\def\ps@pprintTitle{%
  \let\@oddhead\@empty
  \let\@evenhead\@empty
  \def\@oddfoot{\reset@font\hfil\thepage\hfil}
  \let\@evenfoot\@oddfoot
}
\DeclareMathOperator*{\esssup}{ess\,sup}
\DeclareMathOperator*{\sign}{sign}
\DeclareMathOperator*{\supp}{supp}
\DeclareMathOperator{\Lip}{Lip}
\newcommand{\norm}[2]{\left\lVert #1\right\rVert_{#2}}
\newcommand{\seminorm}[2]{\left\vert #1\right\vert_{#2}}
\newcommand{\weakstar}{\stackrel{*}\rightharpoonup}
\newcommand{\pme}{\Psi}
\newcommand{\md}{\partial^\bullet}
\newcommand{\cts}{\hookrightarrow}
\newcommand{\compact}{\xhookrightarrow{c}}
\newtheorem{theorem}{Theorem}[section]
\newtheorem{defn}[theorem]{Definition}
\newtheorem{lem}[theorem]{Lemma}
\newtheorem{ass}[theorem]{Assumption}
\newtheorem{cor}[theorem]{Corollary}
\newtheorem{remark}[theorem]{Remark}
\newcommand{\C}{\mathcal{C}}
\newcommand{\nablabg}{\nabla_{\bar g}}
\newcommand{\nablabgt}{\nabla_{\bar g(t)}}
\newcommand{\nablabgs}{\nabla_{\bar g(s)}}
\newcommand{\Deltabg}{\Delta_{\bar g}}
\newcommand{\Deltabgt}{\Delta_{\bar g(t)}}
\newcommand{\sgrad}{\nabla_\Gamma}
\newcommand{\sgradt}{\nabla_{\Gamma(t)}}
\newcommand{\sgrads}{\nabla_{\Gamma_0}}
\newcommand{\halflapt}{(-\slapt)^{1\slash 2}}
\newcommand{\slapt}{\Delta_{\Gamma(t)}}
\newcommand{\weaklyto}{\rightharpoonup}
\newcommand*\mean[1]{\overline{#1}}
\newcommand*\meanu[1]{\overline{#1}}
\newcommand*\ext[1]{\overline{#1}}
\begin{document}
\hypersetup{
  urlcolor     = blue, 
  linkcolor    = Bittersweet, 
  citecolor   = Cerulean
}


\title{Well-posedness of a fractional porous medium equation on an evolving surface}

\author{Amal Alphonse and Charles M. Elliott}
\affil{Mathematics Institute\\ University of Warwick\\ Coventry CV4 7AL\\ United Kingdom}





\maketitle
\begin{abstract}
We investigate the existence, uniqueness, and $L^1$-contractivity of weak solutions to a porous medium equation with fractional diffusion on an evolving hypersurface. To settle the existence, we reformulate the equation as a local problem on a semi-infinite cylinder, regularise the porous medium nonlinearity and truncate the cylinder. Then we pass to the limit first in the truncation parameter and then in the nonlinearity, and the identification of limits is done using the theory of subdifferentials of convex functionals.  

In order to facilitate all of this, we begin by studying (in the setting of closed Riemannian manifolds and Sobolev spaces) the fractional Laplace--Beltrami operator which can be seen as the Dirichlet-to-Neumann map of a harmonic extension problem. A truncated harmonic extension problem will also be examined and convergence results to the solution of the harmonic extension will be given. For a technical reason, we will also consider some related extension problems on evolving hypersurfaces which will provide us with the minimal time regularity required on the harmonic extensions in order to properly formulate the moving domain problem. This functional analytic theory is of course independent of the fractional porous medium equation and will be of use generally in the analysis of fractional elliptic and parabolic problems on manifolds. 
\end{abstract}

\begin{center}
{\em Dedicated to Juan Luis V{\'a}zquez on the occasion of his 70\textsuperscript{th} birthday}
\end{center}


\section{Introduction}
For each $t \in [0,T]$, let $\Gamma(t) \subset \mathbb{R}^{d+1}$ be a smooth and compact $d$-dimensional hypersurface without boundary evolving with a given velocity field $\mathbf w$. In this paper, we are interested in the well-posedness of the fractional porous medium equation
\begin{equation}\label{eq:pme}
\begin{aligned}
\dot u(t) + \halflapt (u^m(t)) + u(t)\sgradt \cdot \mathbf w(t) &=0&&\text{on $\Gamma(t)$}\\
u(0) &= u_0&&\text{on $\Gamma_0 := \Gamma(0)$}
\end{aligned}
\end{equation}
for $m \geq 1$, where $u_0$ is a given initial data, $u^m := |u|^{m-1}u$ as usual, and $\halflapt$ is the square root of the Laplace--Beltrami operator on $\Gamma(t)$, which is a nonlocal first order elliptic pseudodifferential operator \cite{Seeley, Shubin, Wunsch, TaylorPDEII}.


If the fractional Laplacian in \eqref{eq:pme} is replaced with the ordinary Laplace--Beltrami operator $-\slapt$, \eqref{eq:pme} would be a porous medium equation on an evolving surface. Porous medium equations on stationary domains have, of course, attracted a considerable and well-developed literature. We refer the reader to the book \cite{VazquezBook} by V{\'a}zquez which is a comprehensive study of the mathematical analysis of the equation (and it also contains many references) and results on the porous medium equation on manifolds can be found in \cite[\S 11.5]{VazquezBook} and \cite{BonforteGrillo}. We will also say a few words about the non-fractional moving case in the conclusion of this paper. The investigation of \emph{fractional} porous medium equations was instituted in \cite{DePablo2011} where the authors examined such a problem on $\mathbb{R}^d$ involving the square root of the Laplacian and gave a complete theory of the equation, and indeed, our work is motivated by the results in that paper. There, the existence was proved by discretisation in time of a localised formulation of the equation and then the application of the Crandall--Liggett theorem \cite{CrandallLiggett}. Those results were generalised in \cite{Pablo2012} to a wider range of fractional powers of the Laplacian $(-\Delta)^{s}$ with exponent $s \in (0,1)$ on a stationary domain $\Omega \subseteq \mathbb{R}^d$ using the extension method introduced by Caffarelli and Silvestre in \cite{CaffarelliSilvestre}. Existence was proved in \cite{Bonforte} (for a more general nonlinearity) in a different way through the theory of semigroups and maximal monotone operators. Our model \eqref{eq:pme} differs from all of the aforementioned works since it is on a moving space. 

Other related works in the literature include variants of nonlocal porous medium equations such as those with variable density \cite{Punzo, PunzoGeneral} and different fractional operators \cite{Karch}. We also mention \cite{Alves2014, Capella2010, Stinga2014, Montefusco2012} where elliptic fractional problems are studied in the setting of the Laplacian on a bounded domain with Neumann boundary conditions, and \cite{Imbert2011} where a degenerate parabolic equation arising in crack dynamics is considered, again in the Neumann setting. One can also find numerical and finite element analysis for elliptic and parabolic problems in \cite{Nochetto2013, Nochetto2014}. As is evident, there has been an extraordinary amount of activity in fractional diffusion problems in the last decade or so. A good survey of recent and current output involving nonlinear fractional diffusion can be found in the articles \cite{VazquezAbel, VazquezRecentProgress}.

In terms of the analysis, a common preliminary step when working with half-Laplacians is to rewrite the problem locally using a Dirichlet-to-Neumann map \cite{CabreTan, Banica2012, ValdinociSire, ChangGonzalez}. We will also reformulate \eqref{eq:pme} using such a map; this step is likewise performed in \cite{DePablo2011, Pablo2012} but from here on, the type of approaches taken in \cite{DePablo2011, Pablo2012} are problematic in our setting because of the additional complexity engendered by the evolving domain. For example, one could attempt to pull back the problem onto a reference domain (the resulting expression is not too cumbersome if the evolution of $\Gamma(t)$ is prescribed particularly agreeably) and try to employ an appropriate time-dependent version of Crandall--Liggett \cite{Crandall1972, Evans1977,Lin2011} to the resulting equation (which will have time-dependent coefficients) but these theorems are difficult to apply even when the evolution of the domains is highly simplified. Therefore, we choose a different way to approach this problem, which we shall outline below, starting from the foundations. To our knowledge, the type of approach developed in this paper has not been used before in the fractional setting, even in the stationary case. The challenges and peculiarities that arise due to the moving domain will be highlighted in due course. 

Before we proceed, let us remark that fractional Laplace--Beltrami operators on various classes of manifolds have been studied in \cite{Banica2012, ValdinociSire, ChangGonzalez} through extension problems in the style of Caffarelli--Silvestre \cite{CaffarelliSilvestre}, but a convenient work detailing all the relevant properties of the half-Laplacian on closed manifolds in a Sobolev space setting appears lacking, so this paper is useful also in this respect. With this in mind, it is worth emphasising that the first part of this paper, comprising of \S \ref{sec:fractionalLaplacianOnCompactManifolds}--\ref{sec:fractionalLaplacianOnL2X}, is independent of the second part which consists of \S \ref{sec:nondegenerateProblem} and \S \ref{sec:fpme}, and indeed the reader can read the first part in isolation. The first part can be of use for other fractional diffusion problems on (evolving) manifolds and the second part can be thought of as an application of the first part. See the outline below for more details.
\subsection{Reformulation of the equation and main results}
A natural way to define $\halflapt$ is through a spectral definition which we describe now in greater generality. Indeed, suppose that
\begin{equation}\label{ass:onM}
(M,g)\text{ is a connected closed smooth Riemannian manifold}\tag{$A_M$}
\end{equation}
and let $(\varphi_k, \lambda_k)_{k \in \mathbb{N}}$ be the normalised eigenpairs of the Laplacian $-\Delta_M$ so that $-\Delta_M \varphi_k = \lambda_k \varphi_k$ for each $k$; it follows that $0=\lambda_0 < \lambda_1 \leq \lambda_2 \leq ... \nearrow \infty $ and $\varphi_0 \equiv |M|^{- 1\slash 2}$ \cite[Theorem 3.2.1]{Jost}. The $\varphi_k$ form an orthonormal basis of $L^2(M)$ and are orthogonal in $H^1(M)$. For smooth functions $u$, define
\begin{equation}\label{eq:defnFractionalLaplacian}
(-\Delta_M)^{1\slash 2}u := \sum_{k=1}^\infty \lambda_k^{1\slash 2}(u,\varphi_k)_{L^2(M)}\varphi_k.
\end{equation}
The operator $(-\Delta_M)^{1 \slash 2}$ can  be defined in a weaker sense through the action
\begin{equation}\label{eq:spectralH12Action}
\langle (-\Delta_M)^{1\slash 2}u, v \rangle := \sum_{k=1}^\infty \lambda_k^{1\slash 2}(u,\varphi_k)_{L^2(M)}(v,\varphi_k)_{L^2(M)}
\end{equation}
which is sensible whenever $u$ and $v$ belong to the Hilbert space
\begin{equation}\label{eq:spaceHM}
H(M) := \left\{ u \in L^2(M) \mid \sum_{k=1}^\infty \lambda_k^{1\slash 2}|(u,\varphi_k)_{L^2(M)}|^2 < \infty\right\}
\end{equation} 
endowed with the inner product
\[(u,v)_{H(M)} := (u,v)_{L^2(M)} + \sum_{k=1}^\infty \lambda_k^{1\slash 2}(u,\varphi_k)_{L^2(M)}(v,\varphi_k)_{L^2(M)}.\]
It is useful to have a Sobolev characterisation of the space $H(M)$; in Lemma \ref{lem:HMEquivalence}, we will see that
\[H(M) = H^{1\slash 2}(M) = B^{1\slash 2}_{22}(M) = (L^2(M), W^{1,2}(M))_{1\slash 2},\]
i.e., $H(M)$ is exactly the fractional Sobolev space $H^{1\slash 2}(M)$  
(see \cite[\S 7.2.2, \S 7.3.1, \S 7.4.5]{Triebel} for more details on the second and third equalities). In the later sections, we will be working on hypersurfaces so it is convenient for our purposes to introduce the Sobolev--Slobodecki\u\i\ space $W^{1 \slash 2, 2}(\Gamma)$ (where $\Gamma$ is a sufficiently smooth hypersurface) defined using the Gagliardo norm (see \cite{Alphonse2014a} and references therein):
\[W^{1\slash 2, 2}(\Gamma) := \left\{ u \in L^2(\Gamma) \mid  \norm{u}{W^{1\slash 2, 2}(\Gamma)}^2 := \int_{\Gamma}|u(x)|^2\;\mathrm{d}\sigma(x) +  \int_{\Gamma}\int_{\Gamma}\frac{|u(x) - u(y)|^2}{|x-y|^{n}}\;\mathrm{d}\sigma(x)\mathrm{d}\sigma(y) < \infty\right\}.\]
Of course, this space is equivalent to $H^{1\slash 2}(\Gamma)$ with an equivalence of norms (see \cite[\S I.4.2 and Theorem 5.2 of \S I.5.1]{Wloka}, \cite[Theorem 7.7, Chapter 1]{LionsMagenes}, \cite[Chapter 1, \S 15]{LionsMagenes} and \cite[\S 1.3.3]{Grisvard}), 
but it is important to distinguish between these spaces when $\Gamma=\Gamma(t)$ is time-dependent because the constants in the equivalence of norms will depend on $t$ in an unknown way. 

The spectral definition of $(-\Delta_M)^{1\slash 2}$ in \eqref{eq:defnFractionalLaplacian} is not particularly amenable to a convenient theory of weak solutions; however, there is a way to localise the fractional Laplacian (see \cite{Banica2012, ValdinociSire, ChangGonzalez}). With $\C := M \times [0,\infty)$ and $\bar g$ denoting the trivial product metric on $\C$, consider the problem
\begin{equation}\label{eq:prelim1}
\begin{aligned}
\Deltabg v&=0&&\text{on $\C$}, \qquad v|_{\partial \C} = u
\end{aligned}
\end{equation}
where $\partial \C = M \times \{0\}$. Whenever $u$ belongs to $H(M)$, the equation has a unique weak solution $v = \ext{\mathcal{E}}u$, called the harmonic extension of $u$. This harmonic extension $\ext{\mathcal{E}}u$ belongs in general not to $H^1(\C)$ but to the larger space 
\begin{equation}\label{eq:spaceXC}
X(\C) := \overline{H^1(\C)}^{\norm{\cdot}{X(\C)}} \text{ where }\norm{v}{X(\C)}^2 := \norm{\nablabg v}{L^2(\C)}^2+ \norm{\mathcal{T} v}{L^2(M)}^2\quad\text{for $v \in H^1(\C)$}
\end{equation}
with $\mathcal{T}\colon H^1(\C) \to H(M)$ denoting the trace map onto $M \times \{0\}$, so that $\ext{\mathcal{E}}\colon H(M) \to X(\C)$ (this type of space $X(\C)$ was first defined in a different setting by Stinga and Volzone in \cite{Stinga2014}). As we shall see in Lemma \ref{lem:fracDTN}, the fractional Laplacian is  recovered as a Dirichlet-to-Neumann map:
\begin{equation*}\label{eq:prelimDtn}
\langle (-\Delta_M)^{1\slash 2}u, v \rangle_{H(M)^*, H(M)} = \left\langle \frac{\partial (\ext{\mathcal{E}}u)}{\partial \nu}\Big|_{y=0}, v \right\rangle_{H(M)^*, H(M)},
\end{equation*}
where $\nu = (0,-1)$ is the outward normal to $\C$. All of this will be laid out in detail in \S \ref{sec:fractionalLaplacianOnCompactManifolds}. 

Setting $\pme(r):= |r|^{m-1}r$ and $\C(t) := \Gamma (t) \times [0,\infty)$, the above characterisation implies that one can rewrite \eqref{eq:pme} as
\begin{equation}\label{eq:localEquation2}
\begin{aligned}
\dot u(t) + u(t)\sgradt \cdot \mathbf w(t) + \frac{\partial v(t)}{\partial \nu(t)} &=0&&\text{on $\partial \C(t)$}\\
v(t)&=\ext{\mathcal{E}}_t(\pme(u(t)))\\
u(0) &= u_0 &&\text{on $\Gamma_0$}
\end{aligned}\tag{$\textbf{P}$}
\end{equation}
where $\ext{\mathcal{E}}_t$ is the map $\ext{\mathcal{E}}$ with the manifold $M$ chosen to be $\Gamma(t)$ and $\nu(t) = (0,-1)$ is outward normal to $\C(t)$. Regarding the regularity of $\{\Gamma(t)\}_{t \in [0,T]}$, we will assume Assumption \ref{ass:onHypersurfaces} on p.~\pageref{ass:onHypersurfaces} and that
\begin{equation}\label{eq:eigenvalueEstimate}
\text{there exists a constant $\lambda_1 > 0$ such that }\lambda_1(t) \geq \lambda_1\text{ for all $t \in [0,T]$}\tag{$A_{\lambda}$}
\end{equation}
where $\lambda_k(t)$ denotes the $k$-th eigenvalue of $-\Delta_{\Gamma(t)}$; see Remark \ref{rem:lots}. A proper weak formulation of this problem requires the use of appropriate functional spaces. In \cite{Alphonse2014, Alphonse2014b} (see also \cite{Alphonse2014a}), we defined generalisations of the Bochner spaces $L^p(0,T;Y)$ to handle functions defined on evolving spaces: given a family of Banach spaces $Y \equiv \{Y(t)\}_{t \in [0,T]}$, a family of uniformly bounded linear homeomorphisms $\{\phi_t \colon Y_0 \to Y(t)\}_{t \in [0,T]}$ with uniformly bounded inverses $\{\phi_{-t}\colon Y(t) \to Y_0\}_{t \in [0,T]},$ and $t \mapsto \norm{\phi_t u}{Y(t)}$ measurable for all $u \in Y_0$, we, generalising some work by Vierling \cite{Vierling}, defined the Banach spaces $L^p_Y$ as
\begin{align*}
L^p_Y &= \begin{cases}
\{u:[0,T] \to \bigcup_{t \in [0,T]}Y(t) \times \{t\},\quad \!\!t \mapsto (\hat u(t), t)\quad\!\!\!\! \mid \quad\!\!\!\!\phi_{-(\cdot)} \hat u(\cdot) \in L^p(0,T;Y_0 )\}\quad\!\!\!& \text{for $p \in [1,\infty)$}\\
\{ u \in L^2_{Y} \mid \esssup_{t \in [0,T]} \norm{u(t)}{Y(t)} < \infty\} &\text{for $p=\infty$}
\end{cases}
\end{align*}
endowed with the norm
\begin{equation*}
\begin{aligned}\label{eq:ip}
\norm{u}{L^p_Y} &=\begin{cases}
\left({\int_0^T \norm{u(t)}{Y(t)}^p}\right)^{1\slash p} &\text{for $p \in [1,\infty)$}\\
\esssup_{t \in [0,T]}\norm{u(t)}{Y(t)} &\text{for $p=\infty$}.
\end{cases}
\end{aligned}
\end{equation*}
(Note that we made an abuse of notation after the definition of the first space and identified $u(t) = (\hat u(t), t)$ with $\hat u(t)$.) The space $\mathbb{W}(Y,Z) := \{u \in L^2_Y \mid \dot u \in L^2_Z\}$ with $\dot u$ the weak time or material derivative refers to an evolving space version of a Sobolev--Bochner space; this notion will be properly defined in \S \ref{sec:functionSpaces} where we shall also make clear the assumptions on the evolution of the hypersurface $\Gamma(t)$. This theory will allow us to define the following spaces (amongst others) after we make and check the relevant assumptions in \S \ref{sec:functionSpaces}.
\begin{center}
  \begin{minipage}{0.9\textwidth} 
  \centering
        \begin{tabular}{rl}
    \hline
   Space $L^p_Y$ &Formed from $\{Y(t)\}_{t \in [0,T]}$\\
    \hline\\[-3mm]
    $L^p_{L^q}$ &$\{L^q(\Gamma(t))\}_{t\in [0,T]}$    \\
    $L^2_{W^{1\slash 2, 2}}$ &$\{W^{1\slash 2, 2}(\Gamma(t))\}_{t\in [0,T]}$ 
    \end{tabular}\qquad\qquad
        \begin{tabular}{rl}
    \hline
   Space $L^p_Y$ &Formed from $\{Y(t)\}_{t \in [0,T]}$\\
    \hline\\[-3mm]
$L^2_{L^2(\C)}$ &$\{L^2(\C(t))\}_{t\in [0,T]}$  \\
$L^2_{H^1(\C)}$ &$\{H^1(\C(t))\}_{t\in [0,T]}$  \\
$L^2_{X(\C)}$ &$\{X(\C(t))\}_{t\in [0,T]}$  	
    \end{tabular}     
  \end{minipage}
\end{center}
In order to obtain measurability in time of $t \mapsto \ext{\mathcal{E}_t}(\pme(u(t)))$ for $u \in L^2_{W^{1\slash 2, 2}}$ (recall that each $\ext{\mathcal{E}}_t$ was defined individually at each moment in time as the harmonic extension on  $\Gamma(t)$), we will consider in \S \ref{sec:fractionalLaplacianOnL2X} the ``$L^2_{X(\C)}$ harmonic extension" problem: given $u \in L^2_{W^{1\slash 2, 2}}$, find $\ext{\mathbb{E}}u = v \in L^2_{X(\C)}$ such that
\begin{equation}\label{eq:introL2XHarmonicExtensionProblem}
\begin{aligned}
\Deltabg v&=0, 
 \quad \ext{\mathbb{T}}v = u&&\text{}
\end{aligned}
\end{equation}
holds with $\ext{\mathbb{T}}\colon L^2_{X(\C)} \to L^2_{W^{1\slash 2, 2}}$ the trace map. Then we will show that 
$(\ext{\mathbb{E}}u)(t) = \ext{\mathcal{E}}_tu(t)$ for almost all $t$, 
which gives the desired measurability. Of course, in the stationary setting, this issue of measurability would not arise and there would be no need to consider \eqref{eq:introL2XHarmonicExtensionProblem}.  Now we can think about what we mean by a weak solution. In what follows, given $\eta \in L^2_{W^{1\slash 2, 2}}$, we denote by $E\eta \in L^2_{H^1(\C)}$ an arbitrary extension of $\eta$ that satisfies $\ext{\mathbb{T}}{E\eta} = \eta$.
\begin{defn}[Weak solution]\label{defn:weakSolutionToFPME}A weak solution of \eqref{eq:localEquation2} is a function $u \in L^{\infty}_{L^\infty}$ with $\ext{\mathbb{E}}(\pme(u)) \in L^2_{X(\C)}$ satisfying 
\begin{equation*}\label{eq:fpmeWeakForm}
-\int_0^T \int_{\Gamma(t)}\dot \eta(t)u(t)\;\mathrm{d}\sigma_t\mathrm{d}t + \int_0^T \int_{\C(t)}\nablabgt \ext{\mathcal{E}}_t(\pme(u(t)))\nablabgt (E\eta)(t)\;\mathrm{d}\sigma_t\mathrm{d}t = \int_{\Gamma_0}u_0\eta(0)\;\mathrm{d}\sigma_0
\end{equation*}
for all $\eta \in \mathbb{W}(W^{1\slash 2, 2}, L^2)$ with $\eta(T)=0.$ Here, $\mathrm{d}\sigma_t$ means the surface measure on $\Gamma(t)$.
\end{defn}From now on, for brevity, we will omit the measures in any integrals. We will prove the following theorem in \S \ref{sec:fpme}, which is the main result of our paper. 
\begin{theorem}[Well-posedness of the fractional porous medium equation]\label{thm:wellPosednessOfFPME}
Under Assumption \ref{ass:onHypersurfaces} and \eqref{eq:eigenvalueEstimate}, given $u_0 \in L^\infty(\Gamma_0)$, there exists a unique weak solution $u \in L^\infty_{L^\infty} \cap L^2_{W^{-1\slash 2, 2}}$ to \eqref{eq:localEquation2} with $\ext{\mathbb{E}}(\pme(u)) \in L^2_{X(\C)}$ (in the sense of Definition \ref{defn:weakSolutionToFPME}).
Furthermore, we have the following properties:
\begin{enumerate}
\item Boundedness: for all $t \in [0,T]$, $u(t) \in L^\infty(\Gamma(t))$.
\item Conservation of mass: for all $t \in [0,T]$,
\[\int_{\Gamma(t)} u(t) = \int_{\Gamma_0}u_0.\]
\item $L^1$-contraction principle: if $u_{01}$ and $u_{02}$ are two pairs of initial data in $L^\infty(\Gamma_0)$, then the respective solutions $u_1$ and $u_2$ satisfy
\[\int_{\Gamma(t)}(u_1(t)-u_2(t))^+ \leq \int_{\Gamma_0}(u_{01}-u_{02})^+\quad\text{for all $t \in [0,T]$}.\]
\end{enumerate}
\end{theorem}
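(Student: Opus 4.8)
The plan is to follow the strategy announced in the introduction: regularise the porous medium nonlinearity $\pme$, truncate the semi-infinite cylinder $\C(t)$ to $\C_L(t) = \Gamma(t)\times[0,L]$, solve the resulting nondegenerate problem on the bounded cylinder, and then pass to the limit $L\to\infty$ and then in the regularisation parameter. Concretely, for $\varepsilon>0$ set $\pme_\varepsilon(r):=\pme(r)+\varepsilon r$ (so that $\pme_\varepsilon$ is strictly increasing with linear growth from below), and on the truncated cylinder impose a homogeneous Neumann (or Dirichlet) condition at $y=L$. The nondegenerate, truncated problem should be solvable by the evolving-space Galerkin / subdifferential machinery of \S\ref{sec:nondegenerateProblem}: one writes the equation as $\dot u + \partial_\nu v = -u\,\sgradt\cdot\mathbf w$ with $v$ the ($L$-truncated) harmonic extension of $\pme_\varepsilon(u)$, tests with $\pme_\varepsilon(u)$ itself, and uses the Dirichlet-to-Neumann identity (Lemma~\ref{lem:fracDTN}) together with the transport-formula manipulation of the $\dot u\cdot\pme_\varepsilon(u)$ term to produce an energy estimate. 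The key structural input is that $\int \dot u\, \pme_\varepsilon(u)$ essentially equals $\frac{d}{dt}\int \mathcal{B}_\varepsilon(u)$ up to lower-order terms involving $\sgradt\cdot\mathbf w$, where $\mathcal{B}_\varepsilon'=\pme_\varepsilon$.

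The ordering of the limits matters. First I would fix $\varepsilon$ and let $L\to\infty$: the truncated harmonic extensions converge to the genuine $L^2_{X(\C)}$ harmonic extension by the convergence results promised in \S\ref{sec:fractionalLaplacianOnCompactManifolds} (and its $L^2_{X(\C)}$-version in \S\ref{sec:fractionalLaplacianOnL2X}), so the $\varepsilon$-nondegenerate problem on the full cylinder is obtained. The energy estimates are uniform in $L$ because the truncated extension has energy bounded by that of the full extension, which is controlled by $\norm{\pme_\varepsilon(u)}{W^{1/2,2}}$. Then I would let $\varepsilon\to0$. Here the uniform-in-$\varepsilon$ bounds come from: (i) an $L^\infty$ bound on $u$ via a comparison/truncation argument (test with $(u-\norm{u_0}{L^\infty})^+$ or a Stampacchia-type argument on the extension, exploiting that the harmonic extension preserves $L^\infty$ bounds, which gives property (1)); (ii) mass conservation, obtained by testing with $\eta\equiv1$ and using that $\varphi_0$ is constant so $\halflapt$ annihilates constants — this is property (2) and also furnishes an $L^1$ bound; (iii) the energy estimate giving $\ext{\mathbb{E}}(\pme_\varepsilon(u_\varepsilon))$ bounded in $L^2_{X(\C)}$ and $\dot u_\varepsilon$ bounded in $L^2_{W^{-1/2,2}}$. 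With these I extract weakly/weakly-$*$ convergent subsequences $u_\varepsilon\weaklyto u$ in the appropriate evolving spaces.

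The main obstacle — and the place where the subdifferential theory is really used — is the identification of the nonlinear limit: showing that the weak limit of $\pme_\varepsilon(u_\varepsilon)$ is indeed $\pme(u)$, equivalently that $\ext{\mathbb{E}}(\pme_\varepsilon(u_\varepsilon))\weaklyto \ext{\mathbb{E}}(\pme(u))$. This is the classical monotonicity difficulty for degenerate equations: weak convergence of $u_\varepsilon$ and of $\pme_\varepsilon(u_\varepsilon)$ does not immediately give that the limits are related by $\pme$. The plan is to view the operator $u\mapsto \partial_\nu \ext{\mathbb{E}}(\pme(u))$ as (the composition of $\pme$ with) the subdifferential of the convex functional $v\mapsto\tfrac12\norm{\nablabg v}{L^2(\C)}^2$ restricted through the trace, and use a Minty-type / lower-semicontinuity argument: pass to the limit in the energy inequality $\limsup_\varepsilon \int_0^T\langle \text{(flux)}_\varepsilon, \pme_\varepsilon(u_\varepsilon)\rangle \le \ldots$, combine with the weak convergence and the time-derivative term handled via the integration-by-parts formula in $\mathbb{W}(W^{1/2,2},L^2)$, and conclude by the maximal monotonicity of subdifferentials that the limit pair solves the weak formulation of Definition~\ref{defn:weakSolutionToFPME}. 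A subtlety special to the moving domain is that the convex functional and its subdifferential are $t$-dependent, so I would appeal to the evolving-space integration-by-parts / transport formulae from \S\ref{sec:functionSpaces} to make the time-integrated monotonicity argument rigorous; this is where the minimal time-regularity of the harmonic extensions (the reason \S\ref{sec:fractionalLaplacianOnL2X} and the related evolving extension problems were developed) is indispensable.

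Finally, uniqueness and the $L^1$-contraction principle (property (3)) I would prove together by a Kato-type / doubling argument: take two solutions $u_1,u_2$ with data $u_{01},u_{02}$, subtract the weak formulations, and test with (a regularised version of) $\sign^+(u_1-u_2)=\sign^+(\pme(u_1)-\pme(u_2))$ — legitimate since $\pme$ is monotone so the signs coincide. The transport term $u\,\sgradt\cdot\mathbf w$ contributes a controllable term, the time-derivative term yields $\frac{d}{dt}\int(u_1-u_2)^+$ modulo the geometry of the moving surface, and the crucial nonlinear term $\int_{\C(t)}\nablabgt(\ext{\mathcal{E}}_t\pme(u_1)-\ext{\mathcal{E}}_t\pme(u_2))\cdot\nablabgt E(\sign^+(\cdots))$ has the right sign by the T-monotonicity (order-preservation) of the Dirichlet-to-Neumann map on each $\Gamma(t)$ — i.e. the harmonic extension of a function and the positive-part structure interact favourably, a fact I would establish as a lemma (harmonic extension of $w^+$ compared with $(\ext{\mathcal{E}}w)^+$). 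Integrating in time and using Gronwall with the $L^\infty$-bounded, hence bounded, $\sgradt\cdot\mathbf w$ gives the contraction estimate, and taking $u_{01}=u_{02}$ gives uniqueness. I expect the order-preservation of the fractional Dirichlet-to-Neumann map on the evolving surface, uniform in $t$, to require a short separate argument, but it should follow from the maximum principle for $\Deltabg$ on $\C(t)$.
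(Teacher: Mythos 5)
Your overall skeleton --- regularise $\pme$, truncate the cylinder with a Dirichlet condition at the far end, solve the nondegenerate truncated problem by an evolving-space Galerkin method, pass to the limit first in the truncation and then in the regularisation, and identify the nonlinear limit by convexity/monotonicity with Aubin--Lions compactness --- is exactly the paper's strategy, and your treatment of the $L^\infty$ bound, mass conservation and the subdifferential identification (which the paper runs on the functional $J(v)=\int_0^T\int_{\Gamma(t)} G(v)$ with $G'=\pme^{-1}$, using the strong convergence of $u_k$ in $L^2_{W^{-1\slash 2,2}}$ to pass the product $\int \pme_k^{-1}(v_k)v_k$ to the limit) is in the right spirit. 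One concrete defect already at this stage: the regularisation $\pme_\varepsilon=\pme+\varepsilon\,\mathrm{id}$ does not satisfy the hypotheses \eqref{eq:assumptionsOnBeta} under which the nondegenerate theory works --- for $m>1$ it is not Lipschitz, and the energy estimates and the admissibility of extensions such as $E_R\eta=\beta^{-1}(\ext{\mathbb{E}}_R(\beta(u)))$ use $\beta'\le k$ together with $(\beta^{-1})'$ bounded above and below and $|(\beta^{-1})''|$ bounded. One needs a two-sided regularisation $\pme_k$ with $1\slash C_k\le\pme_k'\le k$ and $|(\pme_k^{-1})''|\le C_k$, as in \cite{Eden1991}.

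The more serious gap is the contraction principle. A Kato-type argument run directly on the degenerate limit equation, as you propose, does not go through: the weak solutions only satisfy $\pme(u_i)\in L^2_{W^{1\slash 2,2}}$, not $u_i\in L^2_{W^{1\slash 2,2}}$, so $(u_1-u_2)^+$ and its truncations are not admissible test functions; and to give the elliptic term a sign one must extend the test function into the cylinder as $T_\epsilon((\pme^{-1}(v_1)-\pme^{-1}(v_2))^+)\slash\epsilon$ with $v_i$ the harmonic extensions of $\pme(u_i)$, which fails to lie in $L^2_{H^1(\C_R)}$ because $\pme^{-1}$ is not Lipschitz for $m>1$. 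The paper instead proves the contraction at the level of the truncated nondegenerate approximations, where the boundedness of $(\beta^{-1})'$ and $(\beta^{-1})''$ is precisely what controls the residual term $((\beta^{-1})'(v_{1R})-(\beta^{-1})'(v_{2R}))\nablabg v_{2R}$ via Stampacchia's theorem, and then passes to the limit in $R$ and in $k$ using the weak lower semicontinuity of $w\mapsto\int_{\Gamma(t)} w^+$ (Mazur's lemma); your proposed lemma comparing $\ext{\mathcal{E}}(w^+)$ with $(\ext{\mathcal{E}}w)^+$ is neither needed nor the mechanism used. Finally, upgrading the contraction from almost every $t$ to every $t\in[0,T]$ requires an additional argument combining the $C^0([0,T];W^{-1\slash 2,2}(\Gamma_0))$ continuity of the pulled-back approximations with weak-$*$ lower semicontinuity of the $L^\infty$ norm, which your sketch omits.
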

An immediate consequence of the contraction principle is the following.
\begin{cor}[$L^1$-continuous dependence and comparison principle] If $u_{01}$ and $u_{02}$ are two pairs of initial data in $L^\infty(\Gamma_0)$, then the respective weak solutions $u_1$ and $u_2$ of Theorem \ref{thm:wellPosednessOfFPME} satisfy the $L^1$-continuous dependence result
\[\int_{\Gamma(t)}|u_1(t)-u_2(t)| \leq \int_{\Gamma_0}|u_{01}-u_{02}|\quad\text{for all $t \in [0,T]$}.\]
If $u_{01} \leq u_{02}$ a.e., then $u_1(t) \leq u_2(t)$ a.e. in $\Gamma(t)$ for all $t$.
\end{cor}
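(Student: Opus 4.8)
The final statement is the Corollary on $L^1$-continuous dependence and the comparison principle, which is said to follow immediately from the $L^1$-contraction principle in Theorem~\ref{thm:wellPosednessOfFPME}.

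The plan is to derive both assertions purely algebraically from the one-sided contraction estimate, without revisiting the construction of solutions. For the continuous dependence bound, the key observation is the pointwise identity $|a-b| = (a-b)^+ + (b-a)^+$. Applying the contraction principle of Theorem~\ref{thm:wellPosednessOfFPME}(3) once with the ordered pair $(u_{01}, u_{02})$ and once with the swapped pair $(u_{02}, u_{01})$ — noting that by uniqueness the solution associated to $u_{0i}$ is the same function $u_i$ regardless of which slot it occupies — yields
\[\int_{\Gamma(t)}(u_1(t)-u_2(t))^+ \leq \int_{\Gamma_0}(u_{01}-u_{02})^+, \qquad \int_{\Gamma(t)}(u_2(t)-u_1(t))^+ \leq \int_{\Gamma_0}(u_{02}-u_{01})^+.\]
Adding these two inequalities and using the identity above on both sides gives exactly $\int_{\Gamma(t)}|u_1(t)-u_2(t)| \leq \int_{\Gamma_0}|u_{01}-u_{02}|$ for all $t \in [0,T]$, which is the first claim.

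For the comparison principle, suppose $u_{01} \leq u_{02}$ almost everywhere on $\Gamma_0$. Then $(u_{01}-u_{02})^+ = 0$ a.e., so the right-hand side of the contraction inequality vanishes, forcing $\int_{\Gamma(t)}(u_1(t)-u_2(t))^+ = 0$ for every $t \in [0,T]$. Since the integrand $(u_1(t)-u_2(t))^+$ is nonnegative and measurable on $\Gamma(t)$, it must vanish almost everywhere, i.e. $u_1(t) \leq u_2(t)$ a.e. on $\Gamma(t)$ for all $t$.

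There is essentially no obstacle here — the entire content is the contraction principle already established in Theorem~\ref{thm:wellPosednessOfFPME}, together with the trivial remarks that $u^+$ integrates to zero iff $u \leq 0$ a.e., and that $|r| = r^+ + (-r)^+$. The only minor point to be careful about is the implicit use of uniqueness (so that "the solution with initial data $u_{0i}$" is well-defined and independent of the order in which the pair is presented), but this is guaranteed by the well-posedness part of Theorem~\ref{thm:wellPosednessOfFPME}.
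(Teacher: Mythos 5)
Your proof is correct and is exactly the intended argument: the paper presents this corollary without proof as an "immediate consequence" of the contraction principle, and your derivation via $|r| = r^+ + (-r)^+$ together with the swapped application of the contraction inequality (justified by uniqueness) is the standard and essentially unique way to fill it in. Nothing further is needed.
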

Let us discuss how these results compare to those in the stationary case considered in \cite{DePablo2011, Pablo2012}. Theorem \ref{thm:wellPosednessOfFPME} and its corollary correspond to parts i, ii, iv and v of Theorem 2.2 of \cite{DePablo2011} and to Theorem 7.2 of \cite{Pablo2012} in the half-Laplacian setting. In terms of the proof, our methods are quite different, as already discussed earlier. Let us sketch the proof now. 
\subsection{Plan of the proof}
In order to solve \eqref{eq:localEquation2} and prove Theorem \ref{thm:wellPosednessOfFPME}, we will first approximate the nonlinearity $\pme$ by well-behaved smooth approximations $\pme_k$ and seek to solve \eqref{eq:localEquation2} with $\pme$ replaced by $\pme_k$. This directs us to study the non-degenerate problem 
\begin{equation}\label{eq:betaProblem}
\begin{aligned}
\dot u_\beta(t) + u_\beta(t) \sgradt \cdot \mathbf w(t) + \frac{\partial v_\beta(t)}{\partial \nu(t)} &= 0&&\text{on $\partial \C(t)$}\\
v_\beta(t) &= \ext{\mathcal{E}}_t(\beta(u_\beta(t)))\\
u_\beta(0) &= u_0 &&\text{on $\Gamma_0$}
\end{aligned}\tag{$\textbf{P}_{\beta}$}
\end{equation}
where $\beta\colon \mathbb{R} \to \mathbb{R}$ satisfies
\begin{equation}\label{eq:assumptionsOnBeta}
\begin{aligned}
&\beta(0) = 0, \beta \text{ is } C^2(\mathbb{R}) \text{ (and Lipschitz)}\\
&\beta', (\beta^{-1})', (\beta^{-1})'' \in L^\infty(\mathbb{R}), \text{ and}\\
&\text{there exist constants $C_{\beta'}, C_{\beta'_{inv}} > 0$ with } \beta' \geq C_{\beta'} \text{ and } (\beta^{-1})' \geq C_{\beta'_{inv}}.
\end{aligned}\tag{$A_\beta$}
\end{equation}
To show well-posedness of \eqref{eq:betaProblem} one could try a Galerkin method but a complication involving the unbounded cylinder $\C(t)$ arises due to the surface evolution, see Remark \ref{rem:needForTruncationOfCylinder}; this suggests truncating the cylinder $\C(t)$ in the unbounded direction. 
So we consider in \S \ref{sec:truncatedHarmonicExtension} a truncated harmonic extension problem and show that its solution approximates the (untruncated) harmonic extension in some sense: given $u \in H(M)$, with $\ext{\mathcal{E}}_{R}u=v_R$ denoting the weak solution of
\begin{equation}\label{eq:prelim2}
\begin{aligned}
\Deltabg v_R &=0&&\text{on $\C_R:=M \times [0,R]$}, \qquad v_R|_{M \times \{0\}} = u, \qquad
v_R|_{M \times \{R\}} = 0,
\end{aligned}
\end{equation}
we will show in \S \ref{sec:convergenceOfTruncatedSolution} that $\nablabg \ext{\mathcal{E}}_{R}u \to \nablabg \ext{\mathcal{E}}u$ in $L^2(\C)$ as $R \to \infty$. As with $\mathcal{E}_t$, we define $\ext{\mathcal{E}}_{R,t}$ as $\ext{\mathcal{E}}_{R}$ with $M=\Gamma(t)$ and $\C_R(t) := \Gamma(t) \times [0,R]$, and consider the following problem as an approximation of \eqref{eq:betaProblem}:
\begin{equation}\label{eq:betaRProblem}
\begin{aligned}
\dot u_{\beta R}(t) + u_{\beta R}(t) \sgradt \cdot \mathbf w(t) + \frac{\partial v_{\beta R}(t)}{\partial \nu(t)} &= 0&&\text{on $\Gamma(t) \times \{0\}$}\\
v_{\beta R}(t) &= \ext{\mathcal{E}}_{R,t}(\beta(u_{\beta R}(t)))\\
u_{\beta R}(0) &= u_0 &&\text{on $\Gamma_0$}.
\end{aligned}\tag{$\textbf{P}_{\beta R}$}
\end{equation}
We can define the spaces $L^2_{L^2(\C_R)}$ and $L^2_{H^1(\C_R)}$ on the truncated cylinder just like before, and consideration of an ``$L^2_{H^1(\C_R)}$ truncated harmonic extension" problem like \eqref{eq:introL2XHarmonicExtensionProblem} in \S \ref{sec:fractionalLaplacianOnL2X} will lead to a map $\ext{\mathbb{E}}_{R}$ and show the measurability in time of $\ext{\mathcal{E}}_{R,t}$. We will use the Galerkin method to solve \eqref{eq:betaRProblem} in \S \ref{sec:truncatedProblem}, see Remark \ref{rem:projectionOperator} where we explain the choice of our Galerkin approximation; this requires emphasis due to a technical difficulty in the evolution-dependent projection operators associated to the Galerkin basis. Then we will pass to the limit in $R$ in \S \ref{sec:passToLimitInR} in order to settle \eqref{eq:betaProblem} and 
the following theorem will be proved.
\begin{theorem}\label{thm:wellPosednessOfNonDegenerateProblem}Under Assumption \ref{ass:onHypersurfaces}, \eqref{eq:eigenvalueEstimate}, and \eqref{eq:assumptionsOnBeta}, given $u_0 \in L^\infty(\Gamma_0)$, there exists a unique solution $u_\beta \in \mathbb{W}(W^{1\slash 2, 2}, W^{-1\slash 2, 2})$ to \eqref{eq:betaProblem} with $u_\beta(0)=u_0$ and $\ext{\mathbb{E}}(\beta(u_\beta)) \in L^2_{X(\C)}$ satisfying 
\begin{equation}\label{eq:nondegenerateProblem}
\int_0^T\langle \dot u_\beta(t), \eta(t)\rangle + \int_0^T\int_{\Gamma(t)}u_\beta(t)\eta(t) \sgradt \cdot \mathbf w(t) + \int_0^T\int_{\C(t)}\nablabgt \ext{\mathcal{E}}_t(\beta(u_\beta(t)))\nablabgt (E\eta)(t) = 0
\end{equation}
for all $\eta \in L^2_{W^{1\slash 2, 2}}$, where the duality pairing is between $W^{-1\slash 2, 2}(\Gamma(t))$ and $W^{1\slash 2, 2}(\Gamma(t))$. Furthermore, mass is conserved and the $L^1$-contraction principle holds for almost all $t \in [0,T]$. 
\end{theorem}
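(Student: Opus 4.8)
The plan is to obtain the solution of \eqref{eq:betaProblem} as the limit $R \to \infty$ of solutions $u_{\beta R}$ of the truncated problems \eqref{eq:betaRProblem}, relying on the Galerkin analysis of \S \ref{sec:truncatedProblem} and the convergence $\nablabg \ext{\mathcal{E}}_{R}u \to \nablabg \ext{\mathcal{E}}u$ in $L^2(\C)$ from \S \ref{sec:convergenceOfTruncatedSolution}. First I would assume that \S \ref{sec:truncatedProblem} has furnished, for each $R$, a solution $u_{\beta R} \in \mathbb{W}(W^{1\slash 2, 2}, W^{-1\slash 2, 2})$ of the weak form of \eqref{eq:betaRProblem} together with uniform-in-$R$ bounds. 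These bounds come from testing the equation with $\beta(u_{\beta R})$ (giving control of $\nablabgt \ext{\mathcal{E}}_{R,t}(\beta(u_{\beta R}(t)))$ in $L^2_{L^2(\C_R)}$ after using the convexity of a primitive of $\beta$ and the transport identity for the time derivative, absorbing the lower-order term $u_{\beta R}\,\sgradt\cdot\mathbf w$ via Gronwall), and with $u_{\beta R}$ itself (giving an $L^\infty_{L^2}$ bound and, through the trace inequality $\norm{\ext{\mathbb{T}}v}{L^2_{W^{1\slash 2,2}}} \lesssim \norm{v}{L^2_{X(\C)}}$ applied on $\C_R$, an $L^2_{W^{1\slash 2,2}}$ bound on $\beta(u_{\beta R})$, hence on $u_{\beta R}$ using $(\beta^{-1})' \in L^\infty$). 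From \eqref{eq:nondegenerateProblem} itself one then reads off a uniform bound on $\dot u_{\beta R}$ in $L^2_{W^{-1\slash 2,2}}$. An $L^\infty_{L^\infty}$ bound on $u_{\beta R}$ is obtained by a Stampacchia-type truncation argument, testing with $(\beta(u_{\beta R}) - \beta(\norm{u_0}{L^\infty}))^+$ and using that the harmonic extension of a nonnegative (resp.\ nonpositive) boundary datum has a sign, so that the Dirichlet-to-Neumann term has a favourable sign.

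Next I would extract weakly/weakly-$*$ convergent subsequences: $u_{\beta R} \weaklyto u_\beta$ in $L^2_{W^{1\slash 2,2}}$ and weak-$*$ in $L^\infty_{L^\infty}$, $\dot u_{\beta R} \weaklyto \dot u_\beta$ in $L^2_{W^{-1\slash 2,2}}$, and — using the Aubin--Lions-type compactness available in the evolving-space setting (the embedding $\mathbb{W}(W^{1\slash 2,2},W^{-1\slash 2,2}) \compact L^2_{L^2}$) — strong convergence $u_{\beta R} \to u_\beta$ in $L^2_{L^2}$ and a.e., whence $\beta(u_{\beta R}) \to \beta(u_\beta)$ in $L^2_{L^2}$ by continuity and the Lipschitz bound on $\beta$. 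To pass to the limit in the extension term, I would view $v_{\beta R} := \ext{\mathbb{E}}_R(\beta(u_{\beta R}))$ as extensions by zero to all of $\C(t)$; the uniform $L^2_{L^2(\C)}$-gradient bound gives $v_{\beta R} \weaklyto v$ for some $v \in L^2_{X(\C)}$, and the task is to identify $v = \ext{\mathbb{E}}(\beta(u_\beta))$, i.e.\ $\Deltabg v = 0$ with trace $\beta(u_\beta)$. The trace identification follows from continuity of $\ext{\mathbb{T}}$ together with $\beta(u_{\beta R}) \to \beta(u_\beta)$; the harmonicity follows by testing against $L^2_{H^1(\C)}$ functions compactly supported away from $\{y=R\}$ and passing to the limit, then using density. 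Here the $R \to \infty$ convergence result of \S \ref{sec:convergenceOfTruncatedSolution}, applied at a.e.\ fixed time to $\ext{\mathcal{E}}_{R,t}$, lets me conclude that the limit is indeed the full harmonic extension rather than some truncated object, and the measurability machinery of \S \ref{sec:fractionalLaplacianOnL2X} identifies $(\ext{\mathbb{E}}(\beta(u_\beta)))(t) = \ext{\mathcal{E}}_t(\beta(u_\beta(t)))$ for a.e.\ $t$. Passing to the limit in \eqref{eq:nondegenerateProblem} with a fixed test function $\eta$ is then routine; the attainment of the initial datum $u_\beta(0) = u_0$ comes from the time-continuity $\mathbb{W}(W^{1\slash 2,2}, W^{-1\slash 2,2}) \hookrightarrow C^0_{L^2}$ and integration by parts in time against $\eta$ with $\eta(T)=0$.

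For uniqueness, mass conservation, and the $L^1$-contraction, I would argue directly at the level of \eqref{eq:betaProblem}. Mass conservation follows by taking $\eta \equiv 1$ (constant in space), which is admissible, so that the extension term vanishes because the harmonic extension of a constant is that constant and its normal derivative is zero, leaving $\frac{d}{dt}\int_{\Gamma(t)} u_\beta = 0$ after the transport formula cancels the $\sgradt\cdot\mathbf w$ term. For the $L^1$-contraction (which contains uniqueness as the case $u_{01}=u_{02}$), given two solutions $u_1, u_2$ with data $u_{01}, u_{02}$, I would subtract the two weak formulations and test with a smooth monotone approximation $p_\delta$ of the Heaviside function applied to $\beta(u_1) - \beta(u_2)$; the key sign is that $\langle \partial_\nu(\ext{\mathcal{E}}_t w_1) - \partial_\nu(\ext{\mathcal{E}}_t w_2), p_\delta(w_1 - w_2)\rangle \geq 0$, which follows because $(-\Delta)^{1\slash 2}$ (as a Dirichlet-to-Neumann map of a linear harmonic extension) is accretive in the sense needed for Kato-type inequalities — concretely, testing the harmonic extension of $w_1 - w_2$ against (the harmonic extension of) $p_\delta(w_1-w_2)$ and using $\abs{\nabla p_\delta(\cdot)}^2 \geq 0$. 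Combining with the transport identity for $\frac{d}{dt}\int_{\Gamma(t)}(u_1-u_2)^+$ and letting $\delta \to 0$ gives the contraction after Gronwall to handle the lower-order transport term. The main obstacle I anticipate is the careful handling of the evolving-domain geometry throughout: every integration by parts in time must use the correct transport formula on $\{\Gamma(t)\}$ and on $\{\C(t)\}$, the uniform bounds must genuinely be $R$- and $t$-independent (which is where \eqref{eq:eigenvalueEstimate} is essential, to keep the Poincaré-type constant controlling $\ext{\mathcal{E}}_{R,t}$ uniform), and the identification of the weak limit $v$ as the honest harmonic extension requires the interplay of the two separate limiting procedures (measurability in $t$ via $\ext{\mathbb{E}}_R$, and $R\to\infty$ convergence) to be threaded together consistently.
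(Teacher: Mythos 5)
Your existence argument follows the paper's route (Galerkin for \eqref{eq:betaRProblem}, uniform-in-$R$ estimates, weak limits plus Aubin--Lions, identification of the limiting extension via the continuous-convergence results of \S \ref{sec:convergenceOfTruncatedSolution} and the measurability machinery of \S \ref{sec:fractionalLaplacianOnL2X}); the only substantive difference is cosmetic (the paper tests with $u_R$ and chooses the extension $E_R\eta=\beta^{-1}(\ext{\mathbb{E}}_R(\beta(u_R)))$ rather than testing with $\beta(u_R)$, and it tracks the mean-value part $\frac{R-y}{R}\mean{\beta(u_R)}$ separately when identifying the limit, a bookkeeping step your ``extension by zero'' sketch glosses over but which is routine). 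The $L^\infty$ bound you propose is not needed for this theorem and is only established later, for the specific regularisations $\pme_k$, by a more delicate cut-off argument.

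The genuine gap is in your contraction argument. You subtract the two weak formulations and test with $p_\delta(\beta(u_1)-\beta(u_2))$. The Dirichlet-to-Neumann sign is then fine (that is the standard accretivity computation), but the time term becomes $\langle \md(u_1-u_2),\, p_\delta(\beta(u_1)-\beta(u_2))\rangle$, which is \emph{not} of the form $\langle \dot w, f(w)\rangle$ for a single $w$, so no chain rule / transport identity (Lemma \ref{lem:weakerIBP}) produces $\frac{d}{dt}\int_{\Gamma(t)}(u_1-u_2)^+$ from it; solutions here have only $W^{-1\slash 2,2}$-valued material derivatives, so you cannot manipulate this pairing pointwise. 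The paper resolves exactly this mismatch by testing with $\frac1\epsilon T_\epsilon((u_1-u_2)^+)$ (a function of $u_1-u_2$, so the time term integrates by parts) while choosing the \emph{extension} in the elliptic term to be $\frac1\epsilon T_\epsilon((\beta^{-1}(v_{1R})-\beta^{-1}(v_{2R}))^+)$ — legitimate because the extension is arbitrary subject to having the right trace — and then controlling the resulting cross term, which involves $(\beta^{-1})''$ and requires the Stampacchia-type fact that $\nablabg(v_{1R}-v_{2R})$ vanishes a.e.\ on $\{v_{1R}=v_{2R}\}$. Without some such device your sign argument and your time-derivative argument want incompatible test functions. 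A second, related obstruction: you work directly on the untruncated problem, but the composed extension $p_\delta(\ext{\mathcal{E}}(w_1-w_2))$ tends to $p_\delta(\mean{w_1-w_2})\neq 0$ as $y\to\infty$ and hence is not in $L^2_{H^1(\C)}$; introducing a cut-off $\psi_\rho$ to fix this produces residual terms of exactly the kind flagged in Remark \ref{rem:needForTruncationOfCylinder}. This is why the paper proves the contraction at the truncated level and only then passes $R\to\infty$ (using weak lower semicontinuity of $w\mapsto\int w^+$).
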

With $\beta$ chosen to be the regularisation $\pme_k$, this theorem gives us a sequence $\{u_k\}_{k \in \mathbb{N}}$ where $u_k \in \mathbb{W}(W^{1\slash 2, 2}, W^{-1\slash 2, 2})$ satisfies $u_k(0)=u_0$, $\ext{\mathbb{E}}(\pme_k(u_k)) \in L^2_{X(\C)}$, and the equation \eqref{eq:nondegenerateProblem} with $\beta$ replaced by $\pme_k$ and $u_\beta$ replaced by $u_k$.
Then we pass to the limit in $k$ using energy estimates and the identification of limits is handled with the theory of subdifferentials of convex functionals in \S \ref{sec:fpme} where the proof of Theorem \ref{thm:wellPosednessOfFPME} is concluded.

In \cite{DePablo2011, Pablo2012}, the authors prove results for existence with integrable data too, as well as other properties besides, including regularity, smoothing effects and extinction of solutions. As the next step to our results, studying regularity in time would be natural (and useful) but it appears difficult in our setting. We comment on this in more detail in the conclusion.
\subsection{Outline}It is clear that we need to properly study the harmonic extension maps $\ext{\mathcal{E}}_t$ and $\ext{\mathcal{E}}_{R,t}$, which we take care of in \S\ref{sec:fractionalLaplacianOnCompactManifolds} in the general setting of closed Riemannian manifolds. 
In \S \ref{sec:functionSpaces} we shall check that the spaces $L^p_Y$ listed above are well-defined and prove some preliminary functional analytic results. We then study the maps  $\ext{\mathbb{E}}$ and $\ext{\mathbb{E}}_R$ in \S \ref{sec:fractionalLaplacianOnL2X}. After this, we tackle the non-degenerate problem \eqref{eq:betaProblem} in \S \ref{sec:nondegenerateProblem} and then prove the main theorem in \S \ref{sec:fpme}. We will finish with some concluding remarks in \S \ref{sec:conclusions}. Let us emphasise that \S \ref{sec:fractionalLaplacianOnCompactManifolds} is useful more generally for fractional problems on closed manifolds and \S \ref{sec:functionSpaces}--\ref{sec:nondegenerateProblem} are useful for fractional diffusion problems on (evolving) hypersurfaces. Only in \S \ref{sec:fpme} do we specialise to the porous medium equation.

\subsection{Notation}
We use the overline $\bar{\cdot}$ in different contexts. When applied to functions $u$, it means the spatial mean value: typically $\mean u = \frac{1}{|M|}\int_{M}u$ or $\mean u= \frac{1}{|\Gamma(t)|}\int_{\Gamma(t)}u$. When applied to symbols like $\mathbb{E}$ or $\mathcal{E}$ like $\ext{\mathbb{E}}$ or $\ext{\mathcal{E}}$, the meaning usually is that the map with the overline is a linear extension, for example, $\ext{\mathcal{E}}$ is a linear extension of $\mathcal{E}$ to a larger space. Symbols of the blackboard bold style like $\mathbb{E}$ refer to maps between the evolving Bochner spaces $L^2_Y$, whilst symbols of the calliographic style like $\mathcal{E}$ refer to maps between Sobolev spaces of the form $H^s(M)$. The notation $\seminorm{\cdot}{}$ denotes a seminorm; usually the $L^2$ part of the corresponding norm is omitted.

As a convenience for the reader, we give here a list of the major notations and symbols that we use in this paper along with the page number of definition or first usage.
\begin{center}
  \begin{minipage}{0.9\textwidth}
  \centering
    \begin{tabular}{lr}
    	\hline
       Notation & Page \\
       \hline \\[-3mm]
	\eqref{ass:onM} &p.~\pageref{ass:onM}\\
	$H(M)$ &p.~\pageref{eq:spaceHM} \\
		$X(\C)$ &p.~\pageref{eq:spaceXC} \\
	    \eqref{eq:eigenvalueEstimate} & p.~\pageref{eq:eigenvalueEstimate}\\    
	$\mathcal{E}$, $\ext{\mathcal{E}}$ &p.~\pageref{thm:harmonicExtensionExistence} \\
	$\mathcal{E}_R$, $\ext{\mathcal{E}}_R$ &p.~\pageref{thm:truncatedHarmonicExtensionExistence} \\
	$\mathscr{Z}_R$ &p.~\pageref{defn:Z} \\
    \end{tabular}\qquad\qquad
        \begin{tabular}{lr}
    	\hline
       Notation & Page \\
       \hline \\[-3mm]     
    $\mathcal{E}_t$, $\mathcal{E}_{R,t}$, $\ext{\mathcal{E}}_t$, $\ext{\mathcal{E}}_{R,t}$ &p.~\pageref{defn:harmonicExtensionsWitht} \\
         $\mathcal{T}_t$, $\mathcal{T}_{R,t,y=0}$, $\mathcal{T}_{R,t,y=R}$, $\ext{\mathcal{T}}_t$ &p.~\pageref{defn:harmonicExtensionsWitht} \\
    $\mathbb{T}$,   $\ext{\mathbb{T}}$ &p.~\pageref{lem:existenceOfTraceMapOnL2H1AndL2X} \\
	$\mathbb{T}_{R,y=0}$, $\mathbb{T}_{R,y=R}$ &p.~\pageref{lem:existenceTraceMapsTruncatedL2H1CR} \\
    $\mathbb{E}$, $\ext{\mathbb{E}}$ &p.~\pageref{thm:wellPosednessHarmonicExtensionL2X} \\
    $\mathbb{E}_R$, $\ext{\mathbb{E}}_R$ &p.~\pageref{thm:wellPosednessTruncatedHarmonicExtensionL2X} \\
    $\langle \cdot, \cdot \rangle_{}$, $\langle \cdot, \cdot \rangle_{0}$ &p.\pageref{notation:dualityProducts}
    \end{tabular}
  \end{minipage}
\end{center}
\section{The fractional Laplacian on compact Riemannian manifolds}\label{sec:fractionalLaplacianOnCompactManifolds}
Throughout this section, we assume that $(M,g)$ is a Riemannian manifold as given in \eqref{ass:onM}. One aim of this section is to realise the fractional Laplacian on a closed Riemannian manifold as the Dirichlet-to-Neumann map of a harmonic extension problem in a Sobolev space setting.  We will define an operator $\ext{\mathcal{E}}\colon H(M) \to X(\C)$ for this purpose. We also study the truncated harmonic extension by means of an operator $\ext{\mathcal{E}}_R\colon H(M) \to H^1(\C_R)$, and then prove that $\mathcal{E}_R$ approximates $\mathcal{E}$. 
\begin{remark}
We do not consider the case where $M$ is an open manifold (i.e., a manifold with boundary). If $\partial M \neq \emptyset$ and we place Neumann boundary conditions then most of what follows should be similar, since the eigenvalues and eigenfunctions behave similarly to the closed setting. If instead Dirichlet boundary conditions are taken then an analogue of the following results will hold; in particular one probably would not need to worry about differentiating between functions with mean value zero and those without, and the space $H(M) = (L^2(M), W^{1,2}_0(M))_{1\slash 2}$ will be rather different. 
\end{remark}
We will often be integrating or manipulating infinite series of functions term by term which can be justified by Abel's test or the Weierstrass M-test. More details of this and lengthier calculations of what follows can be found in \cite{Thesis}. First, we begin with a brief discussion of Sobolev spaces on (semi-infinite) cylinders.

\subsection{Sobolev spaces on semi-infinite cylinders}\label{sec:sobolevSpacesOnSemiInfiniteCylinders}
We can use the space $H^1(\C)$ (utilised already in the introduction) defined in \cite{Amann2012} as the linear subspace of $L^1_{\text{loc}}(\C)$ consisting of all $v$ such that $v$ and $\nablabg v$ belong to $L^2(\C)$, and endowed with the natural norm. Equivalently, it can be defined as the linear subspace of $L^2(0,\infty;H^1(M))$ consisting of all $v$ such that $v_y \in L^2(0,\infty;L^2(M))$. This is precisely the type of Sobolev--Bochner space whose theory was developed by Lions and Magenes \cite[Chapter 1, \S 2.2]{LionsMagenes}. There is a bounded linear surjective trace operator $\mathcal{T} \colon H^1(\C) \to H^{1\slash 2}(M \times \{0\})$ 
 \cite[Theorem 18.1]{Amann2012}, \cite[Theorem 3.2, Chapter 1]{LionsMagenes}, possessing a continuous right inverse. Similarly, the spaces $H^1(\C_R)$ can be defined on the truncated cylinder $\C_R=M \times [0,R]$. Theorem 3.1 of \cite[Chapter 1]{LionsMagenes} gives $H^1(\C_R) \cts C^0([0,R];H^{1\slash 2}(M)),$ so that the linear trace operators $\mathcal{T}_{R,y=0}$, $\mathcal{T}_{R,y=R}  \colon H^1(\C_R) \to H^{1\slash 2}(M)$ defined by $(\mathcal{T}_{R,y=0}v)(\cdot) := v(\cdot,0)$ and $(\mathcal{T}_{R,y=R}v)(\cdot) := v(\cdot,R)$ are also bounded. Furthermore $\mathcal{T}_{R,y=0}$ is surjective \cite[Theorem 3.2, Chapter 1]{LionsMagenes}.
\begin{lem}\label{lem:meanValueContinuous}
If $v \in H^1(\C)$, then $y \mapsto \mean v(y) = \frac{1}{|M|}\int_M v(y)$ is an element of $H^1(0,\infty)$ and thus $\mean v \in C^0([0,\infty)).$
\end{lem}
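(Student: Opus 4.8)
The plan is to verify the two claimed properties of the averaged function $\mean v(y) := \frac{1}{|M|}\int_M v(y)\,\mathrm{d}\sigma$ directly from the characterisation $H^1(\C) = \{v \in L^2(0,\infty;H^1(M)) : v_y \in L^2(0,\infty;L^2(M))\}$ noted just above the lemma. First I would observe that averaging over $M$ is a bounded linear functional on $L^2(M)$ (indeed on $L^1(M)$), with $|\mean v(y)| \leq |M|^{-1\slash 2}\norm{v(y)}{L^2(M)}$ by Cauchy--Schwarz, so that $\mean v \in L^2(0,\infty)$ with $\norm{\mean v}{L^2(0,\infty)} \leq |M|^{-1\slash 2}\norm{v}{L^2(0,\infty;L^2(M))} \leq |M|^{-1\slash 2}\norm{v}{H^1(\C)}$.

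Next I would identify the weak derivative. The natural candidate is $(\mean v)'(y) = \mean{v_y}(y) = \frac{1}{|M|}\int_M v_y(y)$, and the claim is that this holds in the sense of distributions on $(0,\infty)$. This follows because the bounded linear averaging map commutes with the distributional time derivative: for $\psi \in C_c^\infty(0,\infty)$, one has $\int_0^\infty \mean v(y)\psi'(y)\,\mathrm dy = \frac{1}{|M|}\int_0^\infty \int_M v(y)\psi'(y) = -\frac{1}{|M|}\int_0^\infty\int_M v_y(y)\psi(y) = -\int_0^\infty \mean{v_y}(y)\psi(y)\,\mathrm dy$, where the middle equality is precisely the definition of $v_y$ being the weak $y$-derivative of $v$ in $L^2(0,\infty;L^2(M))$ (applied with test function $\psi$ against the fixed spatial profile, after a density/Fubini argument). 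Since $v_y \in L^2(0,\infty;L^2(M))$, the same Cauchy--Schwarz bound gives $\mean{v_y} \in L^2(0,\infty)$. Hence $\mean v \in H^1(0,\infty)$, and the continuous embedding $H^1(0,\infty) \cts C^0([0,\infty))$ then gives $\mean v \in C^0([0,\infty))$.

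The only genuinely delicate point, and the one I would be most careful about, is justifying the interchange of the spatial integral over $M$ with the distributional time derivative, i.e. making rigorous the statement that the averaging operator intertwines $\partial_y$ on the Bochner space with $\partial_y$ of the scalar function. This is routine but needs the right formulation: one can either invoke the general fact that a bounded linear operator $L \colon X \to Y$ between Banach spaces induces a bounded map $L^2(0,\infty;X) \to L^2(0,\infty;Y)$ that commutes with the weak derivative (so $\frac{d}{dy}(Lv) = L(\frac{dv}{dy})$ whenever $\frac{dv}{dy}$ exists), applied to $L = $ averaging from $L^2(M)$ to $\mathbb{R}$; or one can argue by a mollification/Steklov-average argument in the $y$ variable, where everything is classical, and then pass to the limit using the $L^2$ bounds. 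Either route is short. Everything else — the Cauchy--Schwarz estimates and the final Sobolev embedding in one dimension — is entirely standard and needs no comment beyond a citation.
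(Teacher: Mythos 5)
Your proposal is correct and is essentially the paper's argument: the paper's proof simply states that ``a calculation verifies that $\mean v \in H^1(0,\infty)$'' and then invokes the one-dimensional Sobolev embedding (Theorem 8.2 of Brezis), and your Cauchy--Schwarz bounds plus the identification $(\mean v)' = \mean{v_y}$ via commuting the averaging functional with the weak $y$-derivative is precisely that omitted calculation. No gaps.
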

\begin{proof}
A calculation verifies that $\mean v \in H^1(0,\infty)$, and Theorem 8.2 in \cite{brezis2010functional} proves that each function in $H^1(0,\infty)$ has a unique continuous representative in $C^0([0,\infty))$.
\end{proof}
\subsection{Fractional Sobolev spaces and the fractional Laplacian}
The setting of a closed manifold is similar to the setting of Neumann boundary conditions on a bounded domain (see \cite{Stinga2014, Montefusco2012, StingaTorrea}), and now we motivate the definition of the half-Laplacian like \cite[\S 2]{Stinga2014}. As mentioned in the introduction, let $(\lambda_k, \varphi_k)$ be the normalised eigenelements of the Laplace--Beltrami operator $-\Delta_M$. For $k \neq 0$, since $(\varphi_k, \varphi_0)_{L^2(M)} = 0$, $\mean{\varphi_k}=0$. We also have $\norm{\varphi_k}{H^1(M)}^2 = 1+\lambda_k$ which implies that 
\begin{equation*}\label{eq:defnH1}
H^1(M) = \left\{ u \in L^2(M) \mid \norm{u}{H^1(M)}^2 = 	\sum_{k=0}^\infty (1+\lambda_k)|(u, \varphi_k)_{L^2(M)}|^2 < \infty \right\},
\end{equation*}
and for $u \in H^1(M)$, one has
\begin{equation*}\label{eq:Deltau}
-\Delta_M u = \sum_{k=1}^\infty \lambda_k (u,\varphi_k)_{L^2(M)}\varphi_k\quad\text{in $H^{-1}(M)$}
\end{equation*}
with
\begin{equation}\label{eq:actionH1}
\langle -\Delta_M u, v \rangle_{H^{-1}(M), H^1(M)} = \sum_{k=1}^\infty \lambda_k (u,\varphi_k)_{L^2(M)}(v,\varphi_k)_{L^2(M)}.
\end{equation}
With the Hilbert space $H(M)$ as in \eqref{eq:spaceHM}, the previous two identities inspire us to 
define $(-\Delta_M)^{1\slash 2}\colon H(M) \to H(M)^*$ by \eqref{eq:defnFractionalLaplacian} 
with the action \eqref{eq:spectralH12Action}.
For $u$, $v \in H(M)$, it is easy to see the integration by parts formula
\[\langle (-\Delta_M)^{1\slash 2}u, v \rangle_{H(M)^*, H(M)} = \int_M (-\Delta_M)^{1\slash 4}u(-\Delta_M)^{1\slash 4}v\]
where 
$\langle (-\Delta_M)^{1\slash 4}u, v \rangle := \sum_{k=1}^\infty \lambda_k^{1\slash 4}(u,\varphi_k)_{L^2(M)}(v,\varphi_k)_{L^2(M)},$ and we have
\[\norm{(-\Delta_M)^{1\slash 4}u}{L^2(M)}^2 = \sum_{k=1}^\infty \lambda_k^{1\slash 2}|(u,\varphi_k)_{L^2(M)}|^2 = |u|_{H(M)}^2.\]
\subsection{The harmonic extension problem}
Recall the problem \eqref{eq:prelim1}. If $u \equiv 1$, then its harmonic extension is $v \equiv 1$, so $u \mapsto v$ does not map into $H^1(\C)$ in general. Therefore, we will work in the bigger space $X(\C)$, defined in \eqref{eq:spaceXC}. 
\begin{remark}\label{rem:constants}
The constant functions belong to $X(\C)$. To see this, take $c \in \mathbb{R}$ and $c_n \in H^1(\C)$ with
\begin{align*}
c_n(x,y) = \begin{cases}
c &: y \in (0,n]\\
\frac{c}{n}(2n-y) &: y \in (n, 2n]\\
0 &: y \in (2n, \infty)
\end{cases}
\end{align*}
which satisfies $\nabla_M c_n = 0$ and $\partial_y c_n = -c/ n \chi_{(n, 2n)}(y)$. Note that 
\begin{align*}
\norm{c_n-c_m}{ X(\C)}^2 
&\leq 2\left(\norm{\partial_yc_n}{L^2(\C)}^2 + \norm{\partial_yc_m}{L^2(\C)}^2\right)= 2\left(\int_n^{2n}\int_M \frac{c^2}{n^2} + \int_m^{2m}\int_M \frac{c^2}{m^2}\right)= 2|M|\left(\frac{c^2}{n} + \frac{c^2}{m}\right),
\end{align*}
so $\mathbf c := (c_n)$ is a Cauchy sequence in the $ X(\C)$ norm, and it follows that 
\begin{align*}
\norm{\mathbf c}{X(\C)}^2 := \lim_{n \to \infty} \norm{c_n}{ X(\C)}^2 = \lim_{n \to \infty}\int_n^{2n} \int_M \frac{c^2}{n^2} + \int_M c^2 = \lim_{n \to \infty} |M|\left(\frac{c^2}{n}+c^2\right) = |M|c^2.
\end{align*}
Then $\mathbf c$ can be identified with the constant $c$.
\end{remark}	
\begin{lem}[Extension of the gradient to $X(\C)$]The gradient $\nablabg \colon H^1(\C) \to L^2(\C)$ extends to a bounded linear map $\ext{\nablabg} \colon X(\C) \to L^2(\C)$ which satisfies $\ext{\nablabg}|_{H^1(\C)} = \nablabg$ and $\ext{\nablabg}v = \lim_{n \to \infty}\nablabg v_n$ for $v_n \in H^1(\C)$ such that $v_n \to v$ in $X(\C)$.
\end{lem}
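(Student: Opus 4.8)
\emph{Proposal.} This is a textbook density/extension argument (the bounded linear transformation theorem): $H^1(\C)$ is by construction dense in $X(\C)$, the target $L^2(\C)$ is complete, and the gradient is already bounded for the relevant norm. First I would note that the very definition of the $X(\C)$-norm in \eqref{eq:spaceXC} gives, for every $v \in H^1(\C)$,
\[
\norm{\nablabg v}{L^2(\C)} \leq \norm{v}{X(\C)},
\]
so $\nablabg$ is a linear map from $(H^1(\C),\norm{\cdot}{X(\C)})$ into $L^2(\C)$ with operator norm at most $1$. (In passing this shows that $\norm{\cdot}{X(\C)}$ really is a norm on $H^1(\C)$: if $\norm{v}{X(\C)}=0$ then $\nablabg v=0$, hence $v$ is constant because $\C$ is connected, and $\mathcal{T}v=0$ forces that constant to vanish; thus $H^1(\C)$ genuinely embeds into $X(\C)$ and the restriction statement is meaningful.)

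Next, given $v \in X(\C)$, I would choose a sequence $v_n \in H^1(\C)$ with $v_n \to v$ in $X(\C)$, which exists since $X(\C)$ is the completion of $H^1(\C)$. As $(v_n)$ is Cauchy in $X(\C)$ and $\norm{\nablabg v_n - \nablabg v_m}{L^2(\C)} = \norm{\nablabg(v_n-v_m)}{L^2(\C)} \leq \norm{v_n-v_m}{X(\C)}$, the sequence $(\nablabg v_n)$ is Cauchy in $L^2(\C)$ and hence converges; call the limit $\ext{\nablabg}v$. Then I would verify well-definedness: if $v_n \to v$ and $w_n \to v$ in $X(\C)$, then $\norm{\nablabg v_n - \nablabg w_n}{L^2(\C)} \leq \norm{v_n-w_n}{X(\C)} \to 0$, so both sequences produce the same limit.

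Finally I would collect the properties. Linearity of $\ext{\nablabg}$ follows by passing to the limit in the linearity of $\nablabg$ on $H^1(\C)$, using that sums of approximating sequences approximate sums. Boundedness is obtained by taking limits in the inequality above: $\norm{\ext{\nablabg}v}{L^2(\C)} = \lim_n \norm{\nablabg v_n}{L^2(\C)} \leq \lim_n \norm{v_n}{X(\C)} = \norm{v}{X(\C)}$, so $\ext{\nablabg}$ has norm at most $1$ as well. For $v \in H^1(\C)$ the constant sequence $v_n \equiv v$ is admissible, giving $\ext{\nablabg}v = \nablabg v$, i.e.\ $\ext{\nablabg}|_{H^1(\C)} = \nablabg$; and the asserted formula $\ext{\nablabg}v = \lim_{n\to\infty}\nablabg v_n$ is precisely the construction. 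If desired, uniqueness of such an extension is immediate from density of $H^1(\C)$ in $X(\C)$ and continuity.

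\emph{Main obstacle.} There is no substantive difficulty here; the only point deserving a moment's care is the independence of $\ext{\nablabg}v$ from the chosen Cauchy sequence, which is settled at once by the operator-norm bound $\norm{\nablabg(\cdot)}{L^2(\C)} \leq \norm{\cdot}{X(\C)}$.
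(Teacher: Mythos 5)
Your proposal is correct and follows exactly the paper's argument: the bound $\norm{\nablabg v}{L^2(\C)} \leq \norm{v}{X(\C)}$ together with the density of $H^1(\C)$ in $X(\C)$ and the BLT theorem. The extra details you supply (Cauchy-sequence construction, well-definedness, the remark that $\norm{\cdot}{X(\C)}$ is genuinely a norm) are just the standard content of the BLT theorem spelled out, so there is nothing further to add.
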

\begin{proof}
Clearly $\nablabg \colon H^1(\C) \to L^2(\C)$ satisfies $\norm{\nablabg v}{L^2(\C)} \leq \norm{v}{X(\C)}$ for all $v \in H^1(\C)$. Since $H^1(\C)$ is dense in $X(\C)$, the bounded linear transformation (BLT) theorem provides the result.
\end{proof}
\begin{theorem}\label{thm:harmonicExtensionExistence}
For every $u \in H(M)$, there exists a unique weak solution $\ext{\mathcal{E}}u = v \in X(\C)$ to the harmonic extension problem \eqref{eq:prelim1} satisfying $(\ext{\mathcal{E}}u)(\cdot,0) = u(\cdot)$ in $L^2(M)$ and
\[\int_{\C}\ext{\nablabg} (\ext{\mathcal{E}}u) \nablabg \eta= 0\quad\text{for all $\eta \in H^1(\C)$ with $\mathcal{T}\eta = 0$}.\]
When $\mean u=0$, we write the solution as $\mathcal{E}u$ which is such that $\frac{1}{|M|}\int_{M}(\mathcal{E}u)(y) =0$ for all $y \in [0,\infty)$. The map $\ext{\mathcal{E}}\colon H(M) \to X(\C)$ satisfies $\ext{\mathcal{E}}u = \mathcal{E}(u-\mean u) + \mean u$ and $\ext{\nablabg}(\ext{\mathcal{E}}u)  = \nablabg \mathcal{E}(u-\mean u)$. Furthermore (if $\mean u=0$), $\mathcal{E}u \in C^0([0,\infty);L^2(M)) \cap C^\infty((0,\infty);H^1(M))$ and
\begin{align}
\norm{\mathcal{E}u}{L^2(\C)}^2 &\leq \frac{\norm{u}{L^2(M)}^2}{2\lambda_1^{1\slash 2}}\label{eq:harmonicExtensionL2Bound},\\
\norm{\nablabg\mathcal{E}u}{L^2(\C)}^2 &= \norm{(-\Delta_M)^{1\slash 4}u}{L^2(M)}^2 = |u|^2_{H(M)}
\label{eq:harmonicExtensionGradientBound}.
\end{align}
Finally, the harmonic extension $\mathcal{E}u$ (for $\mean{u}= 0$) is the unique minimiser of the energy
\[\mathcal{J}\colon \{ v \in H^1(\C) \mid \mathcal{T}v = u\} \to \mathbb{R}, \quad \mathcal{J}(v) := \frac{1}{2}\int_\C |\nablabg v|^2.\]
\end{theorem}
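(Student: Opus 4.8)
The plan is to reduce everything to the mean-value-zero case, to construct the harmonic extension explicitly by separating variables, and then to read off the bounds, the regularity and the variational characterisation directly from the resulting series; uniqueness inside the completed space $X(\C)$ is the only delicate point, and I would settle it by a Fourier argument in the $M$-variable.

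\emph{Construction and existence.} Write $u_k:=(u,\varphi_k)_{L^2(M)}$, so that $u_0\varphi_0=\mean u$. For $\mean u=0$ I would define $\mathcal{E}u:=\sum_{k\ge1}u_k e^{-\lambda_k^{1/2}y}\varphi_k$, picking the decaying branch of $-c_k''+\lambda_k c_k=0$ as the one compatible with finite energy. Using $\int_0^\infty e^{-2\lambda_k^{1/2}y}\,\mathrm{d}y=(2\lambda_k^{1/2})^{-1}$ and $\int_M\nabla_M\varphi_j\cdot\nabla_M\varphi_k=\lambda_k\delta_{jk}$, one checks that the partial sums $v_N$ form a Cauchy sequence in $H^1(\C)$ (the relevant tail is $\sum_k u_k^2\big((2\lambda_k^{1/2})^{-1}+2\lambda_k^{1/2}\big)$, finite because $u\in H(M)\subset L^2(M)$), so $\mathcal{E}u\in H^1(\C)\subset X(\C)$; since $\mathcal{T}v_N=\sum_{k\le N}u_k\varphi_k\to u$ in $L^2(M)$, also $\ext{\mathcal{T}}(\mathcal{E}u)=u$, where $\ext{\mathcal{T}}\colon X(\C)\to L^2(M)$ is the bounded extension of the trace (by the BLT theorem, exactly as for $\ext{\nablabg}$). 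Each mode $e^{-\lambda_k^{1/2}y}\varphi_k$ is a smooth solution of $\Deltabg(\cdot)=0$ whose outward normal derivative at $y=0$ is $\lambda_k^{1/2}\varphi_k$, so integrating by parts against any $\eta\in H^1(\C)$ with $\mathcal{T}\eta=0$ gives $\int_\C\nablabg(e^{-\lambda_k^{1/2}y}\varphi_k)\cdot\nablabg\eta=\int_M\lambda_k^{1/2}\varphi_k\,\mathcal{T}\eta=0$; summing and passing to the limit yields the weak equation for $\mathcal{E}u$. For general $u\in H(M)$ I would set $\ext{\mathcal{E}}u:=\mathcal{E}(u-\mean u)+\mean u$, invoking Remark \ref{rem:constants} that the constant $\mean u$ lies in $X(\C)$ with $\ext{\nablabg}(\mean u)=0$ and $\ext{\mathcal{T}}(\mean u)=\mean u$; this at once gives $\ext{\nablabg}(\ext{\mathcal{E}}u)=\nablabg\mathcal{E}(u-\mean u)$, $\ext{\mathcal{T}}(\ext{\mathcal{E}}u)=u$ and the weak equation, hence existence and the two asserted gradient identities.

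\emph{Properties of the series.} For $\mean u=0$, Parseval and $\int_0^\infty e^{-2\lambda_k^{1/2}y}\,\mathrm{d}y=(2\lambda_k^{1/2})^{-1}$ give $\norm{\mathcal{E}u}{L^2(\C)}^2=\sum_{k\ge1}u_k^2(2\lambda_k^{1/2})^{-1}\le(2\lambda_1^{1/2})^{-1}\norm{u}{L^2(M)}^2$ by \eqref{eq:eigenvalueEstimate}, while $\norm{\nabla_M\mathcal{E}u}{L^2(\C)}^2=\norm{\partial_y\mathcal{E}u}{L^2(\C)}^2=\tfrac12\sum_{k\ge1}\lambda_k^{1/2}u_k^2$, so $\norm{\nablabg\mathcal{E}u}{L^2(\C)}^2=\sum_{k\ge1}\lambda_k^{1/2}u_k^2=|u|_{H(M)}^2=\norm{(-\Delta_M)^{1/4}u}{L^2(M)}^2$. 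Since $\int_M\varphi_k=|M|^{1/2}(\varphi_k,\varphi_0)_{L^2(M)}=0$ for $k\ge1$, the spatial mean $\mean{(\mathcal{E}u)(y)}$ vanishes for every $y$. For $y\ge\delta>0$ the factor $e^{-\lambda_k^{1/2}y}$ dominates every power of $\lambda_k$, so by the Weierstrass M-test the series and all its $y$-derivatives converge in $C^0([\delta,\infty);H^s(M))$ for every $s$, giving $\mathcal{E}u\in C^\infty((0,\infty);H^1(M))$; near $y=0$, $\norm{(\mathcal{E}u)(y)-u}{L^2(M)}^2=\sum_{k\ge1}u_k^2(1-e^{-\lambda_k^{1/2}y})^2\to0$ by dominated convergence (the sequence $u_k^2$ dominating), and likewise at interior points, so $\mathcal{E}u\in C^0([0,\infty);L^2(M))$.

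\emph{Uniqueness — the main obstacle.} The delicate point is uniqueness in the completed space $X(\C)$, whose elements are a priori abstract. First I would note that the elementary bound coming from $v(x,y)=v(x,0)+\int_0^y\partial_s v(x,s)\,\mathrm{d}s$ yields $X(\C)\hookrightarrow L^2(M\times(0,R))$ for each $R$, so any $v\in X(\C)$ is a genuine function in $L^2_{\mathrm{loc}}(\C)$ with well-defined coefficients $v_k(y):=(v(\cdot,y),\varphi_k)_{L^2(M)}\in L^2_{\mathrm{loc}}(0,\infty)$, and $\ext{\nablabg}v$ agrees with $\nablabg v$ distributionally. Given two weak solutions with trace $u$, let $v$ be their difference and test the weak equation with $\psi(y)\varphi_k(x)$ for $\psi\in C_c^\infty((0,\infty))$ (a legitimate test function with vanishing trace); passing to the limit along an $H^1(\C)$-approximating sequence of $v$ and using $\int_M\nabla_M(\cdot)\cdot\nabla_M\varphi_k=\lambda_k(\cdot,\varphi_k)_{L^2(M)}$ together with one integration by parts in $y$ gives $-v_k''+\lambda_k v_k=0$ in $\mathcal D'((0,\infty))$. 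Finiteness of $\norm{\ext{\nablabg}v}{L^2(\C)}^2=\sum_k\int_0^\infty(\lambda_k v_k^2+(v_k')^2)\,\mathrm{d}y$ rules out the growing solutions, forcing $v_k=a_k e^{-\lambda_k^{1/2}y}$ for $k\ge1$ and $v_0\equiv a_0$; the trace condition $v_k(0^+)=0$ (obtained from the uniform-in-$n$ estimate $|v_{n,k}(0)-v_{n,k}(y)|\le y^{1/2}\norm{v_{n,k}'}{L^2(0,\infty)}$ along the approximating sequence) then forces all $a_k=0$, i.e.\ $v=0$. The main work here is making the Fourier coefficients and the distributional ODE rigorous for a limit element of $X(\C)$; the constant/zero-mean bookkeeping is a recurring nuisance but no genuine obstacle.

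\emph{Minimality.} Finally, for $\mean u=0$ (so $\mathcal{E}u\in H^1(\C)$) and any competitor $v\in H^1(\C)$ with $\mathcal{T}v=u$, I would write $v=\mathcal{E}u+\phi$ with $\phi\in H^1(\C)$, $\mathcal{T}\phi=0$, and expand $\mathcal J(v)=\mathcal J(\mathcal{E}u)+\int_\C\nablabg\mathcal{E}u\cdot\nablabg\phi+\tfrac12\norm{\nablabg\phi}{L^2(\C)}^2=\mathcal J(\mathcal{E}u)+\tfrac12\norm{\nablabg\phi}{L^2(\C)}^2$, the cross term vanishing by the weak equation; hence $\mathcal J(v)\ge\mathcal J(\mathcal{E}u)$, with equality iff $\nablabg\phi=0$, i.e.\ (since $\mathcal{T}\phi=0$) iff $\phi=0$, so $\mathcal{E}u$ is the unique minimiser.
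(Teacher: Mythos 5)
Your proposal is correct, and the construction, the two estimates, the regularity claims, the reduction to mean value zero via Remark \ref{rem:constants}, and the minimisation argument all coincide in substance with the paper's proof (the paper verifies the weak formulation by computing $\int_M \nabla_M v\,\nabla_M\eta=\int_M v_{yy}\eta$ mode by mode and integrating by parts in $y$, which is your argument in slightly different clothing; for minimality it tests with $\eta=v-w$ and applies Young's inequality rather than expanding the square, which is equivalent). The one genuine divergence is uniqueness. The paper disposes of it in a single sentence: subtract the two weak formulations and test with the difference of the two solutions, which has trace zero — an energy argument giving $\int_\C|\ext{\nablabg}(v_1-v_2)|^2=0$ directly. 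Strictly speaking that difference lies only in $X(\C)$ while the admissible test functions are in $H^1(\C)$, so the paper's argument implicitly needs a small density step (approximate the difference in $X(\C)$ by $H^1(\C)$ functions and correct their traces, which tend to zero in $H(M)$). Your Fourier-mode/ODE argument is a longer but self-contained alternative that confronts exactly this point: it buys a rigorous treatment of abstract elements of the completion at the cost of the embedding $X(\C)\hookrightarrow L^2_{\mathrm{loc}}(\C)$, the distributional identification of $\ext{\nablabg}$, and the mode-by-mode trace analysis. Either route is acceptable; if you want the shorter one, just record the density/trace-correction step that makes the energy argument legitimate.
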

\begin{proof}
The proof of the well-posedness is essentially the same as that of Theorem 2.1 in \cite{Stinga2014}.
Suppose for now that $\mean{u}= 0$. Set 
\[(\mathcal{E}u)(y) := v(y) := \sum_{k=1}^\infty e^{-y\lambda_k^{1\slash 2}}(u,\varphi_k)_{L^2(M)}\varphi_k,\] which is a sum that converges in $L^2(M)$ for each fixed $y \in [0,\infty)$. It satisfies
\begin{align*}
\int_M |v(y)|^2 &= \sum_{k\geq 1} e^{-2y\lambda_k^{1\slash 2}}|(u,\varphi_k)_{L^2(M)}|^2\quad\text{and}\quad\int_{M}|\nablabg v(y)|^2 = 2\sum_{k\geq 1} \lambda_ke^{-2y\lambda^{1\slash 2}}|(u,\varphi_k)_{L^2(M)}|^2
\end{align*}
where we used \eqref{eq:actionH1}.
These expressions can be integrated over $y$ term by term since the sums converge uniformly, and doing so leads to properties \eqref{eq:harmonicExtensionL2Bound} and \eqref{eq:harmonicExtensionGradientBound}. 

Now let $\eta \in H^1(\C)$ with $\mathcal{T} \eta =0$. For almost all $y$, $\eta(y) = \sum_{k=0}^\infty (\eta(\cdot,y),\varphi_k)_{L^2(M)}\varphi_k$; let us write the coefficients as $\eta_k(y)$. We see that
\begin{equation*}\label{eq:2}
\int_M \nabla_M v(y) \nabla_M \eta(y) 
= \sum_{k=1}^\infty \lambda_ke^{-y\lambda_k^{1\slash 2}}(u,\varphi_k)_{L^2(M)}\eta_k(y) = \int_M v_{yy}(y)\eta(y)
\end{equation*}
using \eqref{eq:actionH1}. By integrating by parts 
we get
\begin{align*}
\int_{\C}\nabla_M v \nabla_M \eta + v_y \eta_y &= \int_{\C}\nabla_M v \nabla_M \eta - v_{yy} \eta + \int_{\partial\C}v_y\eta
= 0
\end{align*}
as $\eta$ has zero trace. This proves that $v$ is a weak solution. Uniqueness follows by taking the difference of the weak formulations satisfied by two solutions and testing with the difference of the two solutions (which has trace zero). Therefore, the map $\mathcal{E}\colon \{u \in H(M) \mid \mean{u} = 0\} \to H^1(\C)$ is well-defined. Now suppose $\mean{u}\neq 0$. Define 
\[\ext{\mathcal{E}}(u) := \mathcal{E}(u-\mean u) + \mean u.\] 
Note that $\ext{\nablabg}(\ext{\mathcal{E}}u) = \nablabg \mathcal{E}(u-\mean u) + \ext{\nablabg}\mean u$ by linearity and the fact that $\mathcal{E}(u-\mean u) \in H^1(\C)$. 
Let us choose $u_n=c_n \in H^1(\C)$ as in Remark \ref{rem:constants}, which tells us that $\lim_{n \to \infty}\nablabg u_n = \lim_{n \to \infty}-\frac{\mean u}{n}\chi_{(n, 2n)}(y) = 0$  in $L^2(\C)$, i.e., $\nablabg u = 0$. This proves that $\ext{\nablabg}(\ext{\mathcal{E}}u)  = \nablabg \mathcal{E}(u-\mean u).$

For the minimisation property, take $w \in H^1(\C)$ with $\mathcal{T}w=u$, test the weak form $\mathcal{E}u = v$ satisfies with $\eta = v-w$ and use Young's inequality:
\[
\norm{\nablabg v}{L^2(\C)}^2 \leq \frac{1}{2}\norm{\nablabg v}{L^2(\C)}^2 + \frac{1}{2}\norm{\nablabg w}{L^2(\C)}^2 \]
and rearranging shows $\mathcal J(v) \leq \mathcal J(w)$. Uniqueness follows since $\mathcal J$ is strictly convex.
\end{proof}
We will often (but not always) write $\nablabg$ instead of $\ext{\nablabg}$. From \eqref{eq:harmonicExtensionGradientBound}, we find that $\ext{\mathcal{E}}\colon H(M) \to X(\C)$ is an isometry:
\[\norm{\ext{\mathcal{E}}u}{X(\C)} = \norm{u}{H(M)}.\]
The next lemma is fundamental (see also \cite{CabreTan, ValdinociSire}).
\begin{lem}\label{lem:fracDTN}The fractional Laplacian of $u \in H(M)$ is recovered through the Dirichlet-to-Neumann map:
\[(-\Delta_M)^{1\slash 2}u = -\lim_{y \to 0^+}\frac{\partial \ext{\mathcal{E}}u}{\partial y}\quad\text{in $H(M)^*$}.\]
\end{lem}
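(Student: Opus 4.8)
The plan is to reduce everything to the mean-value-zero case, since for $u \in H(M)$ with $\ext{\mathcal{E}}u = \mathcal{E}(u - \mean u) + \mean u$, both sides of the claimed identity annihilate the constant $\mean u$: the fractional Laplacian does because the sum in \eqref{eq:defnFractionalLaplacian} starts at $k=1$, and the normal derivative does because $\ext{\nablabg}(\ext{\mathcal{E}}u) = \nablabg \mathcal{E}(u - \mean u)$. So assume $\mean u = 0$ and work with the explicit series representation $(\mathcal{E}u)(y) = \sum_{k \geq 1} e^{-y\lambda_k^{1/2}}(u,\varphi_k)_{L^2(M)}\varphi_k$ established in Theorem \ref{thm:harmonicExtensionExistence}.

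First I would compute the $y$-derivative termwise: formally $\partial_y (\mathcal{E}u)(y) = -\sum_{k\geq 1}\lambda_k^{1/2}e^{-y\lambda_k^{1/2}}(u,\varphi_k)_{L^2(M)}\varphi_k$, so that $-\partial_y(\mathcal{E}u)(y) \to \sum_{k\geq 1}\lambda_k^{1/2}(u,\varphi_k)_{L^2(M)}\varphi_k = (-\Delta_M)^{1/2}u$ as $y \to 0^+$. The work is in justifying that this convergence holds in the $H(M)^*$ topology (not merely formally), which amounts to controlling the tail $\sum_{k\geq 1}\lambda_k^{1/2}(1 - e^{-y\lambda_k^{1/2}})^2|(u,\varphi_k)_{L^2(M)}|^2$ and showing it tends to $0$ as $y \to 0^+$. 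I would split this at a threshold $K$: the finitely many terms $k \leq K$ vanish in the limit by continuity of $y \mapsto e^{-y\lambda_k^{1/2}}$, while the tail $k > K$ is bounded uniformly in $y$ by $\sum_{k > K}\lambda_k^{1/2}|(u,\varphi_k)_{L^2(M)}|^2$, which is the tail of the convergent series defining $\seminorm{u}{H(M)}^2$ and hence small for $K$ large. (One should be slightly careful about which dual norm is used; identifying $H(M)^*$ via the $\seminorm{\cdot}{H(M)}$-type pairing makes the estimate transparent, and the result is independent of the equivalent choice.)

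There is a complementary route worth having in reserve, which also serves to pin down exactly what ``$\lim_{y\to 0^+}\partial_y \ext{\mathcal{E}}u$'' means as an element of $H(M)^*$: test the weak formulation. For $v \in H^1(\C)$ with $\mathcal{T}v$ arbitrary, integration by parts in the calculation already carried out in the proof of Theorem \ref{thm:harmonicExtensionExistence} gives $\int_{\C}\nablabg(\mathcal{E}u)\,\nablabg v = -\int_M \partial_y(\mathcal{E}u)|_{y=0}\, \mathcal{T}v$, and the left side can be rewritten via the series as $\sum_{k\geq 1}\lambda_k^{1/2}(u,\varphi_k)_{L^2(M)}(\mathcal{T}v,\varphi_k)_{L^2(M)} = \langle(-\Delta_M)^{1/2}u, \mathcal{T}v\rangle$, using $\int_0^\infty \lambda_k e^{-2y\lambda_k^{1/2}}\dy + \int_0^\infty \lambda_k^{1/2}e^{-y\lambda_k^{1/2}}\partial_y(\cdots)$-type identities for the pure-extension part, i.e. essentially the computation behind \eqref{eq:harmonicExtensionGradientBound}. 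This identifies the Dirichlet-to-Neumann map with $(-\Delta_M)^{1/2}$ directly, and then the pointwise-in-$y$ statement follows from the $C^\infty((0,\infty);H^1(M))$ regularity together with the tail estimate above.

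The main obstacle is purely the uniformity of the tail bound: one must ensure that $\lambda_k^{1/2}(1 - e^{-y\lambda_k^{1/2}})^2 \leq \lambda_k^{1/2}$ is the right crude bound to use (it is, since $0 \leq 1 - e^{-s} \leq 1$) and that no $y$-dependent constant sneaks in, so that the $\varepsilon/2 + \varepsilon/2$ argument closes. Everything else — termwise differentiation of the exponentially convergent series on $(0,\infty)$, the reduction to $\mean u = 0$, and the integration by parts — is routine given Theorem \ref{thm:harmonicExtensionExistence} and the series machinery justified by Abel's test / the Weierstrass $M$-test as noted at the start of the section.
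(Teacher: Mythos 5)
Your proposal is correct and follows essentially the same route as the paper: reduce to $\mean u = 0$, differentiate the series for $\mathcal{E}u$ termwise, and pass to the limit $y \to 0^+$ in the resulting pairing, comparing with \eqref{eq:spectralH12Action}. The paper justifies the limit interchange by a one-line appeal to Abel's test, whereas you spell out a Cauchy--Schwarz plus tail-splitting estimate (which in fact yields norm convergence in $H(M)^*$ rather than just convergence of the pairings); both are fine.
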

\begin{proof}
If $\mean{u}=0$ and $\eta \in H(M)$, taking the limit $y \to 0^+$ (using Abel's test) in
\begin{align*}
-\langle v_y(y), \eta \rangle_{H(M)^*, H(M)} = \sum_{k=1}^\infty \lambda_k^{1\slash 2}e^{-y\lambda_k^{1\slash 2}}(u,\varphi_k)_{L^2(M)}(\eta,\varphi_k)_{L^2(M)}
\end{align*}
and comparing the result to \eqref{eq:spectralH12Action} gives us what we expected. The case $\mean{u} \neq 0$ follows easily. 
\end{proof}
\begin{lem}\label{lem:HMEquivalence}The space $H(M) = H^{1\slash 2}(M)$ with an equivalence of norms.
\end{lem}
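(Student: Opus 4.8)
The plan is to show that $H(M)$, defined via decay of spectral coefficients, coincides with the fractional Sobolev space $H^{1/2}(M)$ realised by interpolation, $H^{1/2}(M) = (L^2(M), W^{1,2}(M))_{1/2}$. The cleanest route is to identify both spaces with the same interpolation space between $L^2(M)$ and $H^1(M)$, using the spectral (eigenfunction) description of the $K$-functional. Concretely, for $u = \sum_k u_k \varphi_k$ with $u_k = (u,\varphi_k)_{L^2(M)}$, the $K$-functional satisfies $K(\tau,u)^2 \sim \sum_k \min(1, \tau^2\lambda_k)|u_k|^2$ (this is standard for a self-adjoint positive operator with discrete spectrum; one splits $u$ into low and high frequency parts depending on whether $\lambda_k \leq \tau^{-2}$). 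Then the interpolation norm $\|u\|_{(L^2,H^1)_{1/2,2}}^2 = \int_0^\infty (\tau^{-1/2}K(\tau,u))^2 \,\frac{\mathrm{d}\tau}{\tau}$ becomes, after interchanging sum and integral, comparable to $\sum_k (1+\lambda_k^{1/2})|u_k|^2$, which is exactly $\|u\|_{H(M)}^2$.

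First I would recall or cite the abstract fact that for the operator $-\Delta_M$ (self-adjoint, nonnegative, with compact resolvent), the domain of $(I-\Delta_M)^{1/2}$ is $H^1(M)$ with equivalent norm, and then invoke the general interpolation identity $(L^2(M), \mathrm{dom}(-\Delta_M)^{1/2})_{1/2,2} = \mathrm{dom}(-\Delta_M)^{1/4}$ with equivalent norms — this is the spectral characterisation of interpolation spaces of a positive self-adjoint operator (see e.g.\ \cite[\S 1]{LionsMagenes} or standard references). Since $\mathrm{dom}(-\Delta_M)^{1/4}$ with the graph norm is precisely $H(M)$ (its norm squared is $\sum_k (1+\lambda_k^{1/2})|u_k|^2$, matching the inner product defining $H(M)$), and since $\mathrm{dom}((I-\Delta_M)^{1/2})=H^1(M)=W^{1,2}(M)$, we get $H(M) = (L^2(M), W^{1,2}(M))_{1/2,2}$ with equivalent norms. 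Finally, the identification $(L^2(M), W^{1,2}(M))_{1/2,2} = H^{1/2}(M) = B^{1/2}_{22}(M)$ on a closed manifold is exactly the content of \cite[\S 7.2.2, \S 7.3.1, \S 7.4.5]{Triebel}, which we may cite directly.

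The main obstacle is the spectral estimate for the $K$-functional, i.e.\ justifying $K(\tau,u)^2 \sim \sum_k \min(1,\tau^2\lambda_k)|u_k|^2$; this requires care because one must produce, for each $\tau$, an admissible decomposition $u = a_\tau + b_\tau$ with $a_\tau \in L^2$, $b_\tau \in H^1$ achieving (up to constants) this value, and conversely bound every decomposition from below. The natural choice is the frequency cutoff $b_\tau = \sum_{\lambda_k \leq \tau^{-2}} u_k\varphi_k$, $a_\tau = u - b_\tau$; the upper bound is then a direct computation, and the lower bound follows from $|u_k|^2 \leq 2(|a_{k}|^2 + |b_{k}|^2)$ together with $\min(1,\tau^2\lambda_k) \leq 1$ on low frequencies and $\min(1,\tau^2\lambda_k)\leq \tau^2\lambda_k$ on high frequencies. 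Alternatively, and more economically for the write-up, one can sidestep the $K$-functional entirely by citing the abstract interpolation-of-fractional-powers theorem for sectorial operators (e.g.\ from Lions--Magenes or Triebel) and simply verifying that the spectrally-defined $H(M)$ equals $\mathrm{dom}((I-\Delta_M)^{1/4})$ with equivalent norm, which is immediate from \eqref{eq:spaceHM} and the fact that $\lambda_0 = 0$ contributes a bounded perturbation. I would take this second, shorter path, relegating the $K$-functional computation to a remark or to \cite{Thesis}.
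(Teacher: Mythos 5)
Your proposal is essentially the paper's argument: the paper also proves the equivalence by the $K$-method with the spectral description of the $K$-functional (in fact it computes the exact minimiser $v_k = u_k/(1+t^2(1+\lambda_k))$, obtaining $\norm{u}{H^{1\slash 2}(M)}^2 = \frac{\pi}{2}\sum_k \sqrt{1+\lambda_k}\,|u_k|^2$ exactly and then comparing with $\sum_k \lambda_k^{1\slash 2}|u_k|^2$ via $\sqrt{\lambda_k}\le\sqrt{1+\lambda_k}\le 1+\sqrt{\lambda_k}$), while the inclusion $H(M)\subseteq H^{1\slash 2}(M)$ is obtained by observing that the harmonic extension of a mean-zero $u\in H(M)$ lies in $H^1(\C)$ and taking traces. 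One small correction to your write-up: the displayed equivalence $K(\tau,u)^2\sim\sum_k\min(1,\tau^2\lambda_k)|u_k|^2$ should read $\min(1,\tau^2(1+\lambda_k))$, since with $\lambda_k$ alone the constant mode $\varphi_0$ (where $\lambda_0=0$) is annihilated and the integral then yields only the seminorm $\seminorm{u}{H(M)}^2$ rather than the full norm; your alternative route through $\mathrm{dom}((I-\Delta_M)^{1\slash 4})$ already incorporates this fix.
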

\begin{proof}
Given $u \in H(M)$ with $\mean u=0$, 
 define $v = \mathcal{E}u$, which we know belongs to 
$H^1(\C)$ from Theorem \ref{thm:harmonicExtensionExistence} and so $\mathcal{T} v = u \in H^{1\slash 2}(M)$ since $\mathcal{T}$ has range in $H^{1\slash 2}(M)$. For the case $\mean{u}\neq 0$, we have that $u = u-\mean{u}+ \mean{u}\in H^{1\slash 2}(M)$. Now we prove the reverse inclusion. Recall that a function $u \in L^2(M) + H^1(M)$ belongs to the interpolation space $H^{1\slash 2}(M)$ as defined by the $K$-method if the following norm is finite:
\[\norm{u}{H^{1\slash 2}(M)} := \left(\int_0^\infty (t^{-1\slash 2}K(t,u))^2\frac{\mathrm{d}t}{t}\right)^{1\slash 2}\quad\text{where}\quad K(t,u) := \inf_{\substack{u=u_0 + u_1\\ u_0 \in L^2(M)\\ u_1 \in H^1(M)}}\left(\norm{u_0}{L^2(M)}^2 + t^2\norm{u_1}{H^1(M)}^2\right)^{1\slash 2}.\]
See \cite[Chapter 1, \S15]{LionsMagenes}, \cite[Appendix B]{Bramble}, \cite[Appendix B]{McLean} for more information. We follow the ideas of the proof of Theorem B.2 in \cite{Bramble} now. Let $u=\sum_{k=0}^\infty u_k\varphi_k \in H^{1\slash 2}(M)$ and $v=\sum_{k=0}^\infty v_k\varphi_k \in H^1(M)$. Then 
\begin{align*}
K^2(t,u) &= \inf_{v \in H^1(M)}\left(\norm{u-v}{L^2(M)}^2 + t^2\norm{v}{H^1(M)}^2\right)\\
&= \inf_{v \in H^1(M)}\left(\sum_{k =0}^\infty |u_k-v_k|^2 + t^2\sum_{k=0}^\infty(1+\lambda_k)|v_k|^2\right)\\
&= \sum_{k=0}^\infty \frac{t^2(1+\lambda_k)}{1+t^2(1+\lambda_k)}|u_k|^2
\end{align*}
because the expression in the infimum is minimised when 
$v_k={u_k}\slash (1+t^2(1+\lambda_k))$. 
Therefore,
\begin{align*}
\nonumber \norm{u}{H^{1\slash 2}(M)}^2 = \sum_{k=0}^\infty (1+\lambda_k)|u_k|^2\int_0^\infty  \frac{1}{1+t^2(1+\lambda_k)}\mathrm{d}t
= \frac{\pi}{2}\sum_{k=0}^\infty \sqrt{1+\lambda_k}|u_k|^2\label{eq:kmethodSum2}
\geq \frac{\pi}{2}\seminorm{u}{H(M)}^2
\end{align*}
which implies that $\norm{u}{H(M)} \leq \pi^{-1\slash 2}(2+\pi)^{1\slash 2}\norm{u}{H^{1\slash 2}(M)}$.
In the above calculation, using $\sqrt{1+\lambda_k} \leq 1+ \sqrt{\lambda_k}$ shows that
$\norm{u}{H^{1\slash 2}(M)} \leq 2^{-1\slash 2}{\pi^{1\slash 2}}\norm{u}{H(M)}.$
\end{proof}
We could also have proved this lemma via the $J$-method of interpolation \cite[Appendix B]{McLean} and Weyl's law \cite[Chapter 3, equation (3.2.24)]{Jost}, as is done in \cite[\S 3.1.3]{Bonforte} on a bounded domain. Another approach, relying explicitly on the Gagliardo norm on $H^{1\slash 2}(M)$ when $M$ is a hypersurface, can also work with the use of two-sided Gaussian estimates on the heat kernel, similar to \cite[\S 2.2]{Stinga2014} for the case of the Neumann Laplacian on a bounded domain.

We introduce the following cut-off function which will be useful here and in \S \ref{sec:fpme}.
\begin{defn}[Cut-off function]\label{defn:cutoffFunction}
For any $\rho > 0$, there exists a smooth cut-off function $\psi_\rho$ such that
\begin{equation*}
\psi_\rho(y) = \begin{cases}1 &: y \in [0,\rho]\\
0 &: y \in [2\rho, \infty)
\end{cases}
\end{equation*}
and $-\frac{C}{\rho}\sqrt{\psi(1-\frac y\rho)} \leq \psi_\rho'(y) \leq 0$ on $[\rho,2\rho]$, with $C$ not depending on $\rho$. 
\end{defn}
Define a map $\mathcal{N}\colon H^{1\slash 2}(M) \to H^{-1\slash 2}(M)$ by 
\begin{equation}\label{eq:dtn}
\langle \mathcal{N}u, h \rangle_{H^{-1\slash 2}(M), H^{1\slash 2}(M)} 
:= \int_{\C}\ext{\nablabg}(\ext{\mathcal{E}}u) \nablabg\tilde h 
\end{equation}
where $\tilde h \in H^1(\C)$ is any extension of $h$ (i.e., $\mathcal{T}\tilde h =h$).  
This map is well-defined since if we had two arbitrary extensions $\tilde h_1$ and $\tilde h_2$, then 
\[\int_{\C}\ext{\nablabg}(\ext{\mathcal{E}}u) \nablabg\tilde h_1 - \int_{\C}\ext{\nablabg}(\ext{\mathcal{E}}u) \nablabg\tilde h_2 = \int_{\C}\ext{\nablabg}(\ext{\mathcal{E}}u) \nablabg(\tilde h_1-\tilde h_2) = 0\]
by definition of $\ext{\mathcal{E}}u$ and since $\mathcal{T}(\tilde h_1 -\tilde h_2) = 0$. The fact that the extension can be arbitrary will be extremely useful later on. Furthermore, by choosing in \eqref{eq:dtn} $\tilde h = \mathcal{E}(h-\mean h) + \psi_\rho \mean h \in H^1(\C)$, one can see that $\mathcal{N}u$ is linear and that it is indeed in the dual space of $H^{1\slash 2}(M)$. We can write $\mathcal{N}u = {\partial \ext{\mathcal{E}}u}\slash{\partial \nu}\big|_{y=0},$ i.e., $\mathcal{N}$ is the Dirichlet-to-Neumann map; this notation is justified since, if for example ${\Deltabg} {\mathcal{E}}u \in L^2(\C)$, the standard Green's formula implies $\int_{\partial \C}w{\partial {\mathcal{E}}u}\slash {\partial \nu}=\int_{\C} w{\Deltabg} {\mathcal{E}}u + \int_{\C}\nablabg {\mathcal{E}}u\nabla w = \int_{\C}\nablabg {\mathcal{E}}u\nabla w.$
\subsection{Trace maps}
The trace map can be extended to the space $X(\C)$.
\begin{lem}\label{lem:existenceTraceMapXC}There exists a bounded linear trace map $\ext{\mathcal{T}}\colon X(\C) \to H(M)$ such that 
\begin{equation*}\label{eq:traceMapBound}
\norm{\ext{\mathcal{T}}v}{H(M)} \leq \norm{v}{X(\C)} \quad \text{for $v \in X(\C)$},
\end{equation*}
$\ext{\mathcal{T}}w = \mathcal{T} w$ if $w \in H^1(\C) \subset X(\C)$, and $\ext{\mathcal{T}}w := \lim_{n \to \infty} \mathcal{T} w_n$ for $w_n \in H^1(\C)$ converging to $w$ in $X(\C)$. 
\end{lem}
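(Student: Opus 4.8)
The plan is to extend the trace map $\mathcal{T}\colon H^1(\C)\to H(M)$ to $X(\C)$ by density, exactly as was done for the gradient in the lemma preceding Theorem \ref{thm:harmonicExtensionExistence}. The essential point is to verify the bound $\norm{\mathcal{T}v}{H(M)}\leq\norm{v}{X(\C)}$ for all $v\in H^1(\C)$; once this is in hand, since $H^1(\C)$ is dense in $X(\C)$ by definition of the latter space, the bounded linear transformation (BLT) theorem furnishes a unique bounded linear extension $\ext{\mathcal{T}}\colon X(\C)\to H(M)$ with the stated properties, namely $\ext{\mathcal{T}}|_{H^1(\C)}=\mathcal{T}$ and $\ext{\mathcal{T}}w=\lim_n\mathcal{T}w_n$ for any approximating sequence $w_n\to w$ in $X(\C)$.

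So the real work is the estimate. First I would recall that $\norm{\mathcal{T}v}{H(M)}^2 = \norm{\mathcal{T}v}{L^2(M)}^2 + \seminorm{\mathcal{T}v}{H(M)}^2$, and that $\norm{\mathcal{T}v}{L^2(M)}^2$ is already one of the two terms making up $\norm{v}{X(\C)}^2$, so it suffices to control the seminorm $\seminorm{\mathcal{T}v}{H(M)}^2 = \sum_{k\geq 1}\lambda_k^{1/2}|(\mathcal{T}v,\varphi_k)_{L^2(M)}|^2$ by $\norm{\nablabg v}{L^2(\C)}^2$. Writing $v(\cdot,y)=\sum_{k\geq 0}v_k(y)\varphi_k$ with $v_k(y)=(v(\cdot,y),\varphi_k)_{L^2(M)}$, one has $(\mathcal{T}v,\varphi_k)_{L^2(M)}=v_k(0)$, and by Lemma \ref{lem:meanValueContinuous} (and its obvious generalisation, i.e. each $v_k\in H^1(0,\infty)$, which follows because $v\in H^1(\C)$ identified with $L^2(0,\infty;H^1(M))$ with $v_y\in L^2$) each $v_k$ has a continuous representative with $v_k(y)\to 0$ as $y\to\infty$. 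Hence for each $k\geq 1$,
\[
|v_k(0)|^2 = -\int_0^\infty \frac{\mathrm d}{\mathrm dy}|v_k(y)|^2\dy = -2\int_0^\infty v_k(y)v_k'(y)\dy \leq \frac{1}{\lambda_k^{1/2}}\int_0^\infty \left(\lambda_k|v_k(y)|^2 + |v_k'(y)|^2\right)\dy,
\]
using Young's inequality with the weight chosen to produce the factor $\lambda_k^{1/2}$. Multiplying by $\lambda_k^{1/2}$ and summing over $k\geq 1$ gives
\[
\seminorm{\mathcal{T}v}{H(M)}^2 \leq \sum_{k\geq 1}\int_0^\infty\left(\lambda_k|v_k(y)|^2 + |v_k'(y)|^2\right)\dy = \int_\C\left(|\nabla_M v|^2 + |v_y|^2\right) = \norm{\nablabg v}{L^2(\C)}^2,
\]
where I used \eqref{eq:actionH1} to identify $\sum_k\lambda_k|v_k(y)|^2=\norm{\nabla_M v(\cdot,y)}{L^2(M)}^2$ and Parseval for the $v_y$ term (the $k=0$ mode contributes only to $|v_y|^2$, which only helps). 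Adding the $L^2(M)$ term yields $\norm{\mathcal{T}v}{H(M)}^2\leq\norm{v}{X(\C)}^2$, i.e. the claimed bound with constant one.

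The main obstacle is purely a technical bookkeeping one: justifying that the formal modal computations are legitimate, i.e. that $v\in H^1(\C)$ really does decompose as $v(\cdot,y)=\sum_k v_k(y)\varphi_k$ with $v_k\in H^1(0,\infty)$, $v_k(y)\to 0$ at infinity, and that termwise integration and summation are valid — all of which follows from the identification $H^1(\C)\cong\{v\in L^2(0,\infty;H^1(M)) : v_y\in L^2(0,\infty;L^2(M))\}$ recorded in \S\ref{sec:sobolevSpacesOnSemiInfiniteCylinders}, Lemma \ref{lem:meanValueContinuous}, and the convergence arguments (Abel's test / Weierstrass M-test) the authors flag at the start of the section. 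Given those, everything else is routine. I would close by noting that the displayed bound $\norm{\ext{\mathcal{T}}v}{H(M)}\leq\norm{v}{X(\C)}$ then passes to the limit along approximating sequences, so it holds on all of $X(\C)$, and that $\ext{\mathcal{T}}(\ext{\mathcal{E}}u)=u$ for $u\in H(M)$ is consistent since $\ext{\mathcal{E}}u$ is approximated in $X(\C)$ by functions in $H^1(\C)$ (e.g. truncations of the series) whose traces converge to $u$.
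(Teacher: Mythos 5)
Your proof is correct, but it takes a different route from the paper's. The paper establishes the key estimate $\seminorm{\mathcal{T}w}{H(M)}^2 \leq \norm{\nablabg w}{L^2(\C)}^2$ for $w \in H^1(\C)$ by combining two facts already proved in Theorem \ref{thm:harmonicExtensionExistence}: the identity $\seminorm{w_0}{H(M)}^2 = \norm{\nablabg \mathcal{E}w_0}{L^2(\C)}^2 = 2\mathcal{J}(\mathcal{E}w_0)$ from \eqref{eq:harmonicExtensionGradientBound}, and the fact that $\mathcal{E}w_0$ minimises the Dirichlet energy $\mathcal{J}$ among all $H^1(\C)$ extensions of $w_0 = \mathcal{T}w$, so $2\mathcal{J}(\mathcal{E}w_0) \leq 2\mathcal{J}(w)$; the seminorm observation disposes of the mean-value issue, and BLT finishes. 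You instead prove the same inequality from scratch by expanding $v$ in eigenmodes and applying a weighted one-dimensional trace inequality $|v_k(0)|^2 \leq \lambda_k^{-1/2}\int_0^\infty(\lambda_k|v_k|^2 + |v_k'|^2)$ to each mode, then summing. Your computation is sound (the weight in Young's inequality is chosen correctly to produce the factor $\lambda_k^{1/2}$, the $k=0$ mode is harmless, and the modal bookkeeping is justified by the identification of $H^1(\C)$ with the Lions--Magenes space), and it yields the same sharp constant $1$. What the paper's argument buys is brevity and reuse of the variational machinery; what yours buys is self-containedness --- it does not depend on the existence or minimality of the harmonic extension, only on the spectral decomposition. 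Both are valid.
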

\begin{proof}
This is similar to \cite[Lemma 2.4]{Stinga2014}. Let $w \in H^1(\C)$ be arbitrary with $\mathcal{T} w =: w_0$. If $\meanu{w_0} = 0$, by \eqref{eq:harmonicExtensionGradientBound},
\begin{align*}
\seminorm{w_0}{H(M)}^2 
= 2\mathcal J(\mathcal{E}w_0)
\leq 2\mathcal J(w)
= \norm{\nablabg w}{L^2(\C)}^2,
\end{align*}
and since this inequality involves seminorms, we can drop the assumption $\meanu{w_0} = 0$. Adding $\norm{w_0}{L^2(M)}^2$ to both sides shows that 
$\mathcal{T}\colon H^1(\C) \to H(M)$ satisfies $\norm{\mathcal{T}w}{H(M)} \leq \norm{w}{X(\C)}.$ Then the BLT theorem gives the result.
\end{proof}
The following lemma is a seminorm boundedness property of the trace map; note the Gagliardo seminorm on the left hand side (the proof of Lemma \ref{lem:existenceTraceMapXC} had the $H(M)$ seminorm on the left hand side instead).
\begin{lem}\label{lem:traceBoundedBySeminorm}Let $M=\Gamma$ be a hypersurface of class $C^1$. For every $v \in H^1(\C)$,
\[\seminorm{\mathcal{T}v}{W^{1\slash 2,2}(\Gamma)} \leq  C\norm{\nablabg v}{L^2(\C)}.
\]
\end{lem}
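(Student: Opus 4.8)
The plan is to establish the Gagliardo-seminorm bound on the trace by combining the $H(M)$-seminorm estimate from Lemma~\ref{lem:existenceTraceMapXC} (which gave $\seminorm{\mathcal{T}v}{H(M)} \leq \norm{\nablabg v}{L^2(\C)}$) with the norm equivalence between $H(M)=H^{1\slash 2}(M)$ (Lemma~\ref{lem:HMEquivalence}) and the Sobolev--Slobodecki\u\i\ space $W^{1\slash 2, 2}(\Gamma)$. The subtlety is that the norm equivalence quoted in the introduction is an equivalence of \emph{full norms}, not seminorms, so one cannot immediately transfer a seminorm bound. The key observation is that for $v \in H^1(\C)$ the trace $\mathcal{T}v$ and any of its shifts $\mathcal{T}v - c$ (for constant $c$) have the same Gagliardo seminorm and the same $H(M)$ seminorm, so we may pass to the mean-zero representative $w_0 := \mathcal{T}v - \meanu{\mathcal{T}v}$ without changing either side.

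First I would reduce to the case $\meanu{\mathcal{T}v}=0$ using the shift-invariance of both seminorms under addition of constants. Next, with $w_0$ mean-zero, I would invoke the Poincar\'e-type inequality on the closed hypersurface $\Gamma$: since $\lambda_1 > 0$, we have $\norm{w_0}{L^2(\Gamma)}^2 \leq \lambda_1^{-1}\sum_{k\geq 1}\lambda_k |(w_0,\varphi_k)|^2$, but more usefully I can bound $\norm{w_0}{L^2(\Gamma)}$ by $\seminorm{w_0}{H(M)}$ up to a constant depending on $\lambda_1$: indeed $\norm{w_0}{L^2(\Gamma)}^2 = \sum_{k\geq 1}|(w_0,\varphi_k)|^2 \leq \lambda_1^{-1/2}\sum_{k\geq 1}\lambda_k^{1/2}|(w_0,\varphi_k)|^2 = \lambda_1^{-1/2}\seminorm{w_0}{H(M)}^2$. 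Hence the full norm $\norm{w_0}{H(M)}^2 = \norm{w_0}{L^2(\Gamma)}^2 + \seminorm{w_0}{H(M)}^2 \leq (1 + \lambda_1^{-1/2})\seminorm{w_0}{H(M)}^2 \leq (1+\lambda_1^{-1/2})\norm{\nablabg v}{L^2(\C)}^2$, using Lemma~\ref{lem:existenceTraceMapXC} for the last step.

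Then I would apply the norm equivalence $\norm{w_0}{W^{1\slash 2,2}(\Gamma)} \leq C_\Gamma \norm{w_0}{H^{1\slash 2}(\Gamma)} = C_\Gamma \norm{w_0}{H(M)}$ (valid since $\Gamma$ is $C^1$, with $C_\Gamma$ depending only on $\Gamma$), which in particular dominates the Gagliardo seminorm: $\seminorm{w_0}{W^{1\slash 2,2}(\Gamma)} \leq \norm{w_0}{W^{1\slash 2,2}(\Gamma)} \leq C_\Gamma (1+\lambda_1^{-1/2})^{1/2}\norm{\nablabg v}{L^2(\C)}$. Finally, undoing the reduction, $\seminorm{\mathcal{T}v}{W^{1\slash 2,2}(\Gamma)} = \seminorm{w_0}{W^{1\slash 2,2}(\Gamma)}$, which completes the proof with $C := C_\Gamma(1+\lambda_1^{-1/2})^{1/2}$.

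The main obstacle is conceptual rather than computational: one must be careful that the constant $C$ depends only on the geometry of $\Gamma$ (through the norm-equivalence constant $C_\Gamma$ and the first eigenvalue $\lambda_1$) and not on $v$; in the later time-dependent applications this is exactly the point where one must track how such constants vary with $t$, which is why the paper insists on keeping $W^{1\slash 2,2}$ and $H^{1\slash 2}$ notationally distinct. If one wanted to avoid the detour through the full $H(M)$-norm entirely, an alternative is to use the explicit representation $\mathcal{E}w_0(y) = \sum_{k\geq 1} e^{-y\lambda_k^{1/2}}(w_0,\varphi_k)\varphi_k$ together with a direct kernel estimate for the Gagliardo seminorm, but this requires two-sided heat-kernel bounds on $\Gamma$ and is more technical than the interpolation argument above.
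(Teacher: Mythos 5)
Your proof is correct for a fixed hypersurface $\Gamma$, but it takes a genuinely different route from the paper. The paper does not go through $H(M)$ or the harmonic extension at all: it subtracts the slice-wise mean $\mean{v(y)}$ from $v$ itself (using Lemma \ref{lem:meanValueContinuous} to keep $\hat v = v - \mean{v(\cdot)}$ in $H^1(\C)$), applies the classical trace theorem $H^1(\C)\to W^{1\slash 2,2}(\Gamma)$ together with the slice-wise Poincar\'e inequality on $\Gamma$ to absorb the $L^2(\C)$ norm into $\norm{\nabla_\Gamma \hat v}{L^2(\C)}$, and finishes by noting the Gagliardo seminorm does not see the subtracted constants. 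You instead subtract a single constant from the trace, use the seminorm estimate $\seminorm{\mathcal{T}v}{H(M)}\le\norm{\nablabg v}{L^2(\C)}$ established inside the proof of Lemma \ref{lem:existenceTraceMapXC} (via the energy-minimisation property of $\mathcal{E}$), upgrade to the full $H(M)$ norm through the spectral Poincar\'e inequality $\norm{w_0}{L^2}^2\le\lambda_1^{-1\slash 2}\seminorm{w_0}{H(M)}^2$, and then invoke the equivalences of Lemma \ref{lem:HMEquivalence} and $H^{1\slash 2}(\Gamma)\simeq W^{1\slash 2,2}(\Gamma)$. Both arguments ultimately pay the same toll (the $\Gamma$-dependent equivalence between $H^{1\slash 2}(\Gamma)$ and the Gagliardo space), but they pay it in different places, and this matters for the later application in \S 5.2: there the paper checks $t$-uniformity by verifying exactly the two ingredients of \emph{its} proof (uniform boundedness of $\mathcal{T}_t$ into $W^{1\slash 2,2}(\Gamma(t))$, obtained by pulling back to $\Gamma_0$, and a $t$-uniform Poincar\'e constant). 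Your route would instead require a $t$-uniform constant in $\norm{\cdot}{W^{1\slash 2,2}(\Gamma(t))}\le C\norm{\cdot}{H^{1\slash 2}(\Gamma(t))}$ on the moving surface --- precisely the constant the paper warns depends on $t$ ``in an unknown way'' --- though it is recoverable by pulling back and interpolating, plus the lower bound $\lambda_1(t)\ge\lambda_1$ of \eqref{eq:eigenvalueEstimate}; your explicit $\lambda_1^{-1\slash 2}$ makes that second dependence transparent, which is a small advantage. Two minor caveats: the seminorm bound you quote is proved inside Lemma \ref{lem:existenceTraceMapXC} rather than stated by it, and your argument leans on the spectral machinery of \S 2, developed under the smoothness hypothesis \eqref{ass:onM}, whereas the lemma is stated for $\Gamma$ of class $C^1$ (harmless in the paper's applications, where $\Gamma(t)$ is smooth).
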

\begin{proof}
If $v \in H^1(\C)$ satisfies $\mean{v}(y) = 0$ for all $y$, then using the trace theorem and Poincar\'e's inequality on $\Gamma$,
\begin{align*}
\norm{\mathcal{T}v}{W^{1\slash 2, 2}(\Gamma)} 
&\leq C_1\left(\norm{\nabla_\Gamma v}{L^2(\C)} + \norm{\nablabg v}{L^2(\C)}\right) \leq C_2\norm{\nablabg v}{L^2(\C)}.
\end{align*}
Now suppose that $v \in H^1(\C)$ does have not have spatial mean value zero for a.a. $y$. Then define 
$\hat v(x,y) = v(x,y) - \mean{v(y)}$ which satisfies $\frac{1}{|\Gamma|}\int_{\Gamma}\hat v(y) = 0$
and $\hat v \in H^1(\C)$ by Lemma \ref{lem:meanValueContinuous}. Then, by the above inequality,
\begin{align*}
\norm{\mathcal{T} \hat v}{W^{1\slash 2, 2}(\Gamma)} 
&\leq C_2\left(\norm{\nablabg v}{L^2(\C)}+ \norm{\frac{1}{|\Gamma|}\int_{\Gamma}\partial_y v(y)}{L^2(\C)}\right) \leq C_3\norm{\nablabg v}{L^2(\C)},
\end{align*} 
but, using Lemma \ref{lem:meanValueContinuous}, the left hand side is greater than $\seminorm{\mathcal{T}v}{W^{1\slash 2, 2}(\Gamma)}$
because the seminorm does not see constants.
\end{proof}
\subsection{The truncated harmonic extension problem}\label{sec:truncatedHarmonicExtension}
Define $H^1_0(\C_R) := \{ \eta \in H^1(\C_R) \mid \mathcal{T}_{R,y=0} \eta = \mathcal{T}_{R,y=R} \eta=0\}$\label{defn:H10}; this is a Hilbert space because it is a closed linear subspace of $H^1(\C_R)$. 
For $\eta \in H^1_0(\C_R)$, 
it follows by Fubini--Tonelli that for almost all $x$, $\eta(x,\cdot) \in H^1_0(0,R)$. Thus the Poincar\'e inequality on $[0,R]$ implies for almost all $x$ that
\[\int_0^R |\eta(x)|^2 \leq C_P \int_0^R |\eta_y(x)|^2.\]
Using this fact in the definition of the norm of $H^1_0(\C_R)$ gives $\norm{\eta}{H^1_0(\C_R)} \leq C\norm{\nablabg \eta}{L^2(\C_R)}$ so that $\norm{\nablabg \cdot}{L^2(\C_R)}$ is an equivalent norm on $H^1_0(\C_R).$
\begin{theorem}\label{thm:truncatedHarmonicExtensionExistence}
For every $u \in H(M)$, there exists a unique weak solution $\ext{\mathcal{E}}_Ru = v \in H^1(\C_R)$ to the truncated harmonic extension problem \eqref{eq:prelim2} satisfying $(\ext{\mathcal{E}}_Ru)(\cdot,0) = u(\cdot)$ and $(\ext{\mathcal{E}}_Ru)(\cdot,R) = 0$ in $L^2(M)$ and\[\int_{\C_R} \nablabg v \nablabg \eta = 0\quad\text{for all $\eta \in H^1_0(\C_R)$}
.\]
When $\mean u=0$, we write the solution as $\mathcal{E}_R u$ which is such that $\frac{1}{|M|}\int_{M}(\mathcal{E}_Ru)(y) =0$ for all $y \in [0,R]$. The map $\ext{\mathcal{E}}_R\colon H(M) \to H^1(\C_R)$ satisfies $\ext{\mathcal{E}}_Ru = \mathcal{E}_R(u-\mean u) + \frac{R-y}{R}\mean u$. Furthermore, $\mathcal{E}_Ru \in C^0([0,R];L^2(M)) \cap C^0((0,R];H^1(M)),$ $\partial_y \mathcal{E}_R u \in C^0((0,R];L^2(M))$ and
\begin{align}
\norm{\ext{\mathcal{E}}_R u}{L^2(\C_R)}^2 &\leq \frac{1}{2\lambda_1^{1\slash 2}}\norm{u-\mean u}{L^2(M)}^2 + 4R|M||\mean u|^2,\label{eq:truncatedHarmonicExtensionL2Bound}\\
\norm{\nablabg \ext{\mathcal{E}}_Ru}{L^2(\C_R)}^2 &\leq \left(1 + {1}\slash {(2\sinh^2(\lambda_1^{1\slash 2}R))}\right)\seminorm{u-\mean u}{H(M)}^2 + \frac{|M||\mean u|^2}{R}.\label{eq:truncatedHarmonicExtensionGradientBound}
\end{align}
\end{theorem}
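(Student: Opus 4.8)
The plan is to construct the solution explicitly via separation of variables, in direct analogy with the untruncated case of Theorem~\ref{thm:harmonicExtensionExistence}. First I would treat the case $\mean u = 0$. Expanding $u = \sum_{k \geq 1} u_k \varphi_k$ in eigenfunctions, I look for $v = \mathcal{E}_R u$ of the form $v(y) = \sum_{k \geq 1} g_k(y)\,u_k\,\varphi_k$ where each $g_k$ solves the one-dimensional boundary value problem $g_k'' = \lambda_k g_k$ on $[0,R]$ with $g_k(0) = 1$, $g_k(R) = 0$; explicitly $g_k(y) = \sinh(\lambda_k^{1/2}(R-y))/\sinh(\lambda_k^{1/2} R)$. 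One then checks that the series for $v$, $\nablabg v$ and $v_{yy}$ converge appropriately (using that $0 \le g_k \le 1$ and the eigenvalue lower bound \eqref{eq:eigenvalueEstimate} so $\sinh(\lambda_k^{1/2}R) \ge \sinh(\lambda_1^{1/2}R)$), that $v \in H^1(\C_R)$, and that it attains the prescribed traces in $L^2(M)$ by virtue of the continuous embedding $H^1(\C_R) \hookrightarrow C^0([0,R];H^{1/2}(M))$ recalled in \S\ref{sec:sobolevSpacesOnSemiInfiniteCylinders}. The weak formulation $\int_{\C_R}\nablabg v\,\nablabg\eta = 0$ for $\eta \in H^1_0(\C_R)$ follows by the same termwise integration-by-parts argument as in Theorem~\ref{thm:harmonicExtensionExistence}, using \eqref{eq:actionH1} for the tangential part and the ODE $g_k'' = \lambda_k g_k$ to cancel against the $y$-part, with no boundary contribution since $\eta$ vanishes at $y=0$ and $y=R$.

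Uniqueness is the standard argument: the difference $w$ of two solutions lies in $H^1_0(\C_R)$, so testing the weak form with $w$ gives $\norm{\nablabg w}{L^2(\C_R)} = 0$, and since $\norm{\nablabg\cdot}{L^2(\C_R)}$ is an equivalent norm on $H^1_0(\C_R)$ (as established just before the theorem via Poincar\'e on $[0,R]$), we get $w = 0$. For the regularity assertions $\mathcal{E}_R u \in C^0([0,R];L^2(M)) \cap C^0((0,R];H^1(M))$ and $\partial_y \mathcal{E}_R u \in C^0((0,R];L^2(M))$, I would argue directly from the series: for $y$ bounded away from $0$ the factors $\lambda_k^{1/2} g_k(y)$ and $\lambda_k^{1/2}|g_k'(y)|$ decay fast enough (exponentially in $\lambda_k^{1/2}$) to give locally uniform convergence of the differentiated series, while the $C^0([0,R];L^2(M))$ claim uses only $0 \le g_k \le 1$ together with dominated convergence / Abel summation as $y$ varies.

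For general $u$ with $\mean u \neq 0$, I would split $u = (u - \mean u) + \mean u$ and define $\ext{\mathcal{E}}_R u := \mathcal{E}_R(u - \mean u) + \frac{R-y}{R}\mean u$; the affine profile $\frac{R-y}{R}\mean u$ is exactly the $k=0$ (zero-eigenvalue) harmonic function on $\C_R$ with the correct boundary values, is manifestly in $H^1(\C_R)$, and is orthogonal in the gradient form to everything in $H^1_0(\C_R)$ after integrating by parts in $y$, so linearity gives that the sum is the weak solution. The property that the spatial mean of $\mathcal{E}_R u$ vanishes at every $y$ follows because each $\varphi_k$ with $k \ge 1$ has zero mean. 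Finally, the quantitative bounds \eqref{eq:truncatedHarmonicExtensionL2Bound} and \eqref{eq:truncatedHarmonicExtensionGradientBound} are obtained by squaring the series and integrating in $y$ termwise: for \eqref{eq:truncatedHarmonicExtensionL2Bound} one bounds $\int_0^R g_k(y)^2\,\dy \le \int_0^\infty e^{-2y\lambda_k^{1/2}}\,\dy = 1/(2\lambda_k^{1/2}) \le 1/(2\lambda_1^{1/2})$ for the mean-zero part and computes $\int_0^R \big(\frac{R-y}{R}\big)^2\dy = R/3 \le 4R$ for the constant part; for \eqref{eq:truncatedHarmonicExtensionGradientBound} one needs the explicit computation $\int_0^R \big(g_k(y)^2 + \lambda_k^{-1} g_k'(y)^2\big)\lambda_k\,\dy$, which evaluates (using $g_k'(y) = -\lambda_k^{1/2}\cosh(\lambda_k^{1/2}(R-y))/\sinh(\lambda_k^{1/2}R)$ and the hyperbolic identities) to something of the form $\big(1 + \tfrac{1}{2\sinh^2(\lambda_k^{1/2}R)}\big)\lambda_k^{1/2}$, then uses monotonicity of $R \mapsto \sinh(\lambda_k^{1/2}R)$ and \eqref{eq:eigenvalueEstimate} to replace $\lambda_k$ by $\lambda_1$ in the sinh term, plus $\int_0^R \frac{1}{R^2}\dy = 1/R$ for the affine part. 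I expect the main obstacle to be purely computational rather than conceptual: carefully carrying out the hyperbolic-function integrals to land on exactly the stated constants in \eqref{eq:truncatedHarmonicExtensionGradientBound}, and being careful about the interchange of summation and integration, which is justified by uniform convergence exactly as flagged in the remarks preceding this subsection.
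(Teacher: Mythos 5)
Your proposal is correct and follows essentially the same route as the paper: the explicit separation-of-variables solution (your $g_k(y)=\sinh(\lambda_k^{1/2}(R-y))/\sinh(\lambda_k^{1/2}R)$ is exactly the paper's $\alpha_1 e^{\sqrt{\lambda_k}y}+\alpha_2 e^{-\sqrt{\lambda_k}y}$), the termwise integration by parts for the weak form, the decomposition $\ext{\mathcal{E}}_Ru=\mathcal{E}_R(u-\mean u)+\frac{R-y}{R}\mean u$, and the hyperbolic-function integrals for the two bounds (your $\int_0^R(\lambda_k g_k^2+(g_k')^2)=\lambda_k^{1/2}\coth(\lambda_k^{1/2}R)\le\lambda_k^{1/2}(1+1/(2\sinh^2(\lambda_k^{1/2}R)))$ matches the paper's estimate). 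The only cosmetic remark is that in this section $\lambda_1>0$ holds simply because $M$ is connected and closed, so no appeal to \eqref{eq:eigenvalueEstimate} is needed here.
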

\begin{proof}
Suppose that $\mean{u}=0$ and define
\[(\mathcal{E}_Ru)(y) := v(y) := \sum_{k=1}^\infty \left(\alpha_1(k,R)e^{\sqrt{\lambda_k}y} + \alpha_2(k,R)e^{-\sqrt{\lambda_k}y}\right)(u,\varphi_k)_{L^2(M)}\varphi_k\]
where 
\[\alpha_1(k,R) = -\frac{e^{-\sqrt{\lambda_k}R}}{e^{\sqrt{\lambda_k}R} - e^{-\sqrt{\lambda_k}R}}\quad\text{and}\quad \alpha_2(k,R) = \frac{e^{\sqrt{\lambda_k}R}}{e^{\sqrt{\lambda_k}R} - e^{-\sqrt{\lambda_k}R}}.\]
The formula for $\mathcal{E}_Ru$ comes from separation of variables and the infinite sum converges in $L^2(M)$ for all $y \in [0,R]$. 
 We have
\begin{align}
\nonumber \int_{\C_R} |v(y)|^2 &= \int_0^R\sum_{k=1}^\infty \left(\alpha_1(k,R)e^{\sqrt{\lambda_k}y} + \alpha_2(k,R)e^{-\sqrt{\lambda_k}y}\right)^2|(u,\varphi_k)_{L^2(M)}|^2\\
\nonumber &= \sum_{k=1}^\infty \frac{1}{2\sqrt{\lambda_k}}\frac{{e^{2\sqrt{\lambda_k}R} }{-e^{-2\sqrt{\lambda_k}R}}{}-4R\sqrt{\lambda_k}}{e^{2\sqrt{\lambda_k}R} + e^{-2\sqrt{\lambda_k}R}-2} |(u,\varphi_k)_{L^2(M)}|^2\\
&\leq \frac{1}{2\sqrt{\lambda_1}}\norm{u}{L^2(M)}^2\label{eq:pp1}
\end{align}
and
\begin{align*}
\int_M |\nablabg v(y)|^2 =2\sum_{k=1}^\infty {\lambda_k}\left(\alpha_1(k,R)^2e^{2\sqrt{\lambda_k}y} + \alpha_2(k,R)^2e^{-2\sqrt{\lambda_k}y}\right)|(u,\varphi_k)_{L^2(M)}|^2,
\end{align*}
and formally, integration over $[0,R]$ of the latter quantity yields
\begin{align}
\nonumber  \int_{\C_R}  |\nablabg v|^2 &= \sum_{k=1}^\infty \sqrt{\lambda_k}\frac{e^{2\sqrt{\lambda_k}R}-e^{-2\sqrt{\lambda_k}R}}{e^{2\sqrt{\lambda_k}R} + e^{-2\sqrt{\lambda_k}R}-2}|(u,\varphi_k)_{L^2(M)}|^2\\
\nonumber &\leq \sum_{k=1}^\infty \sqrt{\lambda_k}\frac{e^{2\sqrt{\lambda_k}R}+e^{-2\sqrt{\lambda_k}R}}{e^{2\sqrt{\lambda_k}R} + e^{-2\sqrt{\lambda_k}R}-2}|(u,\varphi_k)_{L^2(M)}|^2\\
&\leq \left(1 + \frac{1}{2\sinh^2(\sqrt{\lambda_1}R)}\right)\sum_{k=1}^\infty \sqrt{\lambda_k}|(u,\varphi_k)_{L^2(M)}|^2.\label{eq:pp2}
\end{align}
To make this rigorous, we should have integrated over $[\epsilon, R]$ instead of $[0,R]$ and then passed to the limit as $\epsilon \to 0$ using the monotone convergence theorem on the left hand side.

To see that $v$ is a weak solution, take a test function $\eta \in H^1_0(\C_R)$ with $\eta(y) = \sum_{k=0}^\infty (\eta(y),\varphi_k)\varphi_k$ and calculate (using \eqref{eq:actionH1})
\[\int_M \sgrad v\sgrad \eta = \sum_{k=1}^\infty \lambda_k \left(\alpha_1(k,R)e^{\sqrt{\lambda_k}y} + \alpha_2(k,R)e^{-\sqrt{\lambda_k}y}\right)(\eta(y), \varphi_k)_{L^2(M)}(u, \varphi_k)_{L^2(M)}=\int_M v_{yy}\eta.\]
Then
\begin{align*}
\int_{\C_R}\sgrad v \sgrad \eta &= \int_{\C_R}v_{yy}\eta = -\int_{C_R}v_y\eta_y + \int_{\partial\C_R}v_y\eta = -\int_{C_R}v_y\eta_y
\end{align*}
with the last equality since $\eta$ vanishes on the boundary; this implies the result. For $u$ with $\mean u \neq 0$, we set $\ext{\mathcal{E}}_Ru := \mathcal{E}_R(u-\mean u) + \frac{R-y}{R}\mean u.$ This is a solution because
\[\int_0^R \int_M \nablabg\left(\frac{R-y}{R}\mean u\right)\nablabg \eta = -\frac{\mean u}{R}\int_0^R \int_M \partial_y \eta =-\frac{\mean u}{R}\int_0^R \frac{d}{dy}\int_M \eta = -\frac{\mean u}{R}\int_M (\eta(R) - \eta(0)) = 0.\]
If $\mean{u}\neq 0$, \eqref{eq:truncatedHarmonicExtensionL2Bound} and \eqref{eq:truncatedHarmonicExtensionGradientBound} follow from \eqref{eq:pp1} and \eqref{eq:pp2} by noting that ${\mathcal{E}}_R(u-\mean u) \perp (R-y)\mean u\slash {R}$ in $L^2(M)$ and $\nablabg {\mathcal{E}}_R(u-\mean u) \perp \nablabg(R-y)\mean u\slash {R}$ in $L^2(M)$ respectively, pointwise in $y$.
\end{proof}
\begin{remark}\label{rem:truncatedHarmonicExtensionArbitrary}
Define a form $a_R \colon H(M) \times H(M) \to \mathbb{R}$ by
\[a_R(u,\eta) = \int_{\C_R}\nablabg \ext{\mathcal{E}}_{R}u\nablabg \tilde \eta\]
where $\tilde \eta \in H^1_0(\C_R)$ is an (arbitrary) extension of $\eta$; 
the choice of extension does not matter, since for any two such extensions $\tilde \eta_1$ and $\tilde \eta_2$,
\begin{align*}
\int_{\C_R}\nablabg \ext{\mathcal{E}}_{R}u\nablabg \tilde \eta_1 &- \int_{\C_R}\nablabg \ext{\mathcal{E}}_{R}u\nablabg \tilde \eta_2 = \int_{\C_R}\nablabg \ext{\mathcal{E}}_{R}u\nablabg (\tilde \eta_1 -\tilde \eta_2)= 0
\end{align*}
by definition of the weak solution, because $\tilde \eta_1 - \tilde \eta_2 \in H^1_0(\C_R)$. 
\end{remark}
\subsection{Decay and convergence of solutions of the truncated problem}\label{sec:convergenceOfTruncatedSolution}
In order to compare functions defined on $C_R$ and $\C$, we define the zero extension 
\begin{equation}\label{defn:Z}
\mathscr{Z}_R\colon \{ \eta \in H^1(0,R) \mid \eta(R) = 0\} \to H^1(0,\infty) \quad\text{by}\quad (\mathscr{Z}_R\eta)(y) = \begin{cases}
\eta(y) &:\text{if $y \leq R$}\\
0 &:\text{otherwise}
\end{cases}
\end{equation}
which is an isometry. 
Clearly, we can also view $\mathscr{Z}_R$ as a map $\mathscr{Z}_R\colon \{ \eta \in H^1(\C_R) \mid \eta(x,R) = 0\} \to H^1(\C)$ and this is also an isometry. 
\begin{lem}\label{lem:expDecay}
For all $u \in H(M)$,
\[\norm{\nablabg(\ext{\mathcal{E}}u-\mathscr{Z}_R\ext{\mathcal{E}}_Ru)}{L^2(\C)}^2 \leq 3e^{-R\sqrt{\lambda_1}}\seminorm{u-\mean u}{H(M)}^2  + \frac{2}{R}e^{-2R\sqrt{\lambda_1}}\norm{u-\mean u}{L^2(M)}^2 + \frac{2|M||\mean u|^2}{R}.\]
Hence $\mathscr{Z}_R\ext{\mathcal{E}}_Ru \to \ext{\mathcal{E}}u$ in $X(\C)$ as $R \to \infty$.
\end{lem}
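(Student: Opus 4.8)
The plan is to compute everything spectrally, exactly as the proofs of Theorems~\ref{thm:harmonicExtensionExistence} and~\ref{thm:truncatedHarmonicExtensionExistence} do, and then estimate the resulting series. First I would reduce to the mean-zero case: since $\ext{\mathcal{E}}u = \mathcal{E}(u-\mean u) + \mean u$ and $\ext{\mathcal{E}}_Ru = \mathcal{E}_R(u-\mean u) + \frac{R-y}{R}\mean u$, and since $\mathscr{Z}_R$ acts on $\frac{R-y}{R}\mean u$ by truncating it to $[0,R]$, the difference splits (orthogonally in $L^2(M)$ pointwise in $y$, as in the last line of the proof of Theorem~\ref{thm:truncatedHarmonicExtensionExistence}) into a mean-zero part $\nablabg(\mathcal{E}(u-\mean u) - \mathscr{Z}_R\mathcal{E}_R(u-\mean u))$ and a constant part $\nablabg(\mean u - \mathscr{Z}_R\frac{R-y}{R}\mean u)$. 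The constant part is easy: $\mean u - \mathscr{Z}_R\frac{R-y}{R}\mean u$ equals $\mean u$ on $[R,\infty)$ (gradient zero there) and equals $\mean u\,\frac{y}{R}$ on $[0,R]$, whose $y$-derivative is $\mean u/R$, giving exactly the $\frac{|M||\mean u|^2}{R}$ term.

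For the mean-zero part, write $u - \mean u = \sum_{k\ge1} u_k\varphi_k$ with $u_k = (u,\varphi_k)_{L^2(M)}$. Then $\mathcal{E}(u-\mean u)(y) = \sum_k u_k e^{-\sqrt{\lambda_k}y}\varphi_k$ and, for $y\le R$, $\mathcal{E}_R(u-\mean u)(y) = \sum_k u_k(\alpha_1 e^{\sqrt{\lambda_k}y} + \alpha_2 e^{-\sqrt{\lambda_k}y})\varphi_k$ with $\alpha_1,\alpha_2$ as in Theorem~\ref{thm:truncatedHarmonicExtensionExistence}. By orthonormality of the $\varphi_k$ in $L^2(M)$ and of $\{\varphi_k, \nabla_\Gamma\varphi_k\}$-type computations already used (equation~\eqref{eq:actionH1}), the squared $L^2(\C)$ norm of the gradient of the difference decomposes termwise into $\int_0^R (\text{tangential} + \text{normal})$ on $[0,R]$ plus $\int_R^\infty 2\lambda_k e^{-2\sqrt{\lambda_k}y}\,dy$ on the tail. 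The tail piece sums to $\sum_k \sqrt{\lambda_k} e^{-2\sqrt{\lambda_k}R}|u_k|^2 \le e^{-2\sqrt{\lambda_1}R}\seminorm{u-\mean u}{H(M)}^2$ using $\lambda_k \ge \lambda_1$ in the exponent only once and monotonicity. For the $[0,R]$ piece, the integrand of the difference is, per mode, the gradient of $u_k(e^{-\sqrt{\lambda_k}y} - \alpha_1 e^{\sqrt{\lambda_k}y} - \alpha_2 e^{-\sqrt{\lambda_k}y})\varphi_k$; one substitutes the explicit $\alpha_1,\alpha_2$, integrates the elementary exponential expressions over $[0,R]$ term by term (justified by monotone convergence / uniform convergence exactly as in the cited proofs, integrating over $[\epsilon,R]$ first if needed to handle the $H^1(M)$-regularity only in the open interval), and collects the result as a single sum $\sum_k c_k(\lambda_k,R)|u_k|^2$.

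The main obstacle is the bookkeeping in that last estimate: showing the per-mode coefficient $c_k(\lambda_k,R)$ is bounded by $3\sqrt{\lambda_k}e^{-\sqrt{\lambda_k}R} + \frac{2}{R}e^{-2\sqrt{\lambda_k}R}$ (so that summing against $|u_k|^2$ and using $\lambda_k\ge\lambda_1$ wherever the factor is monotone gives the stated bound with $\seminorm{u-\mean u}{H(M)}^2$ and $\norm{u-\mean u}{L^2(M)}^2$). Concretely, $\alpha_1(k,R) = O(e^{-2\sqrt{\lambda_k}R})$ and $\alpha_2(k,R) = 1 + O(e^{-2\sqrt{\lambda_k}R})$, so $\alpha_2 e^{-\sqrt{\lambda_k}y} - e^{-\sqrt{\lambda_k}y} = O(e^{-2\sqrt{\lambda_k}R})e^{-\sqrt{\lambda_k}y}$ and $\alpha_1 e^{\sqrt{\lambda_k}y} = O(e^{-2\sqrt{\lambda_k}R})e^{\sqrt{\lambda_k}y}$, the latter being genuinely $O(e^{-\sqrt{\lambda_k}R})$ on $[0,R]$; squaring, differentiating, and integrating over $[0,R]$ produces the two exponential scales $e^{-\sqrt{\lambda_k}R}$ and $e^{-2\sqrt{\lambda_k}R}$ with the indicated powers of $\sqrt{\lambda_k}$ and $R^{-1}$. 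Once the series bound is in hand, the convergence statement $\mathscr{Z}_R\ext{\mathcal{E}}_Ru \to \ext{\mathcal{E}}u$ in $X(\C)$ is immediate: the right-hand side tends to $0$ as $R\to\infty$, and the $L^2(\C)$ difference of the functions themselves (rather than gradients) goes to zero by the analogous, and in fact easier, termwise estimate (or by combining~\eqref{eq:harmonicExtensionL2Bound},~\eqref{eq:truncatedHarmonicExtensionL2Bound} with the pointwise convergence of the coefficients), so the full $X(\C)$ norm — gradient plus trace, and the traces agree exactly — converges.
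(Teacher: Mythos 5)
Your argument is correct and completable, but it takes a genuinely different route from the paper's. The paper does not compute the truncated extension mode by mode on $[0,R]$: it introduces the test function $\eta_R = (\ext{\mathcal{E}}u-\ext{\mathcal{E}}_Ru) - \ext{\mathcal{E}}u(R)y\slash R$, which vanishes at $y=0$ and $y=R$, subtracts the two weak formulations tested against $\eta_R$ (resp.\ $\mathscr{Z}_R\eta_R$) to get the energy identity $\int_{\C_R}|\nablabg(\ext{\mathcal{E}}u-\ext{\mathcal{E}}_Ru)|^2 = \int_{\C_R}\nablabg(\ext{\mathcal{E}}u-\ext{\mathcal{E}}_Ru)\nablabg\bigl(\ext{\mathcal{E}}u(R)y\slash R\bigr)$, and then applies Young's inequality; this reduces everything to estimating $\norm{\ext{\mathcal{E}}u(R)}{L^2(M)}$ and $\norm{\sgrad \ext{\mathcal{E}}u(R)}{L^2(M)}$, for which only the spectral formula for the \emph{untruncated} extension is needed --- the coefficients $\alpha_1,\alpha_2$ of $\ext{\mathcal{E}}_R$ never enter. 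The tail term $\int_R^\infty\int_M|\nablabg\ext{\mathcal{E}}u|^2 = e^{-2R\sqrt{\lambda_1}}\seminorm{u-\mean u}{H(M)}^2$ is handled identically in both arguments, as is your orthogonal splitting of the mean-value part (which the paper uses at the end of the proof of Theorem \ref{thm:truncatedHarmonicExtensionExistence}). Your fully spectral bookkeeping does close: writing $\mu=\sqrt{\lambda_k}$, the per-mode difference on $[0,R]$ is $\frac{e^{-2\mu R}}{1-e^{-2\mu R}}(e^{\mu y}-e^{-\mu y})$, and its gradient contribution integrates exactly to $\mu e^{-2\mu R}\coth(\mu R)$; together with the tail contribution $\mu e^{-2\mu R}$ and the inequality $\coth x \le 1 + 1\slash x$, the per-mode coefficient is at most $2\sqrt{\lambda_k}e^{-2\sqrt{\lambda_k}R} + R^{-1}e^{-2\sqrt{\lambda_k}R}$, comfortably below your claimed $3\sqrt{\lambda_k}e^{-\sqrt{\lambda_k}R}+2R^{-1}e^{-2\sqrt{\lambda_k}R}$ and in fact yielding a \emph{sharper} final bound (decay $e^{-2R\sqrt{\lambda_1}}$ rather than $e^{-R\sqrt{\lambda_1}}$ on the seminorm term). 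The trade-off is the expected one: the paper's variational argument is shorter and less exposed to arithmetic slips, while your direct computation produces the optimal rate. One small remark: for the $X(\C)$ convergence you do not actually need the $L^2(\C)$ convergence of the functions themselves (that is the separate Lemma \ref{lem:convergenceTruncationsL2}); since the traces at $y=0$ coincide exactly, the $X(\C)$ norm of the difference is just the $L^2(\C)$ norm of the gradient, which your main estimate already controls.
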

\begin{proof}
Firstly, let $\eta_R = (\ext{\mathcal{E}}u-\ext{\mathcal{E}}_Ru) - \ext{\mathcal{E}}u(R)y\slash R$ which satisfies $\eta_R(0) = \eta_R(R) = 0$, and consider the difference of the weak formulations of $\ext{\mathcal{E}}_Ru$ tested with $\eta_R$ and $\ext{\mathcal{E}}u$ tested with $\mathscr{Z}_R\eta_R$:
\begin{align*}
0 
&= \int_{\C_R} |\nablabg (\ext{\mathcal{E}}u-\ext{\mathcal{E}}_Ru)|^2 - \nablabg (\ext{\mathcal{E}}u-\ext{\mathcal{E}}_Ru)\nablabg\left(\frac{\ext{\mathcal{E}}u(R)y}{R}\right),
\end{align*}
so that
\begin{align*}
\int_{\C_R} |\nablabg (\ext{\mathcal{E}}u-\ext{\mathcal{E}}_Ru)|^2 
&\leq \int_{\C_R} |\nablabg (\ext{\mathcal{E}}u-\ext{\mathcal{E}}_Ru)||\nablabg(\ext{\mathcal{E}}u(R))| + |\partial_y(\ext{\mathcal{E}}u-\ext{\mathcal{E}}_Ru)||\ext{\mathcal{E}}u(R)|\frac 1R\\
&\leq \frac{1}{2}\norm{\nablabg(\ext{\mathcal{E}}u-\ext{\mathcal{E}}_Ru)}{L^2(\C_R)}^2  + \int_{\C_R} |\sgrad \ext{\mathcal{E}}u(R)|^2 + \frac{1}{R^2}|\ext{\mathcal{E}}u(R)|^2
\end{align*}
where we used $ab \leq a^2\slash 4 + b^2$ on both products. Now, recalling that $\ext{\mathcal{E}}u(R) = \sum_{k \geq 1} e^{-R\sqrt{\lambda_k}}(u-\mean u,\varphi_k)\varphi_k + \mean u,$ 
\begin{align*}
\int_{\C_R} |\ext{\mathcal{E}}u(R)|^2 
&\leq Re^{-2R\sqrt{\lambda_1}}\norm{u-\mean u}{L^2(M)}^2 + R|M||\mean u|^2
\end{align*}
and
\begin{align*}
\int_{\C_R} |\sgrad \ext{\mathcal{E}}u(R)|^2 
&= \sum_{k \geq 1} R\lambda_k e^{-2R\sqrt{\lambda_k}}|(u-\mean u,\varphi_k)|^2
\leq \sum_{k \geq 1} \sqrt{\lambda_k} e^{-R\sqrt{\lambda_k}}|(u-\mean u,\varphi_k)|^2
\leq e^{-R\sqrt{\lambda_1}}\seminorm{u-\mean u}{H(M)}^2
\end{align*}
hold (where we used using $xe^{-2x} \leq e^{-x}$), giving
\begin{equation}\label{eq:add1}
\int_{\C_R} |\nablabg (\ext{\mathcal{E}}u-\ext{\mathcal{E}}_Ru)|^2 \leq 2e^{-R\sqrt{\lambda_1}}\seminorm{u-\mean u}{H(M)}^2  + \frac{2}{R}e^{-2R\sqrt{\lambda_1}}\norm{u-\mean u}{L^2(M)}^2 + \frac{2|M||\mean u|^2}{R}.
\end{equation}
Secondly, note that 
\begin{align*}
\int_R^\infty \int_M |\nablabg \ext{\mathcal{E}}u|^2 &= 2\int_R^\infty \sum_{k \geq 1} \lambda_k e^{-2y\sqrt{\lambda_k}}|(u-\mean u,\varphi_k)|^2 = e^{-2R\sqrt{\lambda_1}}\seminorm{u-\mean u}{H(M)}^2.
\end{align*}
Adding this and \eqref{eq:add1} implies the result.
\end{proof}
\begin{lem}\label{lem:convergenceTruncationsL2}
For all $u \in H(M)$ with $\mean{u}=0$, 
\begin{align*}
\norm{\mathscr{Z}_R\mathcal{E}_R u-\mathcal{E}u}{L^2(\C)}^2 &\leq C_P\left(3e^{-R\sqrt{\lambda_1}}\seminorm{u}{H(M)}^2  + \frac{2}{R}e^{-2R\sqrt{\lambda_1}}\norm{u}{L^2(M)}^2\right)+ \frac{e^{-2R\sqrt{\lambda_1}}}{2\sqrt{\lambda_1}}\norm{u}{L^2(M)}^2
\end{align*}
(where $C_P$ is the Poincar\'e constant on $M$). Hence $\mathscr{Z}_R\mathcal{E}_R u \to \mathcal{E}u$ in $L^2(\C)$.
\end{lem}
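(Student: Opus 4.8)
\emph{Proof sketch.} The plan is to split the cylinder $\C$ into the truncated part $\C_R$ and the tail $M \times (R,\infty)$ and estimate $\norm{\mathscr{Z}_R\mathcal{E}_R u - \mathcal{E}u}{L^2(\C)}^2$ as the sum of the two contributions. On the tail, $\mathscr{Z}_R\mathcal{E}_R u \equiv 0$ by definition of the zero extension, so the contribution is just $\int_R^\infty \int_M |\mathcal{E}u(y)|^2$. Using the explicit series $\mathcal{E}u(y) = \sum_{k \geq 1} e^{-y\sqrt{\lambda_k}}(u,\varphi_k)_{L^2(M)}\varphi_k$ from Theorem~\ref{thm:harmonicExtensionExistence} and integrating over $[R,\infty)$ term by term (permissible by monotone convergence), this equals $\sum_{k\geq 1} \frac{e^{-2R\sqrt{\lambda_k}}}{2\sqrt{\lambda_k}}|(u,\varphi_k)_{L^2(M)}|^2$; since $t \mapsto e^{-2Rt}/(2t)$ is decreasing and $\lambda_k \geq \lambda_1$ for $k \geq 1$, this is bounded by $\frac{e^{-2R\sqrt{\lambda_1}}}{2\sqrt{\lambda_1}}\norm{u}{L^2(M)}^2$, which is the last term of the claimed estimate.

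On $\C_R$ we have $\mathscr{Z}_R\mathcal{E}_R u = \mathcal{E}_R u$, so it remains to bound $\int_{\C_R}|\mathcal{E}_R u - \mathcal{E}u|^2$. The key point is that, because $\mean u = 0$, Theorems~\ref{thm:harmonicExtensionExistence} and \ref{thm:truncatedHarmonicExtensionExistence} give $\int_M \mathcal{E}u(y) = \int_M \mathcal{E}_R u(y) = 0$ for every $y \in [0,R]$; hence for a.e.\ $y$ the slice $(\mathcal{E}_R u - \mathcal{E}u)(\cdot,y)$ has zero spatial mean on $M$, and Poincar\'e's inequality on $M$ (with constant $C_P$) applies slice by slice to yield $\int_{\C_R}|\mathcal{E}_R u - \mathcal{E}u|^2 \leq C_P \int_{\C_R}|\nabla_M(\mathcal{E}_R u - \mathcal{E}u)|^2 \leq C_P \int_{\C_R}|\nablabg(\mathcal{E}_R u - \mathcal{E}u)|^2$, the last step using $|\nabla_M f|^2 \leq |\nablabg f|^2$. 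Finally, $\int_{\C_R}|\nablabg(\mathcal{E}_R u - \mathcal{E}u)|^2 \leq \norm{\nablabg(\ext{\mathcal{E}}u - \mathscr{Z}_R\ext{\mathcal{E}}_R u)}{L^2(\C)}^2$ because the tail term $\int_R^\infty\int_M|\nablabg\mathcal{E}u|^2$ is nonnegative, so Lemma~\ref{lem:expDecay} (taken with $\mean u = 0$) bounds this by $3e^{-R\sqrt{\lambda_1}}\seminorm{u}{H(M)}^2 + \frac{2}{R}e^{-2R\sqrt{\lambda_1}}\norm{u}{L^2(M)}^2$.

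Adding the tail and $\C_R$ contributions gives exactly the stated bound, and since every term on the right-hand side tends to $0$ as $R \to \infty$, we conclude $\mathscr{Z}_R\mathcal{E}_R u \to \mathcal{E}u$ in $L^2(\C)$. There is no real obstacle here: the only points needing care are the slice-wise Poincar\'e inequality (which relies on the mean-value-zero property of the harmonic extensions recorded in the two existence theorems, and on choosing the $M$-Poincar\'e constant rather than an $R$-dependent one-dimensional constant) and the term-by-term integration of the eigenfunction expansion on the tail, both of which are routine with the tools already available.
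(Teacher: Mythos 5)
Your proposal is correct and follows essentially the same route as the paper's proof: Poincar\'e on the slices of $\C_R$ (using the zero spatial mean of both extensions) to reduce to the gradient estimate of Lemma \ref{lem:expDecay}, plus the explicit series computation of the tail $\int_R^\infty\int_M|\mathcal{E}u|^2$. No gaps.
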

\begin{proof}
If $\mean{u}=0$, then $\ext{\mathcal{E}u(y)}=\ext{\mathcal{E}_Ru(y)}=0$ for all $y$. Therefore, with Poincar\'e's inequality on $M$, 
\begin{align*}
\int_0^R \int_M |\mathcal{E}u-\mathcal{E}_R u|^2 &\leq C_P\int_0^R \int_M |\nablabg \mathcal{E}u-\nablabg \mathcal{E}_R u|^2 
\leq C_P\int_0^\infty \int_M |\nablabg \mathcal{E}u-\nablabg \mathscr{Z}_R\mathcal{E}_R u|^2. 
\end{align*}
Over the interval $(R,\infty)$, we have
\begin{align*}
\int_R^\infty \int_M |\mathscr{Z}_R\mathcal{E}_R u-\mathcal{E}u|^2 &= \int_R^\infty \int_M |\mathcal{E}u|^2
=\sum_{k=1}^\infty \frac{e^{-2R\sqrt{\lambda_k}}}{2\sqrt{\lambda_k}}|(u,\varphi_k)_{L^2(M)}|^2
\leq \frac{e^{-2R\sqrt{\lambda_1}}}{2\sqrt{\lambda_1}}\norm{u}{L^2(M)}^2.
\end{align*}
Adding these two estimates and using the previous lemma yields the result.
\end{proof}
The next lemma describes \emph{continuous convergence}.
\begin{lem}\label{lem:ctsConvergence}If $u_R$, $u \in H(M)$ with $u_R \to u$ in $L^2(M)$ with $\meanu{u_R} = \mean u=0$, then $\mathscr{Z}_R{\mathcal{E}}_Ru_R \to {\mathcal{E}}u$ in $L^2(\C)$.
\end{lem}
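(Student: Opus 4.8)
The plan is to exploit the linearity of the truncated harmonic extension maps to split the difference $\mathscr{Z}_R\mathcal{E}_Ru_R - \mathcal{E}u$ into a ``fixed data'' piece and a ``varying data'' piece, and then to handle the first piece by Lemma \ref{lem:convergenceTruncationsL2} and the second by a uniform bound on the operators $\mathcal{E}_R$ together with the convergence $u_R \to u$. Concretely, I would write
\[
\mathscr{Z}_R\mathcal{E}_Ru_R - \mathcal{E}u = \big(\mathscr{Z}_R\mathcal{E}_R u - \mathcal{E}u\big) + \mathscr{Z}_R\mathcal{E}_R(u_R - u),
\]
which is legitimate because $\mathcal{E}_R$ is linear on functions of mean value zero (Theorem \ref{thm:truncatedHarmonicExtensionExistence}) and $\meanu{u_R} = \mean u = 0$ forces $\meanu{u_R-u}=0$. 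The first bracketed term tends to $0$ in $L^2(\C)$ as $R \to \infty$ by Lemma \ref{lem:convergenceTruncationsL2}, so it remains only to control $\|\mathscr{Z}_R\mathcal{E}_R(u_R-u)\|_{L^2(\C)}$.

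For the second term, since $\mathscr{Z}_R$ is an isometry from $\{\eta \in H^1(\C_R) \mid \eta(x,R)=0\}$ into $H^1(\C)$, we have $\|\mathscr{Z}_R\mathcal{E}_R(u_R-u)\|_{L^2(\C)} = \|\mathcal{E}_R(u_R-u)\|_{L^2(\C_R)}$, and the $L^2$-bound \eqref{eq:truncatedHarmonicExtensionL2Bound} in Theorem \ref{thm:truncatedHarmonicExtensionExistence} (with mean value zero, so only the first term survives) gives
\[
\|\mathcal{E}_R(u_R-u)\|_{L^2(\C_R)}^2 \leq \frac{1}{2\lambda_1^{1\slash 2}}\|u_R-u\|_{L^2(M)}^2,
\]
with a constant \emph{independent of $R$}. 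Since $u_R \to u$ in $L^2(M)$, this term tends to $0$. Combining, an $\varepsilon/2$ argument finishes the proof: given $\varepsilon > 0$, choose $R$ large enough that the first bracket has $L^2(\C)$-norm below $\varepsilon/2$, and then (using the uniform bound, which holds for \emph{all} $R$) the second term is below $\varepsilon/2$ once $R$ is also large enough that $\|u_R-u\|_{L^2(M)}^2 < \lambda_1^{1\slash 2}\varepsilon^2/2$.

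The main (modest) obstacle is making sure the $R$-dependence is handled correctly: the first term requires $R \to \infty$ for smallness, while the second is controlled by an $R$-uniform operator bound combined with $u_R \to u$; one must not conflate the two limits or try to make the splitting depend on a single threshold prematurely. One should also double-check that the mean-value hypothesis propagates to the difference $u_R - u$ so that the linear-splitting and the sharpened form of \eqref{eq:truncatedHarmonicExtensionL2Bound} (with $\mean{} = 0$) are applicable — but this is immediate from $\meanu{u_R} = \mean{u} = 0$. No compactness or weak convergence machinery is needed; the argument is entirely quantitative.
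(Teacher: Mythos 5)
Your proposal is correct and follows essentially the same route as the paper: the identical splitting $\mathscr{Z}_R\mathcal{E}_Ru_R - \mathcal{E}u = \mathscr{Z}_R\mathcal{E}_R(u_R-u) + (\mathscr{Z}_R\mathcal{E}_Ru - \mathcal{E}u)$, with the first term controlled by the $R$-uniform bound \eqref{eq:truncatedHarmonicExtensionL2Bound} applied to the mean-zero difference $u_R-u$ and the second by Lemma \ref{lem:convergenceTruncationsL2}. The only cosmetic difference is that the paper compresses the two-limit bookkeeping into a single triangle-inequality line, whereas you spell out the $\varepsilon/2$ argument explicitly.
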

\begin{proof}
Writing $\mathscr{Z}_R{\mathcal{E}}_Ru_R - {\mathcal{E}}u = \mathscr{Z}_R{\mathcal{E}}_R(u_R - u) + \mathscr{Z}_R{\mathcal{E}}_Ru- {\mathcal{E}}u$, using the triangle inequality and \eqref{eq:truncatedHarmonicExtensionL2Bound},
\begin{align*}
\norm{\mathscr{Z}_R{\mathcal{E}}_Ru_R - {\mathcal{E}}u}{L^2(\C)} 
&\leq C\norm{u_R - u}{L^2(M)} + \norm{\mathscr{Z}_R{\mathcal{E}}_Ru- {\mathcal{E}}u}{L^2(\C)}
\end{align*}
which tends to zero by Lemma \ref{lem:convergenceTruncationsL2}. 
\end{proof}
\section{Function spaces on evolving hypersurfaces and preliminary results}\label{sec:functionSpaces}
We start with conditions on the prescribed evolution, in addition to \eqref{eq:eigenvalueEstimate}.
\begin{ass}\label{ass:onHypersurfaces}
For each $t \in [0,T],$ let $\Gamma(t) \subset \mathbb{R}^{d+1}$ be a smooth and compact $d$-dimensional hypersurface without boundary, and assume the existence of a flow $\Phi\colon [0,T] \times \mathbb{R}^{d+1} \to \mathbb{R}^{d+1}$ such that for all $t \in [0,T]$, with $\Gamma_0 := \Gamma(0)$, the map $\Phi_t^0(\cdot):=\Phi(t,\cdot)\colon \Gamma_0 \to \Gamma(t)$ is a $C^3$-diffeomorphism that satisfies
$\frac{d}{dt}\Phi^0_t(\cdot) = \mathbf w(t,\Phi^0_t(\cdot))$ and
$\Phi^0_0(\cdot) = \text{Id}(\cdot)$
for a given $C^2$ velocity field $\mathbf w\colon [0,T]\times\mathbb{R}^{d+1} \to \mathbb{R}^{d+1}$, which we assume satisfies the uniform bound
$|\sgradt \cdot \mathbf w(t)| \leq C$ for all $t \in [0,T]$. A $C^2$ normal vector field on the hypersurfaces is denoted by $\nu^{\Gamma}\colon [0,T]\times \mathbb{R}^{d+1} \to \mathbb{R}^{d+1}$. 
\end{ass}
It follows that the Jacobian $J^0_t := \det \mathbf{D}\Phi^0_t$ is $C^2$ and uniformly bounded away from zero and infinity. We denote by $\Phi^t_0\colon \Gamma(t) \to \Gamma_0$ the inverse of $\Phi^0_t$ and define $|\Gamma| := \max_{t \in [0,T]}|\Gamma(t)|$.
\begin{remark}\label{rem:lots}The assumption \eqref{eq:eigenvalueEstimate} is satisfied if for example each $\Gamma(t)$ has non-negative Ricci curvature, or if the Ricci curvature of $\Gamma(t)$ is greater than $\rho(t) < 0$, where $-\rho(t) \leq \rho$ holds for all $t \in [0,T]$ with $\rho$ a constant. See Theorem 4.6.1 in \cite{Jost} and the discussion afterwards. Also, instead of assuming \eqref{eq:eigenvalueEstimate}, one could study the possible continuity of $t \mapsto \lambda_1(t)$ through the theory of perturbations of linear operators \cite{kato2013perturbation}. Let us furthermore remark that all functional analytic results in this section not involving the harmonic extension maps remain true for $\Gamma(t)$ of class $C^3$. 
\end{remark}
\subsection{Function spaces}
In order to define the spaces $L^p_Y$ mentioned in the introduction, we need simply to verify a few assumptions.
%
%
\subsubsection{Spaces on the surface $\Gamma$}
For $u\colon \Gamma_0 \to \mathbb{R}$, define $(\phi_t u)(x):=(\phi_{\Gamma,t} u)(x) := u (\Phi^t_0(x))$. Fortunately, we already checked that the spaces $L^p_{L^q}$ and $L^2_{W^{1 \slash 2, 2}}$ are well-defined in \cite[\S 2.2.1]{Alphonse2014} and \cite[\S 4.1 and \S 5.4]{Alphonse2014a} respectively. Recall from \cite{Alphonse2014b} that $u \in L^2_{W^{1\slash 2, 2}}$ is said to have a \emph{weak material derivative} $\dot u \in L^2_{W^{-1\slash 2, 2}}$ if 
\[\int_0^T \langle \dot u(t), \zeta(t) \rangle_{W^{-1\slash 2, 2}(\Gamma(t)), W^{1\slash 2, 2}(\Gamma(t))} = - \int_0^T\int_{\Gamma(t)}\dot \zeta(t)u(t) - \int_0^T \int_{\Gamma(t)}u(t)\zeta(t)\sgradt \cdot \mathbf w(t)\]
holds for all $\zeta \in \{\zeta \in L^2_{W^{1\slash 2, 2}} \mid \phi_{\Gamma, -(\cdot)}\zeta(\cdot) \in \mathcal{D}((0,T);W^{1\slash 2, 2}(\Gamma_0))\}$, where $\zeta$ belonging to this set has a \emph{strong material derivative} defined by $\dot \zeta (t) := \phi_{\Gamma, t}({d}\slash {dt}(\phi_{\Gamma, -t}\zeta(t)))$ (see also \cite{Alphonse2014a}).
In \cite[\S 5.4.1]{Alphonse2014a} the evolving Sobolev--Bochner space 
\[\mathbb{W}(W^{1 \slash 2, 2}, W^{-1 \slash 2, 2}) := \{ u \in L^2_{W^{1 \slash 2, 2}} \mid \dot u \in L^2_{W^{-1 \slash 2, 2}} \}\] was shown to be well-defined and isomorphic (via $\phi_{\Gamma, -(\cdot)}$) with an equivalence of norms to 
\[\mathcal{W}(W^{1 \slash 2, 2}, W^{-1 \slash 2, 2}) := \{ \tilde u \in L^2(0,T;W^{1 \slash 2, 2}(\Gamma_0)) \mid \tilde u' \in L^2(0,T;W^{-1 \slash 2, 2}(\Gamma_0))\},\]
and this implies that $\mathbb{W}(W^{1 \slash 2, 2}, W^{-1 \slash 2, 2}) \compact L^2_{L^2}$. 

The following lemma (which is surprisingly non-trivial) is useful later on; the proof of the continuity is the same as in Lemma 2.5 of \cite{Bousquet} with the obvious modifications.
\begin{lem}\label{lem:nemytskiiH12}For a sufficiently smooth hypersurface $\Gamma$, if $\beta\colon \mathbb{R} \to \mathbb{R}$ is Lipschitz with $\beta(0)=0$, then the map $\beta \colon W^{1\slash 2,2}(\Gamma) \to W^{1\slash 2,2}(\Gamma)$ defined by composition is (sequentially) continuous and satisfies
\[\norm{\beta(u)}{W^{1\slash 2,2}(\Gamma)} \leq \Lip(\beta)\norm{u}{W^{1\slash 2,2}(\Gamma)}\quad\text{for all $u \in W^{1\slash 2,2}(\Gamma)$}.\]
\end{lem}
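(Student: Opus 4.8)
The plan is to prove the quantitative bound first and then deduce sequential continuity. The norm estimate is the Gagliardo seminorm computation: since
\[
\seminorm{\beta(u)}{W^{1\slash 2,2}(\Gamma)}^2 = \int_\Gamma \int_\Gamma \frac{|\beta(u(x)) - \beta(u(y))|^2}{|x-y|^n}\;\mathrm{d}\sigma(x)\mathrm{d}\sigma(y),
\]
the pointwise Lipschitz inequality $|\beta(u(x)) - \beta(u(y))| \leq \Lip(\beta)|u(x)-u(y)|$ gives $\seminorm{\beta(u)}{W^{1\slash 2,2}(\Gamma)} \leq \Lip(\beta)\seminorm{u}{W^{1\slash 2,2}(\Gamma)}$ immediately; for the $L^2$ part, $\beta(0)=0$ and Lipschitzness give $|\beta(u(x))| \leq \Lip(\beta)|u(x)|$ so $\norm{\beta(u)}{L^2(\Gamma)} \leq \Lip(\beta)\norm{u}{L^2(\Gamma)}$. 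Combining the two pieces in the Gagliardo norm $\norm{\cdot}{W^{1\slash 2,2}(\Gamma)}$ yields the claimed bound. In particular this shows $\beta(u) \in W^{1\slash 2,2}(\Gamma)$ whenever $u$ is, so the Nemytskii map is well-defined.

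For sequential continuity, suppose $u_j \to u$ in $W^{1\slash 2,2}(\Gamma)$. The strategy I would use mirrors Lemma 2.5 of \cite{Bousquet}: one cannot pass to the limit naively because $\beta$ is only Lipschitz (not $C^1$), so the usual trick is to argue by contradiction together with a subsequence extraction. Pass to a subsequence along which $u_j \to u$ almost everywhere on $\Gamma$ (possible since $W^{1\slash 2,2}(\Gamma) \hookrightarrow L^2(\Gamma)$ and $L^2$ convergence along a further subsequence gives a.e.\ convergence), and such that the difference quotients converge appropriately; then continuity of $\beta$ gives $\beta(u_j) \to \beta(u)$ a.e.\ and $\beta(u_j(x)) - \beta(u_j(y)) \to \beta(u(x)) - \beta(u(y))$ a.e.\ on $\Gamma \times \Gamma$. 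The key point is to obtain an equi-integrable (or dominated) majorant for the difference quotient $|\beta(u_j(x)) - \beta(u_j(y))|^2/|x-y|^n$: here the Lipschitz bound pins this below $\Lip(\beta)^2 |u_j(x)-u_j(y)|^2/|x-y|^n$, and since $u_j \to u$ in $W^{1\slash 2,2}(\Gamma)$ the family $\{|u_j(x)-u_j(y)|^2/|x-y|^n\}_j$ is equi-integrable on $\Gamma\times\Gamma$ (convergent in $L^1$, hence uniformly integrable by Vitali). A Vitali convergence theorem argument then gives $\beta(u_j) \to \beta(u)$ in $W^{1\slash 2,2}(\Gamma)$, and since every subsequence has a further subsequence converging to the same limit, the whole sequence converges.

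The main obstacle is the continuity statement rather than the norm bound. Because $\beta$ is merely Lipschitz, there is no modulus-of-continuity estimate on $\beta$ at the level of the fractional seminorm that would let one bound $\seminorm{\beta(u_j) - \beta(u)}{W^{1\slash 2,2}(\Gamma)}$ directly by $\seminorm{u_j - u}{W^{1\slash 2,2}(\Gamma)}$ — indeed such a bound is false in general. One genuinely needs the a.e.-convergence-plus-equi-integrability (Vitali) route, and the delicate step is verifying equi-integrability of the double-integral difference quotients on $\Gamma\times\Gamma$ and handling the geometry of the hypersurface in the Gagliardo kernel; this is exactly why the statement is flagged as ``surprisingly non-trivial'' and why the authors defer to the argument of \cite{Bousquet}, whose proof on domains in $\mathbb{R}^n$ transfers to $\Gamma$ after localising with charts and using the equivalence of the intrinsic and extrinsic Gagliardo norms.
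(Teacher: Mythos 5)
Your argument is correct and is essentially the paper's: the norm bound is the immediate pointwise Lipschitz estimate in the Gagliardo norm, and for continuity the paper simply defers to Lemma 2.5 of \cite{Bousquet}, whose proof is exactly the subsequence/a.e.-convergence plus equi-integrability (generalised dominated convergence) argument you reconstruct. No gaps.
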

\subsubsection{Spaces on the cylinders $\C$ and $\C_R$}
Recall from the introduction that $\C(t) = \Gamma(t) \times [0,\infty)$, and set $\C_0 := \C(0).$ Given $v \in L^2(\C_0)$, define $(\phi_{\C,t} v)(x,y) := v (\Phi^t_0(x),y)$. We have
\begin{align}
\norm{\phi_{\C,t}v}{L^2(\C(t))}^2  = \int_0^\infty \int_{\Gamma(t)} |v (\Phi^t_0(x),y)|^2= \int_0^\infty \int_{\Gamma_0} |v (z,y)|^2J^0_t \leq \lVert{J^{(\cdot)}_0}\rVert_{\infty} \norm{v}{L^2(\C)}^2\label{eq:R1},
\end{align}
so $\phi_{\C,t}\colon L^2(\C_0) \to L^2(\C(t))$. The inverse mapping is $\phi_{\C,-t}\colon L^2(\C(t)) \to L^2(\C_0)$ given by $(\phi_{\C,-t} w)(x,y) = w (\Phi^0_t(x),y)$ and these maps are linear homeomorphisms. Also, we see that if $v \in H^1(\C_0)$,
\begin{align}
\nonumber \seminorm{\phi_{\C,t}v}{H^1(\C(t))}^2 &= \int_0^\infty \int_{\Gamma(t)} |\nablabg v (\Phi^t_0(x),y)|^2\\
\nonumber &= \int_0^\infty \int_{\Gamma(t)} |(\mathbf{D}\Phi^t_0 )^\intercal(x)(\nabla_{\Gamma_0}v(y)\circ \Phi^t_0(x)) +\partial_yv(\Phi^t_0(x),y)|^2\\
&= \int_0^\infty \int_{\Gamma_0} |(\mathbf{D}\Phi^t_0 )^\intercal \circ \Phi^0_t(z) (\nabla_{\Gamma_0}v(z,y)) + \partial_yv(z,y)|^2J^0_t\label{eq:R2}\\
&\leq C\seminorm{v}{H^1(\C_0)}^2\label{eq:H1SeminormBounded}
\end{align}
Overall, we have shown that 
$\phi_{\C,t}\colon H^1(\C_0) \to H^1(\C(t))$ is bounded uniformly and well-defined. Finally, we have from \eqref{eq:R1} and \eqref{eq:R2} that $t \mapsto \norm{\phi_{\C,t} v}{H^1(\C(t))}^2$
is continuous. Using the theory in \cite[\S 2.2]{Alphonse2014b}, this allows us to define the spaces $L^2_{H^1(\C)}$ and $L^2_{L^2(\C)}$ (just ignore the gradient term).  Clearly, the same argument allows us to define $L^2_{L^2(\C_R)},$ $L^2_{H^1(\C_R)}$, and $L^2_{H^1_0(\C_R)}$ using a map $\phi_{\C_R, t}$ defined in the same way.
\begin{defn}\label{defn:harmonicExtensionsWitht}We denote by $\ext{\mathcal{E}}_t$ and $\ext{\mathcal{E}}_{R,t}$ the maps $\ext{\mathcal{E}}$ and $\ext{\mathcal{E}}_R$ defined in Theorems \ref{thm:harmonicExtensionExistence} and \ref{thm:truncatedHarmonicExtensionExistence} respectively with the manifold $M$ chosen to be $\Gamma(t)$ (and likewise without the overlines). Similarly, we denote by $\ext{\mathcal{T}}_t$, $\mathcal{T}_{R,t,y=0}$ and $\mathcal{T}_{R,t,y=R}$ 
the trace maps $\ext{\mathcal{T}}$, $\mathcal{T}_{R,y=0}$ and $\mathcal{T}_{R,y=R}$ defined in Lemma \ref{lem:existenceTraceMapXC} and in \S \ref{sec:sobolevSpacesOnSemiInfiniteCylinders} respectively with the choice $M=\Gamma(t)$.
\end{defn}
\begin{lem}[Commutativity of the trace and pushforward maps]\label{lem:traceCommutesWithPhi}The following identity holds:
\begin{equation*}
\begin{aligned}
\mathcal{T}_t(\phi_{\C, t}v) &= \phi_{\Gamma, t}(\mathcal{T}_0v) &&\text{for all $v \in H^1(\C_0)$.}
\end{aligned}
\end{equation*}
\end{lem}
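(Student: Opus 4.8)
The plan is to verify the identity by density, reducing to the case of smooth functions where the trace is simply restriction to $y=0$ and the diffeomorphism $\Phi^t_0$ acts only in the tangential variable. First I would note that both sides of the claimed identity, $\mathcal{T}_t(\phi_{\C,t}v)$ and $\phi_{\Gamma,t}(\mathcal{T}_0 v)$, define bounded linear maps from $H^1(\C_0)$ into $H^{1\slash 2}(\Gamma(t)) = H(\Gamma(t))$: the first is the composition of the bounded map $\phi_{\C,t}\colon H^1(\C_0)\to H^1(\C(t))$ (established in \eqref{eq:R1}--\eqref{eq:H1SeminormBounded}) with the bounded trace $\mathcal{T}_t\colon H^1(\C(t))\to H(\Gamma(t))$, and the second is the composition of $\mathcal{T}_0\colon H^1(\C_0)\to H(\Gamma_0)$ with the bounded homeomorphism $\phi_{\Gamma,t}\colon W^{1\slash 2,2}(\Gamma_0)\to W^{1\slash 2,2}(\Gamma(t))$. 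Hence it suffices to check the identity on a dense subset of $H^1(\C_0)$.

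Next I would pick as the dense subset the smooth functions on $\C_0$ that are (say) of the form $v(z,y)=\sum_{j} a_j(z)\chi_j(y)$ with $a_j\in C^\infty(\Gamma_0)$ and $\chi_j$ smooth and compactly supported in $[0,\infty)$, or more simply any $v\in C^\infty_c(\Gamma_0\times[0,\infty))$; such functions are dense in $H^1(\C_0)$ (this follows, e.g., from the Lions--Magenes description of $H^1(\C_0)$ as the Sobolev--Bochner space and the density of smooth cylindrical functions therein). For such a $v$, the trace maps act by genuine pointwise restriction to $y=0$: $(\mathcal{T}_0 v)(z) = v(z,0)$ and $(\mathcal{T}_t w)(x) = w(x,0)$. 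Then for $w := \phi_{\C,t}v$ we have $w(x,y) = v(\Phi^t_0(x),y)$ by definition of $\phi_{\C,t}$, so $(\mathcal{T}_t(\phi_{\C,t}v))(x) = w(x,0) = v(\Phi^t_0(x),0) = (\mathcal{T}_0 v)(\Phi^t_0(x)) = (\phi_{\Gamma,t}(\mathcal{T}_0 v))(x)$, using that $\phi_{\Gamma,t}$ acts precisely by precomposition with $\Phi^t_0$. This establishes the identity on the dense set, and the boundedness from the first step extends it to all of $H^1(\C_0)$ by a standard approximation argument.

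The only genuinely delicate point — and the one I would expect to be the main obstacle — is justifying that the pointwise restriction formula for the trace is legitimate on the chosen dense class; that is, one must know that the abstract trace operators $\mathcal{T}_0$ and $\mathcal{T}_t$, defined via the Lions--Magenes / Amann continuous-embedding theory, really do coincide with evaluation at $y=0$ on smooth cylindrical functions. This is true because for $v\in C^0([0,\infty);H^{1\slash 2}(\Gamma))\cap H^1(\C)$ the trace is the continuous boundary value $v(\cdot,0)$, and for smooth $v$ this is just restriction; but it should be stated carefully, since the whole point of the commutativity lemma is that the two a priori different constructions of the trace (one on the fixed cylinder $\C_0$, one on $\C(t)$) are compatible with the pushforward. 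Once that identification is in hand, everything else is bookkeeping with the definitions of $\phi_{\C,t}$ and $\phi_{\Gamma,t}$.
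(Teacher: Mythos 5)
Your proposal is correct and the core of it is the same computation as the paper's proof: both sides of the identity equal $v(\Phi^t_0(\cdot),0)$, by the definitions of $\phi_{\C,t}$ and $\phi_{\Gamma,t}$. The density reduction you add is harmless but unnecessary here, because the paper's trace operators are already \emph{defined} as evaluation at $y=0$ of the continuous-in-$y$ representative (via the Lions--Magenes embedding $H^1(\C)\subset C^0([0,\infty);H^{1\slash 2}(M))$ noted in \S\ref{sec:sobolevSpacesOnSemiInfiniteCylinders}), so the pointwise identity holds directly for every $v\in H^1(\C_0)$ — the ``delicate point'' you flag is resolved by the definition rather than by an approximation argument.
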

\begin{proof}
We have $\phi_{\C,t}v = v \circ \Phi^t_0 \in H^1(\C(t))$ and so $\mathcal{T}_t\phi_{\C,t}v = v(\Phi^t_0(\cdot),0),$ whilst on the other hand, $\phi_{\Gamma, t}\mathcal{T}_0 v = v(\cdot,0) \circ \Phi^t_0(\cdot) = v(\Phi^t_0(\cdot),0)$. 
\end{proof}
Lemma \ref{lem:traceCommutesWithPhi} implies that if $v \in H^1(\C(t))$, then, using the boundedness of $\phi_{\Gamma, t}$,
\[\norm{\mathcal{T}_t v}{W^{1\slash 2,2}(\Gamma(t))} \leq  C_1\norm{\mathcal{T}_0 \phi_{\C,-t}v}{W^{1\slash 2,2}(\Gamma_0)} \leq C_2\norm{\phi_{\C,-t}v}{H^1(\C_0)}\leq C_3\norm{v}{H^1(\C(t))}\]
because of the trace theorem and the equivalence of norms between $H^{1\slash 2}(\Gamma_0)$ and $W^{1\slash 2, 2}(\Gamma_0)$. This shows that $\mathcal{T}_t \colon H^1(\C(t)) \to W^{1\slash 2,2}(\Gamma(t))$ is bounded independently of $t$. By the same argument, the uniform boundedness of $\mathcal{T}_{R,t,y=0}, \mathcal{T}_{R,t,y=R}\colon H^1(\C_R(t)) \to W^{1\slash 2, 2}(\Gamma(t))$ 
also holds, and a version of Lemma \ref{lem:traceCommutesWithPhi} holds for these maps, which allows us to define $L^2_{H^1_0(\C_R)}$. Now, by Lemma \ref{lem:traceCommutesWithPhi} 
and \eqref{eq:H1SeminormBounded}, for $v \in H^1(\C_0)$, we have
\begin{align*}
\norm{\phi_{\C,t} v}{X(\C(t))}^2 
&= \seminorm{\phi_{\C,t} v}{H^1(\C(t))}^2 + \norm{\mathcal{T}_t\phi_{\C,t} v}{L^2(\Gamma(t))}^2
\leq  C\norm{v}{X(\C_0)}^2
\end{align*}
which shows that $\phi_{\C,t} \colon H^1(\C_0) \to X(\C(t))$ has a useful boundedness property which, by the BLT theorem, allows us to extend $\phi_{\C,t}$ to a bounded linear map $\bar\phi_{\C,t}\colon X(\C_0) \to X(\C(t))$ defined as
\[\bar\phi_{\C,t}x_0 := \lim_{n \to \infty} \phi_{\C,t}v_n\text{ in $X(\C(t))$ for $v_n \in H^1(\C_0)$ with $v_n \to x_0$ in $X(\C_0)$}.\]
We also have the measurability of $t \mapsto \norm{\bar\phi_{\C,t}x_0}{X(\C(t))} = \lim_{n \to \infty} \norm{\phi_{\C,t}v_n}{X(\C(t))}$. Thus $L^2_{X(\C)}$ is also well-defined. Similar arguments can be made for the inverse operator of $\bar \phi_{C,t}$, denoted $\bar \phi_{\C,-t}\colon X(\C(t)) \to X(\C_0).$
By a density argument, exploiting the continuity of the operators involved, we can show that
\begin{equation}\label{eq:traceOnXCCommutesWithPhi}
\ext{\mathcal{T}}_t(\bar \phi_{\C, t}v) = \phi_{\Gamma, t}(\ext{\mathcal{T}}_0v)
 \quad\text{for all $v \in X(\C_0)$}.
\end{equation}
\subsubsection{Superposition trace maps}
\begin{lem}\label{lem:existenceOfTraceMapOnL2H1AndL2X}There exist bounded linear trace operators $\mathbb{T}\colon L^2_{H^1(\C)} \to L^2_{W^{1\slash 2, 2}}$ and $\ext{\mathbb{T}}\colon L^2_{X(\C)} \to L^2_{W^{1\slash 2,2}}$ satisfying $(\mathbb{T}v)(t) = \mathcal{T}_tv(t)$ and $(\ext{\mathbb{T}}v)(t) = \ext{\mathcal{T}}_tv(t)$ for almost every $t$.
\end{lem}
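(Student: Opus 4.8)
The plan is to construct the two trace operators $\mathbb{T}$ and $\ext{\mathbb{T}}$ by composing the fixed-time trace maps $\mathcal{T}_t$ and $\ext{\mathcal{T}}_t$ with the relevant superposition (Nemytskii-type) construction for evolving Bochner spaces, and then to check boundedness via the uniform-in-$t$ bounds established just above. First I would recall that an element $v \in L^2_{H^1(\C)}$ is, by definition, a map such that $\phi_{\C,-(\cdot)}v(\cdot) \in L^2(0,T;H^1(\C_0))$; applying $\mathcal{T}_t$ pointwise in $t$ gives a candidate $(\mathbb{T}v)(t) := \mathcal{T}_t v(t)$. To see this lands in $L^2_{W^{1\slash 2,2}}$, use Lemma \ref{lem:traceCommutesWithPhi} to rewrite $\phi_{\Gamma,-t}\big((\mathbb{T}v)(t)\big) = \phi_{\Gamma,-t}\mathcal{T}_t v(t) = \mathcal{T}_0\big(\phi_{\C,-t}v(t)\big)$, which is the composition of the fixed map $\mathcal{T}_0 \colon H^1(\C_0) \to W^{1\slash 2,2}(\Gamma_0)$ with the $L^2(0,T;H^1(\C_0))$-function $t \mapsto \phi_{\C,-t}v(t)$, hence measurable and in $L^2(0,T;W^{1\slash 2,2}(\Gamma_0))$; this is exactly the condition for $\mathbb{T}v \in L^2_{W^{1\slash 2,2}}$. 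The norm bound follows by integrating the uniform-in-$t$ estimate $\norm{\mathcal{T}_t v(t)}{W^{1\slash 2,2}(\Gamma(t))} \leq C\norm{v(t)}{H^1(\C(t))}$ already derived in the excerpt over $t \in (0,T)$. Linearity is immediate from linearity of each $\mathcal{T}_t$ and of integration.

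For $\ext{\mathbb{T}}$ the argument is the same but built on $X(\C)$ instead of $H^1(\C)$: an element $v \in L^2_{X(\C)}$ satisfies $\bar\phi_{\C,-(\cdot)}v(\cdot) \in L^2(0,T;X(\C_0))$, and I would set $(\ext{\mathbb{T}}v)(t):=\ext{\mathcal{T}}_tv(t)$, using the commutativity relation \eqref{eq:traceOnXCCommutesWithPhi}, namely $\ext{\mathcal{T}}_t(\bar\phi_{\C,t}w) = \phi_{\Gamma,t}(\ext{\mathcal{T}}_0 w)$, to pull back to a fixed composition $\ext{\mathcal{T}}_0 \colon X(\C_0) \to H(M)$ (with $M = \Gamma_0$) against the $L^2(0,T;X(\C_0))$-function $t \mapsto \bar\phi_{\C,-t}v(t)$. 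Measurability of $t \mapsto \norm{(\ext{\mathbb{T}}v)(t)}{W^{1\slash 2,2}(\Gamma(t))}$ then follows exactly as for $\mathbb{T}$, and the boundedness $\norm{\ext{\mathcal{T}}v}{H(M)} \leq \norm{v}{X(\C)}$ from Lemma \ref{lem:existenceTraceMapXC}, combined with the uniform boundedness of $\bar\phi_{\C,\pm t}$ and the $t$-independent equivalence of the $H(M)$ and $W^{1\slash 2,2}$ norms after pulling back to $\Gamma_0$, gives the uniform bound that integrates to $\norm{\ext{\mathbb{T}}v}{L^2_{W^{1\slash 2,2}}} \leq C \norm{v}{L^2_{X(\C)}}$.

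The main technical point — and the only part that is not completely routine — is verifying the measurability of $t \mapsto (\mathbb{T}v)(t)$ (and its analogue for $\ext{\mathbb{T}}$) as a section of the evolving space, i.e.\ that the pulled-back map $t \mapsto \phi_{\Gamma,-t}\big(\mathcal{T}_t v(t)\big)$ is genuinely strongly measurable into $W^{1\slash 2,2}(\Gamma_0)$. This is where one must be careful that $\mathcal{T}_t$ composed with the pushforwards really does collapse to a single fixed bounded operator applied to a measurable vector-valued function; once Lemma \ref{lem:traceCommutesWithPhi} and \eqref{eq:traceOnXCCommutesWithPhi} are invoked this reduces to the standard fact that a fixed bounded linear operator sends strongly measurable functions to strongly measurable functions. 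For $\ext{\mathbb{T}}$ there is the extra subtlety that the defining identity \eqref{eq:traceOnXCCommutesWithPhi} was itself obtained by a density argument, so one should note that $\ext{\mathcal{T}}_t v(t)$ is still well-defined for a.e.\ $t$ because $v(t) \in X(\C(t))$ for a.e.\ $t$ by definition of $L^2_{X(\C)}$. With these observations in place the proof is a short composition-and-estimate argument.
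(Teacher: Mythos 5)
Your proposal is correct and follows essentially the same route as the paper: both define the trace pointwise in $t$, use the commutation identities (Lemma \ref{lem:traceCommutesWithPhi} and \eqref{eq:traceOnXCCommutesWithPhi}) to reduce to the fixed operator $\ext{\mathcal{T}}_0$ applied to the measurable pullback $t\mapsto\bar\phi_{\C,-t}v(t)$, and combine Lemma \ref{lem:existenceTraceMapXC} with the $t$-independent equivalence of the $H(\Gamma_0)$ and $W^{1\slash 2,2}(\Gamma_0)$ norms and the uniform boundedness of the pushforwards to get the estimate. The only cosmetic difference is that the paper treats $\ext{\mathbb{T}}$ first and obtains $\mathbb{T}$ as its restriction to $L^2_{H^1(\C)}$, whereas you build $\mathbb{T}$ first.
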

\begin{proof}
For $v \in L^2_{X(\C)}$, define $(\ext{\mathbb{T}}v)(t) = \ext{\mathcal{T}}_tv(t)$. Then
$(\ext{\mathbb{T}}v)(t)= \ext{\mathcal{T}}_t v(t) = \phi_{\Gamma,t}\ext{\mathcal{T}}_0 \bar \phi_{\C,-t}v(t)$ 
by \eqref{eq:traceOnXCCommutesWithPhi} which gives measurability in time, and we have the bound
\begin{align*}
\norm{(\ext{\mathbb{T}}v)(t)}{W^{1\slash 2, 2}(\Gamma(t))}
&\leq C_1\norm{\ext{\mathcal{T}}_0 \bar \phi_{\C,-t}v(t)}{W^{1\slash 2, 2}(\Gamma_0)}
\leq C_3\norm{\bar \phi_{\C,-t}v(t)}{X(\C_0)}
\leq C_4\norm{v(t)}{X(\C(t))}
\end{align*}
where for the second inequality we used the equivalence of norms between $W^{1\slash 2, 2}(\Gamma_0)$ and $H(\Gamma_0)$ and Lemma \ref{lem:existenceTraceMapXC}. This proves that $\ext{\mathbb{T}} \colon L^2_{X(\C)} \to L^2_{W^{1\slash 2,2}}$ is well-defined as a bounded linear operator. The operator $\mathbb{T}$ can be seen as the restriction of $\ext{\mathbb{T}}$ to $L^2_{H^1(\C)}$.
\end{proof}
By the same reasoning as above, we can prove the following lemma.
\begin{lem}\label{lem:existenceTraceMapsTruncatedL2H1CR}There exist bounded linear trace operators $\mathbb{T}_{R,y=0}, \mathbb{T}_{R,y=R}\colon L^2_{H^1(\C_R)} \to L^2_{W^{1\slash 2, 2}}$ satisfying $(\mathbb{T}_{R,y=0}v)(t) = \mathcal{T}_{R,t,y=0}v(t)$ and $(\mathbb{T}_{R,y=R}v)(t) = \mathcal{T}_{R,t,y=R}v(t)$ for almost every $t$.
\end{lem}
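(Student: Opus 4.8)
The plan is to imitate the proof of Lemma~\ref{lem:existenceOfTraceMapOnL2H1AndL2X}, reducing everything to the fixed reference cylinder $\C_R(0) = \Gamma_0 \times [0,R]$. Given $v \in L^2_{H^1(\C_R)}$, I would define $(\mathbb{T}_{R,y=0}v)(t) := \mathcal{T}_{R,t,y=0}v(t)$ and $(\mathbb{T}_{R,y=R}v)(t) := \mathcal{T}_{R,t,y=R}v(t)$ pointwise in $t$, using the maps from Definition~\ref{defn:harmonicExtensionsWitht}. The first task is measurability in time; for this I would invoke the commutativity of the truncated trace maps with the pushforwards --- the version of Lemma~\ref{lem:traceCommutesWithPhi} already noted to hold for $\mathcal{T}_{R,t,y=0}$, $\mathcal{T}_{R,t,y=R}$ and $\phi_{\C_R,t}$ --- to write
\[
(\mathbb{T}_{R,y=0}v)(t) = \phi_{\Gamma,t}\,\mathcal{T}_{R,0,y=0}\,\phi_{\C_R,-t}v(t),
\]
and similarly for $y=R$. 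By definition of $L^2_{H^1(\C_R)}$, the curve $t \mapsto \phi_{\C_R,-t}v(t)$ lies in $L^2(0,T;H^1(\C_R(0)))$; composing with the fixed, $t$-independent, bounded linear trace $\mathcal{T}_{R,0,y=0}\colon H^1(\C_R(0)) \to W^{1\slash 2, 2}(\Gamma_0)$ shows that $t \mapsto \mathcal{T}_{R,0,y=0}\phi_{\C_R,-t}v(t)$ lies in $L^2(0,T;W^{1\slash 2, 2}(\Gamma_0))$. Since this curve equals $\phi_{\Gamma,-(\cdot)}(\mathbb{T}_{R,y=0}v)(\cdot)$ by the displayed identity, this is exactly the statement that $\mathbb{T}_{R,y=0}v \in L^2_{W^{1\slash 2, 2}}$, and likewise for $\mathbb{T}_{R,y=R}$.

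For boundedness, I would use the uniform boundedness of $\mathcal{T}_{R,t,y=0}, \mathcal{T}_{R,t,y=R}\colon H^1(\C_R(t)) \to W^{1\slash 2, 2}(\Gamma(t))$ established just before Lemma~\ref{lem:existenceOfTraceMapOnL2H1AndL2X} (via $\phi_{\Gamma,t}$, the trace theorem on $\Gamma_0$, and the $t$-independent equivalence of norms on $\Gamma_0$): pointwise this gives $\norm{(\mathbb{T}_{R,y=0}v)(t)}{W^{1\slash 2, 2}(\Gamma(t))} \leq C\norm{v(t)}{H^1(\C_R(t))}$ with $C$ independent of $t$, and integrating in $t$ yields $\norm{\mathbb{T}_{R,y=0}v}{L^2_{W^{1\slash 2, 2}}} \leq C\norm{v}{L^2_{H^1(\C_R)}}$. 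Linearity is immediate from the pointwise definition and the linearity of each $\mathcal{T}_{R,t,y=0}$, and the same argument verbatim handles $\mathbb{T}_{R,y=R}$.

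I do not expect any genuine obstacle here, since all the ingredients have been prepared in the preceding paragraphs. The only mildly delicate point is organising the measurability argument so that one really reduces to composing a measurable reference-domain curve with a \emph{single fixed} bounded operator rather than with a $t$-dependent family; this is precisely what the commutativity identity accomplishes, and it is the same device used in Lemma~\ref{lem:existenceOfTraceMapOnL2H1AndL2X}.
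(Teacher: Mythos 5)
Your proposal is correct and follows exactly the route the paper intends: the paper proves this lemma by the remark ``By the same reasoning as above,'' referring to the proof of Lemma \ref{lem:existenceOfTraceMapOnL2H1AndL2X}, which is precisely the pointwise definition, the commutativity with the pushforwards for measurability, and the uniform boundedness of the trace maps for the norm estimate. Your write-up simply makes explicit what the paper leaves implicit.
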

\subsubsection{Some uniform bounds}
When we work with a time-dependent manifold $M=\Gamma(t)$, we would like the constants in the gradient bounds \eqref{eq:harmonicExtensionGradientBound} and \eqref{eq:truncatedHarmonicExtensionGradientBound} to be independent of time. The space $H^{1\slash 2}(\Gamma(t))$ is equivalent to $W^{1\slash 2, 2}(\Gamma(t))$ with an equivalence of norms, as we mentioned in the introduction. However, the constants in the equivalence of norms result will depend on $t$ and we have no information as to in what way the dependence is. This means that one has to be careful whenever one uses estimates from \S \ref{sec:fractionalLaplacianOnCompactManifolds} involving the $H^{1\slash 2}(\Gamma(t))$ or $H(\Gamma(t))$ norm in the evolving set-up. For this reason, we need the bounds in the next two lemmas. 

\begin{lem}\label{lem:boundOnGradientEL2C}
There exists a constant $C > 0$ such that for all $t \in [0,T]$ and all $u \in H^{1\slash 2}(\Gamma(t))$,
\[\norm{\nablabgt \ext{\mathcal{E}}_t u}{L^2(\C(t))} \leq C\norm{u-\mean u}{W^{1\slash 2, 2}(\Gamma(t))}.\]
\end{lem}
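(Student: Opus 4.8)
The plan is to reduce the estimate to the time-independent gradient bound \eqref{eq:harmonicExtensionGradientBound} by transporting everything back to the reference surface $\Gamma_0$, and then to control the resulting quantity by the Gagliardo norm using the trace estimates already available. First I would recall from Theorem \ref{thm:harmonicExtensionExistence} that $\ext{\nablabgt}(\ext{\mathcal{E}}_t u) = \nablabgt \mathcal{E}_t(u - \mean u)$, so that $\norm{\nablabgt \ext{\mathcal{E}}_t u}{L^2(\C(t))}^2 = \seminorm{u - \mean u}{H(\Gamma(t))}^2$ by \eqref{eq:harmonicExtensionGradientBound}. The difficulty, flagged in the paragraph preceding the lemma, is precisely that this $H(\Gamma(t))$ seminorm cannot be compared to $\norm{u - \mean u}{W^{1\slash 2,2}(\Gamma(t))}$ with a $t$-uniform constant via the abstract equivalence of norms, since that equivalence has unknown $t$-dependence. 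So instead I would use the characterisation $\seminorm{u - \mean u}{H(\Gamma(t))}^2 = 2\mathcal{J}(\mathcal{E}_t(u - \mean u))$ together with the minimisation property of the harmonic extension: for \emph{any} $w \in H^1(\C(t))$ with $\mathcal{T}_t w = u - \mean u$, we have $\seminorm{u - \mean u}{H(\Gamma(t))}^2 \leq 2\mathcal{J}(w) = \norm{\nablabgt w}{L^2(\C(t))}^2$.

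The key step is then to construct a good competitor $w$ on $\C(t)$ whose gradient is controlled uniformly in $t$ by the Gagliardo norm of $u - \mean u$. The natural choice is to pull back a fixed extension operator on $\C_0$: given $u \in H^{1\slash 2}(\Gamma(t))$, set $u_0 := \phi_{\Gamma,-t}(u - \mean u) \in W^{1\slash 2,2}(\Gamma_0)$, let $E_0 u_0 \in H^1(\C_0)$ be a continuous-right-inverse extension of $u_0$ (available on the fixed surface $\Gamma_0$, with a fixed constant), and put $w := \phi_{\C,t}(E_0 u_0) \in H^1(\C(t))$. By Lemma \ref{lem:traceCommutesWithPhi}, $\mathcal{T}_t w = \phi_{\Gamma,t}(\mathcal{T}_0 E_0 u_0) = \phi_{\Gamma,t} u_0 = u - \mean u$, so $w$ is admissible. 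Then, chaining the estimates, $\norm{\nablabgt \ext{\mathcal{E}}_t u}{L^2(\C(t))} \leq \norm{\nablabgt w}{L^2(\C(t))} \leq C_1 \seminorm{E_0 u_0}{H^1(\C_0)} \leq C_2 \norm{u_0}{W^{1\slash 2,2}(\Gamma_0)} \leq C_3 \norm{u - \mean u}{W^{1\slash 2,2}(\Gamma(t))}$, where the second inequality uses \eqref{eq:H1SeminormBounded} (uniform boundedness of $\phi_{\C,t}$ on the $H^1$ seminorm), the third uses boundedness of the fixed extension operator $E_0$ on $\Gamma_0$, and the last uses the uniform boundedness of $\phi_{\Gamma,-t}\colon W^{1\slash 2,2}(\Gamma(t)) \to W^{1\slash 2,2}(\Gamma_0)$ (which holds by the assumptions in \S\ref{sec:functionSpaces} that made $L^2_{W^{1\slash 2,2}}$ well-defined).

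The main obstacle is genuinely conceptual rather than computational: one must avoid ever invoking the $t$-dependent equivalence between $H^{1\slash 2}(\Gamma(t))$ and $W^{1\slash 2,2}(\Gamma(t))$. The device above sidesteps it by doing all ``unknown'' comparisons on the single fixed surface $\Gamma_0$ (where constants are just constants) and only using the transport maps $\phi_{\Gamma,t}$, $\phi_{\C,t}$, which \emph{are} known to be uniformly bounded by Assumption \ref{ass:onHypersurfaces}. One minor technical point to check is that the fixed extension operator $E_0$ can indeed be taken bounded from $W^{1\slash 2,2}(\Gamma_0)$ (equivalently $H^{1\slash 2}(\Gamma_0)$) into $H^1(\C_0)$ with control of the full $H^1(\C_0)$ norm — this is the standard continuous right inverse of the trace $\mathcal{T}_0$ cited in \S\ref{sec:sobolevSpacesOnSemiInfiniteCylinders}, and on a fixed domain its norm is simply a constant. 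A cosmetic alternative, if one prefers to keep $\mean u$ in the picture, is to replace $E_0 u_0$ by $E_0 u_0 \cdot \psi_\rho$ using the cut-off of Definition \ref{defn:cutoffFunction} so that the competitor has compact support in $y$, but this is not needed since $E_0 u_0$ is already in $H^1(\C_0)$.
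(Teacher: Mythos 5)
Your proposal is correct and is essentially the paper's own argument: the paper uses the identical competitor $U(t)=\phi_{\C,t}\mathcal{R}_0\phi_{\Gamma,-t}u$ (fixed right inverse of the trace on $\Gamma_0$, transported by the uniformly bounded maps $\phi_{\C,t}$, $\phi_{\Gamma,-t}$), and your appeal to the energy-minimisation property of $\mathcal{E}_t$ is just a repackaging of the paper's step of testing the weak formulation with $\mathcal{E}_tu-U$ and applying Young's inequality. No gaps.
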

\begin{proof}
Let $\mean u=0$ and set $U(t) := \phi_{\C,t}\mathcal{R}_0\phi_{\Gamma, -t}u \in H^1(\C(t))$ where $\mathcal{R}_0\colon H^{1\slash 2}(\Gamma_0) \to H^1(\C_0)$ is the right continuous inverse of the trace operator. Note that 
\[\norm{U}{H^1(\C(t))} \leq C_0\norm{\mathcal{R}_0\phi_{\Gamma, -t}u}{H^1(\C_0)}\leq C_1\norm{\phi_{\Gamma, -t}u}{H^{1\slash 2}(\Gamma_0)}\leq C_2\norm{\phi_{\Gamma, -t}u}{W^{1\slash 2, 2}(\Gamma_0)}\leq  C_3\norm{u}{W^{1\slash 2, 2}(\Gamma(t))}.\]
Also, we have $\mathcal{T}_tU = \mathcal{T}_t\phi_{\C,t}\mathcal{R}_0\phi_{\Gamma, -t}u = \phi_{\Gamma,t}\mathcal{T}_0\mathcal{R}_0\phi_{\Gamma, -t}u = u$ by Lemma \ref{lem:traceCommutesWithPhi}. So the function $\eta = \mathcal{E}_tu - U \in H^1(\C(t))$ can be taken as an admissible test function in the weak formulation for $\mathcal{E}_tu$, and doing so yields
\begin{align*}
\int_{\C(t)} |\nablabgt \mathcal{E}_tu|^2 &=\int_{\C(t)} \nablabgt \mathcal{E}_tu \nablabgt U
\leq  \frac{1}{2}\int_{\C(t)}|\nablabgt \mathcal{E}_tu|^2 + \frac{C_4}{2}\norm{u}{W^{1\slash 2, 2}(\Gamma)}^2.
\end{align*}
\end{proof}
\begin{lem}\label{lem:graddBoundOnTruncatedWithT}There exist constants $C_1$, $C_2 > 0$ such that for all $t \in [0,T]$ and all $u \in H^{1\slash 2}(\Gamma(t))$,
\[\norm{\nablabgt \ext{\mathcal{E}}_{R,t}u}{L^2(\C_R(t))}^2 \leq C_1\norm{u-\mean u}{W^{1\slash 2, 2}(\Gamma(t)}^2 + \frac{C_2}{R^2}\norm{u-\mean u}{L^2(\Gamma(t))}^2 + \frac{2|\mean u|^2}{R}|\Gamma|.\]
\end{lem}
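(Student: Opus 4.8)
The plan is to derive the estimate from the \emph{untruncated} bound of Lemma~\ref{lem:boundOnGradientEL2C} together with the decay estimate of Lemma~\ref{lem:expDecay}, while being careful that every constant can be taken independent of both $t$ and $R$ --- this uniformity is the whole point of the lemma, since the constants relating $W^{1\slash2,2}(\Gamma(t))$ to $H^{1\slash2}(\Gamma(t))\cong H(\Gamma(t))$ are $t$-dependent in an uncontrolled way, so one may not simply transplant \eqref{eq:truncatedHarmonicExtensionGradientBound} and convert norms.

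First I would split off the mean value exactly as in Theorem~\ref{thm:truncatedHarmonicExtensionExistence}: writing $w:=u-\mean u$, we have $\ext{\mathcal{E}}_{R,t}u=\mathcal{E}_{R,t}w+\tfrac{R-y}{R}\mean u$, so $\nablabgt\ext{\mathcal{E}}_{R,t}u$ is the sum of $\nablabgt\mathcal{E}_{R,t}w$ and the purely vertical field $(0,-\mean u/R)$. These two fields are $L^2(\C_R(t))$-orthogonal, because $\int_{\Gamma(t)}\partial_y\mathcal{E}_{R,t}w(\cdot,y)=\partial_y(|\Gamma(t)|\,\mean{\mathcal{E}_{R,t}w}(y))=0$ since $\mean{\mathcal{E}_{R,t}w}(y)=0$ for every $y$ (Theorem~\ref{thm:truncatedHarmonicExtensionExistence}). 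Hence
\[\norm{\nablabgt\ext{\mathcal{E}}_{R,t}u}{L^2(\C_R(t))}^2=\norm{\nablabgt\mathcal{E}_{R,t}w}{L^2(\C_R(t))}^2+\tfrac{1}{R}|\mean u|^2|\Gamma(t)|,\]
and the last term is at most $\tfrac{2}{R}|\mean u|^2|\Gamma|$. It remains to bound $\norm{\nablabgt\mathcal{E}_{R,t}w}{L^2(\C_R(t))}$, for $w$ with $\mean w=0$, uniformly in $t$ and $R$.

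Since $\mathcal{E}_{R,t}w$ vanishes on $\Gamma(t)\times\{R\}$, the zero-extension $\mathscr{Z}_R$ applies to it and, being an isometry (it simply pads the gradient by zeros past $y=R$), gives $\norm{\nablabgt\mathcal{E}_{R,t}w}{L^2(\C_R(t))}=\norm{\nablabgt\mathscr{Z}_R\mathcal{E}_{R,t}w}{L^2(\C(t))}$. By the triangle inequality and $(a+b)^2\le 2a^2+2b^2$, with $\mathcal{E}_tw=\ext{\mathcal{E}}_tw$ as $\mean w=0$,
\[\norm{\nablabgt\mathscr{Z}_R\mathcal{E}_{R,t}w}{L^2(\C(t))}^2\le 2\norm{\nablabgt\mathcal{E}_tw}{L^2(\C(t))}^2+2\norm{\nablabgt(\mathcal{E}_tw-\mathscr{Z}_R\mathcal{E}_{R,t}w)}{L^2(\C(t))}^2.\]
The first term is handled directly by Lemma~\ref{lem:boundOnGradientEL2C}: $\norm{\nablabgt\mathcal{E}_tw}{L^2(\C(t))}\le C\norm{w}{W^{1\slash2,2}(\Gamma(t))}$ with $C$ independent of $t$. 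For the second term, Lemma~\ref{lem:expDecay} with $M=\Gamma(t)$ and $\mean w=0$ gives
\[\norm{\nablabgt(\mathcal{E}_tw-\mathscr{Z}_R\mathcal{E}_{R,t}w)}{L^2(\C(t))}^2\le 3e^{-R\sqrt{\lambda_1(t)}}\seminorm{w}{H(\Gamma(t))}^2+\tfrac{2}{R}e^{-2R\sqrt{\lambda_1(t)}}\norm{w}{L^2(\Gamma(t))}^2.\]
Now $\seminorm{w}{H(\Gamma(t))}^2=\norm{(-\Delta_{\Gamma(t)})^{1\slash4}w}{L^2(\Gamma(t))}^2=\norm{\nablabgt\mathcal{E}_tw}{L^2(\C(t))}^2$ by \eqref{eq:harmonicExtensionGradientBound}, hence is again $\le C^2\norm{w}{W^{1\slash2,2}(\Gamma(t))}^2$; by \eqref{eq:eigenvalueEstimate} we replace $\lambda_1(t)$ by the uniform lower bound $\lambda_1$ in the exponentials, use $e^{-R\sqrt{\lambda_1}}\le1$, and use $\sup_{r>0}re^{-2r\sqrt{\lambda_1}}<\infty$ to get $\tfrac{1}{R}e^{-2R\sqrt{\lambda_1}}\le C/R^2$. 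Collecting the pieces and noting $\norm{w}{L^2(\Gamma(t))}=\norm{u-\mean u}{L^2(\Gamma(t))}$ yields the claim with $t$- and $R$-independent $C_1,C_2$.

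The only genuine difficulty is the uniformity bookkeeping just described; once one decides to route the $H(\Gamma(t))$-seminorm through Lemma~\ref{lem:boundOnGradientEL2C} (which is the $t$-uniform bridge to $W^{1\slash2,2}(\Gamma(t))$) and to make the exponential factors of Lemma~\ref{lem:expDecay} uniform via \eqref{eq:eigenvalueEstimate}, everything else is routine. (Alternatively, one could bypass Lemma~\ref{lem:expDecay} and argue exactly as in the proof of Lemma~\ref{lem:boundOnGradientEL2C}: test the weak formulation for $\mathcal{E}_{R,t}w$ against $\mathcal{E}_{R,t}w$ minus a $t$- and $R$-uniform $H^1(\C_R(t))$-extension of $w$ that vanishes at $y=R$ --- for instance, the untruncated right inverse of the trace multiplied by a fixed cut-off supported in $[0,2]$, valid for $R\ge2$, with small $R$ treated by rescaling --- and apply Young's inequality.)
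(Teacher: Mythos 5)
Your proof is correct, and the uniformity bookkeeping — which is indeed the entire point of the lemma — is handled properly: you convert the $H(\Gamma(t))$-seminorm appearing in Lemma \ref{lem:expDecay} back to $\norm{\nablabgt\mathcal{E}_t w}{L^2(\C(t))}$ via \eqref{eq:harmonicExtensionGradientBound} and then to $\norm{w}{W^{1\slash2,2}(\Gamma(t))}$ via Lemma \ref{lem:boundOnGradientEL2C}, and you make the exponential prefactors uniform via \eqref{eq:eigenvalueEstimate}. The treatment of the mean part by orthogonality is exactly what the paper does (the pointwise-in-$y$ orthogonality is already recorded at the end of the proof of Theorem \ref{thm:truncatedHarmonicExtensionExistence}). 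Where you differ from the paper is in the mean-zero part: the paper argues in one step by testing the weak formulation for $\mathcal{E}_{R,t}w$ with $\eta=\mathcal{E}_{R,t}w-\frac{R-y}{R}\mathcal{E}_t w\in H^1_0(\C_R(t))$ and applying Young's inequality, after which the two residual terms $\norm{\nablabgt\mathcal{E}_tw}{L^2}$ and $R^{-2}\norm{\mathcal{E}_tw}{L^2}$ are bounded by Lemma \ref{lem:boundOnGradientEL2C} and by \eqref{eq:harmonicExtensionL2Bound} together with \eqref{eq:eigenvalueEstimate}; you instead go through the triangle inequality and the decay estimate of Lemma \ref{lem:expDecay}. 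Since Lemma \ref{lem:expDecay} is itself proved by essentially the same test-function device, your route is a repackaging that reuses an existing estimate rather than redoing the variational comparison; it costs slightly larger constants but nothing else, and both routes rest on the same two $t$-uniform inputs. Your parenthetical alternative at the end is, with a different competitor extension, essentially the paper's actual argument.
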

\begin{proof}
Suppose $\mean u=0$ and let $\eta = {\mathcal{E}}_{R,t}u-\frac{R-y}{R}{\mathcal{E}}_t u \in H^1_0(\C_R(t))$ which we take as the test function in the weak formulation for $\mathcal{E}_{R,t}u$:
\begin{align*}
 \int_{\C_R(t)}|\nablabgt {\mathcal{E}}_{R,t}u|^2 
&\leq \frac{1}{2}\int_{\C_R(t)}|\nablabgt {\mathcal{E}}_{R,t}u|^2 + \frac{1}{R^2}\left|\nablabgt \left((R-y){\mathcal{E}}_tu\right)\right|^2,
\end{align*}
and this gives
\begin{align*}
\int_{\C_R(t)}|\nablabgt {\mathcal{E}}_{R,t}u|^2 
\leq 2\int_{\C_R(t)}4|\nablabgt {\mathcal{E}}_tu|^2 + \frac{1}{R^2}|{\mathcal{E}}_tu|^2 \leq C_1\norm{u}{W^{1\slash 2, 2}}^2 + \frac{C_2}{R^2}\norm{u}{L^2(\Gamma(t))}^2,
\end{align*} 
where we replaced the integral over $\C_R(t)$ by one over $\C(t)$ (this is why we need $\mean u=0$) and used Lemma \ref{lem:boundOnGradientEL2C} and \eqref{eq:harmonicExtensionL2Bound} in conjunction with \eqref{eq:eigenvalueEstimate}. The $\mean{u}\neq 0$ case follows from the above and 
\begin{align*}
\int_{\C_R(t)}|\nablabgt \ext{\mathcal{E}}_{R,t}u|^2 &=\int_{\C_R(t)}\left|\nablabgt {\mathcal{E}}_{R,t}(u-\mean u) +\frac{1}{R} \nablabgt (R-y)\mean u\right|^2.
\end{align*}
\end{proof}
\subsection{Truncations}\label{sec:truncations}
Let $\Gamma$ be a smooth hypersurface. Define the truncation $T_k \colon \mathbb{R} \to \mathbb{R}$ at height $k$ by
\[T_k(x) 
=\max(x, -k) + \min(x, k) - x= \begin{cases}
k\sign(x) &:{ |x| \geq k}\\
x &: |x| < k.
\end{cases}
\]
Both $T_k\colon L^2(\Gamma) \to L^2(\Gamma)$ and $T_k\colon H^1(\Gamma) \to H^1(\Gamma)$ are bounded continuous maps. The Lipschitz nature of the function $\max(\cdot, 0)$ 
implies that $T_k \colon W^{1\slash 2,2}(\Gamma) \to W^{1\slash 2,2}(\Gamma)$ is bounded and by Lemma \ref{lem:nemytskiiH12} it is also continuous. Furthermore, the chain rule for weakly differentiable functions $u$ gives
\begin{equation*}\label{eq:derivativeFact}
\frac{d}{dz}(T_ku(z)) 
= \chi_{\{|u(z)| < k\}}(z)\frac{d}{dz}u(z)
\end{equation*}
for almost every $z$. See \cite[Lemma 2.89]{carl2007nonsmooth}) and  the discussion after Theorem 4.3.6 in \cite{carbone2001unbounded} for these facts on a domain $\Omega$.

Now we discuss truncations over cylinders. Suppose $f \in C^1(\mathbb{R})$ with $f'$ bounded and $f(0)=0$. The chain rule
$\nablabg f(v) = f'(v)\nablabg v$ for $v \in H^1(\C)$ can be proved by the standard argument: approximate $v$ by $v_n \in \mathcal{D}([0,\infty);\mathcal{D}(\Gamma))$, prove the identity for $v_n$ and pass to the limit using continuity of $f'$ and the dominated convergence theorem (DCT). This then allows us to show that
\[\nablabg v^+ = \chi_{\{v \geq 0\}}\nablabg v 
\]
(almost everywhere) by approximating $r \mapsto (r)^+$ by $C^1$ functions with bounded derivatives, the chain rule and then the passage to the limit in the approximations (see \cite[Lemma 1.19]{Heinonen}). This will imply that if $v$ and $w$ are in $H^1(\C)$, then $\max(v,w) \in H^1(\C)$ and
\[\nablabg \max(v,w) =\begin{cases}
\nablabg v &: \text{if $v \geq w$}\\
\nablabg w &: \text{otherwise}.
\end{cases}\]
Since $v = v^+ - v^-$, we have
$\nabla v|_{\{v=0\}} = 0$ almost everywhere.  Also, if $v_n, w_n$ are such that $v_n \to v$ and $w_n \to w$ in $H^1(\C)$, then $\max(v_n,w_n) \to \max(v,w)$ in $H^1(\C)$ \cite[Lemma 1.22]{Heinonen}. Therefore, $T_k(v) \in H^1(\C)$ whenever $v \in H^1(\C)$. Furthermore, $T_k(v) \to v$ in $H^1(\C)$ as $k \to \infty$ and $T_k\colon H^1(\C) \to H^1(\C)$ is continuous. If $v \in L^2_{H^1(\C)}$, then $T_k(v) \in L^2_{H^1(\C)}$ too, since $\phi_{\C,-t}T_k(v(t)) = T_k(\phi_{\C,-t}v(t))$ and $T_k(\tilde v) \in L^2(0,T;H^1(\C_0))$ whenever $\tilde v \in L^2(0,T;H^1(\C_0))$.

Clearly, all of this applies if we replace $\C$ with $\C_R$ and in that case we can drop the requirement $f(0)=0$.
\subsection{Integration by parts}We will need the following integration by parts result which is comparable to a result in \cite{gagneux1995analyse} and \cite[Lemma 7.1]{Droniou2007}.
\begin{lem}\label{lem:weakerIBP}Let $f\colon \mathbb{R} \to \mathbb{R}$ with $f(0)=0$ satisfy either
\begin{enumerate}[label=\textup{(\Alph*)}]
\item\label{item:A} $f$ is $C^1$ and Lipschitz, or
\item\label{item:B} $f$ is $C^0$ and piecewise $C^1$ with $f'=0$ outside a compact set $K \subset \subset \mathbb{R},$
\end{enumerate}
and define $F(s) = \int_0^s f(r)\;\mathrm{d}r$. Then for all $u \in \mathbb{W}(W^{1\slash 2,2}, W^{-1\slash 2,2})$, the following formula holds:
\begin{equation}\label{eq:tp}
\int_0^T \langle \dot u(t), f(u(t)) \rangle_{} =\int_{\Gamma(T)}F(u(T)) -\int_{\Gamma_0}F(u_0) - \int_0^T\int_{\Gamma(t)}F(u(t))\sgradt \cdot \mathbf w(t).
\end{equation}
\end{lem}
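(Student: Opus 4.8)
The plan is to reduce the formula \eqref{eq:tp} to the known integration-by-parts (transport) formula on evolving spaces by an approximation argument, treating the two cases of the hypothesis simultaneously as far as possible and then splitting where needed. First I would recall the base case: for $u \in \mathbb{W}(W^{1\slash 2,2}, W^{-1\slash 2,2})$ and a \emph{linear} test function, or more precisely for products of functions in this space, one has the transport identity $\int_0^T \langle \dot u(t), v(t)\rangle + \int_0^T \langle \dot v(t), u(t)\rangle = \int_{\Gamma(T)} u(T)v(T) - \int_{\Gamma_0}u_0 v_0 - \int_0^T\int_{\Gamma(t)} u(t)v(t)\,\sgradt\cdot\mathbf w(t)$, which is exactly the sort of formula established in \cite{Alphonse2014b, Alphonse2014a} (and built into the definition of the weak material derivative quoted just before Lemma \ref{lem:nemytskiiH12}). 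The target formula is the ``nonlinear'' version with $v = f(u)$ and $F$ playing the role of the primitive; the difficulty is that $f(u)$ need not have a material derivative and even if it did the chain-rule identity $\langle \dot u, f(u)\rangle = \tfrac{d}{dt}\int F(u) + (\text{transport term})$ has to be justified in a space where $u$ is only $W^{1\slash 2,2}$-valued in time.

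The key steps, in order, would be: (1) Pull everything back to the reference surface $\Gamma_0$ via $\phi_{\Gamma,-(\cdot)}$, so that $\tilde u := \phi_{\Gamma,-(\cdot)}u \in \mathcal{W}(W^{1\slash 2,2},W^{-1\slash 2,2}) = \{\tilde u \in L^2(0,T;W^{1\slash 2,2}(\Gamma_0)) \mid \tilde u' \in L^2(0,T;W^{-1\slash 2,2}(\Gamma_0))\}$, which is a standard Sobolev--Bochner space to which the classical Lions--Magenes type results apply. (2) Regularise in time: mollify $\tilde u$ to get $\tilde u_\varepsilon \in C^\infty([0,T];W^{1\slash 2,2}(\Gamma_0))$ with $\tilde u_\varepsilon \to \tilde u$ in $L^2(0,T;W^{1\slash 2,2}(\Gamma_0))$ and $\tilde u_\varepsilon' \to \tilde u'$ in $L^2(0,T;W^{-1\slash 2,2}(\Gamma_0))$; for smooth-in-time functions the chain rule $\tfrac{d}{dt}\int_{\Gamma_0} F(\tilde u_\varepsilon) J^0_t = \langle \tilde u_\varepsilon', f(\tilde u_\varepsilon)\rangle + (\text{Jacobian term})$ is elementary (here one uses $F\in C^1$, $F(0)=0$, and the Nemytskii continuity of $f$ on $W^{1\slash 2,2}$ from Lemma \ref{lem:nemytskiiH12}). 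Integrating over $[0,T]$ and pushing forward gives the desired formula for $\tilde u_\varepsilon$, i.e.\ for the pushed-forward approximants $u_\varepsilon := \phi_{\Gamma,(\cdot)}\tilde u_\varepsilon$. (3) Pass to the limit $\varepsilon\to0$: the left-hand side $\int_0^T\langle \dot u_\varepsilon, f(u_\varepsilon)\rangle$ converges to $\int_0^T\langle\dot u, f(u)\rangle$ because $\dot u_\varepsilon \to \dot u$ in $L^2_{W^{-1\slash 2,2}}$ and $f(u_\varepsilon)\to f(u)$ in $L^2_{W^{1\slash 2,2}}$ (the latter using the Lipschitz bound $\norm{f(u_\varepsilon)-f(u)}{W^{1\slash 2,2}}\le \Lip(f)\norm{u_\varepsilon-u}{W^{1\slash 2,2}}$ in case \ref{item:A}, and the continuity part of Lemma \ref{lem:nemytskiiH12} together with the Lipschitz bound in case \ref{item:B}); the boundary terms $\int_{\Gamma(T)}F(u_\varepsilon(T)) \to \int_{\Gamma(T)}F(u(T))$ and similarly at $t=0$ use the continuous embedding $\mathbb{W}(W^{1\slash 2,2},W^{-1\slash 2,2}) \hookrightarrow C^0_{L^2}$ (so that $u_\varepsilon(T)\to u(T)$, $u_\varepsilon(0)\to u(0)=u_0$ in $L^2$) plus $|F(a)-F(b)|\le \Lip(f)(|a|+|b|)|a-b|$ and the $L^2$-boundedness; and the transport term $\int_0^T\int_{\Gamma(t)}F(u_\varepsilon)\,\sgradt\cdot\mathbf w$ converges since $\sgradt\cdot\mathbf w$ is uniformly bounded (Assumption \ref{ass:onHypersurfaces}) and $F(u_\varepsilon)\to F(u)$ in $L^1_{L^1}$.

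The main obstacle I anticipate is Step (2)–(3) in \emph{case \ref{item:B}}, where $f$ is only piecewise $C^1$: then $F$ is $C^1$ but $f=F'$ is merely Lipschitz-on-compacts and continuous, so the chain rule $\tfrac{d}{dt}F(\tilde u_\varepsilon) = f(\tilde u_\varepsilon)\tilde u_\varepsilon'$ at the level of $W^{-1\slash 2,2}$-valued functions needs care — one cannot naively differentiate $F\circ\tilde u_\varepsilon$ in the fractional space. The cleanest fix is to first approximate $f$ itself by $C^1$ Lipschitz functions $f_n$ (e.g.\ mollify; since $f'=0$ outside a compact $K$ one keeps $f_n$ supported derivatives in a fixed neighbourhood of $K$ and $f_n\to f$ uniformly, $f_n(0)=0$), apply the already-established case \ref{item:A} to each $f_n$, and then let $n\to\infty$ using uniform convergence $f_n\to f$ and $F_n\to F$ together with the uniform ($n$-independent) Lipschitz and sublinear-growth bounds to pass the limit inside all three types of terms. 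One should double-check that the material-derivative pairing $\int_0^T\langle\dot u, f_n(u)\rangle \to \int_0^T\langle\dot u, f(u)\rangle$, which follows from $f_n(u)\to f(u)$ in $L^2_{W^{1\slash 2,2}}$ — this uses the continuity statement of Lemma \ref{lem:nemytskiiH12} applied to the difference-type estimate, or more simply the dominated convergence theorem on the pulled-back functions together with the uniform $W^{1\slash 2,2}$-bound $\norm{f_n(u(t))}{W^{1\slash 2,2}} \le \Lip(f_n)\norm{u(t)}{W^{1\slash 2,2}} \le C\norm{u(t)}{W^{1\slash 2,2}}$. Everything else is routine once the double approximation (in $t$ and, for case \ref{item:B}, in $f$) is set up correctly.
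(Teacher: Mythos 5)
Your overall architecture is sound and runs parallel to the paper's: approximate $u$ by time-regular functions for which the transport/chain-rule identity is elementary, then pass to the limit using the Nemytskii continuity of $f$ on $W^{1\slash 2,2}$ and the growth bound $|F(a)-F(b)|\leq \Lip(f)\max(|a|,|b|)|a-b|$. The differences are cosmetic rather than substantive: the paper approximates $u$ directly by elements of $\mathbb{W}(W^{1\slash 2,2},L^2)\cap L^\infty_{L^\infty}$ (resp.\ $\mathbb{W}(W^{1\slash 2,2},L^2)$ in case \ref{item:B}) and invokes the transport theorem for $F(u_n)$, whereas you pull back to $\Gamma_0$ and mollify in time, proving the scalar chain rule for $t\mapsto\int_{\Gamma_0}F(\tilde u_\varepsilon)J^0_t$ by hand; and for case \ref{item:B} the paper exploits that $F$ is globally Lipschitz (since $f$ is bounded) and uses the same $u_n$-approximation plus the embedding into $C^0_{L^1}$, whereas you regularise $f$ and reduce to case \ref{item:A}. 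Both routes work; yours avoids needing the $L^\infty_{L^\infty}$ bound on the approximants because you never require $F(u_\varepsilon)\in L^2_{W^{1\slash 2,2}}$, only $F(\tilde u_\varepsilon(t))\in L^1(\Gamma_0)$.

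There is, however, one concretely false step: the inequality $\norm{f(u_\varepsilon)-f(u)}{W^{1\slash 2,2}}\leq \Lip(f)\norm{u_\varepsilon-u}{W^{1\slash 2,2}}$ that you invoke in case \ref{item:A} does not hold. The superposition operator induced by a Lipschitz $f$ is \emph{bounded} on $W^{1\slash 2,2}$ (that is the estimate $\norm{f(v)}{W^{1\slash 2,2}}\leq\Lip(f)\norm{v}{W^{1\slash 2,2}}$ of Lemma \ref{lem:nemytskiiH12}) and sequentially continuous, but it is not Lipschitz: the Gagliardo seminorm of $f(u)-f(v)$ involves increments $f(u(x))-f(v(x))-f(u(y))+f(v(y))$, which are not controlled by $|u(x)-v(x)-u(y)+v(y)|$. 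This is precisely why the continuity assertion in Lemma \ref{lem:nemytskiiH12} is nontrivial. The repair is exactly what you already use in case \ref{item:B}: extract a subsequence with $u_\varepsilon(t)\to u(t)$ in $W^{1\slash 2,2}(\Gamma(t))$ for a.e.\ $t$, apply the continuity of Lemma \ref{lem:nemytskiiH12} pointwise in $t$, and conclude $f(u_\varepsilon)\to f(u)$ in $L^2_{W^{1\slash 2,2}}$ by the generalised dominated convergence theorem with the dominating bound $2\Lip(f)^2(\norm{u_\varepsilon(t)}{W^{1\slash 2,2}}^2+\norm{u(t)}{W^{1\slash 2,2}}^2)$, whose time integral converges. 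With that substitution your argument is complete.
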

\begin{proof}
We begin with case \ref{item:A}. Let $u_n \in \mathbb{W}(W^{1\slash 2,2}, L^2) \cap L^\infty_{L^\infty}$ be such that $u_n \to u$ in $\mathbb{W}(W^{1\slash 2,2}, W^{-1\slash 2,2})$. 
Note that $F(u_n) \in \mathbb{W}(W^{1\slash 2,2}, L^2)$. To see this, observe that
\[|F(s) - F(t)| \leq \int_t^s |f(r)| \leq \norm{f'}{\infty}\max(|s|,|t|)|s-t|,\]
so for almost all $t$, $\int_{\Gamma(t)}|F(u_n(t))|^2 \leq  |\Gamma|\norm{f'}{\infty}^2\norm{u_n(t)}{L^\infty(\Gamma(t))}^4$ and since $\norm{u_n(t)}{L^\infty(\Gamma(t))}$ is bounded almost everywhere by $\norm{u_n}{L^\infty_{L^\infty}}$, we have $F(u_n) \in L^2_{L^2}$. We also see that
\begin{align}
|F(u_n(t,x))-F(u_n(t,y))| &\leq \norm{f'}{\infty}\max(|u_n(t,x)|,|u_n(t,y)|)|u_n(t,x)-u_n(t,y)|\label{eq:ibpf1}
\end{align}
which shows that $F(u_n) \in L^2_{W^{1\slash 2,2}}$ since $u_n \in L^\infty_{L^\infty}$. Likewise, $\md(F(u_n)) = f(u_n)\dot u_n \in L^2_{L^2}$. This means that the transport theorem is valid and the desired formula \eqref{eq:tp} holds for the $u_n$ and  now
we must pass to the limit in $n$. For almost every $t$, for a subsequence, $u_{n}(t) \to u(t)$ in $W^{1 \slash 2, 2}(\Gamma(t))$, so by Lemma \ref{lem:nemytskiiH12}, $\norm{f(u_n(t))-f(u(t))}{W^{1 \slash 2, 2}(\Gamma(t))}^2 \to 0$ and 
\[\norm{f(u_n(t))-f(u(t))}{W^{1 \slash 2, 2}(\Gamma(t))}^2 \leq 2\norm{f'}{\infty}^2\left(\norm{u_n(t)}{W^{1 \slash 2, 2}(\Gamma(t))}^2 + \norm{u(t)}{W^{1 \slash 2, 2}(\Gamma(t))}^2\right).\]
The right hand side converges to $4\norm{f'}{\infty}^2\norm{u(t)}{W^{1 \slash 2, 2}(\Gamma(t))}^2$ whilst the integral of the right hand side converges to $4\norm{f'}{\infty}^2\int_0^T\norm{u(t)}{W^{1 \slash 2, 2}(\Gamma(t))}^2$ since $u_n \to u$ in $L^2_{W^{1\slash 2, 2}}$. Then the generalised DCT gives $f(u_n) \to f(u)$ in $L^2_{W^{1 \slash 2, 2}}$. For the remaining terms, we can use \eqref{eq:ibpf1} in conjuncation with $\max(|a|,|b|) \leq |a|+|b|$ and Cauchy--Schwarz.

For the case \ref{item:B}, note that $f(u) \in L^2_{W^{1\slash 2,2}}$ whenever $u \in L^2_{W^{1\slash 2,2}}$ as $f$ is Lipschitz (since $f'$ is bounded a.e. and $f$ is absolutely continuous). Given $u \in \mathbb{W}(W^{1\slash 2,2}, W^{-1\slash 2,2})$, there exist $u_n \in \mathbb{W}(W^{1\slash 2,2}, L^2)$ such that $u_n \to u$ in $\mathbb{W}(W^{1\slash 2,2}, W^{-1\slash 2,2})$. We have that $F(u_n) \in \mathbb{W}(W^{1\slash 2,2}, L^2)$ 
because $F$ is Lipschitz. So then we can use the standard integration by parts formula to obtain the desired formula for $u_n$. Then again we need to pass to the limit. 
We have that $u_n \to u$ in $C^0_{L^1}$ by $\mathbb{W}(W^{1\slash 2,2}, W^{-1\slash 2,2}) \hookrightarrow C^0_{L^2} \hookrightarrow C^0_{L^1}$. This implies that $F(u_n) \to F(u)$ in $C^0_{L^1}$ because $F$ is Lipschitz; 
this takes care of the right hand side of the formula. To finish, since $f$ also is Lipschitz, $f(u_n) \to f(u)$ in $L^2_{W^{1\slash 2,2}}$ due to the same reasoning as before.
\end{proof}
\section{The harmonic extension problems on evolving spaces}\label{sec:fractionalLaplacianOnL2X}
In this section, we shall consider \eqref{eq:introL2XHarmonicExtensionProblem} and also the following ``$L^2_{H^1(\C_R)}$ truncated harmonic extension problem": given $u \in L^2_{W^{1\slash 2, 2}}$, find $v \in L^2_{H^1(\C_R)}$ such that 
\begin{equation}\label{eq:truncatedHarmonicExtensionL2XStrongForm}
\begin{aligned}
\Deltabg v &=0, \quad \mathbb{T}_{R,y=0}v = u, \quad \mathbb{T}_{R,y=R}v = 0.
\end{aligned}
\end{equation}
As explained in the introduction, we study these problems in order to derive measurability in time of $\ext{\mathcal{E}}_t$ and $\ext{\mathcal{E}}_{R,t}$  which we do not automatically get since each $\ext{\mathcal{E}}_t$ and $\ext{\mathcal{E}}_{R,t}$ is constructed individually in time.
\subsection{The harmonic extension of $u \in L^2_{W^{1 \slash 2, 2}}$}
\begin{lem}
For every $u \in L^2_{W^{1\slash 2,2}}$ with $\int_{\Gamma(t)} u(t) = 0$ for a.a. $t$, there exists a $U \in L^2_{H^{1}(\C)}$ with $\mathbb{T}U = u$ and $\int_{\Gamma(t)}U(t,y) = 0$ a.e. $t$ and for all $y$.
\end{lem}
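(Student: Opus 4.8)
The plan is to construct $U$ explicitly by first building an extension $\tilde U$ on the reference cylinder $\C_0$ and then transporting it to $\C(t)$ via $\phi_{\C,t}$, taking care at each stage that the spatial mean on each slice $\Gamma(t) \times \{y\}$ stays zero. First I would apply the pushforward $\phi_{\Gamma,-t}$ to $u$: by the definition of $L^2_{W^{1/2,2}}$, the function $\tilde u := \phi_{\Gamma,-(\cdot)}u(\cdot)$ lies in $L^2(0,T;W^{1/2,2}(\Gamma_0))$. The mean-zero condition $\int_{\Gamma(t)}u(t)=0$ does \emph{not} transfer directly to $\tilde u$ because of the Jacobian $J^0_t$, so instead of asking for mean-zero on $\Gamma_0$ I would simply work with the continuous right inverse $\mathcal{R}_0 \colon W^{1/2,2}(\Gamma_0) \to H^1(\C_0)$ of the trace operator (as used in the proof of Lemma~\ref{lem:boundOnGradientEL2C}), set $\tilde V(t) := \mathcal{R}_0 \tilde u(t)$, and then transport: $V(t) := \phi_{\C,t}\tilde V(t) \in H^1(\C(t))$. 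By Lemma~\ref{lem:traceCommutesWithPhi}, $\mathcal{T}_t V(t) = \phi_{\Gamma,t}\mathcal{T}_0 \mathcal{R}_0 \tilde u(t) = \phi_{\Gamma,t}\tilde u(t) = u(t)$, so $\mathbb{T}V = u$; measurability in $t$ and the bound $\norm{V}{L^2_{H^1(\C)}} \lesssim \norm{u}{L^2_{W^{1/2,2}}}$ follow from the uniform boundedness of $\phi_{\C,t}$, $\mathcal{R}_0$, and the norm equivalence between $W^{1/2,2}(\Gamma_0)$ and $H^{1/2}(\Gamma_0)$, exactly as in Lemma~\ref{lem:existenceOfTraceMapOnL2H1AndL2X}.

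The remaining—and essential—step is to correct $V$ so that each horizontal slice has zero mean while preserving the trace and the regularity. Here I would subtract off the slice-wise spatial average: define
\[
U(t,x,y) := V(t,x,y) - \frac{1}{|\Gamma(t)|}\int_{\Gamma(t)} V(t,\cdot,y)\,;
\]
equivalently $U(t) = V(t) - \ext{V(t)}$ in the mean-value notation of the paper. By construction $\int_{\Gamma(t)}U(t,y) = 0$ for all $y$ and a.e.\ $t$. On the boundary slice $y=0$, the correction term is $\frac{1}{|\Gamma(t)|}\int_{\Gamma(t)}u(t) = 0$ by hypothesis, so $\mathbb{T}U = \mathbb{T}V = u$. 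To see $U \in L^2_{H^1(\C)}$, I would check that the map $v \mapsto v - \ext{v}$ (fibre-wise mean subtraction) is bounded $H^1(\C_0) \to H^1(\C_0)$—this is a consequence of Lemma~\ref{lem:meanValueContinuous}, which tells us $y \mapsto \ext{v}(y) \in H^1(0,\infty)$ with $\partial_y \ext{v}(y) = \frac{1}{|\Gamma_0|}\int_{\Gamma_0}\partial_y v(y)$, so that $\norm{\nablabg \ext{v}}{L^2(\C_0)} \leq \norm{\nablabg v}{L^2(\C_0)}$; then pull this through $\phi_{\C,t}$. Measurability of $t \mapsto \norm{U(t)}{H^1(\C(t))}$ is inherited from that of $V$ together with measurability of $t \mapsto |\Gamma(t)|$ (which is continuous under Assumption~\ref{ass:onHypersurfaces}) and of $t \mapsto \int_{\Gamma(t)}V(t,\cdot,y)$.

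The main obstacle I anticipate is the bookkeeping around which space the mean-zero normalisation lives in: one must resist the temptation to normalise on $\Gamma_0$ (which would clash with the Jacobian and break $\mathbb{T}U = u$) and instead normalise \emph{after} transporting back to $\Gamma(t)$, at the price of having to verify that the fibre-wise mean subtraction is a bounded operator on $H^1(\C(t))$ uniformly in $t$ and is compatible with the measurability structure of $L^2_{H^1(\C)}$. Everything else is a routine assembly of Lemmas~\ref{lem:meanValueContinuous}, \ref{lem:traceCommutesWithPhi}, and~\ref{lem:existenceOfTraceMapOnL2H1AndL2X} together with the uniform boundedness of the right inverse of the trace. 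An alternative, slightly slicker route would be to simply take $U(t) := \ext{\mathcal{E}}_t u(t)$—which already has zero slice-means by Theorem~\ref{thm:harmonicExtensionExistence}—provided one first establishes the measurability of $t \mapsto \ext{\mathcal{E}}_t u(t)$; but since that measurability is precisely what the $L^2_X$ harmonic extension theory is being built to prove, the explicit construction above is the safer choice at this stage of the paper.
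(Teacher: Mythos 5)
Your construction is correct, but it is genuinely different from the paper's. The paper produces $U$ in a single formula,
\[
U(t) = \phi_{\C,t}\left(\frac{1}{J^0_t}\,\mathcal{E}_0\left(\frac{\phi_{\Gamma,-t}u(t)}{\phi_{\Gamma,-t}J^t_0}\right)\right),
\]
exploiting the identity $\phi_{\Gamma,-t}u(t)\slash\phi_{\Gamma,-t}J^t_0 = J^0_t\,\phi_{\Gamma,-t}u(t)$: this weighted pullback has zero mean on $\Gamma_0$ precisely because $\int_{\Gamma(t)}u(t)=0$, so the harmonic extension $\mathcal{E}_0$ on the \emph{fixed} cylinder $\C_0$ has zero $\Gamma_0$-mean on every slice, and dividing by $J^0_t$ before pushing forward simultaneously restores the trace $u(t)$ and converts zero $\Gamma_0$-mean into zero $\Gamma(t)$-mean, since the Jacobian cancels in the change of variables. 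Measurability then reduces to Nemytskii-type considerations for the single fixed operator $\mathcal{E}_0$ acting on an $L^2(0,T;\cdot)$-valued function. You instead take a generic lift $V(t)=\phi_{\C,t}\mathcal{R}_0\phi_{\Gamma,-t}u(t)$ and correct it a posteriori by subtracting the slice-wise mean over $\Gamma(t)$; the trace survives because the subtracted function is continuous in $y$ (Lemma \ref{lem:meanValueContinuous}) and vanishes at $y=0$ by hypothesis, and the correction stays in $H^1(\C(t))$ by Jensen's inequality applied to $\partial_y$. Both routes are sound. The paper's approach buys the mean-zero property structurally from $\mathcal{E}_0$ at the price of carrying the Jacobian weights; yours isolates the mean correction as a separate bounded operation, at the price of the extra (but routine) verifications you correctly flag, namely that fibre-wise mean subtraction is bounded on $H^1(\C(t))$ uniformly in $t$ and compatible with the measurability structure of $L^2_{H^1(\C)}$. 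You are also right to reject the tempting shortcut $U(t):=\ext{\mathcal{E}}_tu(t)$, whose measurability is exactly what this section is building towards.
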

\begin{proof}
Define 
\[U(t) = \phi_{\C,t}\left(\frac{1}{J^0_t} \mathcal{E}_0\left(\frac{\phi_{\Gamma, -t}u(t)}{\phi_{\Gamma, -t}J^t_0}\right)\right)\]
which satisfies $U \in L^2_{H^1(\C)}$ since 
$\phi_{\C, -t}U(t) \in L^2(0,T;H^1(\C_0))$ by smoothness on $J^t_0$ and by using \eqref{eq:harmonicExtensionL2Bound} and \eqref{eq:harmonicExtensionGradientBound} (measurability can be inferred from considerations of Nemytskii maps \cite[\S 3.4]{gasinski}). It is easy to check that $U$ verifies the desired properties using Lemma \ref{lem:traceCommutesWithPhi}.
\end{proof}
\begin{theorem}[The harmonic extension problem in the space $L^2_{X}$]\label{thm:wellPosednessHarmonicExtensionL2X}
There exists a map $\ext{\mathbb{E}}\colon L^2_{W^{1\slash 2,2}} \to L^2_{X(\C)}$ such that given $u \in L^2_{W^{1\slash 2,2}}$,  $v=\ext{\mathbb{E}}u$ is the unique weak solution of \eqref{eq:introL2XHarmonicExtensionProblem} satisfying $\ext{\mathbb{T}}v = u$ in $L^2_{W^{1\slash 2,2}}$,
\begin{equation}\label{eq:harmonicExtensionL2XWeakForm}
\begin{aligned}
\int_0^T \int_{\C(t)} \ext{\nablabgt} v(t) \nablabgt \eta(t)  &=0 &&\text{for all $\eta \in L^2_{H^1}(\C)$ with $\mathbb{T}\eta = 0$},
\end{aligned}
\end{equation}
and $\frac{1}{|\Gamma(t)|}\int_{\Gamma(t)} (\ext{\mathbb{E}}u)(t) = \mean{u(t)}$. When $\mean{u(t)}=0$ for a.a. $t$, we write the solution as $\mathbb{E}u$. The map $\ext{\mathbb{E}}$ satisfies $\ext{\mathbb{E}}u = \mathbb{E}(u-\mean u) + \mean u$. 
\end{theorem}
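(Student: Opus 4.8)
The plan is to construct $v=\ext{\mathbb E}u$ abstractly by a Hilbert-space (Riesz) argument and only afterwards recognise its time slices as the pointwise harmonic extensions $\mathcal E_tu(t)$, which is precisely what delivers measurability in time. First I would treat the case $\mean{u(t)}=0$ for a.a.\ $t$. Observe that $N:=\{\eta\in L^2_{X(\C)}\mid\ext{\mathbb T}\eta=0\}$ is a closed subspace of the Hilbert space $L^2_{X(\C)}$, and that on $N$ the norm reduces to $\eta\mapsto\norm{\nablabg\eta}{L^2_{L^2(\C)}}$ because the $X(\C(t))$-norm of a function with vanishing trace equals its gradient norm; hence $(\eta,\zeta)\mapsto\int_0^T\int_{\C(t)}\nablabgt\eta(t)\,\nablabgt\zeta(t)$ is exactly the inner product inducing the Hilbert structure of $N$. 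Let $U\in L^2_{H^1(\C)}\subset L^2_{X(\C)}$ be the extension of $u$ with $\mathbb TU=u$ supplied by the preceding lemma. Then $\eta\mapsto-\int_0^T\int_{\C(t)}\nablabgt U(t)\,\nablabgt\eta(t)$ is a bounded linear functional on $N$ (of norm at most $\norm{\nablabg U}{L^2_{L^2(\C)}}$), so the Riesz representation theorem yields a unique $w\in N$ with
\[\int_0^T\int_{\C(t)}\nablabgt w(t)\,\nablabgt\eta(t)=-\int_0^T\int_{\C(t)}\nablabgt U(t)\,\nablabgt\eta(t)\qquad\text{for all }\eta\in N.\]
Setting $\mathbb Eu:=U+w\in L^2_{X(\C)}$, the identities $w\in N$ and $\mathbb TU=u$ give $\ext{\mathbb T}(\mathbb Eu)=u$, and since every $\eta\in L^2_{H^1(\C)}$ with $\mathbb T\eta=0$ belongs to $N$, the weak formulation \eqref{eq:harmonicExtensionL2XWeakForm} holds by the choice of $w$.

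Next I would identify the slices. Fix a countable set $\{\eta^j_0\}_j$ dense in $\{\eta_0\in H^1(\C_0)\mid\mathcal T_0\eta_0=0\}$. For each $j$ and each measurable $I\subseteq[0,T]$, the function $t\mapsto\chi_I(t)\,\phi_{\C,t}\eta^j_0$ lies in $L^2_{H^1(\C)}$ and has vanishing superposition trace by Lemma~\ref{lem:traceCommutesWithPhi}; inserting it into \eqref{eq:harmonicExtensionL2XWeakForm} and letting $I$ vary shows that, for a.a.\ $t$ and every $j$,
\[\int_{\C(t)}\nablabgt(\mathbb Eu)(t)\,\nablabgt(\phi_{\C,t}\eta^j_0)=0.\]
Since $\phi_{\C,t}$ restricts to a homeomorphism between the zero-trace subspaces of $H^1(\C_0)$ and $H^1(\C(t))$, a density argument upgrades this to $\int_{\C(t)}\nablabgt(\mathbb Eu)(t)\,\nablabgt\eta=0$ for every zero-trace $\eta\in H^1(\C(t))$; combined with $\ext{\mathcal T}_t(\mathbb Eu)(t)=(\ext{\mathbb T}\,\mathbb Eu)(t)=u(t)$ for a.a.\ $t$, the uniqueness clause of Theorem~\ref{thm:harmonicExtensionExistence} forces $(\mathbb Eu)(t)=\mathcal E_tu(t)$ a.a.\ $t$. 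This in turn yields $\frac1{|\Gamma(t)|}\int_{\Gamma(t)}(\mathbb Eu)(t)=0$ from Theorem~\ref{thm:harmonicExtensionExistence}, and applying the same localisation to the difference of two candidate $L^2_{X(\C)}$-solutions (so that $u=0$) gives uniqueness.

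For general $u$ I would define $\ext{\mathbb E}u:=\mathbb E(u-\mean u)+\mean u$, where $\mean u$ denotes $t\mapsto\mean{u(t)}$ viewed as the constant on $\C(t)$. The point to check is $\mean u\in L^2_{X(\C)}$: the scalar $t\mapsto\mean{u(t)}=\tfrac1{|\Gamma(t)|}\int_{\Gamma(t)}u(t)$ is measurable (write $\int_{\Gamma(t)}u(t)=\int_{\Gamma_0}(\phi_{\Gamma,-t}u(t))\,J^0_t$) and in $L^2(0,T)$ by Cauchy--Schwarz together with $\inf_t|\Gamma(t)|>0$, while Remark~\ref{rem:constants} identifies a constant $c$ with an element of $X(\C(t))$ of norm $\sqrt{|\Gamma(t)|}\,|c|$ and shows $\bar\phi_{\C,-t}$ fixes constants, so $\phi_{\C,-(\cdot)}\mean u(\cdot)\in L^2(0,T;X(\C_0))$. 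Then $\ext{\mathbb T}(\ext{\mathbb E}u)=(u-\mean u)+\mean u=u$, the weak form \eqref{eq:harmonicExtensionL2XWeakForm} passes over from the mean-zero case because the constant has zero gradient in $X(\C(t))$ (Remark~\ref{rem:constants}), and $\frac1{|\Gamma(t)|}\int_{\Gamma(t)}(\ext{\mathbb E}u)(t)=0+\mean{u(t)}$ as asserted. \textbf{The main obstacle} is exactly the measurability of $t\mapsto\mathcal E_tu(t)$ — i.e.\ that the separately constructed extensions assemble into a single element of $L^2_{X(\C)}$ — and the resolution is the two-step device above: obtain existence first by Riesz on $N$, which is legitimate because the Dirichlet integral is a genuine norm there (so no Poincar\'e inequality on the unbounded cylinder is required), and only then pin down the slices by localisation together with the fixed-manifold uniqueness.
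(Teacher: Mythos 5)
Your proposal is correct, but the existence part runs along a genuinely different track from the paper's. The paper transforms to zero boundary data with the mean-zero extension $U$, works in the subspace $\hat X\subset L^2_{H^1(\C)}$ of functions with \emph{both} zero trace and slice-wise zero mean, and minimises the energy $J$ there; the coercivity of $J$ in the $H^1(\C)$ norm is bought precisely by the Poincar\'e inequality on the cross-sections (hence the mean-zero constraints), and a further step is then needed to remove the mean-zero restriction from the test functions. You instead apply Riesz representation on $N=\{\eta\in L^2_{X(\C)}\mid\ext{\mathbb T}\eta=0\}$, observing that the Dirichlet form \emph{is} the inner product there because the $X(\C(t))$-norm of a zero-trace function collapses to its gradient norm; this eliminates the Poincar\'e inequality, the mean-zero bookkeeping on $U$ and on the test class, and is arguably cleaner. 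The trade-offs are minor: the paper's route produces $v\in L^2_{H^1(\C)}$ in the mean-zero case directly (yours only gives $v\in L^2_{X(\C)}$ a priori, recovering the better regularity slice-wise afterwards, which is all the theorem claims), and the paper obtains $\mean{(\ext{\mathbb E}u)(t)}=\mean{u(t)}$ from the construction whereas you obtain it from the slice identification. Your slice identification itself is the paper's Lemma \ref{lem:EagreesPointwise} (proved there separately with $\psi\in C_c^\infty(0,T)$ rather than $\chi_I$, the same localisation idea); folding it into the proof is what makes your uniqueness argument self-contained, and your explicit verification that $\mean u\in L^2_{X(\C)}$ via Remark \ref{rem:constants} is a point the paper glosses over. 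No gaps.
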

\begin{proof}
First, suppose that $\mean{u(t)} = 0$ for a.e. $t$. Let us transform the equation to one with zero initial trace. 
By the previous lemma, there exists a $U \in L^2_{H^1(\C)}$ with $\mathbb{T}U = u$ and crucially
$\mean{U(t,y)} = 0$ for a.a. $t$ and all $y$. Set $d:= v-U \in L^2_{H^1(\C)}$ which satisfies
\begin{equation*}
\begin{aligned}
\Delta_{\bar g} d &=-\Delta_{\bar g}U\quad\text{and}\quad \mathbb{T}d = 0.
\end{aligned}
\end{equation*}
The space $\hat X := \{ d \in L^2_{H^1(\C)} \mid \mathbb{T}d = 0 \text{ and } \mean{d(t,y)}= 0\text{ for all $y$ and a.a. $t$}\}$, being a closed linear subspace of $L^2_{H^1(\C)}$ (thanks to the continuity of $\mathbb{T}$ and $y \mapsto \mean{d(t,y)}$), is a separable Hilbert space. Define $J\colon \hat X \to \mathbb{R}$ by
\[J(d) = \frac{1}{2}\int_0^T\int_{\C(t)} |\nablabgt d(t)|^2 + \int_0^T \int_{\C(t)}\nablabgt U(t)\nablabgt d(t),\]
and observe that $J$ is coercive through the use of Poincar\'e's and Young's inequalities. 
Since $J$ is also continuous, by \cite[Theorem 5.25]{demengel2012functional}, $J$ has a unique minimiser $d$ satisfying $J'(d,w) = (\nablabg  d + \nablabg U, \nablabg w)_{L^2_{L^2(\C)}}=0$ 
for all $w \in \hat X$. Recalling $v=d+U$, we find that $v \in L^2_{H^1(\C)}$ with $\mathbb{T}v = u$ and $\mean v = 0$ satisfies
\[\int_0^T\int_{\C(t)} \nablabgt  v(t)\nablabgt w(t) =0\quad\text{for all $w \in \hat X$.}\]
To remove the mean value condition on the test functions, let $\eta \in L^2_{H^1(\C)}$ with $\mathbb{T}\eta = 0$ and test with $w(t) := \eta(t) - \mean{\eta(t)}$ (this satisfies $\mathbb{T}w =0$ and $\mean{w(t)} = \mean{\eta(t)} - \mean {\eta(t)}=0$, so is admissible):
\begin{align*}
0 
&= \int_0^T\int_{\C(t)} \nablabgt  v(t)\nablabgt \eta(t) - \int_0^T\int_0^\infty\partial_y\left(\frac{1}{|\Gamma(t)|}\int_{\Gamma(t)} v(t)\right)\partial_y\left(\int_{\Gamma(t)}\eta(t)\right)\\
&= \int_0^T\int_{\C(t)} \nablabgt  v(t)\nablabgt \eta(t)
\end{align*}
since $\mean{v(t)} = 0$ for a.a. $t$ and all $y$. This settles the problem for the case $\mean{u(t)} = 0$. For general $u \in L^2_{W^{1\slash 2,2}}$, 
define $\ext{\mathbb{E}}u := \mathbb{E}(u-\mean u) + \mean{u}\in L^2_{X(\C)}$ which satisfies $\frac{1}{|\Gamma(t)|}\int_{\Gamma(t)}\ext{\mathbb{E}}u(t) = \mean{u(t)}$ and
\begin{align*}
\int_0^T\int_{\C(t)}\ext{\nablabgt} (\ext{\mathbb{E}}u)(t) \nablabgt \eta(t) = \int_0^T\int_{\C(t)}\nablabgt  (\mathbb{E}(u- \mean u))(t) \nablabgt \eta = 0\quad\text{for all $\eta \in L^2_{H^1(\C)}$ with $\mathbb T\eta = 0$}.
\end{align*}
\end{proof}
We need to elucidate the link between $\mathbb{E}$ and the family of maps $\{\mathcal{E}_t\}_{t \in [0,T]}$ from Definition \ref{defn:harmonicExtensionsWitht}.
\begin{lem}\label{lem:EagreesPointwise}Let $u \in L^2_{W^{1\slash 2,2}}$. For almost all $t$, $(\ext{\mathbb{E}}u)(t) = \ext{\mathcal{E}}_tu(t)$ in $X(\C(t))$.
\end{lem}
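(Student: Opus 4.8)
The strategy is to show that for almost every $t$ the time-slice $v(t)$, where $v:=\ext{\mathbb{E}}u$, satisfies precisely the characterisation of $\ext{\mathcal{E}}_t u(t)$ furnished by Theorem \ref{thm:harmonicExtensionExistence} (applied with $M=\Gamma(t)$), and then to invoke the uniqueness statement of that theorem. Two of the three ingredients are immediate: since $v\in L^2_{X(\C)}$ we have $v(t)\in X(\C(t))$ for a.a.\ $t$, and since $\ext{\mathbb{T}}v=u$ in $L^2_{W^{1\slash 2,2}}$, Lemma \ref{lem:existenceOfTraceMapOnL2H1AndL2X} gives $\ext{\mathcal{T}}_t v(t)=u(t)$ for a.a.\ $t$. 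So the only point requiring work is to pass from the space--time orthogonality \eqref{eq:harmonicExtensionL2XWeakForm} to the pointwise-in-time orthogonality against zero-trace $H^1(\C(t))$-functions.

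\textbf{Localisation in time.} Fix $\psi\in H^1(\C_0)$ with $\mathcal{T}_0\psi=0$ and $\theta\in L^\infty(0,T)$, and use $\eta(t):=\theta(t)\,\phi_{\C,t}\psi$ as a test function in \eqref{eq:harmonicExtensionL2XWeakForm}. This $\eta$ lies in $L^2_{H^1(\C)}$ because $\phi_{\C,-(\cdot)}\eta(\cdot)=\theta(\cdot)\psi\in L^2(0,T;H^1(\C_0))$, and $\mathbb{T}\eta=0$ because $\mathcal{T}_t(\theta(t)\phi_{\C,t}\psi)=\theta(t)\phi_{\Gamma,t}\mathcal{T}_0\psi=0$ by Lemma \ref{lem:traceCommutesWithPhi}. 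As $\theta$ is arbitrary, the scalar map $t\mapsto \int_{\C(t)}\nablabgt v(t)\,\nablabgt(\phi_{\C,t}\psi)$ vanishes for a.e.\ $t$, with an exceptional null set depending on $\psi$. Now choose a countable set $\{\psi_j\}$ dense in the closed subspace $\{\psi\in H^1(\C_0)\mid \mathcal{T}_0\psi=0\}$ of the separable space $H^1(\C_0)$, and let $N$ be the union of the corresponding null sets. For $t\notin N$ the relation holds for every $\psi_j$; since $\psi\mapsto\int_{\C(t)}\nablabgt v(t)\,\nablabgt(\phi_{\C,t}\psi)$ is continuous on $H^1(\C_0)$ — by Cauchy--Schwarz together with the uniform boundedness of $\phi_{\C,t}\colon H^1(\C_0)\to H^1(\C(t))$ established in \S\ref{sec:functionSpaces} — it holds for all $\psi\in H^1(\C_0)$ with $\mathcal{T}_0\psi=0$.

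\textbf{Conclusion.} For $t\notin N$ and an arbitrary $\eta\in H^1(\C(t))$ with $\mathcal{T}_t\eta=0$, set $\psi:=\phi_{\C,-t}\eta$; then $\phi_{\C,t}\psi=\eta$ and $\mathcal{T}_0\psi=\phi_{\Gamma,-t}\mathcal{T}_t\eta=0$ by the inverse version of Lemma \ref{lem:traceCommutesWithPhi}, so $\int_{\C(t)}\nablabgt v(t)\,\nablabgt\eta=0$. Hence, for a.a.\ $t$, $v(t)\in X(\C(t))$ has trace $u(t)$ and is orthogonal to the gradients of all zero-trace $H^1(\C(t))$-functions, i.e.\ $v(t)$ is a weak solution of \eqref{eq:prelim1} on $\C(t)$ with datum $u(t)$; the uniqueness part of Theorem \ref{thm:harmonicExtensionExistence} then gives $v(t)=\ext{\mathcal{E}}_t u(t)$ in $X(\C(t))$. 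The only mildly delicate point is the bookkeeping in this localisation step, namely arranging a single null set $N$ that works simultaneously for all test functions, but that is routine given the separability of $H^1(\C_0)$ and the continuity bounds already at hand.
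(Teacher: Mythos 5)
Your proof is correct and follows essentially the same route as the paper's: test \eqref{eq:harmonicExtensionL2XWeakForm} with time-localised pushforwards of zero-trace functions on $\C_0$, use the homeomorphism $\phi_{\C,t}$ to recover the pointwise weak formulation on $\C(t)$, and conclude by uniqueness in Theorem \ref{thm:harmonicExtensionExistence}. The only difference is that you make explicit the null-set bookkeeping via a countable dense subset of $\{\psi\in H^1(\C_0)\mid \mathcal{T}_0\psi=0\}$, a point the paper leaves implicit.
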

\begin{proof}
Pick $\psi \in C_c^\infty(0,T)$ and $v_0 \in H^1(\C_0)$ with $\mathcal{T}_0v_0 = 0$, then $\psi \phi_{\C,t}v_0 \in L^2_{H^1(\C)}$ with $\mathbb{T}(\psi\phi_{\C,t}v_0) = 0$, so it is an admissible test function in \eqref{eq:harmonicExtensionL2XWeakForm} and testing with it gives 
\[\int_{\C(t)} \nablabgt (\ext{\mathbb{E}}u)(t)\nablabgt \phi_{\C,t} v_0 = 0 \quad \text{for all $v_0 \in H^1(\C_0)$ with $\mathcal{T}_0v_0 = 0$, for almost all $t$.}\]
By the homeomorphism properties of $\phi_{\C,t}$, this is same as
\[\int_{\C(t)} \nablabgt (\ext{\mathbb{E}}u)(t)\nablabgt v_t = 0 \quad \text{for all $v_t \in H^1(\C(t))$ with $\mathcal{T}_tv_t = 0$, for almost all $t$,}\]
and since also $\ext{\mathcal{T}}_t(\ext{\mathbb{E}}u(t)) = u(t)$, we have $(\ext{\mathbb{E}}u)(t) = \ext{\mathcal{E}}_tu(t)$ by the uniqueness in Theorem \ref{thm:harmonicExtensionExistence}.
\end{proof}
Thanks to the the previous lemma, we can use the bound \eqref{eq:harmonicExtensionL2Bound} and Lemma \ref{lem:boundOnGradientEL2C} in conjunction with the eigenvalue estimate \eqref{eq:eigenvalueEstimate} to obtain the next result.
\begin{cor}\label{lem:boundsOnMathbbE}We have for $u \in L^2_{W^{1\slash 2,2}}$ with $\mean u=0$,
\[\norm{\mathbb{E}u}{L^2_{L^2(\C)}} \leq C\norm{u}{L^2_{L^2}}\quad\text{and}\quad \norm{\nablabg \mathbb{E}u}{L^2_{L^2(\C)}} \leq C\norm{u}{L^2_{W^{1\slash 2,2}}}.\]
\end{cor}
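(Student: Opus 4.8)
The plan is to reduce both estimates to the corresponding pointwise-in-time bounds for the fixed-manifold harmonic extensions $\mathcal{E}_t$, which are already available, and then integrate in $t$. The essential ingredient is Lemma \ref{lem:EagreesPointwise}: for $u \in L^2_{W^{1\slash 2,2}}$ with $\mean u = 0$ we have $(\ext{\mathbb{E}}u)(t) = \ext{\mathcal{E}}_t u(t) = \mathcal{E}_t u(t)$ in $X(\C(t))$ for almost every $t$ (the last equality since $\mean{u(t)}=0$ a.e.). Moreover, by Theorem \ref{thm:wellPosednessHarmonicExtensionL2X}, under the mean-zero hypothesis $\mathbb{E}u \in L^2_{H^1(\C)}$, so the extended gradient in the statement is the ordinary one and $(\nablabg \mathbb{E}u)(t) = \nablabgt \mathcal{E}_t u(t)$ for a.a. $t$; in particular $t \mapsto \norm{\mathcal{E}_t u(t)}{L^2(\C(t))}$ and $t\mapsto \norm{\nablabgt \mathcal{E}_t u(t)}{L^2(\C(t))}$ are measurable, being (components of) the evolving-space norm of the element $\mathbb{E}u \in L^2_{H^1(\C)}$.

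For the first inequality, I would fix $t$ and apply the bound \eqref{eq:harmonicExtensionL2Bound} of Theorem \ref{thm:harmonicExtensionExistence} with $M = \Gamma(t)$ to $\mathcal{E}_t u(t)$, using $\mean u = 0$ to write $u(t) - \mean{u(t)} = u(t)$; this gives $\norm{\mathcal{E}_t u(t)}{L^2(\C(t))}^2 \leq (2\lambda_1(t)^{1\slash 2})^{-1}\norm{u(t)}{L^2(\Gamma(t))}^2$. The eigenvalue assumption \eqref{eq:eigenvalueEstimate} replaces $\lambda_1(t)$ by the $t$-independent $\lambda_1$, and integrating over $[0,T]$ yields $\norm{\mathbb{E}u}{L^2_{L^2(\C)}}^2 \leq (2\lambda_1^{1\slash 2})^{-1}\norm{u}{L^2_{L^2}}^2$, which is the claim.

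For the gradient bound, I would invoke Lemma \ref{lem:boundOnGradientEL2C}, whose constant is already uniform in $t$ by design: for a.a. $t$, $\norm{\nablabgt \mathcal{E}_t u(t)}{L^2(\C(t))} \leq C\norm{u(t) - \mean{u(t)}}{W^{1\slash 2, 2}(\Gamma(t))} = C\norm{u(t)}{W^{1\slash 2, 2}(\Gamma(t))}$, again using $\mean u = 0$. Squaring, integrating in $t$, and using the pointwise identification above gives $\norm{\nablabg \mathbb{E}u}{L^2_{L^2(\C)}} \leq C\norm{u}{L^2_{W^{1\slash 2,2}}}$.

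There is no genuine obstacle here; the corollary is essentially a transcription of single-time estimates into the evolving Bochner norms. The only points needing (minor) care are that the constants be independent of $t$ — which is precisely why \eqref{eq:eigenvalueEstimate} and the uniform-in-$t$ Lemma \ref{lem:boundOnGradientEL2C} were established beforehand, rather than using the $t$-dependent equivalence between $H^{1\slash 2}(\Gamma(t))$ and $W^{1\slash 2,2}(\Gamma(t))$ directly — and the measurability of the integrands, which is inherited from $\mathbb{E}u \in L^2_{H^1(\C)}$ together with Lemma \ref{lem:EagreesPointwise}.
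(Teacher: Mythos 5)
Your proof is correct and follows exactly the route the paper takes: the paper's (one-line) justification of this corollary is precisely to combine Lemma \ref{lem:EagreesPointwise} with the bound \eqref{eq:harmonicExtensionL2Bound}, the eigenvalue estimate \eqref{eq:eigenvalueEstimate}, and Lemma \ref{lem:boundOnGradientEL2C}, then integrate in time. Your additional remarks on measurability and on why the uniform-in-$t$ constants matter are accurate and consistent with the paper's earlier discussion.
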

\subsection{The truncated harmonic extension of $u \in L^2_{W^{1\slash 2, 2}}$}
\begin{theorem}[The truncated harmonic extension problem in the space $L^2_{H^1(\C_R)}$]\label{thm:wellPosednessTruncatedHarmonicExtensionL2X}
There exists a map $\ext{\mathbb{E}}_R\colon L^2_{W^{1\slash 2, 2}} \to L^2_{H^1(\C_R)}$ such that given $u \in L^2_{W^{1\slash 2, 2}}$, $\ext{\mathbb{E}}_R u$ is the unique weak solution of \eqref{eq:truncatedHarmonicExtensionL2XStrongForm} satisfying $\mathbb{T}_{R,y=0}v = u$ and $\mathbb{T}_{R,y=R} =0$ in $L^2_{W^{1\slash 2,2}}$ and
\begin{equation}\label{eq:truncatedHarmonicExtensionL2XWeakForm}
\begin{aligned}
\int_0^T \int_{\C_R(t)}\nablabgt v(t) \nablabgt \eta(t) &= 0 &&\text{for all $\eta \in L^2_{H^1_0(\C_R)}$},
\end{aligned}
\end{equation}
and $\frac{1}{|\Gamma(t)|}\int_{\Gamma(t)} (\ext{\mathbb{E}}_Ru)(t) = \mean u(t)$. When $\mean{u(t)}=0$ for a.a. $t$, we write the solution as $\mathbb{E}_Ru$. 
\end{theorem}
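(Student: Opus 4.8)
The plan is to follow the proof of Theorem~\ref{thm:wellPosednessHarmonicExtensionL2X} almost verbatim, with the truncated cylinders and trace maps of \S\ref{sec:truncatedHarmonicExtension} replacing the untruncated ones; the truncated setting is in fact a little simpler, since $\norm{\nablabg\cdot}{L^2(\C_R(t))}$ is already an equivalent norm on $H^1_0(\C_R(t))$ (Poincar\'e on $[0,R]$, with constant independent of $t$), so no zero-mean subspace is needed for coercivity. The first step is to produce a lifting $U\in L^2_{H^1(\C_R)}$ of the boundary data, i.e.\ with $\mathbb{T}_{R,y=0}U=u$ and $\mathbb{T}_{R,y=R}U=0$. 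I would take $U(t):=\phi_{\C_R,t}\big(\psi_{R/4}(y)\,\mathcal{R}_0(\phi_{\Gamma,-t}u(t))\big)$, where $\mathcal{R}_0$ is a continuous right inverse of the $y=0$ trace into $H^1(\C_R(0))$ (available from \S\ref{sec:sobolevSpacesOnSemiInfiniteCylinders}) and $\psi_{R/4}$ is the cut-off of Definition~\ref{defn:cutoffFunction}, so that multiplying by $\psi_{R/4}$ leaves the $y=0$ trace unchanged but forces the $y=R$ trace to vanish. That $U\in L^2_{H^1(\C_R)}$ with the claimed traces follows exactly as in the construction in Lemma~\ref{lem:boundOnGradientEL2C} (boundedness and, via Nemytskii-map considerations, measurability in $t$) together with the truncated analogue of the commutation identity in Lemma~\ref{lem:traceCommutesWithPhi}.

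Next, setting $d:=v-U$ reduces the problem to finding $d$ in the separable Hilbert space $L^2_{H^1_0(\C_R)}$ (well-defined by \S\ref{sec:functionSpaces}) that minimises
\[
J(d):=\frac12\int_0^T\int_{\C_R(t)}|\nablabgt d(t)|^2+\int_0^T\int_{\C_R(t)}\nablabgt U(t)\,\nablabgt d(t).
\]
This $J$ is continuous, strictly convex, and coercive — by Young's inequality together with the Poincar\'e equivalence $\norm{\nablabg d}{L^2_{L^2(\C_R)}}\simeq\norm{d}{L^2_{H^1_0(\C_R)}}$ — so \cite[Theorem~5.25]{demengel2012functional} furnishes a unique minimiser $d$, whose Euler--Lagrange equation reads $\int_0^T\int_{\C_R(t)}\nablabgt(d(t)+U(t))\,\nablabgt w(t)=0$ for all $w\in L^2_{H^1_0(\C_R)}$. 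Then $v:=d+U\in L^2_{H^1(\C_R)}$ satisfies \eqref{eq:truncatedHarmonicExtensionL2XWeakForm} with $\mathbb{T}_{R,y=0}v=u$ and $\mathbb{T}_{R,y=R}v=0$, and I take this as the definition of $\ext{\mathbb{E}}_Ru$ (and of $\mathbb{E}_Ru$ when $\mean{u(t)}=0$ a.e.). Uniqueness is immediate: the difference of two solutions lies in $L^2_{H^1_0(\C_R)}$, and testing the difference of the weak forms against it gives $\norm{\nablabg(\cdot)}{L^2_{L^2(\C_R)}}=0$, hence $0$ by Poincar\'e; linearity of $u\mapsto\ext{\mathbb{E}}_Ru$ then follows from uniqueness.

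Finally, the mean-value statement follows by testing \eqref{eq:truncatedHarmonicExtensionL2XWeakForm} with $x$-independent test functions $\eta(t)(x,y)=\psi(t)\zeta(y)$ for $\psi\in C_c^\infty(0,T)$ and $\zeta\in H^1_0(0,R)$ (which do lie in $L^2_{H^1_0(\C_R)}$, since $\phi_{\C_R,-t}$ acts only in $x$): this gives $\int_0^R\zeta'(y)\,\partial_y\mean{v(t,y)}\dy=0$ for a.a.\ $t$, so $y\mapsto\mean{v(t,y)}$ is affine on $[0,R]$, and since its boundary values are $\mean{u(t)}$ at $y=0$ and $0$ at $y=R$ one obtains $\mean{v(t,y)}=\tfrac{R-y}{R}\mean{u(t)}$; in particular $\tfrac{1}{|\Gamma(t)|}\int_{\Gamma(t)}(\ext{\mathbb{E}}_Ru)(t)$ equals $\mean{u(t)}$ on $y=0$, and $\mean{(\mathbb{E}_Ru)(t,y)}=0$ for all $y$ when $\mean{u(t)}=0$, consistently with Theorem~\ref{thm:truncatedHarmonicExtensionExistence}. (Alternatively, all of this drops out of the pointwise-in-time identification $(\ext{\mathbb{E}}_Ru)(t)=\ext{\mathcal{E}}_{R,t}u(t)$ for a.a.\ $t$, the exact analogue of Lemma~\ref{lem:EagreesPointwise}, obtained by testing \eqref{eq:truncatedHarmonicExtensionL2XWeakForm} with $\psi(t)\phi_{\C_R,t}v_0$ and invoking uniqueness in Theorem~\ref{thm:truncatedHarmonicExtensionExistence}.) I do not anticipate any real difficulty here: the only steps needing care are the measurability in $t$ of the lifting $U$ and the bookkeeping that the cut-off $\psi_{R/4}$ genuinely kills the $y=R$ trace, both routine given \S\ref{sec:functionSpaces} and \S\ref{sec:truncatedHarmonicExtension}; if anything plays the role of a ``main obstacle'' it is simply assembling $U$ in the correct evolving Bochner space, exactly as in the untruncated case.
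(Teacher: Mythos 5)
Your argument is correct and is essentially the paper's proof: the paper likewise lifts the boundary data, reduces to a homogeneous problem for $d=v-U$ on $L^2_{H^1_0(\C_R)}$ (where, exactly as you note, Poincar\'e in the $y$-direction gives coercivity with no mean-value restriction), and solves it by Lax--Milgram, which for this symmetric form is the same as your minimisation of $J$. The only difference is the choice of lifting: the paper takes $U=(R-y)\ext{\mathbb{E}}u\slash R$, reusing the untruncated extension of Theorem \ref{thm:wellPosednessHarmonicExtensionL2X} (with boundedness of the resulting functional coming from \eqref{eq:harmonicExtensionL2Bound} and Lemma \ref{lem:boundOnGradientEL2C}), whereas you assemble one from a right inverse of the trace and the cut-off $\psi_{R/4}$ --- both are valid.
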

\begin{proof}
We transform \eqref{eq:truncatedHarmonicExtensionL2XStrongForm} to having zero boundary conditions by setting $w=v-{(R-y)\ext{\mathbb{E}}u}\slash {R} \in L^2_{H^1(\C_R)}$; then 
\begin{equation*}
\begin{aligned}
\Deltabgt w(t) &= -\frac{1}{R}\Deltabgt\left((R-y)\ext{\mathbb{E}}u(t)\right) &&\text{on $\C_R(t)$}\\
w(t,x,0) &= w(t,x,R) = 0,
\end{aligned}
\end{equation*}
which, by Lax--Milgram, has a unique solution $w \in L^2_{H^1_0(\C_R)}$ satisfying
\begin{equation*}\label{eq:weakFormTruncatedZeroInitialDataL2L2}
\int_0^T \int_{\C_R(t)}\nablabgt w(t) \nablabgt \eta(t) = -\int_0^T \int_{\C_R(t)}\nablabgt \left(\frac{R-y}{R}\ext{\mathcal{E}}_t(u(t))\right)\nablabgt \eta(t)\quad \forall \eta \in L^2_{H^1_0(\C_R)}.
\end{equation*}
Indeed, define the bounded and coercive bilinear form $a\colon L^2_{H^1_0(\C_R)}\times L^2_{H^1_0(\C_R)} \to \mathbb{R}$ by the left hand side of the above equality 
and define $l\colon L^2_{H^1_0(\C_R)} \to \mathbb{R}$ by the right hand side, which is a bounded linear functional due to \eqref{eq:harmonicExtensionL2Bound} and Lemma \ref{lem:boundOnGradientEL2C}. 
It follows that $\ext{\mathbb{E}}_Ru := v:= w+{(R-y)\ext{\mathbb{E}}(u)}\slash {R} \in L^2_{H^1(\C_R)}$
satisfies $\mathbb{T}_{R,y=0}v = u$, $\mathbb{T}_{R,y=R}=0$ and \eqref{eq:truncatedHarmonicExtensionL2XWeakForm}.
\end{proof}
\begin{lem}\label{lem:ERagreesPointwise}Let $u \in L^2_{H^{1\slash 2}}$. For almost all $t$, $(\ext{\mathbb{E}}_Ru)(t) = \ext{\mathcal{E}_{R,t}}u(t)$ in $H^1(\C_R(t))$.
\end{lem}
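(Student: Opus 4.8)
The plan is to mimic exactly the proof of Lemma \ref{lem:EagreesPointwise}, since the situation is structurally identical: we have constructed $\ext{\mathbb{E}}_R u$ as a solution of the \emph{global-in-time} weak formulation \eqref{eq:truncatedHarmonicExtensionL2XWeakForm}, and we wish to identify its time-slices with the \emph{pointwise-in-time} harmonic extensions $\ext{\mathcal{E}}_{R,t}u(t)$ furnished by Theorem \ref{thm:truncatedHarmonicExtensionExistence}. The strategy is to insert a separated test function of the form $\psi(t)\phi_{\C_R,t}v_0$ into \eqref{eq:truncatedHarmonicExtensionL2XWeakForm}, strip off the time scalar $\psi$ using the fundamental lemma of the calculus of variations, and conclude by the uniqueness statement in Theorem \ref{thm:truncatedHarmonicExtensionExistence}.

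First I would pick $\psi \in C_c^\infty(0,T)$ and $v_0 \in H^1_0(\C_R(0))$ (the truncated-cylinder analogue of $v_0\in H^1(\C_0)$ with zero initial trace, here with zero trace at \emph{both} ends $y=0$ and $y=R$), and note that $\psi\,\phi_{\C_R,t}v_0 \in L^2_{H^1_0(\C_R)}$ is an admissible test function: this uses that $\phi_{\C_R,t}$ is a uniformly bounded homeomorphism $H^1(\C_R(0)) \to H^1(\C_R(t))$ and the analogue of Lemma \ref{lem:traceCommutesWithPhi} for the maps $\mathcal{T}_{R,t,y=0}$ and $\mathcal{T}_{R,t,y=R}$ (established in \S\ref{sec:functionSpaces}), which guarantees the zero-trace property is preserved. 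Testing \eqref{eq:truncatedHarmonicExtensionL2XWeakForm} with this function and using that $\psi$ is an arbitrary smooth compactly supported scalar, the fundamental lemma of the calculus of variations gives
\[
\int_{\C_R(t)} \nablabgt (\ext{\mathbb{E}}_R u)(t)\,\nablabgt \phi_{\C_R,t}v_0 = 0 \quad \text{for a.a.\ } t,
\]
for each fixed $v_0$; a separability argument over a countable dense set of such $v_0$ (with the $t$-null set chosen independent of $v_0$) then upgrades this to: for a.a.\ $t$, the identity holds for all $v_0 \in H^1_0(\C_R(0))$. By the homeomorphism properties of $\phi_{\C_R,t}$ this is the same as $\int_{\C_R(t)} \nablabgt (\ext{\mathbb{E}}_R u)(t)\,\nablabgt v_t = 0$ for all $v_t \in H^1_0(\C_R(t))$, i.e., the slice $(\ext{\mathbb{E}}_R u)(t)$ satisfies the pointwise weak formulation of the truncated problem \eqref{eq:prelim2} on $\Gamma(t)$.

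To close the argument I would verify the boundary conditions of the slice: $\mathbb{T}_{R,y=0}(\ext{\mathbb{E}}_R u) = u$ and $\mathbb{T}_{R,y=R}(\ext{\mathbb{E}}_R u) = 0$ in $L^2_{W^{1\slash 2,2}}$ by construction (Theorem \ref{thm:wellPosednessTruncatedHarmonicExtensionL2X}), hence by Lemma \ref{lem:existenceTraceMapsTruncatedL2H1CR} we get $\mathcal{T}_{R,t,y=0}(\ext{\mathbb{E}}_R u)(t) = u(t)$ and $\mathcal{T}_{R,t,y=R}(\ext{\mathbb{E}}_R u)(t) = 0$ for a.a.\ $t$. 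Therefore $(\ext{\mathbb{E}}_R u)(t)$ is \emph{a} weak solution of \eqref{eq:prelim2} with datum $u(t)$ on $\Gamma(t)$, and the uniqueness part of Theorem \ref{thm:truncatedHarmonicExtensionExistence} forces $(\ext{\mathbb{E}}_R u)(t) = \ext{\mathcal{E}}_{R,t}u(t)$ in $H^1(\C_R(t))$ for a.a.\ $t$. I do not expect any serious obstacle here — this is essentially a transcription of Lemma \ref{lem:EagreesPointwise} — but the one point requiring mild care is the bookkeeping of exceptional null sets in $t$: one must first fix the countable dense family, obtain a single null set off which all the slice identities hold simultaneously, and only then invoke pointwise uniqueness; the trace identities must likewise be arranged to hold on a common full-measure set.
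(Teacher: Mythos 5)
Your proposal is correct and follows exactly the route the paper intends: the paper's proof of this lemma is just the remark that it "follows just like Lemma \ref{lem:EagreesPointwise} since $\phi_{\C,t}\colon H^1_0(\C_R(0)) \to H^1_0(\C_R(t))$ is a homeomorphism," which is precisely the adaptation you carry out. Your extra care with the countable dense family and the common null set is a legitimate tightening of a step the paper leaves implicit, not a different argument.
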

This lemma follows just like Lemma \ref{lem:EagreesPointwise} since $\phi_{\C,t}\colon H^1_0(\C_R(0)) \to H^1_0(\C_R(t))$ is a homeomorphism, and it, along with Lemma \ref{lem:graddBoundOnTruncatedWithT}, implies the following.
\begin{cor}\label{lem:gradBoundTruncatedProblemL2H12}There exist constants $C_1$, $C_2 >0$ independent of $R$ such that
\begin{align*}
\norm{\ext{\mathbb{E}}_Ru}{L^2_{L^2(\C_R)}} &\leq C_1\norm{u}{L^2_{L^2}} + 2\sqrt{R|\Gamma|}\norm{\mean u}{L^2(0,T)}\\
\norm{\nablabg \ext{\mathbb{E}}_Ru}{L^2_{L^2(\C_R)}} &\leq C_2\norm{u}{L^2_{W^{1\slash 2, 2}}}\text{ if $R \geq 1$}.
\end{align*}
\end{cor}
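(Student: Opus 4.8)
The plan is to deduce both bounds by integrating over $[0,T]$ the pointwise-in-time estimates already available. By Lemma~\ref{lem:ERagreesPointwise}, $(\ext{\mathbb{E}}_Ru)(t) = \ext{\mathcal{E}}_{R,t}u(t)$ in $H^1(\C_R(t))$ for almost every $t$, and since $\ext{\mathbb{E}}_Ru \in L^2_{H^1(\C_R)}$ by Theorem~\ref{thm:wellPosednessTruncatedHarmonicExtensionL2X}, the maps $t \mapsto \norm{(\ext{\mathbb{E}}_Ru)(t)}{L^2(\C_R(t))}^2$ and $t \mapsto \norm{\nablabgt(\ext{\mathbb{E}}_Ru)(t)}{L^2(\C_R(t))}^2$ are measurable, so the integration is legitimate. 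It then remains to feed in \eqref{eq:truncatedHarmonicExtensionL2Bound} (applied at each time with $M = \Gamma(t)$) for the first bound and Lemma~\ref{lem:graddBoundOnTruncatedWithT} for the second, namely
\[\norm{\ext{\mathcal{E}}_{R,t}u(t)}{L^2(\C_R(t))}^2 \leq \frac{1}{2\lambda_1(t)^{1\slash 2}}\norm{u(t)-\mean{u(t)}}{L^2(\Gamma(t))}^2 + 4R|\Gamma(t)|\,|\mean{u(t)}|^2\]
and
\[\norm{\nablabgt \ext{\mathcal{E}}_{R,t}u(t)}{L^2(\C_R(t))}^2 \leq C_1\norm{u(t)-\mean{u(t)}}{W^{1\slash 2, 2}(\Gamma(t))}^2 + \frac{C_2}{R^2}\norm{u(t)-\mean{u(t)}}{L^2(\Gamma(t))}^2 + \frac{2|\mean{u(t)}|^2}{R}|\Gamma|.\]

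For the $L^2$ bound I would use $\norm{u-\mean u}{L^2(\Gamma(t))} \leq \norm{u}{L^2(\Gamma(t))}$, the eigenvalue estimate \eqref{eq:eigenvalueEstimate} to replace $\lambda_1(t)^{-1\slash 2}$ by $\lambda_1^{-1\slash 2}$, and $|\Gamma(t)| \leq |\Gamma|$; integrating in $t$ (with $\int_0^T|\mean{u(t)}|^2 = \norm{\mean u}{L^2(0,T)}^2$) and then applying $\sqrt{a+b} \leq \sqrt a + \sqrt b$ gives the first inequality with $C_1 = (2\lambda_1^{1\slash 2})^{-1\slash 2}$, the $\sqrt{R|\Gamma|}$ term being carried through unchanged. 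For the gradient bound the point is that every appearance of $R$ is of the harmless form $R^{-1}$ or $R^{-2}$, so assuming $R \geq 1$ these factors are $\leq 1$. Since the Gagliardo seminorm does not see constants, $\norm{u-\mean u}{W^{1\slash 2, 2}(\Gamma(t))} \leq \norm{u}{W^{1\slash 2, 2}(\Gamma(t))}$, and likewise $\norm{u-\mean u}{L^2(\Gamma(t))} \leq \norm{u}{L^2(\Gamma(t))} \leq \norm{u}{W^{1\slash 2, 2}(\Gamma(t))}$; for the last term, Cauchy--Schwarz gives $|\mean{u(t)}|^2 \leq |\Gamma(t)|^{-1}\norm{u(t)}{L^2(\Gamma(t))}^2$, and $|\Gamma(t)| = \int_{\Gamma_0}J^0_t$ is bounded below uniformly in $t$ by Assumption~\ref{ass:onHypersurfaces}, so $|\mean{u(t)}|^2|\Gamma| \leq C\norm{u(t)}{W^{1\slash 2, 2}(\Gamma(t))}^2$ with $C$ independent of $R$. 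Adding the three terms and integrating in $t$ yields $\norm{\nablabg \ext{\mathbb{E}}_Ru}{L^2_{L^2(\C_R)}}^2 \leq C\norm{u}{L^2_{W^{1\slash 2, 2}}}^2$, and taking square roots finishes the proof.

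There is no genuine obstacle here: the substance is entirely contained in the pointwise estimates of Theorem~\ref{thm:truncatedHarmonicExtensionExistence} and Lemma~\ref{lem:graddBoundOnTruncatedWithT}, together with the pointwise identification in Lemma~\ref{lem:ERagreesPointwise}. The only thing that needs a little care is the uniformity of the constants in $R$, which is guaranteed once $R \geq 1$ by the structure ($R^{-1}$, $R^{-2}$) of the $R$-dependence in those estimates, and the bookkeeping of the mean-value terms via Cauchy--Schwarz and the uniform positivity of $|\Gamma(t)|$.
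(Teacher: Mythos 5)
Your proposal is correct and follows exactly the route the paper intends: the corollary is stated as a consequence of the pointwise identification in Lemma \ref{lem:ERagreesPointwise} combined with \eqref{eq:truncatedHarmonicExtensionL2Bound} and Lemma \ref{lem:graddBoundOnTruncatedWithT}, which is precisely what you integrate in time. Your bookkeeping of the mean-value terms (seminorm blindness to constants, Cauchy--Schwarz for $|\mean{u(t)}|$, uniform positivity of $|\Gamma(t)|$, and $R^{-1},R^{-2}\leq 1$ for $R\geq 1$) is sound and supplies the details the paper leaves implicit.
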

%
A third way to interpret the map $\mathscr{Z}_R$ from \eqref{defn:Z} is as a map $\mathscr{Z}_R\colon \{ \eta \in L^2_{H^1(\C_R)} \mid \mathbb{T}_{R,y=R}\eta = 0\} \to L^2_{H^1(\C)}$, and again this is an isometry.
\begin{lem}\label{lem:convergenceTruncationsGradientL2L2}We have $\mathscr{Z}_R\ext{\mathbb{E}}_R u \to \ext{\mathbb{E}}u$ in $L^2_{X(\C)}$.
\end{lem}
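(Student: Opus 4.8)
The plan is to reduce the claimed convergence in $L^2_{X(\C)}$ to a gradient estimate and then integrate the pointwise-in-time decay bound of Lemma~\ref{lem:expDecay}. Throughout, $u \in L^2_{W^{1\slash 2,2}}$. First I would note that, by Theorems~\ref{thm:wellPosednessHarmonicExtensionL2X} and~\ref{thm:wellPosednessTruncatedHarmonicExtensionL2X}, we have $\ext{\mathbb{T}}(\ext{\mathbb{E}}u) = u$ and $\mathbb{T}_{R,y=0}(\ext{\mathbb{E}}_Ru) = u$, while $\mathbb{T}_{R,y=R}(\ext{\mathbb{E}}_Ru) = 0$, so that $\mathscr{Z}_R$ may indeed be applied to $\ext{\mathbb{E}}_Ru$ and the resulting element lies in $L^2_{H^1(\C)} \subset L^2_{X(\C)}$. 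Since $\mathscr{Z}_R$ leaves a function unchanged near $y=0$, the $y=0$ trace of $\mathscr{Z}_R\ext{\mathbb{E}}_Ru$ is again $u$, so the difference $\mathscr{Z}_R\ext{\mathbb{E}}_Ru - \ext{\mathbb{E}}u \in L^2_{X(\C)}$ has vanishing $y=0$ trace, and therefore by the definition of the $X(\C(t))$-norm in~\eqref{eq:spaceXC},
\[\norm{\mathscr{Z}_R\ext{\mathbb{E}}_Ru - \ext{\mathbb{E}}u}{L^2_{X(\C)}}^2 = \int_0^T \norm{\nablabgt\big((\mathscr{Z}_R\ext{\mathbb{E}}_Ru)(t) - (\ext{\mathbb{E}}u)(t)\big)}{L^2(\C(t))}^2 \dt.\]
Then I would use Lemmas~\ref{lem:EagreesPointwise} and~\ref{lem:ERagreesPointwise} to identify $(\ext{\mathbb{E}}u)(t) = \ext{\mathcal{E}}_tu(t)$ and $(\ext{\mathbb{E}}_Ru)(t) = \ext{\mathcal{E}}_{R,t}u(t)$ for a.a.\ $t$, noting that $\mathscr{Z}_R$ acts pointwise in time so that $(\mathscr{Z}_R\ext{\mathbb{E}}_Ru)(t) = \mathscr{Z}_R(\ext{\mathcal{E}}_{R,t}u(t))$.

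Next I would apply Lemma~\ref{lem:expDecay} at each such $t$ with $M = \Gamma(t)$ (legitimate since $u(t) \in W^{1\slash 2,2}(\Gamma(t)) = H(\Gamma(t))$ for a.a.\ $t$), using that the $H(\Gamma(t))$-seminorm does not see the spatial mean value, and then the eigenvalue estimate~\eqref{eq:eigenvalueEstimate} to replace each $\lambda_1(t)$ by the uniform lower bound $\lambda_1$. Integrating over $[0,T]$ gives
\[\norm{\mathscr{Z}_R\ext{\mathbb{E}}_Ru - \ext{\mathbb{E}}u}{L^2_{X(\C)}}^2 \leq 3e^{-R\sqrt{\lambda_1}}\int_0^T\seminorm{u(t)}{H(\Gamma(t))}^2\dt + \frac{2}{R}e^{-2R\sqrt{\lambda_1}}\norm{u-\mean u}{L^2_{L^2}}^2 + \frac{2}{R}\int_0^T|\Gamma(t)||\mean{u(t)}|^2\dt.\]

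It then remains to check that the three integrals on the right are finite with $R$-independent bounds; the prefactors $e^{-R\sqrt{\lambda_1}}$, $R^{-1}e^{-2R\sqrt{\lambda_1}}$ and $R^{-1}$ will then force the right-hand side to $0$ as $R \to \infty$. The second and third are routine: $u - \mean u \in L^2_{L^2}$ because $u \in L^2_{L^2}$ and $t \mapsto \mean{u(t)}$ lies in $L^2(0,T)$ by Jensen's inequality (and the uniform bounds on $|\Gamma(t)|$), while Jensen also gives $|\Gamma(t)||\mean{u(t)}|^2 \leq \norm{u(t)}{L^2(\Gamma(t))}^2$, so the third integral is at most $\norm{u}{L^2_{L^2}}^2$. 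The first integral is the only genuine obstacle, since the $H(\Gamma(t))$-norm is not controlled by the $W^{1\slash 2,2}(\Gamma(t))$-norm with a $t$-uniform constant; here I would use precisely Lemma~\ref{lem:boundOnGradientEL2C} together with~\eqref{eq:harmonicExtensionGradientBound} to write, for a.a.\ $t$,
\[\seminorm{u(t)}{H(\Gamma(t))}^2 = \norm{\nablabgt\ext{\mathcal{E}}_tu(t)}{L^2(\C(t))}^2 \leq C\norm{u(t)-\mean{u(t)}}{W^{1\slash 2,2}(\Gamma(t))}^2 \leq C'\norm{u(t)}{W^{1\slash 2,2}(\Gamma(t))}^2,\]
the last step using $|\mean{u(t)}| \leq |\Gamma(t)|^{-1\slash 2}\norm{u(t)}{L^2(\Gamma(t))}$ and the uniform bounds on $|\Gamma(t)|$. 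Integrating gives $\int_0^T\seminorm{u(t)}{H(\Gamma(t))}^2\dt \leq C'\norm{u}{L^2_{W^{1\slash 2,2}}}^2 < \infty$, which completes the argument.
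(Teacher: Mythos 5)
Your proof is correct, and it rests on the same key ingredient as the paper's: the pointwise-in-time decay estimate of Lemma~\ref{lem:expDecay} applied with $M=\Gamma(t)$, together with the identifications of Lemmas~\ref{lem:EagreesPointwise} and~\ref{lem:ERagreesPointwise} and the uniform eigenvalue bound \eqref{eq:eigenvalueEstimate}. Where you diverge is in the final limiting step. The paper takes the qualitative route: it notes that Lemma~\ref{lem:expDecay} gives $\norm{\nablabgt \mathscr{Z}_R\ext{\mathcal{E}}_{R,t}u(t)-\nablabgt\ext{\mathcal{E}}_tu(t)}{L^2(\C(t))}\to 0$ for a.a.\ $t$ and then applies the dominated convergence theorem, with the $R$-uniform integrable dominating function supplied by Lemma~\ref{lem:graddBoundOnTruncatedWithT}. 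You instead integrate the explicit right-hand side of Lemma~\ref{lem:expDecay} over $[0,T]$ and show the three resulting time integrals are finite with $R$-independent bounds, using Lemma~\ref{lem:boundOnGradientEL2C} combined with \eqref{eq:harmonicExtensionGradientBound} to convert $\seminorm{u(t)}{H(\Gamma(t))}^2$ into a $t$-uniformly controlled multiple of $\norm{u(t)}{W^{1\slash 2,2}(\Gamma(t))}^2$ --- which is exactly the right move, since the $H(\Gamma(t))$/$W^{1\slash 2,2}(\Gamma(t))$ equivalence constants are not known to be $t$-uniform. Both routes need a uniform-in-$t$ gradient bound from \S\ref{sec:functionSpaces} (yours via Lemma~\ref{lem:boundOnGradientEL2C}, the paper's via Lemma~\ref{lem:graddBoundOnTruncatedWithT}); your version avoids DCT altogether and has the small bonus of yielding an explicit rate of convergence in $R$, essentially $O(e^{-R\sqrt{\lambda_1}\slash 2}+R^{-1\slash 2})$. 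Your preliminary observation that the difference has vanishing trace at $y=0$, so that the $L^2_{X(\C)}$-norm reduces to the gradient seminorm, is also correct and worth making explicit.
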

\begin{proof}
Lemma \ref{lem:expDecay} gives for almost all $t$ $\norm{\nablabgt \mathscr{Z}_R\ext{\mathcal{E}}_{R,t} u(t) - \nablabgt \ext{\mathcal{E}}_tu(t)}{L^2(\C(t))} \to 0$ and to use the DCT it suffices to find an integrable uniform in $R$ bound on the above norm which follows from 
Lemma \ref{lem:graddBoundOnTruncatedWithT}.
\end{proof}
\section{The non-degenerate problem: proof of Theorem \ref{thm:wellPosednessOfNonDegenerateProblem}}\label{sec:nondegenerateProblem}
Let $\beta\colon \mathbb{R} \to \mathbb{R}$ be a function satisfying \eqref{eq:assumptionsOnBeta} on p.~\pageref{eq:assumptionsOnBeta}. We will prove Theorem \ref{thm:wellPosednessOfNonDegenerateProblem} in this section, that of the well-posedness of problem \eqref{eq:betaProblem}. For easier reading, we will shorten the duality products \label{notation:dualityProducts} $\langle \cdot, \cdot \rangle_{W^{-1\slash 2, 2}(\Gamma(t)), W^{1\slash 2, 2}(\Gamma(t))}$ to $\langle \cdot, \cdot \rangle_{}$ (an abuse of notation) and $\langle \cdot, \cdot \rangle_{W^{-1\slash 2, 2}(\Gamma_0), W^{1\slash 2, 2}(\Gamma_0)}$ to $\langle \cdot, \cdot \rangle_0.$

\subsection{Existence of solutions to the truncated problem}\label{sec:truncatedProblem}
This subsection is devoted to the proof of the following theorem. 
\begin{theorem}\label{thm:existenceTruncatedProblem}
For each $R \geq 1$, there exists a unique weak solution $u_R \in \mathbb{W}(W^{1\slash 2,2}, W^{-1\slash 2,2})$ to \eqref{eq:betaRProblem} with $\nablabg\ext{\mathbb{E}}_R(\beta(u_R)) \in L^2_{L^2(\C_R)}$ and $u_R(0)=u_0$ satisfying 
\[\int_0^T\langle \dot u_R(t), \eta(t)\rangle_{} + \int_0^T\int_{\Gamma(t)}u_R(t)\eta(t) \sgradt \cdot \mathbf w(t) + \int_0^T\int_{\C_R(t)}\nablabgt \ext{\mathcal{E}}_{R,t}(\beta(u_R(t)))\nablabgt (E_R\eta)(t) = 0\]
for all $\eta \in L^2_{W^{1\slash 2,2}}$, where $E_R\eta \in L^2_{H^1(\C_R)}$ satisfies $\mathbb{T}_{R,y=0}E_R\eta = \eta$ and $\mathbb{T}_{R,y=R}E_R\eta = 0$.
\end{theorem}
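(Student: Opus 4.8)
The plan is to use a Galerkin scheme adapted to the evolving-surface setting, using the eigenfunctions of the Laplace--Beltrami operator on the \emph{reference} surface $\Gamma_0$ pushed forward by $\phi_{\Gamma,t}$ as a time-dependent basis. Concretely, let $\{\chi_j\}_{j\in\mathbb{N}}$ be an $L^2(\Gamma_0)$-orthonormal basis of $W^{1\slash 2,2}(\Gamma_0)$ consisting of eigenfunctions of $-\Delta_{\Gamma_0}$, and set $\chi_j^t := \phi_{\Gamma,t}\chi_j$. Denote by $P_N^t$ the ($t$-dependent) projection onto $V_N(t):=\mathrm{span}\{\chi_1^t,\dots,\chi_N^t\}$; as flagged in Remark~\ref{rem:projectionOperator}, one must be careful because these projections are not the naive $L^2(\Gamma(t))$-orthogonal ones (the pushforward does not preserve orthogonality), so I would define $P_N^t$ via $\phi_{\Gamma,t}\circ (\text{$L^2(\Gamma_0)$-orthogonal projection onto }\mathrm{span}\{\chi_1,\dots,\chi_N\})\circ\phi_{\Gamma,-t}$, which keeps the time-regularity of the coefficients under control. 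Seek $u_R^N(t)=\sum_{j=1}^N a_j^N(t)\chi_j^t$ solving, for each $i\le N$,
\[
\langle \dot u_R^N(t),\chi_i^t\rangle + \int_{\Gamma(t)} u_R^N(t)\chi_i^t\,\sgradt\cdot\mathbf w(t) + \int_{\C_R(t)}\nablabgt\ext{\mathcal{E}}_{R,t}(\beta(u_R^N(t)))\nablabgt(E_R\chi_i^t) = 0,
\]
with $u_R^N(0)=P_N^0 u_0$. Pulling back by $\phi_{\Gamma,-t}$ turns this into a system of ODEs for $(a_1^N,\dots,a_N^N)$ with coefficients that are $C^1$ in $t$ (using the $C^2$/$C^3$ regularity of the flow from Assumption~\ref{ass:onHypersurfaces}) and locally Lipschitz in the $a_j$ (since $\beta$ is $C^2$ and Lipschitz, and $\ext{\mathcal{E}}_{R,t}$ is linear and bounded); Picard--Lindelöf gives local existence, and the energy estimate below gives global existence on $[0,T]$.

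\textbf{Energy estimates.} Testing the Galerkin equation with the natural quantity — here the cleanest choice is to exploit that $\beta(u_R^N(t))\in V_N(t)$ is \emph{not} generally true, so instead I would test with $u_R^N$ itself and separately derive an $L^2$-in-time bound, or, following the structure that makes the porous-medium energy work, test the equation with $\beta(u_R^N(t))$ after noting that we may use any extension $E_R$ and in particular $E_R\beta(u_R^N(t)) = \ext{\mathcal{E}}_{R,t}(\beta(u_R^N(t)))$ up to the projection subtlety. The key identity is the integration-by-parts/transport formula of Lemma~\ref{lem:weakerIBP} applied to $f=\beta$: $\int_0^\tau\langle\dot u_R^N,\beta(u_R^N)\rangle = \int_{\Gamma(\tau)}B(u_R^N(\tau)) - \int_{\Gamma_0}B(u_0^N) - \int_0^\tau\int_{\Gamma(t)}B(u_R^N)\sgradt\cdot\mathbf w$ with $B(s)=\int_0^s\beta$, while the extension term gives $\int_0^\tau\int_{\C_R(t)}|\nablabgt\ext{\mathcal{E}}_{R,t}(\beta(u_R^N(t)))|^2 \ge 0$. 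Using $B(s)\ge C_{\beta'}s^2/2$ (from $\beta'\ge C_{\beta'}$), the uniform bound $|\sgradt\cdot\mathbf w|\le C$, Grönwall, and Corollary~\ref{lem:gradBoundTruncatedProblemL2H12} (uniformly in $R\ge 1$) for the extension norms, one obtains $u_R^N$ bounded in $L^\infty_{L^2}$ and $\ext{\mathcal{E}}_{R,t}(\beta(u_R^N))$ bounded in $L^2_{H^1(\C_R)}$, hence (by Lemma~\ref{lem:nemytskiiH12} / the $W^{1\slash 2,2}$ boundedness of $\beta$ and the trace bounds) $\beta(u_R^N)$ bounded in $L^2_{W^{1\slash 2,2}}$; comparison in the equation then bounds $\dot u_R^N$ in $L^2_{W^{-1\slash 2,2}}$. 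These bounds are all independent of $N$.

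\textbf{Passage to the limit.} With the uniform bounds, extract a subsequence so that $u_R^N\weaklyto u_R$ in $L^2_{W^{1\slash 2,2}}$ (and weak-$\ast$ in $L^\infty_{L^2}$), $\dot u_R^N\weaklyto\dot u_R$ in $L^2_{W^{-1\slash 2,2}}$, $\beta(u_R^N)\weaklyto\xi$ in $L^2_{W^{1\slash 2,2}}$, and $\nablabgt\ext{\mathcal{E}}_{R,t}(\beta(u_R^N))\weaklyto\zeta$ in $L^2_{L^2(\C_R)}$. The compact embedding $\mathbb{W}(W^{1\slash 2,2},W^{-1\slash 2,2})\compact L^2_{L^2}$ gives $u_R^N\to u_R$ strongly in $L^2_{L^2}$ and a.e., whence by continuity of $\beta$ (and the a.e. bound / Vitali) $\xi=\beta(u_R)$; then by linearity and continuity of the map $\ext{\mathbb{E}}_R$ (Theorem~\ref{thm:wellPosednessTruncatedHarmonicExtensionL2X}), $\zeta=\nablabg\ext{\mathbb{E}}_R(\beta(u_R))$, which pointwise in $t$ equals $\nablabgt\ext{\mathcal{E}}_{R,t}(\beta(u_R(t)))$ by Lemma~\ref{lem:ERagreesPointwise}. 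Fix $\eta$ of the form $\psi(t)\chi_i^t$ with $\psi\in C^\infty_c(0,T)$, pass to the limit in the Galerkin identity using that $V_N(t)$-test functions become dense (using the projection definition to handle the time-dependence cleanly), then extend by density to all $\eta\in L^2_{W^{1\slash 2,2}}$; the initial condition $u_R(0)=u_0$ follows from $u_R^N(0)=P_N^0u_0\to u_0$ and the continuity $\mathbb{W}(W^{1\slash 2,2},W^{-1\slash 2,2})\hookrightarrow C^0_{L^2}$. Uniqueness: given two solutions, subtract the weak forms, test with $\beta(u_R^{(1)})-\beta(u_R^{(2)})$, use Lemma~\ref{lem:weakerIBP} on the time-derivative term and monotonicity of $\beta$ together with nonnegativity of the Dirichlet energy of the (linear) extension of the difference, and Grönwall; alternatively, the $L^1$-contraction argument (Kato-type inequality, testing with a smooth approximation of $\sign^+(u_R^{(1)}-u_R^{(2)})$) yields both uniqueness and the stated contraction/mass-conservation properties (mass conservation follows by testing with $\eta\equiv 1$, using $\nablabgt\ext{\mathcal{E}}_{R,t}(\cdot)$ paired with $\nablabgt(E_R 1)$ — choosing $E_R 1=(R-y)/R$ makes this explicit).

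\textbf{Main obstacle.} The delicate point is the Galerkin machinery on the moving surface: the basis functions $\chi_i^t$ are time-dependent, the associated projections $P_N^t$ are not $L^2(\Gamma(t))$-orthogonal, and one needs $\dot\chi_i^t = \phi_{\Gamma,t}(\tfrac{d}{dt}\text{(stuff)})$ to lie in the right space so that the transport theorem applies to $u_R^N$ — this is exactly the technical difficulty alluded to in Remark~\ref{rem:projectionOperator}. Getting the commutation of $\ext{\mathcal{E}}_{R,t}$, the trace maps, and $\phi_{\Gamma,t}$ to interact correctly with the time-derivative (so that the ODE system genuinely has $C^1$ coefficients and the a priori estimate closes uniformly in $N$ and $R$) is where the real work lies; the weak-limit identification, by contrast, is routine once the uniform bounds and Lemmas~\ref{lem:ERagreesPointwise} and \ref{lem:weakerIBP} are in hand.
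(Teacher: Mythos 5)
Your overall architecture --- a Galerkin ansatz whose pullback lives in the span of a fixed basis on $\Gamma_0$, a finite-dimensional ODE system, $N$-uniform energy estimates, Aubin--Lions, and identification of the nonlinear and gradient limits --- is the same as the paper's (the paper writes the ansatz directly on $\Gamma_0$ and shifts the Jacobian $J^t_0$ onto the elliptic form, which is equivalent to your pushed-forward basis up to bookkeeping). However, there are two genuine gaps at the technical core.

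First, the a priori estimate. You correctly observe that $\beta(u_R^N(t))\notin V_N(t)$, so testing with $\beta(u_R^N)$ and invoking Lemma \ref{lem:weakerIBP} with $f=\beta$ is inadmissible at the Galerkin level; but your fallback, ``test with $u_R^N$ itself,'' leaves the elliptic term $\int_{\C_R(t)}\nablabgt\ext{\mathcal{E}}_{R,t}(\beta(u_R^N))\,\nablabgt(E_R u_R^N)$ with no sign or coercivity, and you never say how to control it. The paper's resolution is to exploit the freedom in the \emph{cylinder extension} of the boundary test function rather than the freedom in the boundary test function itself: with $u_R^N$ as the (admissible, finite-dimensional) test function on $\Gamma(t)$, one chooses $E_R(t)(J^t_0\phi_t\tilde u_n(t)) = J^t_0\,\beta^{-1}\!\left(\ext{\mathcal{E}}_{R,t}(\beta(\phi_t\tilde u_n(t)))\right)$, which has the correct traces and turns the elliptic term into $\int (\beta^{-1})'(\cdot)\,|\nablabgt\ext{\mathcal{E}}_{R,t}(\beta(\cdot))|^2$ plus lower-order terms, whence coercivity follows from $(\beta^{-1})'\geq C_{\beta'_{inv}}$. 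This is the key device of the whole section (and the reason the cylinder must be truncated at all, cf.\ Remark \ref{rem:needForTruncationOfCylinder}); without it your energy estimate does not close.

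Second, the ODE existence. You assert the coefficients of the system for $(a_1^N,\dots,a_N^N)$ are $C^1$ in $t$ and invoke Picard--Lindel\"of. But $t\mapsto\ext{\mathcal{E}}_{R,t}$ is only known to be \emph{measurable} in $t$ --- establishing even that is the entire purpose of \S\ref{sec:fractionalLaplacianOnL2X} and Lemma \ref{lem:ERagreesPointwise} --- so the right framework is Carath\'eodory existence theory: measurability in $t$ for fixed $\bm\alpha$ (via $\ext{\mathbb{E}}_R$), continuity in $\bm\alpha$ (via the Nemytskii continuity of $\beta$ on $W^{1\slash 2,2}$, Lemma \ref{lem:nemytskiiH12} --- note this gives continuity, not local Lipschitzness, of the superposition operator), and an integrable dominating function. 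Your claim of $C^1$ time-regularity is not available under the stated hypotheses.
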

We hide the subscript $R$ in $u_R$ and write just $u$ for simpler notation. Define $a_R(t;\cdot,\cdot)\colon W^{1\slash 2,2}(\Gamma(t))\times W^{1\slash 2,2}(\Gamma(t)) \to \mathbb{R}$ by
\[a_R(t;u,\eta) = \int_{\C_R(t)}\nablabgt \ext{\mathcal{E}}_{R,t}(\beta(u))\nablabgt E_R(t)\eta\]
where $E_R(t)\colon W^{1\slash 2,2}(\Gamma(t)) \to H^1(\C_R(t))$ is an (arbitrary) extension that satisfies $\mathcal{T}_{R,t,y=0}(E_R(t)\eta) = \eta$ and $\mathcal{T}_{R,t,y=R}(E_R(t)\eta) = 0$; the choice of $E_R$ does not matter (see Remark \ref{rem:truncatedHarmonicExtensionArbitrary}).
To derive the Galerkin approximation, we pull back the first two terms in the equation onto $\Gamma_0$ and then make a substitution in order to put the Jacobian term $J^0_t$ onto the elliptic form. 
Let $\{b_j\}$ be an orthonormal basis of $L^2(\Gamma_0)$ that is orthogonal in $W^{1\slash 2,2}(\Gamma_0)$ and let $\tilde u(t) = \phi_{\Gamma, -t}u(t)$. The Galerkin approximation is the system
\begin{equation}\label{eq:galerkinEquation}
\begin{aligned}
\int_{\Gamma_0}\tilde u_n'(t) b_j+ \int_{\Gamma_0}\tilde u_n(t)b_j\phi_{-t}(\sgradt \cdot \mathbf w(t)) + a_R(t;\phi_t \tilde u_n(t),J^t_0\phi_t b_j) &= 0 \quad\forall j=1, ..., n\\
\tilde u_n(0) &= \tilde u_{0n}
\end{aligned}
\end{equation}
for an ansatz $\tilde u_n(t) = \sum_{i=1}^n \alpha_i(t)b_i$ with unknown coefficients $\alpha_i = \alpha_i^n$ and $\tilde u_{0n} \in V_n(0) := \text{span}\{b_1, ..., b_n\}$ is such that $\tilde u_{0n} \to u_0$ in $W^{1\slash 2,2}(\Gamma_0)$ and $\norm{\tilde u_{0n}}{W^{1\slash 2,2}(\Gamma_0)} \leq C\norm{u_0}{W^{1\slash 2,2}(\Gamma_0)}$.
\begin{remark}\label{rem:projectionOperator}We pulled back the equation onto a reference domain in order to facilitate the procurement of a bound on $\tilde u_n'$ which is needed for a strong convergence result. This transformation to the reference domain $\Gamma_0$ could have been avoided if we knew that the orthogonal projection operator $P_n^t \colon L^2(\Gamma(t)) \to V_n(t):= \phi_t(V_n(0))$ defined by 
\[(P_n^t u-u, v_n)_{L^2(\Gamma(t))} =0\quad\text{for all $v_n \in V_n(t)$}\]
is bounded as a map $P_n^t\colon V(t) \to V(t)$. Such a bound is true when $t=0$ because of the special choice of basis functions, but for arbitrary $t$ the desired bound appears elusive. Of course, such a result would be of fundamental use generally in evolutionary equations on evolving domains.
\end{remark}
\begin{lem}The Galerkin equation \eqref{eq:galerkinEquation} has a solution $\tilde u_n \in H^1(0,T;V_n(0))$. 
\end{lem}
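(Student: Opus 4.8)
The plan is to reduce the Galerkin system \eqref{eq:galerkinEquation} to a system of ordinary differential equations for the coefficient vector $\alpha(t) = (\alpha_1(t),\dots,\alpha_n(t))$ and then invoke the Carathéodory existence theorem followed by an a priori bound to rule out blow-up. Writing $\tilde u_n(t) = \sum_{i=1}^n \alpha_i(t) b_i$ and testing against each $b_j$, the first term is simply $\alpha_j'(t)$ (since the $b_j$ are $L^2(\Gamma_0)$-orthonormal); the second term is $\sum_i \alpha_i(t)\int_{\Gamma_0} b_i b_j \phi_{-t}(\nabla_{\Gamma(t)}\cdot\mathbf w(t))$, which is linear in $\alpha$ with coefficients that are continuous in $t$ by Assumption \ref{ass:onHypersurfaces} and the smoothness of the flow. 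So the system has the form
\begin{equation*}
\alpha'(t) = -B(t)\alpha(t) - G(t,\alpha(t)),\qquad \alpha(0) = \alpha_0,
\end{equation*}
where $B(t)$ is continuous and $G(t,\alpha)_j := a_R(t;\phi_t\tilde u_n(t), J^t_0\phi_t b_j)$ encodes the nonlinear (through $\beta$) and nonlocal (through $\ext{\mathcal{E}}_{R,t}$) part.

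**First** I would check that $(t,\alpha)\mapsto G(t,\alpha)$ is a Carathéodory function: continuous in $\alpha$ for fixed $t$, and measurable in $t$ for fixed $\alpha$, with a local $L^1_t$ bound uniform on bounded $\alpha$-sets. Continuity in $\alpha$ follows because $\alpha \mapsto \phi_t\tilde u_n(t)$ is linear and continuous into $W^{1/2,2}(\Gamma(t))$, $\beta\colon W^{1/2,2}(\Gamma(t))\to W^{1/2,2}(\Gamma(t))$ is continuous by Lemma \ref{lem:nemytskiiH12}, $\ext{\mathcal{E}}_{R,t}$ is a bounded linear map into $H^1(\C_R(t))$ by Theorem \ref{thm:truncatedHarmonicExtensionExistence}, and the bilinear pairing against the fixed extension of $J^t_0\phi_t b_j$ is continuous; measurability in $t$ comes from the measurability of $\ext{\mathcal{E}}_{R,t}$ established via $\ext{\mathbb{E}}_R$ in Lemma \ref{lem:ERagreesPointwise} together with the smoothness of the geometric quantities $J^t_0$, $\phi_t$. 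The local bound follows from $|G(t,\alpha)_j| \le \norm{\nabla_{\bar g(t)}\ext{\mathcal{E}}_{R,t}(\beta(\phi_t\tilde u_n(t)))}{L^2(\C_R(t))}\,\norm{\nabla_{\bar g(t)}(J^t_0\phi_t b_j)}{L^2(\C_R(t))}$, the right factor being bounded uniformly in $t$ and the left controlled using Lemma \ref{lem:graddBoundOnTruncatedWithT} together with $\norm{\beta(v)}{W^{1/2,2}}\le\Lip(\beta)\norm{v}{W^{1/2,2}}$ from Lemma \ref{lem:nemytskiiH12} and the uniform equivalence $\norm{\phi_t\tilde u_n(t)}{W^{1/2,2}(\Gamma(t))}\le C\norm{\tilde u_n(t)}{W^{1/2,2}(\Gamma_0)}\le C|\alpha|$ (all coefficients being finite sums). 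Carathéodory's theorem then yields a local absolutely continuous solution on some $[0,\tau)$.

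**To get global existence** on $[0,T]$ I would derive the standard energy estimate: test \eqref{eq:galerkinEquation} with $\tilde u_n(t)$ itself (i.e. take the linear combination $\sum_j \alpha_j(t)(\cdots)$). This yields $\tfrac12\tfrac{d}{dt}|\alpha(t)|^2$ on the left (equivalently $\tfrac12\tfrac{d}{dt}\norm{\tilde u_n(t)}{L^2(\Gamma_0)}^2$), the advection term is bounded by $C\norm{\tilde u_n(t)}{L^2(\Gamma_0)}^2$ using the uniform bound on $\nabla_{\Gamma(t)}\cdot\mathbf w$, and the crucial sign comes from the elliptic term: $a_R(t;\phi_t\tilde u_n(t), J^t_0\phi_t\tilde u_n(t))$ equals, after undoing the pull-back, $\int_{\C_R(t)}\nabla_{\bar g(t)}\ext{\mathcal{E}}_{R,t}(\beta(u_n(t)))\nabla_{\bar g(t)}(E_R(t)u_n(t))$, and since the extension may be chosen arbitrarily (Remark \ref{rem:truncatedHarmonicExtensionArbitrary}) we pick $E_R(t)u_n(t)$ built from $\ext{\mathcal{E}}_{R,t}$; using monotonicity of $\beta$ and the variational characterisation of $\ext{\mathcal{E}}_{R,t}$ this term is nonnegative (this is the analogue of the computation that $\int (\ext{\mathcal{E}}_{R,t}w)(\ext{\mathcal{E}}_{R,t}\beta^{-1}(w))' \ge 0$, Stampacchia-type), so it can be dropped. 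Grönwall's inequality then gives $|\alpha(t)|^2 \le e^{Ct}|\alpha_0|^2 \le e^{CT}C\norm{u_0}{W^{1/2,2}(\Gamma_0)}^2$ on the whole existence interval, which precludes blow-up and extends the solution to $[0,T]$. Absolute continuity of $\alpha$ together with $\alpha' \in L^2(0,T)$ (from the same bound applied to the ODE right-hand side) gives $\tilde u_n \in H^1(0,T;V_n(0))$.

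**The main obstacle** I anticipate is not the ODE existence per se but verifying, with enough care, that $t\mapsto G(t,\alpha)$ is genuinely measurable and that the constant in the local $L^1_t$-bound is uniform in $n$ and $R$ — this is where one must lean on the evolving-space framework of \S\ref{sec:functionSpaces} and, in particular, on the measurability of the harmonic-extension maps secured through $\ext{\mathbb{E}}_R$, since each $\ext{\mathcal{E}}_{R,t}$ was constructed pointwise in $t$. The sign of the nonlinear elliptic term, which makes Grönwall work, relies on the freedom to choose the extension $E_R$ and on the monotonicity of $\beta$ from \eqref{eq:assumptionsOnBeta}; getting that inequality cleanly is the one genuinely substantive point, though it is routine given Theorem \ref{thm:truncatedHarmonicExtensionExistence}.
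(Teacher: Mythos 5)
Your proposal follows essentially the same route as the paper: reduce to a Carathéodory ODE for the coefficient vector, obtain measurability in $t$ from the identification $\ext{\mathcal{E}}_{R,t} = \ext{\mathbb{E}}_R(\cdot)(t)$ (Lemma \ref{lem:ERagreesPointwise}), continuity in $\alpha$ from Lemma \ref{lem:nemytskiiH12} and Lemma \ref{lem:graddBoundOnTruncatedWithT}, and rule out blow-up via the energy estimate with the extension $E_R(t)(J^t_0\phi_t\tilde u_n(t)) = J^t_0\beta^{-1}(\ext{\mathcal{E}}_{R,t}(\beta(\phi_t\tilde u_n(t))))$. The only imprecision is your claim that the elliptic term is simply nonnegative: the chain rule produces residual terms from the Jacobian $J^t_0$, so the term is coercive only up to lower-order contributions that must be absorbed by Young's inequality and \eqref{eq:truncatedHarmonicExtensionL2Bound} before Gr\"onwall applies, which is exactly why the truncation to $\C_R$ is needed (Remark \ref{rem:needForTruncationOfCylinder}).
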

\begin{proof}
The equation \eqref{eq:galerkinEquation} leads to
\begin{equation}\label{eq:galerkin1}
\begin{aligned}
\alpha_j'(t)+ \sum_{i=1}^n \alpha_i(t)\int_{\Gamma_0}b_ib_j\phi_{-t}(\sgradt \cdot \mathbf w(t)) + a_R(t;\sum_{i=1}^n \alpha_i(t)\phi_t b_i,J^t_0\phi_t b_j) &= 0 && \forall j=1, ..., n\\
\alpha_j(0) &= (\tilde u_{0n}, b_j)_{L^2(\Gamma_0)}  &&\forall j=1, ..., n.
\end{aligned}
\end{equation}
Define $\bm{\alpha}(t)=(\alpha_1(t), ..., \alpha_n(t))^{\intercal},$ $\mathbf{b}(t) = (\phi_tb_1, \hdots, \phi_tb_n)^\intercal,$ $\mathbf{a}(t,\bm{\alpha}) = (a_R(t;\bm{\alpha}\cdot \bm{b}(t),J^t_0\phi_t b_1), \hdots, a_R(t;\bm{\alpha}\cdot \bm{b}(t),J^t_0\phi_t b_n))^{\intercal}$ and the matrix $(\mathbf{W}(t))_{ij} = \int_{\Gamma_0}b_jb_i\phi_{-t}(\sgradt \cdot \mathbf w(t)).$
The system of equations \eqref{eq:galerkin1} is then written
\[\bm{\alpha}'(t) = F(t,\bm{\alpha}(t)) := - \mathbf{W}(t)\bm{\alpha}(t) - \mathbf{a}(t,\bm{\alpha}(t))\]
with initial data $\bm{\alpha}(0) = ((\tilde u_{0n}, b_1)_{L^2(\Gamma_0)}, ..., (\tilde u_{0n}, b_n)_{L^2(\Gamma_0)})^{\intercal}$. We need to show that $t \mapsto F(t,\bm{\alpha})$ is measurable for fixed $\bm \alpha \in \mathbb{R}^n$. The term with the matrix is clear. For the remaining term, we have
\begin{align*}
a_R(t;\bm{\alpha} \cdot \bm{b}(t),J^t_0\phi_t b_j) &= \int_0^R  \int_{\Gamma(t)}\nablabgt \ext{\mathcal{E}}_{R,t}(\beta(\bm{\alpha}\cdot \bm{b}(t)))\nablabgt E_R(t)(J^t_0\phi_t b_j)\\
&= \int_0^R  \int_{\Gamma_0}J^0_t\sgrads \phi_{-t}[\ext{\mathcal{E}}_{R,t}(\beta(\bm{\alpha}\cdot \bm{b}(t)))](\mathbf{D}\Phi^0_t)^{-1}(\mathbf{D}\Phi^0_t )^{-\intercal} \sgrads \phi_{-t}[E_R(t)(J^t_0\phi_t b_j)]\\
&\quad + \int_0^R  \int_{\Gamma_0}J^0_t\partial_y\phi_{-t}\ext{\mathcal{E}}_{R,t}(\beta(\bm{\alpha}\cdot \bm{b}(t)))\partial_y \phi_{-t}E_R(t)(J^t_0\phi_t b_j),
\end{align*}
and 
we know that $\ext{\mathcal{E}}_{R,t}(\beta(\bm{\alpha}\cdot \bm{b}(t))) = \ext{\mathbb{E}}_R(\beta(\bm{\alpha}\cdot \bm{b}))(t)$ for almost all $t$ (Lemma \ref{lem:ERagreesPointwise}), and the pullback of the latter is measurable as a function of $t$ since $\ext{\mathbb{E}}_R(\beta(\bm{\alpha}\cdot \bm{b})) \in L^2_{H^1(\C_R)}$; the same argument can be used to deal with the $E_R(t)$ term. Now suppose that $\bm \alpha^j \to \bm\alpha$ in $\mathbb{R}^n$. We see that
\begin{align*}
\norm{\mathbf{a}(t,\bm{\alpha}^j)-\mathbf{a}(t,\bm{\alpha})}{\mathbb{R}^n}^2 &= \sum_i |a_R(t;\bm{\alpha}^j\cdot \bm{b}(t),J^t_0\phi_t b_i)-a_R(t;\bm{\alpha}\cdot \bm{b}(t),J^t_0\phi_t b_i)|^2\\
&\leq \sum_{i}\left(\int_0^R  \int_{\Gamma(t)}|\nablabgt \ext{\mathcal{E}}_{R,t}[\beta(\bm{\alpha}^j\cdot \bm{b}(t))-\beta(\bm{\alpha}\cdot \bm{b}(t))]\nablabgt E_R(t)(J^t_0\phi_t b_i)|\right)^2\\
&\leq C(R)\norm{\beta(\bm{\alpha}^j\cdot \bm{b}(t))-\beta(\bm{\alpha}\cdot \bm{b}(t))}{W^{1\slash 2,2}(\Gamma(t))}^2\sum_{i}\norm{\nablabgt E_R(t)(J^t_0\phi_t b_i)}{L^2(\C_R)}^2
\end{align*}
by Lemma \ref{lem:graddBoundOnTruncatedWithT}, and this tends to zero by Lemma \ref{lem:nemytskiiH12} since $\bm{\alpha}^j\cdot \bm{b}(t) \to \bm{\alpha}\cdot \bm{b}(t)$ in $W^{1\slash 2,2}(\Gamma(t))$. Therefore, $\bm{\alpha} \mapsto \mathbf{a}(t,\bm\alpha)$ is continuous and so $F$ is a Carath\'eodory function. The uniform bound that we shall derive in the next subsection shows that $\norm{\bm{\alpha}(t)}{\mathbb{R}^n} \leq c$ for all $t$ if $\bm{\alpha}$ satisfies the ODE \eqref{eq:galerkin1}. Let us now prove that there exists $f \in L^1(0,T)$ with $\norm{F(t,\bm{\alpha})}{\mathbb{R}^n} \leq f(t)$ for every $\bm{\alpha} \in \{ \bm{\alpha} \in \mathbb{R}^n \mid \norm{\bm{\alpha}}{\mathbb{R}^n} \leq 2c\}$. 
Note that
\begin{align*}
\norm{\mathbf{a}(t,\bm\alpha)}{\mathbb{R}^n}^2 
&\leq \sum_j \norm{\nablabgt \ext{\mathcal{E}}_{R,t}(\beta(\bm \alpha \cdot \bm b(t)))}{L^2(\C_R(t))}^2\norm{\nablabgt E_R(t)(J^t_0\phi_t b_j)}{L^2(\C_R(t))}^2\\
&\leq C_1\sum_j \norm{\beta(\bm \alpha \cdot \bm b(t))}{W^{1\slash 2,2}(\Gamma(t))}^2\norm{\nablabgt E_R(t)(J^t_0\phi_t b_j)}{L^2(\C_R(t))}^2\tag{by Lemma \ref{lem:graddBoundOnTruncatedWithT}}\\
&\leq C_1\norm{\beta'}{\infty}^2 \norm{\bm{\alpha}}{\mathbb{R}^n}^2\sum_i\norm{b_i(t)}{W^{1\slash 2,2}(\Gamma(t))}^2\sum_j\norm{\nablabgt E_R(t)(J^t_0\phi_t b_j)}{L^2(\C_R(t))}^2\\ 
&\leq C_2\norm{\bm \alpha}{\mathbb{R}^n}^2 \sum_{i,j} \norm{\phi_t b_i}{W^{1\slash 2,2}(\Gamma(t))}^2\norm{\nablabgt E_R(t)(J^t_0\phi_t b_j)}{L^2(\C_R(t))}^2
\end{align*}
so that overall (because the Frobenius norm $\norm{\cdot}{F}$ is compatible with the Euclidean vector norm),
\begin{align*}
\norm{F(t,\bm\alpha)}{\mathbb{R}^n} 
&\leq 2c\left(\norm{\bm W(t)}{F} + \sqrt{C_2}\sum_{i,j} \norm{\phi_t b_i}{W^{1\slash 2,2}(\Gamma(t))}\norm{\nablabgt E_R(t)(J^t_0\phi_t b_j)}{L^2(\C_R(t))}\right) =: f(t)
\end{align*}
and the term in the brackets on the right hand side is integrable over $(0,T)$. Now an application of the ODE theory in \cite[Problem 30.2]{ZeidlerIIB} gives global existence of a solution $\tilde u_n \colon [0,T] \to V_n(0)$.
\end{proof}
\subsubsection{Uniform estimates (in $n$)}
Multiply the first equality in \eqref{eq:galerkinEquation} by $\alpha_j(t)$ and sum up to get (using an arbitrary linear extension)
\[\int_{\Gamma_0}\tilde u_n'(t) \tilde u_n(t) + \int_{\Gamma_0}\tilde u_n(t)^2\phi_{-t}(\sgradt \cdot \mathbf w(t)) + a_R(t;\phi_t \tilde u_n(t),J^t_0\phi_t \tilde u_n(t)) = 0.\]
Now, in 
\[a_R(t;\phi_t \tilde u_n(t),J^t_0\phi_t \tilde u_n(t))=\int_{\C_R(t)}\nablabgt \ext{\mathcal{E}}_{R,t}(\beta(\phi_t \tilde u_n(t)))\nablabgt E_R(t)(J^t_0\phi_t \tilde u_n(t))\]
let us pick $E_R(t)(J^t_0\phi_t \tilde u_n(t)) = J^t_0\beta^{-1}(\ext{\mathcal{E}}_{R,t}(\beta(\phi_t \tilde u_n(t))))$, which is valid since 
$\mathcal{T}_{R,t,y=0}E_R(t)(J^t_0\phi_t \tilde u_n(t)) = J^t_0\beta^{-1}(\beta(\phi_t \tilde u_n(t))) = J^t_0\phi_t \tilde u_n(t)$ and $\mathcal{T}_{R,t,y=R}E_R(t)(J^t_0\phi_t \tilde u_n(t))= 0,$ and this gives
\begin{align*}
&\frac{1}{2}\frac{d}{dt}\int_{\Gamma_0}\tilde u_n(t)^2  + C_1C_{\beta'_{inv}} \int_{\C_R(t)}|\nablabgt \ext{\mathcal{E}}_{R,t}(\beta(\phi_t \tilde u_n(t)))|^2\\
&\quad\leq \frac{1}{2}\norm{\sgrad \cdot \mathbf w}{\infty}\int_{\Gamma_0}\tilde u_n(t)^2+ C_2\int_{\C_R(t)}C_{\epsilon}|\beta^{-1}(\ext{\mathcal{E}}_{R,t}(\beta(\phi_t \tilde u_n(t))))|^2 +  \epsilon|\sgrad \ext{\mathcal{E}}_{R,t}(\beta(\phi_t \tilde u_n(t)))|^2
\end{align*}
wherein we note that
\begin{align*}
\int_0^R\int_{\Gamma(t)}|\beta^{-1}(\ext{\mathcal{E}}_{R,t}(\beta(\phi_t \tilde u_n(t))))|^2 
&\leq C_3\norm{(\beta^{-1})'}{\infty}^2\norm{\beta(\phi_t \tilde u_n(t))}{L^2(\Gamma(t))}^2
\leq C_4\norm{\tilde u_n(t)}{L^2(\Gamma_0)}^2
\end{align*}
by \eqref{eq:truncatedHarmonicExtensionL2Bound}, where $C_3$ and $C_4$ depend on $R$ and $\lambda_1$. Then Gronwall's inequality implies 
\[\max_{t \in [0,T]}\norm{\tilde u_n(t)}{L^2(\Gamma_0)} + \norm{\nablabg \ext{\mathbb{E}}_R(\beta(\phi_{(\cdot)} \tilde u_n))}{L^2_{L^2(\C_R)}} \leq C.\]
\begin{remark}\label{rem:needForTruncationOfCylinder}
We needed to truncate the domain in order to obtain the previous bounds. If the domain was instead the full cylinder $\C(t)$, the extension of the test function would have to include a cut-off function so that it belongs to $L^2_{H^1(\C)}$, for example, if $\psi_\rho$ is as in Definition \ref{defn:cutoffFunction}, then we could choose
\[E(t)(J^t_0\phi_t \tilde u_n(t)) = J^t_0\beta^{-1}[{\mathcal{E}}_{t}(\beta(\phi_t \tilde u_n(t))-\mean{\beta(\phi_t \tilde u_n(t))}) + \psi_\rho\mean{\beta(\phi_t \tilde u_n(t))}]\]
but this leads to a residual term of the type 
\[\int_0^\infty \int_{\Gamma(t)}\beta^{-1}[{\mathcal{E}}_{t}(\beta(\phi_t \tilde u_n(t))-\mean{\beta(\phi_t \tilde u_n(t))}) + \psi_\rho\mean{\beta(\phi_t \tilde u_n(t))}]\sgrad \ext{\mathcal{E}}_{t}(\beta(\phi_t \tilde u_n(t)))\sgrad J^t_0\]
and we would have to make restrictive assumptions on the evolution to neglect this term as we send $\rho \to \infty$.
\end{remark}
Writing $\beta(u_n) = \mathbb{T}_{R,y=0} \ext{\mathbb{E}}_R(\beta(u_n))$ and using the trace inequality,
\begin{align*}
\norm{\beta(u_n)}{L^2_{W^{1\slash 2,2}}}^2 
\leq C_1\norm{\ext{\mathbb{E}}_R(\beta(u_n))}{L^2_{H^1(\C_R)}}^2
\leq C_2\norm{\beta'}{\infty}^2\norm{u_n}{L^{2}_{L^2}}^2 + C_3
\leq C_4
\end{align*}
by Corollary \ref{lem:gradBoundTruncatedProblemL2H12} ($C_2$ and $C_4$ will depend on $R$) and the energy estimates. Since $\beta^{-1}$ is Lipschitz, this implies
\[\norm{u_n}{L^2_{W^{1\slash 2,2}}} \leq C\]
independent of $n$ (using the boundedness result of Lemma \ref{lem:nemytskiiH12}). The bound on the time derivative follows too: take $\eta \in W^{1\slash 2, 2}(\Gamma_0)$, recall \eqref{eq:galerkinEquation} and that $P^0_n$ is self-adjoint:
\begin{align*}
\int_{\Gamma_0}\tilde u_n'(t)\eta 
&= -\int_{\Gamma_0}\tilde u_n(t)P_n^0(\eta)\phi_{-t}(\sgradt \cdot \mathbf w(t))- \int_0^R  \int_{\Gamma(t)}\nablabgt \ext{\mathcal{E}}_{R,t}(\beta(\phi_t \tilde u_n(t)))\nablabgt E_R(t)(J^t_0\phi_t P_n^0(\eta))
\end{align*}
assuming a linear extension. Now picking $E_R(t)(J^t_0\phi_t P_n^0(\eta))=J^t_0\ext{\mathcal{E}}_{R,t}(\phi_t P_n^0(\eta))$, observe that
\begin{align*}
\norm{\nablabgt [J^t_0\ext{\mathcal{E}}_{R,t}(\phi_t P_n^0(\eta))]}{L^2(\C_R(t))}^2 
&\leq C_1\int_0^R \int_{\Gamma(t)} |\nablabgt \ext{\mathcal{E}}_{R,t}(\phi_t P_n^0(\eta))|^2 + |\ext{\mathcal{E}}_{R,t}(\phi_t P_n^0(\eta))|^2\\
&\leq C_2\norm{\phi_t P_n^0(\eta)}{W^{1\slash 2,2}(\Gamma(t))}^2\tag{by Lemma \ref{lem:graddBoundOnTruncatedWithT} and \eqref{eq:truncatedHarmonicExtensionL2Bound}}
\end{align*}
(where again $C_2$ depends on $R$ and $\lambda_1$) which implies that
\begin{align*}
\int_0^T \langle \tilde u_n'(t), \eta \rangle_{0}
&\leq C_3\norm{P_n^0(\eta)}{L^2(0,T;W^{1\slash 2,2}(\Gamma_0))}\left(\norm{\tilde u_n}{L^2(0,T;L^2(\Gamma_0))} + \norm{\nablabgt \ext{\mathbb{E}}_R(\beta(\phi_t \tilde u_n))}{L^2_{L^2(\C_R)}}\right)\\
&\leq C_4\norm{\eta}{L^2(0,T;W^{1\slash 2,2}(\Gamma_0))}
\end{align*}
by using the uniform estimates. Now taking the supremum over $\eta \in L^2(0,T;W^{1\slash 2,2}(\Gamma_0))$ shows that
\[\norm{\tilde u_n'}{L^2(0,T;W^{-1\slash 2,2}(\Gamma_0))} \leq C.\]
\subsubsection{Passage to the limit in the Galerkin approximation}\label{sec:passToLimitGalerkin}
We have as $n \to \infty$
\begin{equation}\label{eq:convergencesGalerkin}
\begin{aligned}
\tilde u_n &\weaklyto \tilde u &&\text{in $L^2(0,T;W^{1\slash 2,2}(\Gamma_0))$}\\
\tilde u_n' &\weaklyto \tilde u' &&\text{in $L^2(0,T;W^{-1\slash 2,2}(\Gamma_0))$}\\
\tilde u_n &\to \tilde u &&\text{in $L^2(0,T;L^2(\Gamma_0))$}\\
\underline{D}_i \ext{\mathbb{E}}_R \beta (u_n) &\weaklyto \theta_i &&\text{in $L^2_{L^2(\C_R)}$}\\
\partial_y \ext{\mathbb{E}}_R \beta(u_n) &\weaklyto \theta_y &&\text{in $L^2_{L^2(\C_R)}$}
\end{aligned}
\end{equation}
where $\underline{D}_i = (\sgrad)_i$ is the $i$-th component of the tangential gradient and Aubin--Lions yielded the strong convergence. Therefore we have $u_n \to u$ in $L^2_{L^2}$ and $\beta(u_n) \to \beta(u)$ in $L^2_{L^2}$ thanks to the Lipschitz continuity of $\beta$. Using the boundedness of $\ext{\mathbb{E}}_R$ in the space $L^2_{L^2(\C_R)}$ from Corollary \ref{lem:gradBoundTruncatedProblemL2H12}, we obtain by linearity that
\begin{equation}\label{eq:conv1}
\ext{\mathbb{E}}_R(\beta(u_n)) \to \ext{\mathbb{E}}_R(\beta(u)) \quad\text{in $L^2_{L^2(\C_R)}$}.
\end{equation}
\paragraph{Identification of the spatial term}Take the test function 
\begin{equation}\label{eq:testFunction}
\eta(t,y,x) = \psi(t)(\phi_t v_0)(x) h(y)\quad\text{where $\psi \in C_c^\infty(0,T)$, $v_0 \in C_c^1(\Gamma_0)$ and $h \in C^\infty_c(0,R)$}
\end{equation} 
belonging to $L^2_{H^1(\C_R)}$ in the spatial integration by parts formula \cite[\S 2.1]{Alphonse2014a} on $\Gamma(t)$ integrated over $y$ and $t$:
\begin{align*}
\int_0^T \int_{\C_R(t)}(\underline{D}_i \ext{\mathbb{E}}_R\beta(u_n))\eta &= -\int_0^T \int_{\C_R(t)}(\ext{\mathbb{E}}_R\beta(u_n))\underline{D}_i\eta + \int_0^T \int_{\C_R(t)}(\ext{\mathbb{E}}_R\beta(u_n))\eta H\nu^\Gamma_i
\end{align*}
where $H$ is the mean curvature. Using \eqref{eq:convergencesGalerkin} and \eqref{eq:conv1}, we have
\[\int_0^T \int_{\C_R(t)} \theta_i \eta = -\int_0^T \int_{\C_R(t)}(\ext{\mathbb{E}}_R\beta(u))\underline{D}_i\eta + \int_0^T \int_{\C_R(t)}(\ext{\mathbb{E}}_R\beta(u))\eta H\nu^\Gamma_i,\]
whence it follows that for almost every $t$, for almost every $y$, 
\[\int_{\Gamma(t)} \theta_i(t,y) \phi_t v_0= -\int_{\Gamma(t)}(\ext{\mathbb{E}}_R\beta(u))(t,y)\underline{D}_i\phi_t v_0 + \int_{\Gamma(t)}(\ext{\mathbb{E}}_R\beta(u))(t,y)\phi_t v_0 H(t)\nu^\Gamma_i(t).\]
Since this holds for all $\phi_t v_0 \in C_c^1(\Gamma_0)$, it also holds for all $v \in C_c^1(\Gamma(t))$, which implies that $\underline{D}_i(\ext{\mathbb{E}}_R\beta(u)) = \theta_i$ by definition.
\paragraph{Identification of the $y$ term}Again take $\eta \in L^2_{H^1(\C_R)}$ as in \eqref{eq:testFunction} and consider the integration by parts formula
\begin{align*}
\int_0^T \int_{\C_R(t)}(\partial_y \ext{\mathbb{E}}_R\beta(u_n))\eta &= -\int_0^T \int_{\C_R(t)}(\ext{\mathbb{E}}_R\beta(u_n))\partial_y\eta. 
\end{align*}
As before, passing to the limit we find
for almost every $t$ and almost every $x$ (again since $\phi_t v_0$ ranges over all of $C_c^1(\Gamma(t))$) that
\[\int_0^R \theta_y(t,x) h = - \int_0^R (\ext{\mathbb{E}}_R\beta(u))(t,x)\partial_yh\]
and thus we identify $\theta_y = \partial_y \ext{\mathbb{E}}_R\beta(u)$.
\paragraph{Conclusion of the proof of Theorem \ref{thm:existenceTruncatedProblem}}Therefore, the last two convergences listed in \eqref{eq:convergencesGalerkin} can be replaced with $\nablabg \ext{\mathbb{E}}_R \beta(u_n) \weaklyto \nablabg \ext{\mathbb{E}}_R \beta(u)$ in $L^2_{L^2(\C_R)}$. Recall that $V_n(t) := \text{span}\{\phi_t b_1, ..., \phi_t b_n\}$. Given $\eta \in L^2_{W^{1\slash 2,2}},$ by density, there is a sequence $\eta_l(t) = \sum_{j=1}^l \gamma^l_j(t)\phi_t b_j$ with $\eta_l \in L^2_{V_l}$ such that $\eta_l \to \eta$ in $L^2_{W^{1\slash 2,2}}.$ 
If $l \leq n$, then $\eta_l \in L^2_{V_n}$ and we multiply \eqref{eq:galerkinEquation} by $\gamma^l_j(t)$ and sum up to get
\begin{equation*}\label{eq:ic2}
\int_{\Gamma_0}\tilde u_n'(t) \tilde \eta_l(t)+ \int_{\Gamma_0}\tilde u_n(t)\tilde \eta_l(t)\phi_{-t}(\sgradt \cdot \mathbf w(t)) + \int_{\C_R(t)}\nablabgt \ext{\mathcal{E}}_{R,t}(\beta(u_n(t)))\nablabgt E_R(t)(J^t_0\eta_l(t)) = 0,
\end{equation*}
where $\tilde \eta_l(t) := \phi_{-t}\eta_l(t)$. We obtain after integrating the above equation and sending $n \to \infty$ the equation
\begin{align}
\nonumber \int_0^T\langle \tilde u'(t) , \tilde \eta_l(t)\rangle_{0} &+ \int_0^T\int_{\Gamma_0}\tilde u(t)\tilde \eta_l(t)\phi_{-t}(\sgradt \cdot \mathbf w(t))\\
&+ \int_0^T\int_{\C_R(t)}\nablabgt \ext{\mathcal{E}}_{R,t}(\beta(u(t)))\nablabgt E_R(t)(J^t_0\eta_l(t)) = 0.\label{eq:recal1}
\end{align}
Let us prove that $\phi_{-t}(J^t_0)\tilde \eta_l \to \phi_{-t}(J^t_0) \tilde \eta$ in $L^2(0,T;W^{1\slash 2,2}(\Gamma_0))$. For the seminorm, we have
\begin{align*}
&\seminorm{\phi_{-t}(J^t_0)(\tilde \eta_l(t) - \tilde \eta(t))}{W^{1\slash 2,2}(\Gamma_0)}^2 
\leq 2\int_{\Gamma_0}\int_{\Gamma_0} \frac{|\phi_{-t}(J^t_0)(x)\left([\tilde \eta_l(t) - \tilde \eta(t)](x) - [\tilde \eta_l(t) - \tilde \eta(t)](y)\right)|^2}{|x-y|^n}\\
&\quad  + 2\int_{\Gamma_0}\int_{\Gamma_0} \frac{|[\tilde \eta_l(t)-\tilde \eta(t)](y)(\phi_{-t}(J^t_0)(x)-\phi_{-t}(J^t_0)(y)|^2}{|x-y|^n}\\ 
&\leq C_1\seminorm{\tilde \eta_l(t) - \tilde \eta(t)}{W^{1\slash 2,2}(\Gamma_0)}^2 + C_2\int_{\Gamma_0}|[\tilde \eta_l(t)-\tilde \eta(t)](y)|^2\int_{\Gamma_0} \frac{1}{|x-y|^{n-2}}\;\mathrm{d}x\mathrm{d}y\tag{$\phi_{-t}J^t_0$ is Lipschitz with uniform Lipschitz constant}\\
&\leq C_3\norm{\tilde \eta_l(t) - \tilde \eta(t)}{W^{1\slash 2,2}(\Gamma_0)}^2,
\end{align*}
see Lemma 5.2 in \cite{Alphonse2014a} for last line. Integrating over time and passing to the limit shows the result. Thus $J^t_0\eta_l \to J^t_0\eta$ in $L^2_{W^{1\slash 2,2}}$ and it follows from Corollary \ref{lem:gradBoundTruncatedProblemL2H12} that $\nablabg \ext{\mathbb{E}}_R(\eta_lJ^t_0) \to \nablabg \ext{\mathbb{E}}_R(\eta J^t_0)$ in $L^2_{L^2(\C_R)}$. With this in mind, taking limits $l \to \infty$ in \eqref{eq:recal1} with $E_R = \ext{\mathbb{E}}_R$, 
and then because the extension can be arbitrary (Remark \ref{rem:truncatedHarmonicExtensionArbitrary}), we obtain 
\begin{equation*}\label{eq:ic1}
\int_0^T \langle \tilde u'(t), \tilde \eta(t) \rangle_{0} + \int_0^T\int_{\Gamma_0}\tilde u(t)\tilde \eta(t)\phi_{-t}(\sgradt \cdot \mathbf w(t)) + \int_0^T\int_{\C_R(t)}\nablabgt \ext{\mathcal{E}}_{R,t}(\beta(u(t)))\nablabgt E_R(J^{(\cdot)}_0\eta)(t) = 0
\end{equation*}
for all $\tilde \eta \in L^2(0,T;W^{1\slash 2,2}(\Gamma_0))$.
Now, pushing forward the first two integrals, recalling from the proof of Theorem 2.33 of \cite{Alphonse2014b} that $\dot u(t) = \phi_{-t}^*(J^0_t \tilde u'(t))$ (where $\phi_{-t}^*$ is the adjoint of $\phi_{-t}\colon W^{1\slash 2, 2}(\Gamma(t)) \to W^{1\slash 2, 2}(\Gamma_0)$) and using
\begin{align*}
\langle \tilde u'(t), \tilde \eta(t)\rangle_{0}
=\langle J^0_t \tilde u'(t), (J^0_t)^{-1}\tilde \eta(t)\rangle_{0}
=\langle \phi_{-t}^*(J^0_t \tilde u'(t)), \phi_t((J^0_t)^{-1}\tilde \eta(t))\rangle_{}
=\langle \dot u(t), J_0^t\phi_t\tilde \eta(t)\rangle_{}
\end{align*}
gives
\begin{align*}
\int_0^T\langle \dot u(t), J_0^t\phi_t \tilde \eta(t)\rangle_{} &+ \int_0^T\int_{\Gamma(t)}u(t)\phi_t \tilde \eta(t) J^t_0 \sgradt \cdot \mathbf w(t)\\
&+ \int_0^T\int_{\C_R(t)}\nablabgt \ext{\mathcal{E}}_{R,t}(\beta(u(t)))\nablabgt E_R(J^{(\cdot)}_0\phi_{(\cdot)} \tilde \eta)(t) = 0.
\end{align*}
Picking $\tilde \eta(t) = \phi_{-t}\eta(t) \slash\phi_{-t} J^t_0$ yields that $u$ satisfies the equality given in Theorem \ref{thm:existenceTruncatedProblem} 
for each $\eta \in L^2_{W^{1\slash 2,2}}$. A standard argument involving integration by parts of the equation satisfied by $u$ and the equation satisfied by $\tilde u_n$ and then passage to the limit in $n$ shows that the initial condition is satisfied, see \cite[\S 5.3]{Alphonse2014b}. 
\subsection{Existence of solutions to the non-degenerate problem}\label{sec:passToLimitInR}
Therefore, for each $R \geq 1$, we have a function $u_R \in \mathbb{W}(W^{1\slash 2,2}, W^{-1\slash 2,2})$ with  $u_R(0)=u_0$ and $\nablabg\ext{\mathbb{E}}_{R}(\beta(u_R)) \in L^2_{L^2(\C_R)}$ satisfying 
\begin{equation}\label{eq:pointwiseWeakFormuR}
\langle \dot u_R(t), \eta(t)\rangle_{} + \int_{\Gamma(t)}u_R(t)\eta(t) \sgradt \cdot \mathbf w(t) + \int_{\C_R(t)}\nablabgt \ext{\mathcal{E}}_{R,t}(\beta(u_R(t)))\nablabgt (E_R\eta)(t) = 0
\end{equation}
for all $\eta \in L^2_{W^{1\slash 2,2}}$ and for almost all $t$. We now want some estimates independent of $R$. 
\subsubsection{Uniform estimates (in $R$)}
Let us pick $\eta = u_R$ and use $E_R\eta = \beta^{-1}(\ext{\mathbb{E}}_R(\beta(u_R)))$:
\begin{align*}
\frac{1}{2}\frac{d}{dt}\int_{\Gamma(t)}|u_R(t)|^2 + \frac{1}{2}\int_{\Gamma(t)}|u_R(t)|^2\sgradt \cdot \mathbf w(t) + \int_0^R \int_{\Gamma(t)}(\beta^{-1})'[\ext{\mathcal{E}}_{R,t}(\beta(u_R(t)))]|\nablabgt \ext{\mathcal{E}}_{R,t}(\beta(u_R(t)))|^2=0.
\end{align*}
Since $(\beta^{-1})' \geq C_{\beta'_{inv}}$ and $u_R(0) = u_0$, we immediately obtain via Gronwall's inequality that
\[\max_{t \in [0,T]}\norm{u_R(t)}{L^2(\Gamma(t))} + \norm{\nablabg \ext{\mathbb{E}}_R(\beta(u_R))}{L^2_{L^2(\C_R)}} \leq C\]
independent of $R$. Choosing $E_R = \ext{\mathbb{E}}_R$ in \eqref{eq:pointwiseWeakFormuR}, 
using Corollary \ref{lem:gradBoundTruncatedProblemL2H12} and the uniform estimates, and taking supremums over $\eta \in L^2_{W^{1\slash 2,2}}$ gives
\[\norm{\dot u_R}{L^2_{W^{-1\slash 2,2}}} \leq C.\]
Lemma \ref{lem:traceBoundedBySeminorm} implies that $\seminorm{\mathcal{T}_tv}{W^{1\slash 2,2}(\Gamma(t))} \leq  C\norm{\nablabgt v}{L^2(\C(t))}$ for all $v \in H^1(\C(t))$; we claim the constant is independent of $t$. Indeed, an inspection of the proof of the lemma reveals that we need to check whether the trace map $\mathcal{T}_t \colon H^1(\C(t)) \to W^{1\slash 2, 2}(\Gamma(t))$ is bounded uniformly in $t$ and whether the constant in the Poincar\'e inequality on $\Gamma(t)$ is independent of $t$. The first question has been settled in \S \ref{sec:functionSpaces} and the second is also affirmative due to \cite[Lemma 5.9]{Alphonse2014a}. Using this inequality, we find
\begin{align*}
\seminorm{\beta(u_R(t))}{W^{1\slash 2,2}(\Gamma(t))}=\seminorm{\mathcal{T}_t \mathscr{Z}_R \ext{\mathcal{E}}_{R,t} \beta(u_R(t))}{W^{1\slash 2,2}(\Gamma(t))}  \leq 
C\norm{\nablabg \ext{\mathcal{E}}_{R,t}(\beta(u_R(t)))}{L^2(\C_R(t))},
\end{align*}
which implies  that $\seminorm{\beta(u_R)}{L^2_{W^{1\slash 2,2}}} \leq C$. This gives boundedness of $u_R$ in the fractional seminorm, and thus 
\[\norm{u_R}{L^2_{W^{1\slash 2,2}}} \leq C.\]
\subsubsection{Passage to the limit in $R$}
Therefore, we have
\begin{equation*}
\begin{aligned}
u_R &\weaklyto u &&\text{in $L^2_{W^{1\slash 2,2}}$}\\
\dot u_R &\weaklyto \dot u &&\text{in $L^2_{W^{-1\slash 2,2}}$}\\
u_R &\to u &&\text{in $L^2_{L^2}$}\\
\underline{D}_i \mathscr{Z}_R\ext{\mathbb{E}}_R(\beta(u_R)) &\weaklyto \theta_i &&\text{in $L^2_{L^2(\C)}$}\\
\partial_y \mathscr{Z}_R\ext{\mathbb{E}}_R(\beta(u_R)) &\weaklyto \theta_y &&\text{in $L^2_{L^2(\C)}$}
\end{aligned}
\end{equation*}
and we need to identify the limits $\theta_i$ and $\theta_y$. Our first task is to show that $\mathscr{Z}_R\ext{\mathbb{E}}_R(\beta(u_R)-\mean{\beta(u_R)}) \to \mathbb{E}(\beta(u)-\mean{\beta(u)})$ in $L^2_{L^2(\C)}$. Set $w_R = \beta(u_R)$ and $w = \beta(u)$; since $w_R(t) - \meanu{w_R(t)} \to w(t)-\mean{w(t)}$ in $L^2(\Gamma(t))$ for almost every $t$, by Lemma \ref{lem:ctsConvergence},
\[f_R(t) := \norm{\mathscr{Z}_R{\mathcal{E}}_R(w_R(t)-\mean{w_R(t)}) -{\mathcal{E}}(w(t)-\mean{w(t)})}{L^2(\C(t))}^2 \to 0.\]
By virture of $\mathscr{Z}$ being an isometry, and using \eqref{eq:harmonicExtensionL2Bound} and \eqref{eq:truncatedHarmonicExtensionL2Bound},
\begin{align*}
|f_R(t)| 
&\leq C_1\left(\norm{w_R(t)-\mean{w_R(t)}}{L^2(\Gamma(t))}^2+ \norm{w(t)-\mean{w(t)}}{L^2(\Gamma(t))}^2\right) =: g_R(t).
\end{align*}
Now, for almost all $t$, $g_R(t) \to 2C_1\norm{w(t)-\mean{w(t)}}{L^2(\Gamma(t))}^2$
and $\int_0^T g_R(t) 
\to 
2C_1\norm{w-\mean{w}}{L^2_{L^2}}^2,$
so that by the generalised DCT, $\lim_{R \to \infty}\int_0^T f_R(t) = \int_0^T \lim_{R \to \infty}f_R(t) =0$, giving
\[\mathscr{Z}_R{\mathbb{E}}_R(w_R-\meanu{w_R}) \to {\mathbb{E}}(w-\mean{w}) \quad \text{in $L^2_{L^2(\C)}$}\]
as desired. Now, choosing $\eta$ as in \eqref{eq:testFunction} except with $h \in C_c^\infty(0,\infty)$, we have
\begin{align*}
\int_0^T\int_{\C(t)} \underline{D}_i\mathscr{Z}_R\ext{\mathbb{E}}_Rw_R \eta &= \int_0^T\int_{\C(t)} \underline{D}_i\mathscr{Z}_R\left({\mathbb{E}}_R(w_R-\meanu{w_R}) + \frac{R-y}{R}\meanu{w_R}\right) \eta\\
&= -\int_0^T\int_{\C(t)} \underline{D}_i\eta \mathscr{Z}_R{\mathbb{E}}_R(w_R-\meanu{w_R}) + \int_0^T\int_{\C(t)}  \mathscr{Z}_R{\mathbb{E}}_R(w_R-\meanu{w_R})\eta H\nu^\Gamma_i
\end{align*}
and passing to the limit on both sides gives
\begin{align*}
\int_0^T\int_{\C(t)} \theta_i \eta=-\int_0^T\int_{\C(t)} \underline{D}_i\eta {\mathbb{E}}(w-\mean{w}) + \int_0^T\int_{\C(t)}  {\mathbb{E}}(w-\mean{w})\eta H\nu^\Gamma_i,
\end{align*}
and then an argument similar to that in \S \ref{sec:passToLimitGalerkin} shows that $\underline{D}_i \mathbb{E}(w-\mean{w}) = \theta_i$. For the $y$ term,
\begin{align}
\nonumber \int_0^T\int_{\C(t)} \partial_y \mathscr{Z}_R\ext{\mathbb{E}}_Rw_R \eta &= \int_0^T\int_{\C(t)} \partial_y \mathscr{Z}_R\left({\mathbb{E}}_R(w_R-\meanu{w_R}) + \frac{R-y}{R}\meanu{w_R}\right) \eta\\
&= -\int_0^T\int_{\C(t)} \mathscr{Z}_R{\mathbb{E}}_R(w_R-\meanu{w_R})\partial_y\eta + \chi_{y \leq R}\frac{\meanu{w_R}}{R}\eta\label{eq:conv2},
\end{align}
where the last term on the right hand side
\begin{align*}
\int_0^T\int_{\C(t)} \chi_{y \leq R}\frac{\meanu{w_R}}{R}\eta &= \int_0^T \frac{\mean{w_R(t)}}{R}\psi(t)\int_0^\infty \chi_{y \leq R}(y)h(y) \int_{\Gamma(t)} \phi_tv_0 \to 0
\end{align*}
since 
$\int_0^T {\mean{w_R(t)}\psi(t)}\slash {R} \to 0$ and $\int_0^\infty \chi_{y \leq R}(y)h(y) \to \int_0^\infty h(y)$ both due to the DCT (recall that $\mean{w_R(t)} \to \mean{w(t)}$ a.e.). Then taking the limit in \eqref{eq:conv2}, we get
\begin{align*}
\int_0^T\int_{\C(t)} \theta_y \eta 
&= -\int_0^T\int_{\C(t)} \mathbb{E}(w -\mean{w}) \partial_y \eta
\end{align*}
which again gives $\partial_y \mathbb{E}(w -\mean{w}) = \theta_y$ by similar reasoning to \S \ref{sec:passToLimitGalerkin}.
Now, integrating \eqref{eq:pointwiseWeakFormuR} in time, we can pass to the limit 
by first of all taking $E_R\eta = \mathscr{Z}_1\ext{\mathbb{E}}_1\eta$ (this satisfies $E_R\eta|_{y=0} = \eta$ and $E_R\eta|_{y=R} = 0$ since $R \geq 1$). Replace the integral over $\C_R(t)$ by one over $\C(t)$:
\[\int_0^T\langle \dot u_R(t), \eta(t)\rangle_{} + \int_0^T\int_{\Gamma(t)}u_R(t)\eta(t) \sgradt \cdot \mathbf w(t) + \int_0^T\int_{\C(t)}\nablabgt \mathscr{Z}_R\ext{\mathcal{E}}_{R,t}(\beta(u_R(t)))\nablabgt \mathscr{Z}_1\ext{\mathbb{E}}_1\eta(t) = 0,\]
and then using the above convergence results 
and recalling that the elliptic form can have an arbitary extension, we find exactly the weak formulation \eqref{eq:nondegenerateProblem} of Theorem \ref{thm:wellPosednessOfNonDegenerateProblem}. For the conservation of mass, note that $\int_{\Gamma(t)}u_R(t) = \int_{\Gamma_0}u_0$ holds simply by testing with $\eta= E_R\eta \equiv 1$ and then we can use the strong convergence of $u_R$ to $u$ in $L^2_{L^2}$ and the continuity of $t \mapsto (u(t),1)_{L^2(\Gamma(t))}$ to get the result for all $t$.
\subsection{Contraction principle}
Let $u_{01}$, $u_{02} \in L^\infty(\Gamma_0)$ be initial data and consider the respective solutions $u_{1R}$ and $u_{2R}$ to the truncated problem \eqref{eq:nondegenerateProblem}.  
The contractivity can be proved with a sensible choice of test function (for example, see \cite{BlanchardPorretta} for a continuous dependence argument).  Take the difference of the two weak formulations, set $v_{iR} = \ext{\mathbb{E}}_R(\beta(u_{iR}))$, pick $\eta = \frac{1}{\epsilon}T_\epsilon((u_{1R}-u_{2R})^+)$ and integrate over time:
\begin{align*}
\nonumber &\frac{1}{\epsilon}\int_0^t\langle \md  (u_{1R} - u_{2R}), T_\epsilon(u_{1R} - u_{2R})^+ \rangle +  \frac{1}{\epsilon}\int_0^t\int_{\Gamma(s)}(u_{1R} - u_{2R}) T_\epsilon(u_{1R} - u_{2R})^+\sgradt \cdot \mathbf{w}(t)\\
& +  \int_0^t\int_{\C_R(s)}\nablabgs (v_{1R}(s)-v_{2R}(s))\nablabgs E_R(s) \eta(s)= 0.
\end{align*}
Defining $S_\epsilon(s) := \int_0^s {T_\epsilon(r^+)}\slash {\epsilon}\;\mathrm{d}r$, applying Lemma \ref{lem:weakerIBP}, taking the limit inferior  
and using $S_\epsilon(\cdot) \to (\cdot)^+$, we obtain
\begin{align}
\nonumber &\int_{\Gamma(t)}(u_{1R}(t)-u_{2R}(t))^+  +\liminf_{\epsilon \to 0}\frac{1}{\epsilon}\int_0^t\int_{\C_R(s)}\nablabgs (v_{1R}(s)-v_{2R}(s))\nablabgs E_R(s) \eta(s)\\
&= \int_{\Gamma_0}(u_{1R}(0)-u_{2R}(0))^+.\label{eq:19}
\end{align}
Let us pick $E_R\eta = T_{\epsilon}((\beta^{-1}(v_{1R})-\beta^{-1}(v_{2R}))^+)\slash {\epsilon} \in L^2_{H^1(\C_R)}$ which satisfies $\mathbb{T}_{R, y=0}(E_R\eta) = T_\epsilon((u_{1R}-u_{2R})^+)\slash {\epsilon}$ and $\mathbb{T}_{R,y=R}(E_R\eta) = 0$ so is an admissible test function. Here we used that, for example, $\mathcal{T}_{R,y=0}T_\epsilon(w^+) = T_\epsilon(\mathcal{T}_{R,y=0}w)^+$ for all $w \in H^1(\C_R(0))$; this holds due to a density argument using the continuity of $T_\epsilon \circ (\cdot)^+$ between $H^1(\C_R(0))$ (see \S \ref{sec:truncations}) and between $W^{1\slash 2, 2}(\Gamma_0)$ (by Lemma \ref{lem:nemytskiiH12}). We also used that $\mathcal{T}_{R,y=0}\beta^{-1}(w) = \beta^{-1}(\mathcal{T}_{R,y=0}w)$ for all $w \in H^1(\C_R(0))$, which follows again by Lemma \ref{lem:nemytskiiH12} and the continuity of the map $\beta^{-1}\colon H^1(\C_R(0)) \to H^1(\C_R(0))$ (which is a consequence of the boundedness and continuity of $(\beta^{-1})'$).

Setting $B_\epsilon(s) := \{(x,y) \in \Gamma(s) \times [0,R] \mid 0 \leq \beta^{-1}(v_{1R}(s,x,y))-\beta^{-1}(v_{2R}(s,x,y)) < \epsilon\},$ the elliptic form in \eqref{eq:19} is (see \S \ref{sec:truncations})
\begin{align}
\nonumber &\int_{\C_R(s)}\nablabgs (v_{1R}(s)-v_{2R}(s))\nablabgs E_R(s) \eta(s)\\
\nonumber &=\frac{1}{\epsilon} \int_{B_\epsilon(s)}\nablabgs (v_{1R}(s)-v_{2R}(s))\nablabgs (\beta^{-1}(v_{1R}(s))-\beta^{-1}(v_{2R}(s)))\\
&\geq \frac{1}{\epsilon}\int_{B_\epsilon(s)}((\beta^{-1})'(v_{1R}(s))-(\beta^{-1})'(v_{2R}(s)))\nablabgs (v_{1R}(s)-v_{2R}(s))\nablabgs v_{2R}.\label{eq:18}
\end{align}
Here, $\beta^{-1}(v_{1R})-\beta^{-1}(v_{2R})=(\beta^{-1})'(v_{1R})(\nablabgs v_{1R}-\nablabgs v_{2R}) +((\beta^{-1})'(v_{1R})-(\beta^{-1})'(v_{2R}))\nablabgs v_{2R}$ was used to derive \eqref{eq:18}. The right hand side of \eqref{eq:18} can be estimated as follows:
\begin{align}
\nonumber &\left|\frac{1}{\epsilon}  \int_{B_\epsilon(s)}((\beta^{-1})'(v_{1R}(s))-(\beta^{-1})'(v_{2R}(s)))\nablabgs (v_{1R}(s)-v_{2R}(s))\nablabgs v_{2R}(s)\right|\\
\nonumber &\leq \frac{1}{\epsilon}\norm{(\beta^{-1})''}{\infty}  \int_{B_\epsilon(s)}|v_{1R}(s)-v_{2R}(s)||\nablabgs (v_{1R}(s)-v_{2R}(s))||\nablabgs v_{2R}(s)|\\
\nonumber &\leq \frac{1}{\epsilon}\norm{(\beta^{-1})''}{\infty}\norm{\beta'}{\infty} \int_{B_\epsilon(s)}|\beta^{-1}(v_{1R}(s))-\beta^{-1}(v_{2R}(s))||\nablabgs (v_{1R}(s)-v_{2R}(s))||\nablabgs v_{2R}(s)|\\
&\leq \norm{(\beta^{-1})''}{\infty}\norm{\beta'}{\infty}\int_{\C_R(s)}\chi_{B_\epsilon(s)}|\nablabgs (v_{1R}(s)-v_{2R}(s))||\nablabgs v_{2R}(s)|\label{eq:rhsOfElliptic}.
\end{align}
Now we show that this expression tends to zero as $\epsilon$ tends to zero. 
By DCT the integral on the right hand side of \eqref{eq:rhsOfElliptic} converges to the integral of the limit, so we shall focus on the pointwise limit of $\chi_{B_\epsilon(s)}$, namely,
\begin{equation}\label{eq:new1}
\chi_{\{z \in \C_R(s) \mid \beta^{-1}(v_{1R}(s,z))-\beta^{-1}(v_{2R}(s,z)) =0\}} = \chi_{\{z \in C_R(s) \mid v_{1R}(s,z)-v_{2R}(s,z) =0\}}.
\end{equation}
Observe that $\nablabgs(v_{1R}(s)-v_{2R}(s))|_{\{v_{1R}(s)-v_{2R}(s) =0\}} =0$ a.e. on $[0,R]\times \Gamma(s)$ by a theorem of Stampacchia (see \S \ref{sec:truncations}), so if $\{\beta^{-1}(v_{1R}(s))-\beta^{-1}(v_{2R}(s)) =0\}$ has positive measure, then the limit of the integral on the right hand side of \eqref{eq:rhsOfElliptic} vanishes. So then let us suppose that $\beta^{-1}(v_{1R})-\beta^{-1}(v_{2R})=0$ only on a set of measure zero. In this case, \eqref{eq:new1} is exactly 0, so again the limit vanishes. This implies in \eqref{eq:18} that
\begin{align*}
\liminf_{\epsilon \to 0} \int_{\C_R(s)}\nablabgs (v_{1R}(s)-v_{2R}(s))\nablabgs E_R(s) \eta(s) \geq 0.
\end{align*}
Plugging this back into \eqref{eq:19}, we obtain the desired contractivity for $u_{1R}-u_{2R}$ at each point in time.
Now, by the work in the previous subsections, thanks to the strong $L^2_{L^2}$ convergence, it follows that for almost all $t$, $u_{1R}(t) \to u_1(t)$ and $u_{2R}(t) \to u_2(t)$ in $L^2(\Gamma(t))$ for a subsequence. Therefore we can pass to the limit and we will obtain for almost all $t$
\begin{align*}
\int_{\Gamma(t)}(u_{1}(t)-u_{2}(t))^+ \leq \int_{\Gamma_0}(u_{01}-u_{02})^+.
\end{align*}
\section{The fractional porous medium equation: proof of Theorem \ref{thm:wellPosednessOfFPME}}\label{sec:fpme}
We pick (see \cite[p.~102]{Eden1991}) a sequence of smooth functions $\pme_k$ such that $\pme_k(0) = 0$, $\Psi_k \to \Psi$ in $C^0_{\text{loc}}(\mathbb{R})$, $|\pme^{-1}_k(r)| \leq C_1|r| + C_2$, $|(\pme^{-1}_k)''| \leq C_k,$ and $1\slash C_k \leq \pme_k' \leq k.$
The previous section gives us existence and uniqueness of $u_k \in \mathbb{W}(W^{1\slash 2,2}, W^{-1\slash 2,2})$ satisfying  
\begin{equation}\label{eq:toTestWith}
\langle \dot u_k(t), \eta(t)\rangle_{} + \int_{\Gamma(t)}u_k(t)\eta(t) \sgradt \cdot \mathbf w(t) + \int_{\C(t)}\nablabgt \ext{\mathcal{E}}_t(\pme_k(u_k(t)))\nablabgt E(t)\eta(t) = 0.
\end{equation}
Now we obtain appropriate estimates independent of $k$ and pass to the limit for the last time. We first look for a weak maximum principle.
Let us set $w_k(t) = u_k(t)e^{-\lambda t}$ (note that $\dot u_k(t) = e^{\lambda t}(\dot w_k(t) + \lambda w_k(t))$) and pick $\eta = (w_k-M)^+$ where $M:= \norm{u_0}{L^\infty(\Gamma_0)}$. We would like to pick the extension of $\eta = (w_k-M)^+$ to be
\[\left(\pme_k^{-1}(\mathbb{E}(\pme_k(u_k)-\mean{\pme_k(u_k)}))e^{-\lambda t} -M\right)^+\]
but this is not possible since the bracketed term is not square integrable. Therefore we  
define 
\[g_k(u_k,\rho) := \mathbb{E}(\pme_k(u_k)-\mean{\pme_k(u_k)}) + \psi_\rho \mean{\pme_k(u_k)}\]
and pick $E(w_k-M)^+ = \left(\pme_k^{-1}(g_k(u_k,\rho))e^{-\lambda t} -M\psi_\rho\right)^+ \in L^2_{H^1(\C)}$ which satisfies $\mathbb{T}E\eta = (w_k(t)-M)^+$ and 
\[\nablabgt E\eta = \begin{cases}
(\pme_k^{-1})'(g_k(u_k,\rho))e^{-\lambda t}\nablabgt g_k(u_k,\rho) -M\psi_\rho' &: \text{if $\pme_k^{-1}(g_k(u_k,\rho))e^{-\lambda t} -M\psi_\rho \geq 0$}\\
0 &:\text{otherwise}.
\end{cases}\]
Equation \eqref{eq:toTestWith} reads 
\begin{align*}
\langle \dot w_k(t), (w_k(t)-M)^+\rangle_{} &+ \int_{\Gamma(t)}w_k(t)(w_k(t)-M)^+ (\lambda + \sgradt \cdot \mathbf w(t))\\
&+ e^{-\lambda t}\int_{\C(t)}\nablabgt \ext{\mathcal{E}}_t(\pme_k(u_k(t)))\nablabgt E(t)(w_k(t)-M)^+ = 0,
\end{align*}
and the gradient term on the set $\{\pme_k^{-1}(g_k(u_k,\rho))e^{-\lambda t} -M\psi_\rho \geq 0\}$ is
\begin{align}
\nonumber &\int_{\C(t)}\nablabgt \ext{\mathcal{E}}_t(\pme_k(u_k(t)))\nablabgt E(t)\eta(t)\\
\nonumber &= \int_{\C(t)}\nablabgt \ext{\mathcal{E}}_t(\pme_k(u_k(t)))\left((\pme_k^{-1})'(g_k(u_k,\rho))e^{-\lambda t}(\nablabgt \mathbb{E}(\pme_k(u_k(t))-\mean{\pme_k(u_k(t))}) + \psi_\rho' \mean{\pme_k(u_k(t))}) -M\psi_\rho'\right)\\
\nonumber &= \int_{\C(t)}(\pme_k^{-1})'(g_k(u_k,\rho))e^{-\lambda t}\left(|\nablabgt \ext{\mathcal{E}}_t(\pme_k(u_k(t)))|^2+ \partial_y\ext{\mathcal{E}}_t(\pme_k(u_k(t)))\psi_\rho' \mean{\pme_k(u_k(t))}\right)\\
\nonumber &\quad -\int_{\C(t)}M\partial_y \ext{\mathcal{E}}_t(\pme_k(u_k(t))) \psi_\rho'\\
\nonumber &\geq e^{-\lambda t}k^{-1}\int_{\C(t)}|\nablabgt \ext{\mathcal{E}}_t(\pme_k(u_k(t)))|^2 - e^{-\lambda t}C_k\int_{\C(t)}|\partial_y\ext{\mathcal{E}}_t(\pme_k(u_k(t)))||\psi_\rho' \mean{\pme_k(u_k(t))}|\label{eq:g2}\\
\nonumber &\geq \frac{1}{2} e^{-\lambda t}C_1(k)\int_{\C(t)}|\nablabgt \ext{\mathcal{E}}_t(\pme_k(u_k(t)))|^2 - \frac{1}{2} e^{-\lambda t}C_2(k)|\Gamma(t)|| \mean{\pme_k(u_k(t))}|^2\frac{1}{\rho}
\end{align}
where the last term in the antepenultimate line vanished because $\partial_y \mathcal{E}_t(\pme_k(u_k(t))-\mean{\pme_k(u_k(t))})$ has mean value zero and to derive the last line we used Young's inequality and that $|\psi_{\rho}'| \leq C\slash \rho$ with $\supp(\rho) \subset [\rho,2\rho]$:
\begin{align*}
\int_{\C(t)}  |\partial_y\ext{\mathcal{E}}_t(\pme_k(u_k(t)))||\psi_\rho' \mean{\pme_k(u_k(t))}|
&\leq \int_{\C(t)} \epsilon|\nablabgt \ext{\mathcal{E}}_t(\pme_k(u_k(t)))|^2 + C_\epsilon C|\Gamma(t)|| \mean{\pme_k(u_k(t))}|^2\int_\rho^{2\rho} \frac{1}{\rho^2}.
\end{align*}
Thus, we have
\begin{align*}
&\langle \dot w_k(t), (w_k(t)-M)^+\rangle_{} + \int_{\Gamma(t)}w_k(t)(w_k(t)-M)^+(\lambda + \sgradt \cdot \mathbf w(t))\\
&-\frac{1}{2}\chi_{\{\pme_k^{-1}(g_k(u_k,\rho))e^{-\lambda t} -M\psi_\rho \geq 0\}} e^{-2\lambda t}C_2(k)|\Gamma(t)|| \mean{\pme_k(u_k(t))}|^2\frac{1}{\rho}\leq 0.
\end{align*}
Choosing $\lambda := \norm{\sgrad \cdot \mathbf w}{\infty}$ and sending $\rho \to \infty$, we can discard the last two terms and we will find
\begin{align*}
\frac{1}{2}\frac{d}{dt}\int_{\Gamma(t)}((w_k(t)-M)^+)^2-\frac{1}{2}\int_{\Gamma(t)}((w_k(t)-M)^+)^2\sgradt \cdot \mathbf w(t)
\leq 0.
\end{align*}
Gronwall's inequality implies boundedness of $w_k$ and hence
\begin{equation}\label{eq:ukBoundedLinfty}
\esssup_{t \in [0,T]}\norm{u_k(t)}{L^\infty(\Gamma(t))} + \esssup_{t \in [0,T]}\norm{\pme_k(u_k(t))}{L^\infty(\Gamma(t))} \leq C.
\end{equation}
The second bound holds because  $\esssup_{t \in [0,T]}\norm{\pme_k(u_k(t))}{L^\infty(\Gamma(t))} \leq \max\left(|\pme_k(C)|, |\pme_k(-C)|\right)$ (since $\pme_k$ is increasing) and the right hand side is bounded since $\Psi_k \to \Psi$.

Now we focus on obtaining a bound on $\pme_k(u_k)$ in $L^2_{W^{1\slash 2,2}}$. To this end, let us define the antiderivatives
\begin{equation*}
\begin{aligned}
H_k(r) &= \int_0^r \pme_k(s)\;\mathrm{d}s \quad\text{and}\quad G_k(r) = \int_0^r \pme^{-1}_k(s)\;\mathrm{d}s
\end{aligned}
\end{equation*}
and also the antiderivatives $H$ and $G$ by the obvious formulae. 
If $u \in L^2(M)$, then $G_k(u) \in L^1(M)$ and $H_k(\pme^{-1}_k(u)) \in L^1(M)$; this follows from
\begin{align}
|G_k(u)| 
 \leq \max(|\pme^{-1}_k(u)||u|, |\pme^{-1}_k(-u)||u|) \leq (C_1|u| + C_2)|u|\label{eq:boundOnGk}
\end{align}
and
\begin{equation}\label{eq:boundOnHk}
H_k(\pme^{-1}_k(u)) = u\pme^{-1}_k(u) - G_k(u) \leq |u|(C_1|u| + C_2) + |G_k(u)| \leq (C_3|u| + C_4)|u|.
\end{equation}
These properties are also true for $G$ and $H$.
\begin{remark}
{We could have generalised the porous medium nonlinearity $\pme(r) = |r|^{m-1}r$ to simply having $\pme$ as a continuous increasing function. In this case $\pme$ is no longer invertible so we would have to use Legendre transforms \cite{Eden1991}.}
\end{remark}
Test the equation \eqref{eq:toTestWith} with $\eta = \pme_k(u_k)$, pick $E(\pme_k(u_k)) = \mathbb{E}(\pme_k(u_k)-\mean{\pme_k(u_k)}) + \psi_\rho \mean{\pme_k(u_k)}$ and use the integration by parts formula of Lemma \ref{lem:weakerIBP}:
\begin{align*}
&\int_0^T\int_0^\infty  \int_{\Gamma(t)}|\nablabgt \mathcal{E}_t(\pme_k(u_k)-\mean{\pme_k(u_k)})|^2 + \partial_y \mathcal{E}_t(\pme_k(u_k)-\mean{\pme_k(u_k)})\psi'_\rho\mean{\pme_k(u_k)}\\
&\leq \int_{\Gamma_0}H_k(u_0) + \int_0^T\int_{\Gamma(t)}H_k(u_k)\sgradt \cdot \mathbf w(t)  - \int_0^T\int_{\Gamma(t)}u_k\pme_k(u_k)\sgradt \cdot \mathbf w(t)
\end{align*}
where we threw away the $H_k(u_k(T))$ term since $H_k \geq 0$. The second term on the LHS disappears since the harmonic extension of a mean value zero function has mean value zero too. Then we finally get after using \eqref{eq:boundOnHk}
that $|H_k(u_k)| \leq C_1\norm{\pme_k(u_k)}{L^\infty_{L^\infty}}^2 + C_2$. This takes care of the second term on the right hand side, and as for the initial data, we note that $|H_k(u_0)| \leq  C_1\norm{\pme_k(u_0)}{L^\infty(\Gamma_0)}^2+C_2$
and $\norm{\pme_k(u_0)}{L^\infty(\Gamma_0)} \leq \max\left(|\pme_k(M)|, |\pme_k(-M)|\right)$, and the right hand side is bounded like before. Thus
\[\norm{\nablabg \mathbb{E}(\pme_k(u_k)-\mean{\pme_k(u_k)})}{L^2_{L^2(\C)}} \leq C.\]
Writing $\pme_k(u_k) = \ext{\mathbb{T}}\ext{\mathbb{E}}(\pme_k(u_k))$ and using Lemma \ref{lem:existenceOfTraceMapOnL2H1AndL2X} and the previous uniform bounds, we have
\begin{align*}
\norm{\pme_k(u_k)}{L^2_{W^{1\slash 2,2}}} 
\leq C.
\end{align*}
Finally, integrating and rearranging \eqref{eq:toTestWith}:
\begin{align*}
\int_0^T \langle \dot u_k(t), \eta(t)\rangle_{} 
&\leq \norm{\sgrad \cdot \mathbf w}{\infty}\norm{u_k}{L^2_{L^2}}\norm{\eta}{L^2_{L^2}} + \norm{\nablabg \ext{\mathbb{E}}(\pme_k(u_k))}{L^2_{L^2(\C)}}\norm{\nablabg E\eta}{L^2_{L^2(\C)}},
\end{align*}
choosing $E\eta = \mathscr{Z}_\rho \ext{\mathbb{E}}_\rho \eta$ for some $\rho > 1$ and using 
$\norm{\nablabg \mathscr{Z}_\rho \ext{\mathbb{E}}_\rho \eta}{L^2_{L^2(\C)}} =  \norm{\nablabg \ext{\mathbb{E}}_\rho \eta}{L^2_{L^2(\C_\rho)}} \leq C\norm{\eta}{L^2_{W^{1\slash 2,2}}}$
with the last inequality by Corollary \ref{lem:gradBoundTruncatedProblemL2H12}, it easily follows that
\begin{equation}\label{eq:dotukBounded}
\norm{\dot u_k}{L^2_{W^{-1\slash 2,2}}} \leq C
\end{equation}
independent of $k$. Therefore, we have
\begin{equation}\label{eq:listOfConvergences}
\begin{aligned}
u_k &\weaklyto u &&\text{in $L^2_{L^2}$}\\
u_k &\to u &&\text{in $L^2_{W^{-1\slash 2,2}}$}\\
v_k:= \pme_k(u_k) &\weaklyto v &&\text{in $L^2_{W^{1\slash 2,2}}$}\\
\nablabg \mathbb{E}(v_k-\mean{v_k}) &\weaklyto \theta &&\text{in $L^2_{L^2(\C)}$}
\end{aligned}
\end{equation}
with the strong convergence by Aubin--Lions. Now the question is whether $v = \pme(u)$. If so, then we can also identify $\theta$: indeed,
we know that the map $\mathbb{G}\colon L^2_{W^{1\slash 2,2}} \to L^2_{L^2(\C)}$  defined by $\mathbb{G}w = \nablabg \mathbb{E}(w-\mean{w})$ is linear and also continuous by Corollary \ref{lem:boundsOnMathbbE}: \[\norm{\mathbb{G}w}{L^2_{L^2(\C)}} \leq C_1\norm{w-\mean{w}}{L^2_{W^{1\slash 2,2}}} \leq C_2\norm{w}{L^2_{W^{1\slash 2,2}}},\]
and this implies that $\mathbb{G}v_k \weaklyto \mathbb{G}\pme(u)$ in $L^2_{L^2(\C)}$, i.e., $\nablabg \mathbb{E}(v_k-\mean{v_k}) \weaklyto \nablabg \mathbb{E}(\pme(u)-\mean{\pme(u)})$ in $L^2_{L^2(\C)}$. Now we show that indeed $v = \pme(u)$.
\subsection{Identification of $v \equiv \pme(u)$}
Let us define 
\begin{align*}
J_k(v) = \begin{cases}
\int_0^T \int_{\Gamma(t)} G_k(v) &\text{if $G_k(v) \in L^1_{L^1}$}\\
0 &\text{otherwise}
\end{cases} \quad \text{and}\quad
J(v) = \begin{cases}
\int_0^T \int_{\Gamma(t)} G(v) &\text{if $G(v) \in L^1_{L^1}$}\\
0 &\text{otherwise}.
\end{cases}
\end{align*}
Note that if $v \in L^2_{L^2}$ then $G_k(v)$, $G(v) \in L^1_{L^1}$ (see \eqref{eq:boundOnGk}).
\begin{lem}\label{lem:semicontinuity}The map
\[v \mapsto \int_0^T \int_{\Gamma(t)} G(v)\]
from $L^2_{L^2}$ into $\mathbb{R}$ is lower semicontinuous.
\end{lem}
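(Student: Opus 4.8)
The functional $v \mapsto \int_0^T\int_{\Gamma(t)} G(v)$ is a convex integral functional, so the natural route is to exhibit it as a supremum of continuous affine (or at least weakly continuous) functionals, or equivalently to use the convexity of $G$ together with a weak-to-strong approximation argument. Since we are working on an evolving surface, it is cleanest to pull everything back to the fixed reference surface $\Gamma_0$ via the homeomorphisms $\phi_{\Gamma,-t}$ and the Jacobians $J^0_t$: if $v \in L^2_{L^2}$ corresponds to $\tilde v = \phi_{\Gamma,-(\cdot)}v \in L^2(0,T;L^2(\Gamma_0))$, then by the change of variables formula
\[
\int_0^T\int_{\Gamma(t)} G(v(t)) = \int_0^T \int_{\Gamma_0} G(\tilde v(t)) J^0_t,
\]
and since $J^0_t$ is bounded above and below away from zero and is continuous in $t$, lower semicontinuity of the left-hand functional on $L^2_{L^2}$ (with respect to strong $L^2_{L^2}$ convergence) is equivalent to lower semicontinuity of $\tilde v \mapsto \int_0^T\int_{\Gamma_0} G(\tilde v)J^0_t$ on $L^2(0,T;L^2(\Gamma_0)) = L^2((0,T)\times\Gamma_0)$.

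**Key steps.** First I would record the relevant properties of $G(r) = \int_0^r \Psi^{-1}(s)\,\mathrm ds$: since $\Psi^{-1}$ is continuous and non-decreasing (as $\Psi(r) = |r|^{m-1}r$ is increasing), $G$ is a continuous convex function with $G(0)=0$, and by \eqref{eq:boundOnGk}-type estimates $0 \le G(r) \le (C_1|r|+C_2)|r|$, which guarantees $G(\tilde v) J^0_t \in L^1$ whenever $\tilde v \in L^2$. Second, for strong convergence the argument is elementary: if $\tilde v_k \to \tilde v$ in $L^2((0,T)\times\Gamma_0)$, pass to a subsequence converging pointwise a.e. with an $L^2$ dominating function $h$; then $G(\tilde v_k)J^0_t \to G(\tilde v)J^0_t$ a.e. and is dominated by $(C_1 h + C_2) h \cdot \|J^0\|_\infty \in L^1$, so the dominated convergence theorem gives actual continuity — hence certainly lower semicontinuity — along the full sequence. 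If instead one wants lower semicontinuity with respect to \emph{weak} $L^2_{L^2}$ convergence (which is the form actually needed to identify limits below), I would invoke convexity: the map $w \mapsto \int_0^T\int_{\Gamma_0} G(w)J^0_t$ is convex (because $G$ is convex and $J^0_t \ge 0$) and strongly lower semicontinuous by the DCT argument just given, and a convex functional that is strongly lower semicontinuous on a Banach space is weakly (sequentially) lower semicontinuous — this is the standard consequence of Mazur's lemma. Either way, transporting back through the change of variables yields the claim for $v \mapsto \int_0^T\int_{\Gamma(t)} G(v)$ on $L^2_{L^2}$.

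**Main obstacle.** The only genuine subtlety is the bookkeeping needed to move between the evolving-space functional and the fixed-domain functional — one must check that $\tilde v_k \to \tilde v$ in $L^2(0,T;L^2(\Gamma_0))$ is equivalent to $v_k \to v$ in $L^2_{L^2}$ (this is built into the definition of $L^2_{L^2}$ via the uniformly bounded homeomorphisms $\phi_{\Gamma,t}$ with uniformly bounded inverses, so it is immediate), and that $J^0_t$ has the stated regularity (Assumption \ref{ass:onHypersurfaces} gives $J^0_t$ is $C^2$ and uniformly bounded away from $0$ and $\infty$). Beyond that, everything reduces to the classical fact that convex integral functionals with a non-negative, continuous, superlinearly-or-better-controlled integrand are (weakly) lower semicontinuous, applied on the cylinder $(0,T)\times\Gamma_0$; no regularity of the surface evolution beyond what is already assumed is required. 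I do not anticipate any real difficulty here — the lemma is a preparatory step for the subdifferential argument that identifies $v = \Psi(u)$, and its proof is essentially a one-line appeal to convexity plus the dominated convergence theorem after pulling back.
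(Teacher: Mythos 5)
Your proposal is correct, and its core is the same as the paper's: reduce to pointwise a.e.\ convergence along a subsequence, apply an integral convergence theorem, and then use the subsequence characterisation of the $\liminf$ to recover the statement for the full sequence. The technical packaging differs in two ways worth noting. First, the paper does not pull back to $(0,T)\times\Gamma_0$; it works fibrewise, invoking lower semicontinuity of $w\mapsto\int_{\Gamma(t)}G(w)$ on each $L^2(\Gamma(t))$ (a convex-integrand result quoted from Showalter) and then integrating in $t$ with Fatou's lemma, using $G\geq 0$ and measurability of $t\mapsto\int_{\Gamma(t)}G(v_{n_j}(t))=\int_{\Gamma_0}G(\tilde v_{n_j}(t))J^0_t$. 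Your route — a.e.\ convergence on the product space with an $L^2$ dominating function, the quadratic growth bound $0\leq G(r)\leq(C_1|r|+C_2)|r|$, and the DCT — actually yields strong \emph{continuity} of the functional, which is stronger than what the paper's Fatou argument gives, at no extra cost. Second, you explicitly record the upgrade from strong to weak sequential lower semicontinuity via convexity and Mazur's lemma; the paper's proof of this lemma only establishes the strong version, yet the lemma is later applied to the weakly convergent sequence $v_k\rightharpoonup v$ in $L^2_{W^{1\slash 2,2}}$ (the Mazur step is spelled out in the paper only for the functional $I_t$ in the contraction-principle argument). Making that step explicit here, as you do, is a genuine improvement in completeness; otherwise the two proofs are interchangeable.
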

\begin{proof}
First, observe that $G\colon \mathbb{R} \to \mathbb{R}$ is convex, proper and continuous, hence (for example by adapting Proposition 8.1 in \cite[Chapter II]{showalter}) the map 
\[w \mapsto \int_{\Gamma(t)}G(w) \quad\text{for $w \in L^2(\Gamma(t))$}\]
(which is well-defined, for example, see \eqref{eq:boundOnGk}) is lower semicontinuous for each fixed $t$. If $v_n \to v$ in $L^2_{L^2}$, we have $v_{n_j}(t) \to v(t)$ in $L^2(\Gamma(t))$ for almost all $t$, so 
\begin{equation}\label{eq:22}
\int_{\Gamma(t)}G(v(t)) \leq \liminf_{{n_j} \to \infty}\int_{\Gamma(t)}G(v_{n_j}(t)).
\end{equation}
Integrating \eqref{eq:22}, and since $\int_{\Gamma(t)}G(v_{n_j}(t)) \geq 0$ and $t \mapsto \int_{\Gamma(t)}G(v_{n_j}(t)) = \int_{\Gamma_0}G(\tilde v_{n_j}(t))J^0_t$ is measurable, we can apply Fatou's lemma to give
\[\int_0^T\int_{\Gamma(t)}G(v(t)) \leq \int_0^T \liminf_{{n_j} \to \infty}\int_{\Gamma(t)}G(v_{n_j}(t))\leq \liminf_{{n_j} \to \infty}\int_0^T\int_{\Gamma(t)}G(v_{n_j}(t)).\]
Thus far we have shown that for any sequence $v_n \to v$ converging in $L^2_{L^2}$, $J(v) \leq \liminf_{j  \to \infty}J(v_{n_j})$ holds for a subsequence $n_j$. Now, if $v_n \to v$ in $L^2_{L^2}$, then it follows that there is a subsequence $v_{n_j}$ such that 
\begin{equation}\label{eq:23}
\liminf_{n \to \infty}J(v_n) = \lim_{j \to \infty}J(v_{n_j})
\end{equation}
by definition of the $\liminf$ ($J$ is non-negative, so either $\liminf J(v_n) = \infty$ or $\liminf J(v_n) = C \geq 0$; the former case makes the problem trivial). We know that there is a subsequence $n_{j_{k}}$ of $n_{j}$ such that $J(v) \leq \liminf_{k \to \infty}J(v_{n_{j_k}}) = \lim_{j \to \infty}J(v_{n_j}) = \liminf_{n \to \infty}J(v_n)$ with the first equality because the limit of $J(v_{n_{j_k}})$ is the same as the limit of $J(v_{n_j})$ and the second equality from \eqref{eq:23}.
\end{proof}
\begin{lem}We have $u = \pme^{-1}(v)$.
\end{lem}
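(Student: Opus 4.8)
The plan is to use a Minty-type monotonicity argument combined with the convexity structure encoded in the functionals $J_k$ and $J$. The key identity to exploit is that, since $\pme_k$ is the subdifferential of the convex function $H_k$ (equivalently $\pme_k^{-1} = G_k'$), we have the ``Young-type'' equality $G_k(v_k(t)) = v_k(t)\pme_k^{-1}(v_k(t)) - H_k(\pme_k^{-1}(v_k(t))) = v_k(t)u_k(t) - H_k(u_k(t))$ pointwise, and more usefully the inequality $G_k(w) \geq G_k(v_k) + \pme_k^{-1}(v_k)(w - v_k) = G_k(v_k) + u_k(w-v_k)$ for any $w$, valid because $G_k$ is convex with derivative $\pme_k^{-1}$. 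First I would integrate this inequality over $\bigcup_t \Gamma(t)\times\{t\}$, test the weak formulation \eqref{eq:toTestWith} with $\eta = \pme_k(u_k) = v_k$ and the admissible extension $E v_k = \mathbb{E}(v_k - \mean{v_k}) + \psi_\rho\mean{v_k}$ (exactly as done just above to get the $L^2_{W^{1/2,2}}$ bound), apply the integration-by-parts formula of Lemma \ref{lem:weakerIBP} to the $\langle \dot u_k, v_k\rangle$ term, and thereby express $\int_0^T\int_{\Gamma(t)} u_k v_k$ in terms of the Dirichlet energy $\norm{\nablabg \mathbb{E}(v_k - \mean{v_k})}{L^2_{L^2(\C)}}^2$ plus the transport terms plus initial/final data. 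Sending $\rho \to \infty$ kills the cut-off residuals as before.

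The second ingredient is a convergence of energies. From the convergences \eqref{eq:listOfConvergences} we have $u_k \to u$ strongly in $L^2_{W^{-1/2,2}}$ and $v_k \weaklyto v$ in $L^2_{W^{1/2,2}}$, so the product $\int_0^T\int_{\Gamma(t)} u_k v_k = \int_0^T \langle v_k, u_k\rangle_{W^{1/2,2},W^{-1/2,2}}$ passes to the limit giving $\int_0^T\int_{\Gamma(t)} u\,v$. On the other side, weak lower semicontinuity of the (convex, nonnegative) Dirichlet energy functional $w \mapsto \norm{\nablabg \mathbb{E}(w - \mean{w})}{L^2_{L^2(\C)}}^2$ on $L^2_{W^{1/2,2}}$ — which follows since $\mathbb{G}$ is linear continuous and norms are weakly lsc — together with the identification $\nablabg\mathbb{E}(v_k - \mean{v_k}) \weaklyto \nablabg\mathbb{E}(v - \mean{v})$ already established, yields a ``$\limsup$'' inequality of the form
\[
\limsup_{k\to\infty} \int_0^T\int_{\Gamma(t)} u_k v_k \leq \int_0^T\int_{\Gamma_0}\!\!(\text{data}) - \norm{\nablabg\mathbb{E}(v-\mean v)}{L^2_{L^2(\C)}}^2 + (\text{transport}),
\]
where the transport and data terms also pass to the limit using the uniform $L^\infty_{L^\infty}$ bound \eqref{eq:ukBoundedLinfty}, the strong $L^2_{L^2}$ convergence of $u_k$ (after upgrading via interpolation/compactness), and the convergence $H_k \to H$, $G_k \to G$ locally uniformly. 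Comparing with the limit identity for the full limit problem tested with $v$, one reads off that in fact $\int_0^T\int_{\Gamma(t)} u_k v_k \to \int_0^T\int_{\Gamma(t)} u\,v$, i.e. convergence of energies holds.

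With convergence of the ``pairing'' $\int u_k v_k \to \int u\,v$ in hand, I would run the monotonicity/Minty step: for arbitrary $w \in L^2_{L^2}$, integrate the convexity inequality $G_k(w) \geq G_k(v_k) + u_k(w - v_k)$ over the evolving domain, then let $k\to\infty$. The left side tends to $\int_0^T\int_{\Gamma(t)} G(w)$ because $G_k \to G$ locally uniformly with the uniform quadratic bound \eqref{eq:boundOnGk} allowing dominated convergence (using $w \in L^2_{L^2}$ and the $L^\infty$ bounds where needed); the term $\int u_k w \to \int u\,w$ by weak convergence of $u_k$ (it suffices $u_k \weaklyto u$ in $L^2_{L^2}$ against the fixed $w$); the term $\int u_k v_k \to \int u\,v$ by the energy convergence just proved; and $\liminf_k \int_0^T\int_{\Gamma(t)} G_k(v_k) \geq \int_0^T\int_{\Gamma(t)} G(v)$ by an adaptation of Lemma \ref{lem:semicontinuity} (one checks $G_k(v_k) \geq G(v_k) - \varepsilon_k$ with $\varepsilon_k\to 0$ uniformly on the relevant bounded range, then applies lower semicontinuity of the $G$-functional). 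This gives $\int_0^T\int_{\Gamma(t)} G(w) \geq \int_0^T\int_{\Gamma(t)} G(v) + \int_0^T\int_{\Gamma(t)} u(w - v)$ for all $w \in L^2_{L^2}$, which is precisely the statement that $u \in \partial(\int G)(v)$ in the sense of subdifferentials of convex integral functionals on $L^2_{L^2}$. Since $\partial G = G' = \pme^{-1}$ pointwise and $G$ is $C^1$ (so the subdifferential is a singleton), this forces $u = \pme^{-1}(v)$ a.e., equivalently $v = \pme(u)$.

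The main obstacle I anticipate is the energy-convergence step, i.e. rigorously upgrading the weak convergences into $\int_0^T\int_{\Gamma(t)} u_k v_k \to \int_0^T\int_{\Gamma(t)} u\,v$. This requires carefully matching the ``$\limsup$'' from the $k$-problem with the energy identity for the limit problem, and in turn needs: (i) the final-time terms $\int_{\Gamma(T)} H_k(u_k(T))$, which were merely discarded via $H_k \geq 0$ in the a priori estimate, to be controlled from below in the limit — this uses $H_k \to H$, lower semicontinuity, and the fact that $u_k(T) \to u(T)$ in $L^2(\Gamma(T))$ coming from $\mathbb{W}(W^{1/2,2},W^{-1/2,2}) \hookrightarrow C^0_{L^2}$ and the $\dot u_k$ bound \eqref{eq:dotukBounded}; and (ii) the transport integrals $\int_0^T\int_{\Gamma(t)} u_k \pme_k(u_k)\sgradt\cdot\mathbf w$ and $\int_0^T\int_{\Gamma(t)} H_k(u_k)\sgradt\cdot\mathbf w$ to pass to the limit, which follows from the uniform $L^\infty_{L^\infty}$ bound \eqref{eq:ukBoundedLinfty}, $\pme_k \to \pme$ and $H_k \to H$ locally uniformly, and strong $L^2_{L^2}$ convergence of $u_k$. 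Everything else is bookkeeping with convex conjugates and dominated convergence.
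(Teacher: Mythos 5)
Your core argument is correct and is essentially the paper's proof: integrate the subdifferential inequality $G_k(w) \geq G_k(v_k) + \pme_k^{-1}(v_k)(w-v_k)$ over the evolving domain, pass to the limit term by term (dominated convergence for $\int G_k(w)$, strong--weak duality for $\int u_k v_k$, weak convergence against the fixed $w$ for $\int u_k w$, and the uniform estimate $\int G_k(v_k)-G(v_k)\to 0$ combined with weak lower semicontinuity of $w\mapsto\int_0^T\int_{\Gamma(t)}G(w)$ for the remaining term), and read off $u\in\partial J(v)$, hence $u=\pme^{-1}(v)$.

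The one substantive remark is that the entire ``convergence of energies'' apparatus you construct --- testing \eqref{eq:toTestWith} with $\eta=v_k$, controlling the discarded final-time terms $\int_{\Gamma(T)}H_k(u_k(T))$, and matching a $\limsup$ inequality against the limit equation --- which you single out as the main obstacle, is unnecessary. As you yourself observe in the first sentence of that paragraph, $\int_0^T\int_{\Gamma(t)}u_kv_k$ is the duality pairing of $u_k$ with $v_k$, and since $u_k\to u$ strongly in $L^2_{W^{-1\slash 2,2}}$ by Aubin--Lions while $v_k\weaklyto v$ in $L^2_{W^{1\slash 2,2}}$ with bounded norms, the pairing converges to $\int_0^T\int_{\Gamma(t)}uv$ immediately. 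That is all the paper uses, and it sidesteps entirely the delicate points (i) and (ii) you list.
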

\begin{proof}
By convexity of $G_k$ and $G$, $J_k$ and $J$ are also convex (see \cite[\S 2.4]{Blanchard1988}). If the G\^ateaux derivative of $J_k$ or $J$ exists at a particular point, then the set of subdifferentials of $J_k$ or $J$ coincides with the set of G\^ateaux derivatives at that point \cite[Proposition 3.33]{hemi}. By a direct calculation, the subdifferentials are
\begin{align*}
\nonumber \partial J_k(v_k) = \{w \in L^2_{L^2} \mid w &= \pme^{-1}_k(v_k) \text{ in $L^2_{L^2}$}\}\quad\text{and}\quad
\partial J(v) = \{w \in L^2_{L^2} \mid w = \pme^{-1}(v) \text{ in $L^2_{L^2}$}\}\label{eq:subdifferentialOfJ}.
\end{align*}
By definition (see \cite[Definition 3.31]{hemi}), since $\pme^{-1}_k(v_k) \in \partial J_k(v_k)$, 
 for all $w \in L^2_{L^2}$,
\begin{equation}\label{eq:pme1}
 \int_0^T\int_{\Gamma(t)}G_k(v_k) + \int_0^T\int_{\Gamma(t)}\pme^{-1}_k(v_k)w \leq  \int_0^T\int_{\Gamma(t)}G_k(w) + \int_0^T\int_{\Gamma(t)}\pme^{-1}_k(v_k)v_k.
\end{equation}
We want to pass to the limit in this inequality using \eqref{eq:listOfConvergences} and the methods of \cite{Eden1991}. For the first term on the right hand side: for almost all $t$ and almost all $x \in \Gamma(t)$,  $G_k(w(t,x)) \to  G(w(t,x))$ by the convergence of $\pme^{-1}_k \to \pme^{-1}$. We also have by \eqref{eq:boundOnGk} $|G_k(w(t,x))| \leq C(|w(t,x)|^2 + |w(t,x)|)$, and the right hand side is in $L^1_{L^1}$, so by the DCT, $G_k(w) \to G(w)$ in $L^1_{L^1}$, which obviously implies
\[\int_0^T\int_{\Gamma(t)}G_k(w) \to \int_0^T\int_{\Gamma(t)}G(w).\]
For the second term on the right hand side, since $u \in L^2_{L^2},$ 
\begin{align*}
\int_0^T\int_{\Gamma(t)}\pme^{-1}_k(v_k)v_k = \langle \pme^{-1}_k(v_k), v_k \rangle_{L^2_{W^{-1\slash 2,2}}, L^2_{W^{1\slash 2,2}}} \to \langle u, v \rangle_{L^2_{W^{-1\slash 2,2}}, L^2_{W^{1\slash 2,2}}} = \int_0^T\int_{\Gamma(t)}u v.
\end{align*}
For the first term on the left hand side, we first show an intermediary step, that 
\begin{equation}\label{eq:pme2}
\lim_{k \to \infty}\int_0^T\int_{\Gamma(t)}G_k(v_k) - G(v_k) = 0.
\end{equation}
To see this, note that
\begin{align*}
|G_k(v_k(t,x)) - G(v_k(t,x))| &= \bigg|\int_0^{v_k(t,x)}(\pme^{-1}_k(s) - \pme^{-1}(s))\bigg| 
\leq C\sup_{s \in [-C, C]}|\pme^{-1}_k(s)-\pme^{-1}(s)|,
\end{align*}
hence
\begin{align*}
\left|\int_0^T\int_{\Gamma(t)}G_k(v_k) - G(v_k)\right| 
&\leq |\Gamma|T\sup_{s \in [-C, C]}|\pme^{-1}_k(s)-\pme^{-1}(s)|C \to 0.
\end{align*}
By weak lower semicontinuity of the map $v \mapsto \int_0^T \int_{\Gamma(t)} G(v)$ (Lemma \ref{lem:semicontinuity}), we have
\begin{align*}
\int_0^T \int_{\Gamma(t)}G(v) &\leq \liminf_{k \to \infty} \int_0^T \int_{\Gamma(t)}G(v_k) = \liminf_{k \to \infty} \int_0^T \int_{\Gamma(t)}G_k(v_k)
\end{align*}
with the equality by \eqref{eq:pme2}. Lastly, the second term on the left hand side is obvious. Now we can take $\liminf_{k \to \infty}$ in \eqref{eq:pme1} and use the above facts to get
\[ \int_0^T\int_{\Gamma(t)}G(v) + \int_0^T\int_{\Gamma(t)}u w \leq  \int_0^T\int_{\Gamma(t)}G(w) + \int_0^T\int_{\Gamma(t)}u v,\]
which is exactly the statement $u \in \partial J(v)$, i.e., $u = \pme^{-1}(v)$.
\end{proof}
That $u \in L^\infty_{L^\infty}$ follows from the strong convergence in $L^2_{W{-1\slash 2, 2}}$ and the $L^\infty$ estimate \eqref{eq:ukBoundedLinfty},  see \cite[\S 3(b)]{Alphonse2014} for more details. Integrating \eqref{eq:toTestWith} by parts over time and letting $\eta \in \mathbb{W}(W^{1\slash 2,2}, L^2)$ with $\eta(T)=0$, the equation we want to pass to the limit in is 
\begin{align*}
-\int_0^T \int_{\Gamma(t)}\dot \eta(t)u_k(t) + \int_0^T \int_{\C(t)}\nablabgt \ext{\mathcal{E}}_t(\pme_k(u_k(t)))\nablabgt E(t)\eta(t) = \int_{\Gamma_0}u_0\eta(0), 
\end{align*}
and this is easily done using the convergence results and will result in the equation in Definition \ref{defn:weakSolutionToFPME}.
\subsection{Contraction principle and conservation of mass}
We know that the solutions $u_{1k}$ and $u_{2k}$ of the non-degenerate problem (with nonlinearity $\pme_k$) and initial data $u_{01}$ and $u_{02}$ respectively satisfy 
\begin{equation}\label{eq:21a}
\int_{\Gamma(t)}(u_{1k}(t)-u_{2k}(t))^+ \leq \int_{\Gamma_0}(u_{01} - u_{02})^+\quad\text{for all $k$}
\end{equation}
by Theorem \ref{thm:wellPosednessOfNonDegenerateProblem}. We have shown that (for a subsequence) $u_{ik}$ converges to $u_i$, the solution of the fractional porous medium equation with initial data $u_{0i}$. Now, with $\tilde u_{ik} := \phi_{-(\cdot)}u_{ik}$, the bounds \eqref{eq:ukBoundedLinfty} and \eqref{eq:dotukBounded} translate into (see \cite[Lemma 2.8]{Alphonse2014} and \cite[Proof of Theorem 2.33]{Alphonse2014b}) 
\[\norm{\tilde u_{ik}}{L^\infty(0,T;L^\infty(\Gamma_0))} + \norm{\tilde u_{ik}'}{L^2(0,T;W^{-1\slash 2,2}(\Gamma_0))} \leq C,\]
thus by Aubin--Lions (Theorem II.5.16 in \cite{Boyer}), for a subsequence and for every $t \in [0,T]$, 
\begin{equation}\label{eq:strongConvergenceCtsDep}
\begin{aligned}
&\tilde u_{ik}(t) \to \tilde u_i(t) &&\text{in $W^{-1\slash 2,2}(\Gamma_0)$}.
\end{aligned}
\end{equation}
By the uniform bound, we have for almost all $t$ that 
$\tilde u_{1 l_t(k)}(t) - \tilde u_{2 l_t  (k)}(t)\weaklyto \tilde u_1(t) - \tilde u_2(t)$ in $L^1(\Gamma_0)$ (the identification is thanks to the strong convergence \eqref{eq:strongConvergenceCtsDep}). Since $(\cdot)^+$ is a convex function, $I_t\colon L^1(\Gamma(t)) \to \mathbb{R}$ defined by $I_t(w) = \int_{\Gamma(t)}w^+$ is convex, and clearly it is also continuous. Then, by a corollary of Mazur's lemma \cite[Corollary 3.8 and Remark 5]{brezis2010functional}, $I_t$ is weakly lower semicontinuous, which from \eqref{eq:21a} gives
\begin{equation*}\label{eq:ctsDependenceForAAt}
\int_{\Gamma(t)}(u_1(t)-u_2(t))^+\leq \int_{\Gamma_0}(u_{01}-u_{02})^+\quad\text{for almost all $t \in [0,T]$}.
\end{equation*}
In fact this holds for every $t \in [0,T]$. Take an arbitrary $t \in [0,T]$ and a sequence $t_j \to t$ such that $\tilde u_{ik}(t_j)$ is bounded in $L^\infty(\Gamma_0)$. This gives $\tilde u_{ik}(t_j) \weakstar \tilde u_{ik}(t)$ in $L^\infty(\Gamma_0)$ since $\tilde u_{ik} \in C^0([0,T];W^{-1\slash 2, 2}(\Gamma_0))$. The weak-star lower semicontinuity of norms gives 
\[\norm{\tilde u_{ik}(t)}{L^\infty(\Gamma_0)} \leq C\quad\text{for every $t \in [0,T]$},\]
and the argument previously given can be repeated and we will get $\tilde u_{ik}(t) \weakstar \tilde u_{i}(t)$, and then we can pass to the limit in the contraction result satisfied by $\tilde u_{1k}(t_j)-\tilde u_{2k}(t_j)$, first in $j$ and then in $k$.

The conservation of mass follows easily by passing to the limit in $\int_{\Gamma(t)}u_k(t) = \int_{\Gamma_0}u_0$.
\section{Concluding remarks}\label{sec:conclusions}
The (non-fractional) porous medium equation on an evolving surface can be also tackled in this way, as a limit of approximations; of course the problem is easier in that case as we would not need \S \ref{sec:fractionalLaplacianOnCompactManifolds}, \S \ref{sec:fractionalLaplacianOnL2X} and parts of \S \ref{sec:functionSpaces}, and the non-degenerate problem in \S \ref{sec:nondegenerateProblem} can be handled with a fixed point argument using the linear theory in \cite{Alphonse2014b}, as done in \cite{Alphonse2014} for a Stefan problem. We name a few of the many interesting open issues left. We required bounded initial data for the results above but the $L^1$-continuous dependence result leaves us in good position to extend the results to integrable data if we manage to obtain a smoothing effect (for which the work \cite{BonforteGrillo} by Bonforte and Grillo may be useful). There is also the fast diffusion or the singular case where $m \in (0,1)$ which we have not addressed. A fundamental property enjoyed by solutions of the fractional porous medium equation on a stationary domain is regularity in time \cite[Theorem 2.3]{DePablo2011}, that is, the solution has a time derivative in $L^1$. In the stationary case, this regularity is obtained partially by a rescaling argument of \cite{BenilanCrandall} and using the $L^1$-continuous dependence applied to a solution and its rescaled version. This does not work in our setting since rescaled solutions live on a different evolving hypersurface, so the continuous dependence inequality cannot be applied. This result would be useful because it would allow us to study qualitative properties such as the effect the geometry of the hypersurface has on the solution. An obvious further extension is to study this theory of weak solutions with a general exponent in the fractional Laplacian $(-\Delta_{\Gamma(t)})^s$: for this of course \cite{CaffarelliSilvestre} is the obvious starting point and the methodology we used in this paper should work. 
\section*{Acknowledgements}
A.A. was supported by the EPSRC grant EP/H023364/1 within the MASDOC Centre for Doctoral Training. The authors thank the anonymous referee for some helpful suggestions. A.A. is also grateful to Juan Luis V{\'a}zquez and James Robinson for useful comments and additional references.
%
%
\bibliographystyle{abbrv}
\bibliography{FPMENewBib}

\end{document}